\numberwithin{equation}{section}
\newtheorem{theorem}{Theorem}[section]
\newtheorem{corollary}[theorem]{Corollary}
\newtheorem{lemma}[theorem]{Lemma}
\newtheorem{proposition}[theorem]{Proposition}
\theoremstyle{definition}
\newtheorem{definition}[theorem]{Definition}
\newtheorem{example}[theorem]{Example}
\theoremstyle{remark}
\newtheorem{remark}[theorem]{Remark}
\definecolor{ddmagenta}{rgb}{0.7,0,1.0}
\definecolor{ddcyan}{rgb}{0,0.1,1.0}
\definecolor{dred}{rgb}{.8,0,0}
 \numberwithin{equation}{section}
\newcommand{\Uu}{\mathrm{U}}
\newcommand{\Ww}{\mathrm{W}}
\newcommand{\N}{\mathbb{N}}
\newcommand{\R}{\mathbb{R}}
\newcommand{\BB}{\mathscr{B}}
\newcommand{\LL}{\mathscr{L}}
\newcommand{\XX}{\mathscr{X}}
\newcommand{\VV}{\mathscr{V}}
\newcommand{\cB}{{\ensuremath{\mathcal B}}}
\newcommand{\cE}{{\ensuremath{\mathcal E}}}
\newcommand{\cP}{{\ensuremath{\mathcal P}}}
\newcommand{\cR}{{\ensuremath{\mathcal R}}}
\newcommand{\dd}{{\mbox{\boldmath$d$}}}
\newcommand{\uu}{{\mbox{\boldmath$u$}}}
\newcommand{\vv}{{\mbox{\boldmath$v$}}}
\newcommand{\xx}{{\mbox{\boldmath$x$}}}
\newcommand{\sfxx}{{\mbox{\boldmath$\sfx$}}}
\newcommand{\sfa}{{\sf a}}
\newcommand{\sfb}{{\sf b}}
\newcommand{\sfm}{{\sf m}}
\newcommand{\sfp}{{\sf p}}
\newcommand{\sfr}{{\sf r}}
\newcommand{\sfs}{{\sf s}}
\newcommand{\sft}{{\sf t}}
\newcommand{\sfu}{{\sf u}}
\newcommand{\sfv}{{\sf v}}
\newcommand{\sfw}{{\sf w}}
\newcommand{\sfx}{{\sf x}}
\newcommand{\sfC}{{\sf C}}
\newcommand{\sfS}{{\sf S}}
\newcommand{\beq}{\begin{equation}}
\newcommand{\beql}[1]{\begin{equation}\label{#1}}
\newcommand{\eeq}{\end{equation}}
\newcommand{\ba}{\begin{array}} \newcommand{\ea}{\end{array}}
\newcommand{\eps}{\varepsilon}
\def\d{\mathrm d}
\def\dd{\;\!\mathrm{d}} 
\def\rmd{{\mathrm d}}
  \def\rmC{{\mathrm C}}
\def\rmD{{\mathrm D}}  
  \def\rmI{{\mathrm I}}
\def\rmJ{{\mathrm J}}
\newcommand{\weaksto}{{\rightharpoonup^*}\,}
\newcommand{\weakto}{\rightharpoonup}
\newcommand{\Restr}[1]{\lower2pt\hbox{$|_{#1}$}}
\newcommand{\argmin}{\mathop\textrm{Argmin}}
\newcommand{\trepar}{{|\kern-1truept|\kern-1truept|}}
\newcommand{\ltt}{{(\kern-2truept(}}
\newcommand{\rtt}{{)\kern-2truept)}}
\newcommand{\la}{\langle}
\newcommand{\ra}{\rangle}
\newcommand{\nchi}{{\raise.4ex\hbox{$\chi$}}}
\newcommand{\down}{\downarrow}
\newcommand{\up}{\uparrow}
\newcommand{\xfin}{X}
\newcommand{\Kx}{K^*}
\newcommand{\xfins}{X^*}
\newcommand{\Diss}{{\Psi_0}}
\newcommand{\DualDiss}{\Psi_0^*}
\newcommand{\rescs}{\mathsf s}
\newcommand{\rescpt}{\mathsf p}
\newcommand{\resct}{\mathsf t}
\newcommand{\rescu}{\mathsf u}
\newcommand{\rescw}{\mathsf w}
\newcommand{\rescT}{\mathsf S}
\newcommand{\rescS}{\mathsf S}
\newcommand{\forae}{\text{for a.a.}}
\newcommand{\ene}[2]{\cE_{#1}(#2)}
\newcommand{\Varname}[1]{{\rm Var}_#1}
\newcommand{\pVarname}[1]{{\text{\sl Var}_{#1}}}
\newcommand{\Var}[4]{\mathop{\rm Var}\nolimits_{#1}(#2;[#3,#4])}
\newcommand{\pVar}[4]{\mathop{\text{\sl Var}}\nolimits_{#1}(#2;[#3,#4])}
\newcommand{\JVar}[4]{{\rm Jmp}_{#1}(#2;[#3,#4])}
\newcommand{\BV}{\mathrm{ BV}}
\newcommand{\AC}{\mathrm{AC}}
\newcommand{\Norm}[3]{\cB_{#3}\left(#1;#2\right)}
\newcommand{\FNorm}[3]{\cR_{#3}\left(#1;#2\right)}
\newcommand{\Leb}[1]{\mathscr L^{#1}}
\newcommand{\pt}{p}
\newcommand{\Bip}{\mathfrak P}
\newcommand{\Bipo}{\mathfrak P}
\newcommand{\BipDom}{\BB}
\newcommand{\fl}{{\Lambda}}
\newcommand{\bip}{{\mathfrak p}}
\newcommand{\bipo}{{{\mathfrak p}}}
\newcommand{\bipotential}{vanishing viscosity contact potential\relax}
\newcommand{\shortbipotential}{contact potential\relax}
\newcommand{\Bipotential}{Vanishing viscosity contact potential}
\newcommand{\topref}[2]{\stackrel{\eqref{#1}}#2}
\newcommand{\Contact}[1]{\Sigma_{#1}}
\newcommand{\co}{{\rm co}}
\newcommand{\Ca}{{\rm C}}
\newcommand{\Le}{{\LL}}
\newcommand{\VRIS}{\ensuremath{(\xfin,\cE,\bipo)}}
\newcommand{\PVRIS}{\ensuremath{(\xfin,\cE,\Bip)}}
\newcommand{\RIS}{\ensuremath{(\xfin,\cE,\Diss)}}
\newcommand{\Mint}[4]{\int_{#1}^{#2}\d\Mdiff{#3}{#4}}
\newcommand{\Mdiff}[2]{#1( #2)}
\newcommand{\cost}[3]{\Delta_{#1}(#2,#3)}
\newcommand{\tricost}[4]{\Delta_{#1}(#2,#3,#4)}
\newcommand{\Cost}[4]{\Delta_{#1}(#2;#3,#4)}
\newcommand{\TriCost}[5]{\Delta_{#1}(#2;#3,#4,#5)}
\newcommand{\Dist}[3]{\Delta_{\cB_{#3}}(#1,#2)}
\newcommand{\FDist}[3]{\Delta_{\cR_{#3}}(#1,#2)}
\newcommand{\bipcE}{{\bipo,\cE}}
\newcommand{\MU}[2]{\mu_{#1,#2}}
\newcommand{\essJ}{\mathop{\text{\rm ess-J}}\nolimits}
\newcommand{\fnorm}[1]{\|#1\|}
\newcommand{\snorm}[1]{|#1|}
\newcommand{\dist}{\mathrm{dist}_*}
\newcommand{\taueps}{{\tau,\eps}}
\newenvironment{mydescription}%
{\begin{list}{}{\setlength{\labelwidth}{0pt}
      \setlength{\leftmargin}{12pt}
      \setlength{\itemindent}{-12pt}
      \setlength{\listparindent}{0pt}
      }}%
  {\end{list}}
\newenvironment{mylist}%
{\begin{list}{}{\setlength{\labelwidth}{0pt}
      \setlength{\leftmargin}{0pt}
      \setlength{\itemindent}{4pt}
      \setlength{\listparindent}{0pt}}}%
  {\end{list}}
\begin{document}

\title{BV solutions and viscosity approximations of rate-independent systems}
\date{9 October 2009}

\author{Alexander Mielke}
\address{Weierstra\ss-Institut,
  Mohrenstra\ss{}e 39, 10117 D--Berlin and Institut f\"ur
  Mathematik, Humboldt-Universit\"at zu
  Berlin, Rudower Chaussee 25, D--12489 Berlin (Adlershof), Germany.}
\email{mielke\,@\,wias-berlin.de}

\author{Riccarda Rossi}
\address{Dipartimento di Matematica, Universit\`a di
  Brescia, via Valotti 9, I--25133 Brescia, Italy.}
\email{riccarda.rossi\,@\,ing.unibs.it}

\author{Giuseppe Savar\'e}

\address{Dipartimento di Matematica ``F.\
  Casorati'', Universit\`a di Pavia.
  Via Ferrata, 1 -- 27100 Pavia, Italy.}
\email{ giuseppe.savare\,@\,unipv.it}
\thanks{
    A.M. has been partially supported by DFG, Research Unit FOR 797
    ``MicroPlast'', Mi\,459/5-1.
 R.R. and G.S. have  been partially supported by a  MIUR-PRIN'06 grant for the project
 ``Variational methods in optimal mass transportation and in geometric
 measure theory''.}

\begin{abstract}
   In the nonconvex case solutions of rate-independent systems
   may develop jumps as a function of time. To model such jumps, we
   adopt the philosophy that rate independence should be considered as
   limit of systems with smaller and smaller viscosity.
   For the finite-dimensional case we study the vanishing-viscosity
   limit of doubly nonlinear equations given in terms of a
   differentiable energy functional and a dissipation
   potential which is a viscous regularization of a given
   rate-independent dissipation potential.

   The resulting definition  of `BV solutions' involves, in a
   nontrivial way, both the  rate-independent   and the viscous dissipation
   potential, which play a crucial role in the description of the
   associated jump trajectories.

    We shall prove a general convergence result for the  time-continuous and
    for the time-discretized  viscous approximations and
    establish various properties of the limiting $\BV$ solutions.  In
    particular, we shall provide a careful description of the jumps
    and compare the new notion of solutions with the related concepts
    of energetic and local solutions to rate-independent systems.

\noindent {\bf AMS Subject Classification}: 49Q20, 58E99.
\end{abstract}

\maketitle

\section{Introduction}
Rate-independent evolutions occur in several contexts. We refer the
reader to~\cite{Mielke05} and the forthcoming
monograph~\cite{MieRou08?RIST} for a survey of rate-independent
modeling and analysis in a wide variety of applications, which may
pertain to very different and far-apart branches of
mechanics and physics. Rate-independent systems present very distinctive common
features, because of their hysteretic character
\cite{Visintin94,Krej99EVIM}. Driven by external loadings on a
time scale much slower than their internal scale, such systems respond
to changes in the external actions invariantly for
time-rescalings. Thus, they in fact show (almost) no intrinsic
time-scale. This kind of behavior is encoded in the simplest, but
still significant, example of rate-independent evolution, namely the
doubly nonlinear differential inclusion
\begin{equation}
\label{e:ris_intro} \partial\Diss (u'(t)) + \mathrm{D} \cE_t (u(t)) \ni
0 \quad \text{in $\xfins$}\quad \text{for a.a. $t \in (0,T).$}
\tag{DN$_0$}
\end{equation}
For the sake of simplicity, we will consider here the case when
$\xfin$ is a finite dimensional linear
space, $\cE:[0,T]\times \xfin \to \R$
an  energy functional ($\mathrm{D}\cE$ denoting the differential of
$\cE$ with respect to the variable $u\in \xfin$), and $\Diss: \xfin
\to [0,+\infty)$ is a convex, 
  nondegenerate,
dissipation potential, hereafter supposed \emph{positively
homogeneous of degree $1$}. Thus,  \eqref{e:ris_intro} is invariant
for time-rescalings, rendering the system rate independence.

Since the range
  $\Kx$
of $\partial\Diss$ is a proper subset of $\xfins$, when
$\cE(t,\cdot)$ is not strictly convex one cannot expect the
existence of an absolutely continuous solution of
\eqref{e:ris_intro}. Over the past decade,  this fact has motivated
the development of suitable notions of weak solutions to
\eqref{e:ris_1}. In the mainstream of~\cite{Efendiev-Mielke06,
Mielke-Rossi-Savare08, Mielke-Zelik09}, the present paper aims to
contribute to this issue. Relying on the \emph{vanishing-viscosity
approach}, we shall propose the notion of $\BV$ solution
to~\eqref{e:ris_intro} and thoroughly analyze it.

To better motivate the use of
  vanishing viscosity and highlight the features of the concept of
  $\BV$ solution, in the next paragraphs we shall briefly recall the
  other main weak solvability notions for~\eqref{e:ris_intro}.
  For the sake of simplicity,  we shall focus on the particular case
  \begin{equation}
    \label{e:norm-intro} \Diss(v) =
      \|v\|,
     \qquad \text{for some norm
  $\fnorm\cdot$ on $\xfin$.}
\end{equation}

\paragraph{\textbf{Energetic and local solutions.}} \, The first
attempt at a rigorous weak formulation of~\eqref{e:ris_intro} goes
back to~\cite{MieThe99MMRI} and the
subsequent~\cite{Mielke-Theil-Levitas02, MieThe04RIHM}, which advanced
the notion of \emph{global energetic solution} to the rate-independent
system~\eqref{e:ris_intro}. In the simplified
case~\eqref{e:norm-intro}, this solution concept consists of the
following relations, holding for all $t \in [0,T]$:
\begin{equation}
  \label{eq:1-intro}
  \forall\, z\in \xfin:\qquad \ene t{u(t)}\le \ene tz+\fnorm{z-u(t)},
  \tag{$\mathrm{S}$}
\end{equation}
\begin{equation}
  \label{eq:2-intro}
  \ene{t}{u(t)}+\Var{}u{0}{t}=\ene{0}{u(0)}+
  \int_{0}^{t} \partial_t\ene s{u(s)}\,\d s\,.
  \tag{$\mathrm{E}$}
\end{equation}
The energy identity \eqref{eq:2-intro}  balances at every time $t
\in [0,T]$
 the dissipated energy $\Var{}u{0}{t}$ (the latter symbol denotes the total variation  of the
solution $u \in \BV([0,T];\xfin)$ on the interval $[0,t]$),
 with the stored energy $\ene t{u(t)}$, the initial energy,  and the
 work of the external forces. On the other hand,  \eqref{eq:1-intro} is a stability
 condition, for it  asserts that the change from the current state $u(t)$
 to another state $z$ brings about a gain of potential energy smaller
 than the dissipated energy. Since
 the  competitors for  $u(t)$ range in the whole space $\xfin$,
 \eqref{eq:1-intro} is in fact a \emph{global stability} condition.

 The global energetic formulation~\eqref{eq:1-intro}--\eqref{eq:2-intro}
  only involves the (assumedly
 smooth) power of the external forces $\partial_t \cE$, and is
 otherwise \emph{derivative-free}. Thus, it is well suited to jumping
 solutions. Furthermore, as shown in~\cite{MaiMie05EREM, Mielke05},
 it is amenable to analysis in very general ambient spaces, even
 with no underlying linear structure. Because of its flexibility,
 this concept has been exploited in a variety of applicative
 contexts, like, for instance,  shape memory alloys~\cite{Mielke-Theil-Levitas02,
MieRou03RIMI, AuMiSt08RIMI}, crack propagation~\cite{DalToa02MQSG,
DaFrTo05QCGN, DalZan07QSCG}, elastoplasticity~\cite{Miel02FELG,
Miel03EFME, Miel04EMIE, FraMie06ERCR,
DaDeMo06QEPL,DaDeMoMo06,MaiMie08?GERI}, damage in brittle materials
\cite{MieRou06RIDP, BoMiRo07?CDPS, ThoMie09?DNEM, Miel09?CDEB},
delamination~\cite{KoMiRo06RIAD},
ferroelectricity~\cite{MieTim06EMMT}, and
superconductivity~\cite{SchMie05VPSC}.

 On the other hand, in the case of nonconvex energies
condition~\eqref{eq:1-intro} turns out to be a strong requirement,
for it may lead the system to change instantaneously in a very
drastic way, jumping into very far-apart energetic configurations
(see, for instance, \cite[Ex.\,6.1]{Miel03EFME},
\cite[Ex.\,6.3]{KnMiZa07?ILMC}, and
\cite[Ex.\,1]{Mielke-Rossi-Savare08}). On the discrete level, global
stability is reflected in the global minimization scheme giving
raise to approximate solutions by time-discretization. Indeed,  for
a fixed time-step $\tau>0$, inducing a partition $\{
0=t_0<t_1<\ldots<t_{N-1}<t_N=T\}$ of the interval $[0,T]$, one
constructs \emph{discrete solutions} $(\Uu_{\tau}^n)_{n=1}^N$
of~\eqref{eq:1-intro}--\eqref{eq:2-intro} by setting
$\Uu_{\tau}^0:=u_0$ and then solving recursively the variational
incremental scheme
\begin{equation}
  \label{eq:58-intro}
  U^n_\tau\in \argmin_{\Uu \in \xfin}\Big\{\fnorm{\Uu-\Uu^{n-1}_\tau}+\ene{t_n}\Uu\Big\} \qquad \text{for $n=1,\ldots,
  N$.}
  \tag{$\mathrm{IP}_0$}
\end{equation}
However, a scheme based on \emph{local minimization}
 would be preferable, both in  view of numerical analysis and from a
 modeling perspective, see the discussions in
 \cite[Sec.\,6]{Miel03EFME} and, in the realm of crack propagation,
 \cite{DT02MQGB, NegOrt07?QSCP, Larsen09}.

 As pointed out in~\cite{DT02MQGB}, local minimization may be
 enforced by perturbing the variational scheme~\eqref{eq:58-intro}
 with a term, modulated by a \emph{viscosity} parameter $\eps$,
  which penalizes  the squared distance from the previous step
 $\Uu^{n-1}_\taueps$
\begin{equation}
  \label{eq:58-intro-eps}
  \Uu^n_\taueps\in \argmin_{\Uu \in \xfin}\Big\{\fnorm{\Uu-\Uu^{n-1}_\taueps}+
  \eps\frac{\snorm{\Uu-\Uu^{n-1}_\taueps}^2}{\tau}+\ene{t_n}\Uu\Big\}\quad \text{for $n=1,\ldots,
  N$}\,,
  \tag{$\mathrm{IP}_\eps$}
\end{equation}
and depends on a second norm $\snorm\cdot$, typically Hilbertian, on the space $\xfin$.
In a infinite-dimensional setting, one may think of $X=L^2 (\Omega)$, with $\Omega$ a domain
in $\R^d$, $d \geq 1$, and $\fnorm \cdot$, $\snorm\cdot$ the $L^1$ and
$L^2 $ norms, respectively. Notice that, on the time-continuous
level, \eqref{eq:58-intro} corresponds to the  \emph{viscous doubly
nonlinear} equation
\begin{equation}
\label{e:ris_intro-eps}
\begin{gathered}
\partial\Psi_\eps (u_\eps'(t)) + \mathrm{D} \cE_t
(u_\eps(t)) \ni 0 \quad \text{in $\xfins$}\quad \text{for a.a. $t \in
(0,T),$} \tag{DN$_\eps$}
\\
\text{with} \quad  \Psi_\eps(v) = \fnorm v+ \frac{\eps}2\snorm v^2
\end{gathered}
\end{equation}
(see~\cite{Colli-Visintin90, Colli92} for the existence of solutions
$u_\eps \in \AC([0,T];\xfin)$). Then, the idea would be to consider
the solutions to~\eqref{e:ris_intro} arising in the  passage to the
limit, in the discrete scheme \eqref{eq:58-intro-eps}, as $\eps $
and $\tau $ tend to $0$ \emph{simultaneously}, 
  keeping
  $\tau\ll\eps$. One can guess that, at least formally,
  this procedure should be equivalent to considering the limit
  of the solutions to \eqref{e:ris_intro-eps} as $\eps\downarrow 0$.

Vanishing viscosity has by now become an established selection
criterion for mechanically feasible weak solvability notions of
rate-independent evolutions. We refer the reader
to~\cite{KreLie07?RIKP} for rate-independent problems with convex
energies and  discontinuous inputs, and, in more specific applied
contexts, to~\cite{DDMM07?VVAQ} for elasto-plasticity with
softening, to \cite{fiaschi09} for general material models with
nonconvex elastic energies,  the recent
\cite{DaDeSo09?QECP} for cam-clay non-associative plasticity,
and \cite{ToaZan06?AVAQ, KnMiZa07?ILMC, KnMiZa08?CPPM}
for crack propagation. Since the energy functionals involved in such
applications are usually nonsmooth and nonconvex, the passage to the
limit mostly relies on lower semicontinuity arguments. Let us
illustrate the latter in the prototypical
case~\eqref{e:ris_intro-eps}. The key observation is
that~\eqref{e:ris_intro-eps} is equivalent (see the discussion in
Section~\ref{ss:2.1}) to  the \emph{$\eps$-energy identity}
\begin{equation}
\label{eps-enid}
\begin{aligned}
   \ene t{u_\eps(t)}+\int_0^t
   \Big(\fnorm{u_\eps'(s)}\, \d s + \frac\eps2 \snorm{u_\eps'(s)}^2
  & + \frac1{2\eps}
    \dist
  \big({-}\mathrm{D} \cE_s (u_\eps(s)),
   \Kx
   \big)^2 \Big)
  \d s  \\ &  = \ene0{u(0)} + \int_0^t
  \partial_t \ene{s}{u_\eps(s)} \d s
 \end{aligned}
 \end{equation}
for all $t \in [0,T]$, where the term
  \begin{equation}
    \label{e:term}
    \begin{aligned}
      \dist\big({-}\mathrm{D} \cE_t (u(t)), \Kx \big)&: = \min_{z \in \Kx}
      \snorm{-{\rm
          D} \cE_t (u(t)) -z}_{*},\ \
      \text{with}\ \
      \Kx= \big\{z\in \xfin^*:\fnorm z_*\le
      1\big\},
    \end{aligned}
  \end{equation}
measures the distance with respect to the dual norm $\snorm\cdot_*$
of $-\mathrm{D} \cE_t (u(t)) $ from the set $\Kx$.
The term defined in \eqref{e:term} is penalized in~\eqref{eps-enid}
by the coefficient $1/{2\eps}$. Thus, passing to
the limit in~\eqref{eps-enid}
 as $\eps
\down 0$, one finds
\[
\dist({-}\mathrm{D} \cE_t (u(t)), \Kx )=0 \qquad \forae\, t \in
(0,T)\,.
\]
Hence,
\begin{equation}
\label{loc-stab-intro} 
-\rmD\cE_t(u(t))\in \Kx,\quad\text{i.e.}\quad \fnorm {-\mathrm{D} \cE_t
(u(t))}_* \leq 1 \qquad \forae\, t \in (0,T)\,,
\end{equation}
which is a \emph{local version} of the global
stability~\eqref{eq:1-intro}. Furthermore, \eqref{eps-enid} yields,
via lower-semicontinuity, the  \emph{energy inequality}
\begin{equation}
  \label{eq:3-intro}
  \ene{t}{u(t)}+\Var{}u{0}{t} \leq \ene{0}{u(0)}+
  \int_{0}^{t} \partial_t\ene s{u(s)}\,\d s \quad \text{for all $t \in
  [0,T]$}\,.
\end{equation}
Conditions~\eqref{loc-stab-intro}--\eqref{eq:3-intro} give raise to
the notion of~\emph{local solution} of the rate-independent
system~\eqref{e:ris_intro}.

While the local stability~\eqref{loc-stab-intro} is more physically
realistic than~\eqref{eq:1-intro}, its combination with the energy
inequality~\eqref{eq:3-intro} turns out to provide an unsatisfactory
description of the solution at jumps (see the discussion
in~\cite[Sec.~5.2]{Mielke-Rossi-Savare08} and Remark~\ref{rmk:lack}
later on). In order to capture the jump dynamics, the energetic
behavior of the system in a jump regime has to be revealed. From this
perspective, it seems to be crucial to recover from~\eqref{eps-enid},
as $\eps \down 0$, an \emph{energy identity}, rather than an energy
inequality. Thus, the passage to the limit has to somehow keep track
of the limit of the term
\[
 \int_0^t
 \left(\frac\eps2 \snorm{u_\eps'(s)}^2 + \frac1{2\eps} \dist\big ({-}\mathrm{D}
   \cE_s (u_\eps(s)), \Kx \big)^2 \right) \d s\,,
\]
 which in fact  encodes the contribution of the viscous
 dissipation $\frac\eps2|u_\eps'|^2$,  completely missing in~\eqref{eq:3-intro}.

 \paragraph{\textbf{BV solutions.}} Moving from these considerations,
 it is natural to introduce the \emph{{\bipotential}} (which is
 related to the bipotential discussed in
 \cite{Buliga-deSaxce-Vallee08}, see Section
 \ref{subsec:bipotentials})  induced by $\Psi_\eps$, i.e.~the
 quantity
\begin{equation}
 \label{bipot}
\begin{aligned}
  \bipo(v,w) := \inf_{\eps>0}\Big(\Psi_\eps(v)+\Psi_\eps^*(w)\Big) & =\inf_{\eps>0} \left(\fnorm v +
    \frac\eps2\snorm v^2 +\frac1{2\eps}\dist^2 (w, \Kx ) \right)\\
  &=
  \fnorm v +
  \snorm v\,\dist (w, \Kx )
  \quad \text{for $v \in \xfin$, $w \in \xfin^*$}\,.
\end{aligned}
\end{equation}
Then,  the  $\eps$-energy identity~\eqref{eps-enid} 
  yields the inequality
\begin{equation}
\label{eps-enid-bipot}
 \ene t{u_\eps(t)}+\int_0^t \bipo\left( u_\eps'(s),-\mathrm{D}
  \cE_s (u_\eps(s)) \right) \d s
\le \ene0{u(0)} + \int_0^t \partial_t \ene{s}{u_\eps(s)} \d s\,,
 \end{equation}
 see Section~\ref{ss:3.1}.
Passing to the limit in~\eqref{eps-enid-bipot},  in
Theorem~\ref{thm:convergence1} we shall
prove that, 
 up to a
subsequence, the solutions $(u_\eps)$  of the viscous equation
\eqref{e:ris_intro-eps}  converge, as $\eps \down 0$, to a curve $u
\in \BV([0,T];\xfin)$ satisfying the local
stability~\eqref{loc-stab-intro} and the following \emph{energy
inequality}
\begin{equation}
    \label{eq:73-intro}
    \ene{t}{u(t)}+\pVar{{\bipcE}}u{0}{t}\le\ene{0}{u(0)}+
    \int_{0}^{t} \partial_t\ene s{u(s)}\,\d s\,.
\end{equation}
Without going into details (see Definition~\ref{def:Finsler_var} later
on), we may point out that \eqref{eq:73-intro} features a notion of
(pseudo)-total variation (denoted by $\pVarname{\bipcE}$) induced by
the {\bipotential} $\bipo$~\eqref{bipot} and the energy $\cE$.
The main novelty is that a $\BV$-curve obeying the local stability
condition \eqref{loc-stab-intro} always satisfies the opposite
inequality in \eqref{eq:73-intro}, thus yielding the energy balance
\begin{equation}
    \label{eq:186}
    \ene{t}{u(t)}+\pVar{{\bipo,\cE}}u{t}{t}=\ene{0}{u(0)}+
    \int_{0}^{t} \partial_t\ene s{u(s)}\,\d s\,.
    \tag{$\mathrm{E}_{\bipo,\cE}$}
\end{equation}
In fact, $\pVarname\bipcE$ provides a finer description of the
dissipation $\Delta_\bipcE$ of $u$, along a jump between two values
$u_-$ and $u_+$ at time $t$: it involves not only the quantity
$\fnorm{u_+-u_-}$ related to the dissipation potential
\eqref{e:norm-intro}, but also the viscous contribution induced by the
{\bipotential} $\bip$ through the formula
  \begin{equation}
    \label{eq:185}
    \begin{aligned}
      \Cost{\bipcE}t{u_-}{u_+}:=\inf\Big\{&\int_{r_0}^{r_1} \bipo(\dot
      \vartheta(r),-\rmD\ene t{\vartheta(r)})\,\d r:\\& \vartheta\in
      \AC([r_0,r_1];\xfin),\ \vartheta(r_0)=u_-,\
      \vartheta(r_1)=u_+\Big\}.
    \end{aligned}
  \end{equation}
  By a rescaling technique, it is possible to show that, in  a jump
  point, the system may switch to a \emph{viscous behavior}, which
  is in fact reminiscent of the viscous
  approximation~\eqref{e:ris_intro-eps}. In particular, when the jump
  point is of viscous type,
  the infimum in \eqref{eq:185} is attained and the states
  $u_-$ and $u_+$ are connected by some transition curve $\vartheta:
  [r_0,r_1]\to \xfin$, fulfilling the \emph{viscous} doubly nonlinear
  equation
\[
    \partial\Psi_0(\vartheta'(r)) +
    \vartheta'(r) +
    \mathrm{D}\ene{t}{\vartheta(r)} \ni 0 \quad \text{in
      $\xfin^*$} \quad \forae\, r \in (r_0,r_1)
      \]
  (in the case the norm $\snorm\cdot$ is Euclidean and we use its differential to identify $\xfin$ with
  $\xfin^*$).
 The combination
of~\eqref{loc-stab-intro} and~\eqref{eq:73-intro} yields the notion
of \emph{$\BV$ solution} to the rate-independent system~\VRIS. This
concept was first introduced in~\cite{Mielke-Rossi-Savare08}, in the
case the ambient space $\xfin$ is a finite-dimensional
\emph{manifold} $\mathcal{X}$, and both the rate-independent and the
viscous approximating dissipations depend on one single  Finsler
distance on $\mathcal{X}$. In this paper, while keeping to a Banach
framework,  we shall considerably broaden the class of
rate-independent and viscous dissipation functionals, cf.
Remark~\ref{ex:viscous}. Moreover, the notion of $\BV$ solution
shall be presented here in a more compact form than
in~\cite{Mielke-Rossi-Savare08}, amenable to a finer analysis and,
hopefully, to further generalizations.

Let us now briefly comment on our main results. First of all, we are
going to show in Theorems~\ref{thm:def-bv-solutions},
\ref{thm:filling-jumps}, and~\ref{thm:careful} that the concept of
$\BV$ rate-independent evolution completely encompasses the solution
behavior in both a purely rate-independent, non-jumping regime, and
in jump regimes, where the competition between dry-friction and
viscous effects is highlighted. Indeed,  from \eqref{loc-stab-intro}
and~\eqref{eq:73-intro} it is possible to deduce suitable energy
balances  at jumps (cf. conditions~\eqref{eq:67} in
Theorem~\ref{thm:def-bv-solutions}).

Then, in Theorem~\ref{thm:convergence1} we shall prove that, along a
subsequence, the viscous approximations arising
from~\eqref{e:ris_intro-eps} converge as $\eps \downarrow 0$ to a
$\BV$ solution. Next, our second main result,
Theorem~\ref{thm:convergence1bis}, states that, up to a subsequence,
also the discrete solutions $\Uu_{\tau,\eps}$ constructed via the
$\eps$-discretization scheme~\eqref{eq:58-intro-eps} converge to a
$\BV$ solution $u \in \BV([0,T];\xfin)$ of~\eqref{e:ris_1} as $\eps
\down 0$ and $\tau \down 0$ simultaneously, provided that the
respective convergence rates are such that
\[
 \lim_{\eps,\,\tau  \down 0} \frac{\eps}{\tau}=+\infty\,.
\]

Finally, in Section~\ref{sec:parametrized} we shall develop a
different approach to $\BV$ solutions, via the rescaling technique
advanced in~\cite{Efendiev-Mielke06} and refined
in~\cite{Mielke-Rossi-Savare08, Mielke-Zelik09}. The main idea is to
suitably reparametrize the approximate viscous curves $(u_\eps)$ in
order to capture, in the vanishing viscosity limit,  the viscous
transition paths at jumps points. This leads to performing an
asymptotic analysis as $\eps \down 0$ of the graphs of the functions
$u_\eps$,  in the extended phase space $[0,T]\times \xfin$. For
every $\eps>0$ the graph of $u_\eps$ can be parametrized by a couple
of functions $(\sft_\eps,\sfu_\eps) $, $\sft_\eps$ being the
(strictly increasing) rescaling function and $\sfu_\eps: = u_\eps
\circ \sft_\eps$ the rescaled solution. In
Theorem~\ref{thm:convergence2} we assert that, up to a subsequence,
the functions $(\sft_\eps,\sfu_\eps)$ converge as $\eps \down 0$ to
a \emph{parametrized rate-independent solution}. By the latter
terminology we mean a curve $(\sft,\sfu): [0,\sfS] \to [0,T] \times
\xfin$ fulfilling
\begin{subequations}
\label{mprif}
\begin{align}
\label{mprifa} &\begin{aligned}
&  \text{$\sft: [0,\sfS] \to [0,T]$ is nondecreasing, } \\
&  \sft'(s) + \fnorm{\sfu'(s)} >0 \quad \forae\, s \in
(0,\sfS),
\end{aligned}
\\[0.5em]
\label{mprifb} &\begin{aligned} & \left.\ba{@{}ccc}
    \sft'(s)>0&\Longrightarrow&
      \fnorm{-\mathrm{D} \cE_{\sft(s)} (\sfu(s))}\le 1
,\\
    \fnorm{\sfu'(s)}>0&\Longrightarrow&
    \fnorm{-\mathrm{D} \cE_{\sft(s)} (\sfu(s))}\ge 1
    \ea \right\}\quad \text{for a.a.\ }s \in (0,\sfS)\,,
\end{aligned}
\\[0.1em]
\intertext{and the \emph{energy identity}} \label{mprifc}
&\begin{aligned}
  \frac{\rmd}{\rmd s}\cE (\sft(s),\sfu(s) )
  &-\partial_t\cE (\sft(s),\sfu(s) )\, \sft'(s)
  \\ & = - \fnorm{\sfu'(s)} - \snorm{\sfu'(s)} \dist({-}\mathrm{D} \cE_{\sft(s)} (\sfu(s)),\Kx )
   \quad \forae \, s\in(0,\sfS)\,,
\end{aligned}
\end{align}
\end{subequations}
As already pointed out in~\cite{Efendiev-Mielke06,
Mielke-Rossi-Savare08}, like the notion of $\BV$ solution,
relations~\eqref{mprif} as well comprise both the purely
rate-independent evolution as well as the viscous transient regime
at jumps. The latter regime in fact corresponds to the case 
  $-\mathrm{D} \cE_{\sft} (\sfu)\not\in\Kx$
: the system
does not obey the local stability constraint~\eqref{loc-stab-intro}
any longer, and switches to viscous behavior, see also
Remark~\ref{rmk:mech} later on.

 As a matter of fact, Theorem~\ref{thm:equivalence} shows
that parametrized rate-independent solutions may be viewed as the
``continuous counterpart'' to $\BV$ evolutions.  With a suitable
transformation, it is possible to associate with every parametrized
rate-independent solution a $\BV$ one, and conversely. One
advantage of the parametrized notion is that it avoids  the
technicalities related to $\BV$ functions. Hence, it is for instance
more easily amenable to a stability analysis (cf. \cite[Rmk.
6]{Mielke-Rossi-Savare08}). Furthermore, in~\cite{Mielke-Zelik09}
 a highly refined vanishing
viscosity analysis has been developed, with this reparametrization
technique, in the  infinite-dimensional $(L^1, L^2)$-framework,
where~\eqref{e:ris_intro-eps} is replaced by a general quasilinear
evolutionary PDE.

\paragraph{\textbf{Generalizations and future developments.}}
So far we have focused on dissipation functionals of the
type~\eqref{e:norm-intro} and $\Psi_\eps(v)=\fnorm v+\frac\eps2\snorm
v^2$ as in~\eqref{e:ris_intro-eps} for expository reasons only, in
order to highlight the main variational argument leading to the notion
of $\BV$ solution.  Indeed, the analysis
developed in this paper is targeted to a general
\[
 \text{positively $1$-homogeneous, convex dissipation $\Diss:\xfin\to [0,+\infty)$,}
\]
(cf.\ \eqref{e:2.1}), and considers a fairly wide class of approximate
viscous dissipation functionals $\Psi_\eps$, defined by
conditions~\eqref{eq:19}--\eqref{eq:21} in Section~\ref{ss:2.3}.
Furthermore, at the price of just technical complications, our results
could be extended to the case of a \emph{Finsler-like} family of
dissipation functionals $\Diss(u,\cdot)$, depending on the state
variable $u\in \xfin$, and satisfying uniform bounds and
Mosco-continuity with respect to $u$, see
\cite[Sect.\,2]{Mielke-Rossi-Savare08} and
\cite[Sect.~6,\,8]{Rossi-Mielke-Savare08}.

The extension to \emph{infinite-dimensional} ambient spaces and
\emph{nonsmooth} energies is  crucial for  application of the
concept of $\BV$ solution to the PDE systems modelling
rate-independent evolutions in continuum mechanics. A first step in
this direction is to generalize the known existence results for
doubly nonlinear equations, driven by a viscous dissipation, to
nonconvex and nonsmooth energy functionals
in infinite dimensions. As shown in~\cite{Rossi-Savare06,
Rossi-Mielke-Savare08}, in the nonsmooth and nonconvex case one can
replace the energy differential $\mathrm{D}\cE_t$ with a suitable
notion of \emph{subdifferential} $\partial \cE_t $. Accordingly,
instead of continuity of $\mathrm{D}\cE_t$, one asks for closedness of
the multivalued subdifferential $\partial \cE_t $ in the sense of
graphs. These ideas shall be further advanced in the forthcoming
work~\cite{Mielke-Rossi-Savare-viscous_progress09}. Therein,
exploiting  techniques from nonsmooth analysis, we shall also tackle
energies which do not depend smoothly on time (this is relevant for
rate-independent applications, see e.g.~\cite{KnMiZa08?CPPM}
and~\cite{KreLie07?RIKP}).

On the other hand, the requirement that the ambient space is
finite-dimensional could be replaced by suitable compactness (of the
sublevels of the energy) and reflexivity assumptions on the ambient
space $\xfin$. The latter topological requirement in fact ensures
that $\xfin$ has the so-called Radon-Nikod\'ym property, i.e.~that
absolutely continuous curves with values in $\xfin$ are almost
everywhere differentiable. The vanishing viscosity analysis in
spaces which do not enjoy this property requires a subtler approach,
involving metric arguments (see e.g.\ \cite[Sect.
7]{Rossi-Mielke-Savare08}), or ad-hoc stronger estimates
\cite{Mielke-Zelik09}.  See also \cite{Miel08?DEMF} for some
preliminary approaches to BV solutions for PDE problems.


\subsection*{Plan of the paper}
 Section~\ref{s:2} is devoted to an extended presentation of
 energetic and local solutions to rate-independent systems. In
 particular, after fixing the setup of the paper in
 Section~\ref{subsec:setting},  in Sec.~\ref{ss:2.2} we recall the
 definition of global energetic solution, show its differential
 characterization and the related  variational time-incremental
 scheme. We develop  the vanishing-viscosity approach in
 Secs.~\ref{ss:2.3}  and~\ref{ss:2.1}, thus arriving at the notion
 of local solution (see Section~\ref{ss:2.5}), which also admits a
 differential characterization.

In Section~\ref{s:3} we  introduce the concept of {\bipotential} and
thoroughly analyze its properties, as well as the induced
(pseudo)-total variation.   With these ingredients, in
Sec.~\ref{sec:BV-solutions}  we present the notion of $\BV$
solution. We show that $\BV$ rate-independent evolutions admit, too,
a differential characterization, and, in Sec.~\ref{ss:4.2},  that
they provide a careful description of the energetic behavior of the
system. Then, in Section~\ref{subsec:viscous-limit}, we state our
main results on $\BV$ solutions.

 While Section~\ref{sec:parametrized} is focused on the alternative notion
 of
 parametrized rate-independent solutions, the last Sec.~\ref{sec:technical} contains some
technical results which lie at the core of our theory.



\section{Global energetic versus  local solutions, and their viscous regularizations}
\label{s:2}
 \noindent In this section, we will briefly recall the notion of \emph{energetic solutions}
and   show that  their viscous regularizations give raise to
\emph{local solutions}.

\subsection{Rate-independent setting: dissipation and energy functionals}
\label{subsec:setting}
We let
\[
(\xfin,\|\cdot\|_\xfin) \ \ \ \text{be a finite-dimensional normed
vector space,}
\]
endowed with a gauge function $\Diss$, namely a
\begin{equation}
   \label{e:2.1}
   \text{non-degenerate, positively $1$-homogeneous, convex dissipation $\Diss:\xfin\to [0,+\infty)$,}
\end{equation}
 i.e.~$\, \Diss$ satisfies  $\Diss(v)>0$ if $v\neq 0$, and
 \begin{displaymath}
   \Diss(v_1+v_2)\le \Diss(v_1)+\Diss(v_2),\quad
   \Diss(\lambda v)=\lambda \Diss(v)
   \quad
   \text{for every }\lambda \ge0,\ v,v_1,v_2\in \xfin.
 \end{displaymath}
 In particular, there exists a constant $\eta>0$ such that
 \begin{displaymath}
   \eta^{-1}\|v\|_{\xfin}\le \Diss(v)\le \eta\|v\|_{\xfin}\quad\text{for every }v\in \xfin.
 \end{displaymath}
 Since $\Diss$ is $1$-homogeneous, its
 subdifferential $\partial\Diss\, : \xfin \rightrightarrows \xfins$ can be characterized by
\begin{equation}
  \label{eq:18}
  \partial\Diss(v):=\Big\{w\in \xfin:
  \la w,z\ra \le \Diss(z)\text{ for every }z\in \xfin,\quad
  \la w,v\ra=\Diss(v)\Big\}\subset \xfins;
\end{equation}
$\partial\Diss$ takes its values
in the convex set $\Kx\subset \xfins$, given by
\begin{equation}
  \label{e:2.3}
  \Kx=\partial\Diss(0):=\big\{w\in \xfins\,: \langle w,z\rangle\le \Psi_0(z)\quad\forall\, z\in \xfin\}\supset
  \partial\Diss(v)\quad\text{for every }v\in \xfin,
\end{equation}
which enjoys some useful (and well-known, see e.g.\
\cite{Rockafellar70}) properties. For the reader's convenience we
list them here:
\begin{enumerate}[\bf K1.]
\item
  $\Kx$ is the proper domain of the Legendre transform $\DualDiss $ of
  $\Diss$, since
  \begin{equation}
    \label{eq:104}
    \DualDiss (w)=\rmI_{\Kx}(w)=
    \begin{cases}
      0&\text{if }w\in \Kx,\\
      +\infty&\text{otherwise.}
    \end{cases}
  \end{equation}
\item $\Diss$ is the support function of $\Kx$, since
  \begin{equation}
    \label{eq:38}
    \Diss(v)=\sup_{w\in \Kx}\la w,v\ra\quad\text{for every }v\in \xfin,
  \end{equation}
  and $\Kx$ is the polar set of the unit ball $K:=\big\{v\in \xfin:
  \Diss(v)\le 1\big\}$ associated with $\Diss$.
\item $\Kx$ is the unit ball
  of the support function $\Psi_{0*}$ of $K$:
  \begin{equation}
    \label{eq:105}
    \Kx=\big\{w\in \xfin^*:\Psi_{0*}(w)\le 1\big\}, \quad
    \text{with}\quad
    \Psi_{0*}(w)=\sup_{v\in K} \la w,v\ra=\sup_{v\neq 0}\frac{\la w,v\ra}{\Diss( v)}.
  \end{equation}
\item In the even case (i.e., when $\Diss(v) = \Diss({-}v)$ for all $v \in \xfin$),  we have that
 $\Diss$ is an
  equivalent norm for $\xfin$, $\Psi_{0*}$ is its dual norm, $K$ and
  $\Kx$ are their respective unit balls.
\end{enumerate}
Further, we consider a \emph{smooth} energy functional
\[
\cE \in {\rm C}^1 ([0,T] \times \xfin)\,,
\]
which we suppose
bounded from below and with energy-bounded time derivative
\begin{equation}
  \label{assene1} \exists\, C >0  \ \forall\, (t,u) \in
[0,T] \times \xfin\, :  \qquad \ene{t}{u} \geq -C\,,\qquad
|\partial_t\ene{t}{u}| \leq  C \left( 1+
  \ene{t}{u}^+\right),
\end{equation}
where $(\cdot)^+$ denotes the positive part.
The rate-independent
system associated with the energy functional $\cE$ and the
dissipation potential $\Diss$ can be formally described by the
\emph{rate-independent doubly nonlinear} differential inclusion
\begin{equation}
\label{e:ris_1} \partial\Diss (u'(t)) + \mathrm{D} \cE_t (u(t)) \ni 0
\quad \text{in $\xfins$}\quad
\text{for a.a. $t \in (0,T).$}
\tag{DN$_0$}
\end{equation}

 As already mentioned in the Introduction, for nonconvex energies
solutions to \eqref{e:ris_1} may exhibit discontinuities in time.
The first weak solvability notion for~\eqref{e:ris_1} is the concept
of \emph{(global)
 energetic solution}   to the rate-independent
system~\eqref{e:ris_1} (see~\cite{Mielke-Theil-Levitas02,
MieThe99MMRI, MieThe04RIHM} and the survey~\cite{Mielke05}), which
we recall in the next section.
\subsection{Energetic solutions and variational incremental scheme}
\label{ss:2.2}
\begin{definition}[Energetic solution]
  \label{def:energetic}
  A curve $u\in \BV([0,T];\xfin)$ is an energetic solution of the \emph{rate independent system}
  {\RIS}
  if for all $t\in [0,T]$ the \emph{global stability} (S) and the
  \emph{energy balance} (E) holds:
\begin{equation}
  \label{eq:1}
  \forall\, z\in \xfin:\qquad \ene t{u(t)}\le \ene tz+\Psi_0(z-u(t)),
  \tag{S}
\end{equation}
\begin{equation}
  \label{eq:2}
  \ene{t}{u(t)}+\Var{\Psi_0}u{0}{t}=\ene{0}{u(0)}+
  \int_{0}^{t} \partial_t\ene s{u(s)}\,\d s.
  \tag{E}
\end{equation}
\end{definition}
\paragraph{\textbf{$\BV$ functions.}}
  Hereafter,  we shall  consider functions of bounded variation \emph{pointwise defined
  in every point $t\in [0,T]$},
  such that the \emph{pointwise} total variation
  with respect to $\Diss$ (any equivalent norm of $\xfin$ can be chosen)
  $\Var{\Psi_0}u0T$ is finite, where
   \[
    \Var{\Psi_0}uab:=
    \sup\Big\{\sum_{m=1}^M\Psi_0\big(u(t_m)-u(t_{m-1})\big):a=t_0< t_1<\cdots<t_{M-1}<t_M=b\Big\}.
    \]
 Notice that a function $u$ in $\BV([0,T];\xfin)$ admits left and right limits at every  $t\in [0,T]:$
  \begin{equation}
    \label{eq:41}
    u(t_-):=\lim_{s\up t}u(s),\ \
    u(t_+):=\lim_{s\down t}u(s),\ \ \text{with the convention }u(0_-):=u(0),\ u(T_+):=u(T),
  \end{equation}
  and its \emph{pointwise} jump set $\rmJ_u$ is the at most countable set defined
  by
  \begin{equation}
    \label{eq:88}
    \rmJ_u:=\big\{t\in [0,T]:u(t_-)\neq u(t)\text{ or }u(t)\neq u(t_+)\big\}\supset
    \essJ_u:=
    \big\{t\in [0,T]:u(t_-)\neq u(t_+)\big\}.
  \end{equation}
 We denote by $u'$ the distributional derivative  of
$u$, and recall that $u'$  is a Radon vector measure with finite
total variation $|u'|$. It is well known
\cite{Ambrosio-Fusco-Pallara00} that $u'$ can be decomposed into the
sum of the three mutually singular measures
\begin{equation}
  \label{eq:87}
  u'=u'_\Le+u'_\Ca+u'_\rmJ,\quad u'_\Le=\dot u\,\Leb 1,\quad
  u'_\co:=u'_\Le+u'_\Ca\,.
\end{equation}
Here,  $u'_\Le$ is the absolutely continuous part with
respect to the Lebesgue measure $\Leb1$, whose Lebesgue density
$\dot u$ is the usual pointwise (and $\Leb 1$-a.e.\ defined)
derivative, $u'_\rmJ$ is a discrete measure concentrated on
$\essJ_u\subset \rmJ_u$, and $u'_\Ca$ is the so-called Cantor part,
still satisfying $u'_\Ca(\{t\})=0$ for every $t\in [0,T]$. Therefore
$u'_\co=u'_\Le+u'_\Ca$ is the diffuse part of the measure, which
does not charge $\rmJ_u$. 
  In the following, it will be useful to use a nonnegative and diffuse reference measure $\mu$ on $(0,T)$
  such that $\Leb 1$ and $u'_\Ca$ are absolutely continuous w.r.t.\ $\mu$: just to fix our ideas, we set
  \begin{equation}
    \label{eq:156}
    \mu:=\Leb 1+|u'_\Ca|.
  \end{equation}
  With a slight abuse of notation, for every $(a,b)\subset (0,T)$ we denote by $\Mint ab{ \Psi_0}{u'_\co}$
  the integral
\begin{equation}
  \label{eq:187}
  \Mint ab{ \Psi_0}{u'_\co}:=\int_a^b \Psi_0\left(\frac{\d u'_\co}{\d \mu}\right)\,\d \mu=
  \int_a^b \Psi_0(\dot u)\,\d \Leb 1+
  \int_a^b \Psi_0\left(\frac{\d u_\Ca'}{\d|u_\Ca'|}\right)\,\d |u_\Ca'|.
\end{equation}
Since $\Psi_0$ is $1$-homogeneous, the above integral  is
independent of $\mu$, provided $u_\co'$ is absolutely continuous
w.r.t.\ $\mu$. 
\paragraph{\textbf{Towards a differential characterization of energetic solutions.}}
Let us first of all point out that \eqref{eq:1} is stronger than the
local stability condition
\begin{equation}
  \label{eq:65}
  \tag{$\mathrm{S}_{\mathrm{loc}}$}
  -\rmD\ene t{u(t)}\in K^*\quad\text{for every } t\in [0,T]\setminus \rmJ_u,
\end{equation}
which can be formally deduced from  \eqref{e:ris_1} and
\eqref{e:2.3}. Indeed, the global stability  \eqref{eq:1} yields for
every $z=u(t)+hv\in \xfin$ and $h>0$
\[
  \la-\rmD\ene t{u(t)},h v\ra+o(|h|)\le \ene t{u(t)}-\ene t{u(t)+hv}\le h\Psi_0(v)
  \]
and therefore, dividing by $h$ and passing to the limit as
$h\downarrow0$, one gets
 \[
  \la -\rmD\ene t{u(t)},v\ra\le \Psi_0(v)\quad\text{for every } z\in \xfin,
  \]
so that \eqref{eq:65} holds.
We obtain more insight into \eqref{eq:2} by representing the
$\Psi_0$ variation $\Var{\Psi_0}uab$ in terms of the
distributional derivative $u'$ of $u$.
In fact, recalling \eqref{eq:156} and \eqref{eq:187}, we have
 \[
  \Var{\Psi_0}uab:=
  \Mint ab{\Psi_0}{u'_\co}+\JVar{\Psi_0}uab,
  \]
where the jump contribution $\JVar{\Psi_0}uab$ can be described, in
terms of the quantities
\begin{equation}
  \label{eq:64}
  \cost{\Psi_0}{v_0}{v_1}:=\Psi_0(v_1-v_0),\qquad
  \tricost{\Psi_0}{v_-}{v}{v_+}:=\Psi_0(v-v_-)+\Psi_0(v_+-v),
\end{equation}
by
\begin{equation}
  \label{eq:37}
   \JVar{\Psi_0}uab:=
    \cost{\Psi_0}{u(a)}{u(a_+)}+\cost{\Psi_0}{u(b_-)}{u(b)}+
    \kern-6pt\sum_{t\in\rmJ_u\cap (a,b)}\kern-6pt\tricost{\Psi_0}{u(t_-)}{u(t)}{u(t_+)}.
\end{equation}
Also notice that, as usual in rate-independent evolutionary
problems, $u$ is pointwise \emph{everywhere} defined and the jump
term $\JVar{\Psi_0}u\cdot\cdot$ takes into account the value of $u$
at every time $t\in J_u$. Therefore, if $u$ is not continuous at
$t$, this part may  yield a strictly bigger contribution than the
total mass of the distributional jump measure $u'_\rmJ$ (which gives
rise to the so-called \emph{essential} variation).

  The following result provides an equivalent characterization of
  energetic solutions: besides the global stability condition
  \eqref{eq:1}, it involves a $\BV$ formulation of the differential inclusion
  \eqref{e:ris_1}  (cf.\  the
  \emph{subdifferential formulation} of~\cite{MieThe04RIHM}) and a
  \emph{jump condition} at any jump point of~$u$.
\begin{proposition}
 \label{le:global-diff}
   A curve $u\in \BV([0,T];\xfin)$ \emph{satisfying the global
     stability condition \eqref{eq:1}} is an energetic solution of the
   rate-independent system {\RIS} if and only if it satisfies the
   differential inclusion
   \begin{equation}
     \label{eq:56}
     \partial\Psi_0\Big(\frac {\d u'_\co}{d \mu}(t)\Big)+\rmD\ene t{u(t)}\ni 0\quad \text{for $\mu$-a.e.\ $t\in [0,T]$},
     \quad \mu:=\Leb 1+|u_\Ca'|,
     \tag{DN$_{0,\BV}$}
   \end{equation}
and the jump conditions
\begin{equation}
  \label{eq:57}
  \begin{gathered}
    \ene{t}{u(t)}-\ene t{u(t_-)}=-\cost{\Psi_0}{u(t_-)}{u(t)},\quad
    \ene{t}{u(t_+)}-\ene t{u(t)}=-\cost{\Psi_0}{u(t)}{u(t_+)},\\
    \ene{t}{u(t_+)}-\ene t{u(t_-)}=-\cost{\Psi_0}{u(t_-)}{u(t_+)}.
  \end{gathered}
  \tag{J$_{\text{ener}}$}
\end{equation}
for every $t\in\rm J_u$ (recall convention \eqref{eq:41} in the case
$t=0,T$).
\end{proposition}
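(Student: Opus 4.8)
The plan is to recast the energy balance \eqref{eq:2} as the identity $A(t)+B(t)=0$, where $A(t)\ge0$ collects a ``diffuse'' contribution and $B(t)\ge0$ a ``jump'' contribution, and then to recognize that $A\equiv 0$ is exactly the differential inclusion \eqref{eq:56} while $B\equiv 0$ is exactly the jump conditions \eqref{eq:57}. Throughout I will use the standing hypothesis \eqref{eq:1} in two ways: it implies the local stability \eqref{eq:65}, as shown just before the statement; and, since the stability set $\mathcal S:=\{(t,v):\ene tv\le\ene tz+\Psi_0(z-v)\ \text{for all }z\in\xfin\}$ is closed (a standard consequence of the continuity of $\cE$ and of $\Psi_0$) and $(s,u(s))\in\mathcal S$ for every $s$, the one-sided limits are stable at time $t$ as well, i.e.\ $(t,u(t_-)),(t,u(t_+))\in\mathcal S$ for every $t\in[0,T]$.

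The key ingredient is the chain rule for the $C^1$ energy along the $\BV$ curve $u$ (the standard $\BV$-calculus identity, cf.\ \cite{Ambrosio-Fusco-Pallara00} and Section~\ref{sec:technical}): writing $g(s):=\ene s{u(s)}$, which is $\BV$ on $[0,T]$ with $g(s_\pm)=\ene s{u(s_\pm)}$ by continuity of $\cE$, one has for every $t\in[0,T]$, with $\mu:=\Leb 1+|u'_\Ca|$,
\[
  g(t)-g(0)=\int_0^t\partial_t\ene s{u(s)}\,\d s
  +\int_0^t\Big\langle\rmD\ene s{u(s)},\frac{\d u'_\co}{\d\mu}\Big\rangle\,\d\mu
  +\mathrm{Jmp}_g(0,t),
\]
where the second integral is the diffuse (Lebesgue plus Cantor) chain-rule term and $\mathrm{Jmp}_g(0,t):=[g(0_+)-g(0)]+[g(t)-g(t_-)]+\sum_{s\in\rmJ_u\cap(0,t)}\big([g(s)-g(s_-)]+[g(s_+)-g(s)]\big)$ uses the pointwise values of $u$ in complete parallel with \eqref{eq:37}. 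Inserting this into \eqref{eq:2}, and using $\Var{\Psi_0}u0t=\Mint 0t{\Psi_0}{u'_\co}+\JVar{\Psi_0}u0t$ together with \eqref{eq:187} and \eqref{eq:64}--\eqref{eq:37}, one checks that \eqref{eq:2} at time $t$ is equivalent to $A(t)+B(t)=0$, with
\[
  A(t):=\int_0^t\Big(\Psi_0\Big(\frac{\d u'_\co}{\d\mu}\Big)+\Big\langle\rmD\ene s{u(s)},\frac{\d u'_\co}{\d\mu}\Big\rangle\Big)\,\d\mu
\]
and $B(t)$ the sum over $s\in\rmJ_u\cap(0,t)$ of the two half-jump terms $\Psi_0(u(s)-u(s_-))+\ene s{u(s)}-\ene s{u(s_-)}$ and $\Psi_0(u(s_+)-u(s))+\ene s{u(s_+)}-\ene s{u(s)}$, plus the analogous single terms at the endpoints $0$ and $t$ (read with the conventions \eqref{eq:41}).

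Both $A$ and $B$ are nonnegative. For $A$: since $\mu$ is diffuse it does not charge the countable set $\rmJ_u$, so \eqref{eq:65} gives $-\rmD\ene s{u(s)}\in\Kx$ for $\mu$-a.e.\ $s$, whence, by \eqref{eq:38}, $\Psi_0(v)=\sup_{w\in\Kx}\langle w,v\rangle\ge\langle-\rmD\ene s{u(s)},v\rangle$ and the integrand of $A$ is $\ge0$; moreover, by the definition \eqref{eq:18} of $\partial\Psi_0$, that integrand vanishes at $s$ if and only if $-\rmD\ene s{u(s)}\in\partial\Psi_0\big(\tfrac{\d u'_\co}{\d\mu}(s)\big)$, i.e.\ \eqref{eq:56} holds at $s$. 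For $B$: at every $s\in\rmJ_u$ all three states $u(s_-),u(s),u(s_+)$ are stable at time $s$; testing \eqref{eq:1} at $s$ with current state $u(s_-)$ and competitor $z=u(s)$ yields $\Psi_0(u(s)-u(s_-))+\ene s{u(s)}-\ene s{u(s_-)}\ge0$, and with current state $u(s)$ and competitor $z=u(s_+)$ yields $\Psi_0(u(s_+)-u(s))+\ene s{u(s_+)}-\ene s{u(s)}\ge0$; each of these equals $0$ precisely when the corresponding identity of \eqref{eq:57} holds (the endpoint terms at $0$ and $t$ being treated the same way, via \eqref{eq:41}).

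It remains to put the pieces together. If $u$ is an energetic solution, then in particular $A(T)+B(T)=0$, hence $A(T)=B(T)=0$ by nonnegativity; $A(T)=0$ forces the integrand of $A$ to vanish $\mu$-a.e., which is \eqref{eq:56}, while $B(T)=0$ forces every half-jump term to vanish, giving the first two identities of \eqref{eq:57} at each $t\in\rmJ_u$, and the third follows by adding these two and combining the subadditivity of $\Psi_0$ with the stability of $u(t_-)$ tested against $z=u(t_+)$. Conversely, \eqref{eq:56} makes the integrand of $A$ vanish $\mu$-a.e., so $A\equiv0$; \eqref{eq:57} makes each half-jump term of $B(t)$ vanish (at interior jump points, at the endpoints through \eqref{eq:41}, and trivially at continuity times), so $B\equiv0$; hence $A(t)+B(t)=0$, i.e.\ \eqref{eq:2} holds for all $t\in[0,T]$. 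The step I expect to demand the most care is the chain-rule identity for $s\mapsto\ene s{u(s)}$ in the $\BV$ setting --- specifically the correct bookkeeping of the Cantor part together with the pointwise (rather than essential) values at jumps --- since once it is in place the argument is just the convexity-and-stability accounting sketched above.
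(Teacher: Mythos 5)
Your proof is correct and follows essentially the same route as the paper, which establishes this result by the argument of Proposition~\ref{le:local-diff}: the $\BV$ chain rule for $t\mapsto\ene t{u(t)}$, the sign information coming from stability tested at the jump values together with the characterization \eqref{eq:18} of $\partial\Psi_0$, and the resulting forcing of each nonnegative (diffuse and half-jump) term to vanish. Your explicit $A(t)+B(t)=0$ bookkeeping and the closedness argument for the stability of $u(t_\pm)$ are just cleaner packagings of the same steps.
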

\noindent We shall simply sketch the proof, referring to the
arguments  for the forthcoming Proposition~\ref{le:local-diff} for
all details.
\begin{proof}
By the additivity property of the total variation $\Var{\Psi_0}
u\cdot\cdot$, \eqref{eq:2} yields for every $0\le t_0<t_1\le T$
\begin{equation}
  \label{eq:74}
  \Var{\Psi_0}u{t_0}{t_1}+\ene{t_1}{u(t_1)}=\ene{t_0}{u(t_0)}+
  \int_{t_0}^{t_1} \partial_t\ene t{u(t)}\,\d t\,.
  \tag{E'}
\end{equation}
Arguing as in the proof of Proposition~\ref{le:local-diff} later on,
one can see that the global stability \eqref{eq:1} and \eqref{eq:74}
yield the differential inclusion~\eqref{eq:56} and
conditions~\eqref{eq:57}.

Conversely, repeating the arguments of Proposition~\ref{le:local-diff}
one can verify that \eqref{eq:56} and \eqref{eq:57} imply
\eqref{eq:2}.
\end{proof}

\paragraph{\textbf{Incremental minimization scheme}}
Existence of energetic solutions can be proved by solving a minimization
scheme, which is also interesting as construction of an
effective approximation of the solutions.

For a given time-step $\tau>0$ we consider a uniform partition (for
simplicity) $0=t_0<t_1<\cdots<t_{N-1}< T\le t_N$, $t_n:=n\tau$, of
the time interval $[0,T]$, and an initial value $\Uu^0_\tau\approx
u_0$. In order to find good approximations of $\Uu^n_\tau\approx
u(t_n)$ we solve the incremental minimization scheme
\begin{equation}
  \label{eq:58}
  \text{find $\Uu^1_\tau,\cdots,\Uu^N_\tau$}\quad\text{such that}\quad
  \Uu^n_\tau\in \argmin_{\Uu \in \xfin}\Big\{\Psi_0(\Uu-\Uu^{n-1}_\tau)+\ene{t_n}\Uu\Big\}.
  \tag{IP$_0$}
\end{equation}
Setting
\begin{equation}
  \label{eq:59}
  \overline{\Uu}_\tau(t):=\Uu^n_\tau\quad\text{if }t\in (t_{n-1},t_n],
\end{equation}
it is possible to find a suitable vanishing sequence of step sizes
$\tau_k\downarrow0$ (see, e.g., \cite{MieThe04RIHM,Mielke05} for all
calculations),
 such that
 \[
  \exists \ \lim_{k\to+\infty}\overline{\Uu}_{\tau_k}(t)=:u(t)\quad \text{for every }
     t\in [0,T],
     \]
and $u$ is an energetic solution of \eqref{e:ris_1}.

\subsection{Viscous approximations of rate-independent systems}
\label{ss:2.3} In the present paper we want to study a different
approach to approximate and solve \eqref{e:ris_1}: the main idea is
to replace the linearly growing dissipation potential $\Psi_0$ with
a suitable \emph{convex and superlinear} ``viscous'' regularization
$\Psi_\eps:\xfin\to[0,+\infty)$ of $\Diss$, depending on a ``small''
parameter $\eps>0$ and ``converging'' to $\Diss$ in a suitable sense
as $\eps\down0$. Solving the doubly nonlinear differential inclusion
(we use the notation $\dot u$ for the time derivative when $u$ is
absolutely continuous)
\begin{equation}
  \label{e:ris_eps} \partial\Psi_\eps(\dot u_\eps(t))
  + \mathrm{D} \cE_t (u_\eps(t)) \ni 0 \quad \text{in $\xfins$ \quad for a.a. $t \in (0,T),$}
  \tag{DN$_\eps$}
\end{equation}
one can consider the sequence  $(u_\eps)$ as
a good approximation of the solution $u$ of \eqref{e:ris_1} as
$\eps\down0$.

There is also a natural discrete counterpart to \eqref{e:ris_eps},
which regularizes the incremental minimization problem
\eqref{eq:58}. We simply substitute $\Psi_0$ by $\Psi_\eps$ in
\eqref{eq:58}, recalling that now the time-step $\tau$ should
explicitly appear, since $\Psi_\eps$ is not $1$-homogeneous any
longer. The viscous incremental problem is therefore
\begin{equation}
  \label{eq:61}
  \text{find $\Uu^1_{\tau,\eps},\cdots,\Uu^N_{\tau,\eps}$}\quad
  \text{such that}\quad
  \Uu^n_{\tau,\eps}\in \argmin_{\Uu \in \xfin}
  \Big\{\tau\Psi_\eps\Big(\frac{\Uu-\Uu^{n-1}_{\tau,\eps}}\tau\Big)+\ene{t_n}\Uu\Big\}.
  \tag{IP$_\eps$}
\end{equation}
Setting as in \eqref{eq:59}
 \[
  \overline{\Uu}_{\tau,\eps}(t):=\Uu^n_{\tau,\eps}\quad\text{if }t\in (t_{n-1},t_n],
\]
one can study the limit of the discrete solutions when $\tau\down0$
and $\eps\down0$, under some restriction on the behavior of the
quotient $\eps/\tau$ (see Theorem~\ref{thm:convergence1bis} later
on).

\paragraph{\textbf{The choice of the viscosity approximation $\Psi_\eps$.}}
Here we consider the particular case when
the potential $\Psi_\eps$ can be obtained starting from a given
\begin{equation}
  \label{eq:19}
  \text{convex function $\Psi:\xfin\to [0,+\infty)$}\quad
  \text{such that}\quad
  \Psi(0)=0,\quad
  \lim_{\|v\|_\xfin\uparrow+\infty}\frac{\Psi(v)}{\|v\|_\xfin}=+\infty,
  \tag{$\Psi.1$}
\end{equation}
by the canonical rescaling
\begin{equation}
  \label{eq:20}
  \Psi_\eps(v):=\eps^{-1}\Psi(\eps v)\quad \text{for every }v\in \xfin,\ \eps>0,
  \tag{$\Psi.2$}
\end{equation}
and $\Psi_\eps$ is linked to $\Diss$ by the relation
\begin{equation}
  \label{eq:21}
  \Diss(v)=\lim_{\eps\down0}\Psi_\eps(v)=\lim_{\eps\down0}\eps^{-1}\Psi(\eps v)\quad
  \text{for every }v\in \xfin.
  \tag{$\Psi.3$}
\end{equation}
\begin{remark}
\label{rem:prop-psi} Notice that, by convexity of $\Psi$ and the
fact that $\Psi(0)=0$, the map $\eps \mapsto \eps^{-1}\Psi(\eps v)$
is nondecreasing for all $v \in \xfin$. Hence,
\begin{equation}
\label{eq:21-BIS} \Diss(v) \leq \Psi_\eps(v) \quad \text{for all $v
\in \xfin$, for all $\eps>0$.}
\end{equation}
Furthermore, by the coercivity condition \eqref{eq:19},
 \[
  \partial\Psi_\eps(v):=\partial\Psi(\eps v)\quad\text{is a surjective map}.
  \]
\end{remark}
Here are some examples, showing that \eqref{eq:20} still provides a
great flexibility and covers  several interesting cases.
\begin{example}
  \label{ex:viscous}
  \
  \begin{mydescription}
  \item[$\Psi_0$-viscosity]   The simplest example, still absolutely non trivial
    \cite{Mielke-Rossi-Savare08},
    is to consider
    \begin{equation}
      \label{eq:23}
      \begin{gathered}
      \Psi(v):=\Diss(v)+\frac 12\big( \Diss(v)\big)^2,\quad
      \Psi_\eps(v):=\Diss(v)+\frac \eps2\big(\Diss(v)\big)^2,\\
      \partial\Psi_\eps(v)=\Big(1+\eps\Diss(v)\Big)\partial\Diss(v).
      \end{gathered}
    \end{equation}
    A similar regularization can be obtained by choosing a real convex
    and superlinear function $F_V:[0,+\infty)\to [0,+\infty)$,
    with $F_V(0)=F_V'(0)=0$, and setting
    \begin{equation}
      \label{eq:92}
      \Psi(v):=\Diss(v)+F_V(\Diss(v))=F(\Diss(v)),\quad \text{with} \ \  F(r):=r+F_V(r).
    \end{equation}
  \item[Quadratic or $p$-viscosity induced by a norm $\|\cdot\|$] The most interesting case
    involves an arbitrary norm $\|\cdot\|$ on $\xfin$ and considers for $p>1$
    \begin{equation}
      \label{eq:24}
      \Psi(v)=\Diss(v)+\frac 1p\|v\|^p,\quad
      \Psi_\eps(v)=\Diss(v)+\frac{\eps^{p-1}} p\|v\|^p,\quad
      \partial\Psi_\eps(v)=\partial\Diss(v)+\eps^{p-1} J_p(v),
    \end{equation}
    where $J_p$ is the $p$-duality map associated with $\|\cdot\|$.
    In particular, if $\|\cdot\|$ is a Hilbertian norm and $p=2$,
    then $J_2$ is the Riesz isomorphism and
    we can choose $J_2(v)=v$ by identifying $\xfin$ with $\xfins$.
    Hence,
    \eqref{e:ris_eps} reads
     \[
      \partial\Psi_\eps(\dot u_\eps(t))+\eps \dot u_\eps(t)+\rmD\ene
      t{u_\eps(t)}\ni0 \quad \text{in $\xfins$ \quad for a.a. $t \in (0,T),$}
      \]
    and the incremental problem \eqref{eq:61} looks for $U^n_{\tau,\eps}$ which recursively minimizes
     \[
      U\mapsto \Psi_0(U-U^{n-1}_{\tau,\eps})+\frac \eps{2\tau}\|U-U^{n-1}_{\eps,\tau}\|^2+\ene{t_n}U.
      \]
    This is the typical situation which motivates our investigation.
  \item[Additive viscosity]
    More generally, we can choose a convex ``viscous'' potential
    $\Psi_V:\xfin\to [0,+\infty)$ satisfying
    \begin{equation}
      \label{eq:25}
      \lim_{\eps\downarrow0}\eps^{-1}\Psi_V(\eps v)=0,\quad
      \lim_{\lambda\up+\infty}\lambda^{-1}\Psi_V(\lambda v)=+\infty\quad
      \text{for all $\, v\in \xfin,$}
    \end{equation}
    and set
    \begin{equation}
      \label{eq:5}
      \Psi(v):=\Diss(v)+\Psi_V(v),\quad
      \Psi_\eps(v):=\Diss(v)+\eps^{-1}\Psi_V(\eps v),\quad
      \partial\Psi_\eps(v)=\partial\Diss+\partial\Psi_V(\eps v).
    \end{equation}
  \end{mydescription}
\end{example}

\subsection{Viscous energy identity}
\label{ss:2.1}
Since $\Psi$ has a superlinear growth,
the results of~\cite{Colli-Visintin90, Colli92} ensure
that for every $\eps>0$ and initial datum $u_0\in \xfin$ there exists at least one solution
 $u_\eps \in \AC([0,T];\xfin)$ to equation~\eqref{e:ris_eps}, fulfilling the Cauchy condition $u_\eps(0)=u_0$.

In order to capture its asymptotic behavior as $\eps\down0$, we
split equation \eqref{e:ris_eps} in a simple system of two
conditions,
 involving an auxiliary variable $w_\eps:
[0,T]\to \xfins$ and a scalar function $\pt_\eps:[0,T]\to \R$
\begin{subequations}
  \label{eq:7}
  \begin{align}
    \label{eq:9}
    \partial\Psi_\eps(\dot u_\eps(t))&\ni w_\eps \qquad \forae\, t \in (0,T)\,,\\
    \label{eq:10}
    \rmD\ene t{u_\eps(t)}&=-w_\eps(t),\qquad
    \partial_t\ene t{u_\eps(t)}=-\pt_\eps(t) \qquad \text{for all $t \in [0,T]$.}
  \end{align}
\end{subequations}
Denoting by  $\Psi^*,\Psi_\eps^*$ the conjugate functions of $\Psi$
and $\Psi_\eps$, we have
\begin{equation}
  \label{eq:6}
  0=\Psi^*(0)\le \Psi^*(\xi)<+\infty,
  \qquad
  \Psi_\eps^*(\xi)=\eps^{-1}\Psi^*(\xi)\quad
  \text{for every }\xi\in \xfin^*.
\end{equation}
Due to~\eqref{eq:21-BIS}, there holds
\begin{equation}
\label{eq:21-TER}  \Psi_\eps^*(\xi) \leq \Psi_0^*(\xi) \quad
\text{for all $v \in \xfin$, $\eps>0$.}
\end{equation}
 The classical characterization of
the subdifferential of $\Psi_\eps$ yields that the first condition
\eqref{eq:9} is equivalent to
\begin{equation}
  \label{eq:8}
  \Psi_\eps(\dot u_\eps(t))
  + \Psi_\eps^* (w_\eps(t)) = \langle w_\eps(t), \dot u_\eps(t)
  \rangle \qquad \forae\, t \in (0,T)\,.
\end{equation}
On the other hand, the chain rule for the $\mathrm{C}^1$ functional
$\cE$ shows that along the absolutely continuous curve $u_\eps$
\begin{equation}
  \label{e:classical-chain_rule}
  \frac{\rm d}{\rm dt}\ene{t}{u_\eps(t)}=\langle \mathrm{D}
  \ene{t}{u_\eps(t)}, {\dot{u}_\eps}(t) \rangle+
  \partial_t \ene{t}{u_\eps(t)}=
  -\la w_\eps(t),\dot u_\eps(t)\ra-\pt_\eps(t) \ \  \text{for a.a.~$t \in (0,T)$.}
\end{equation}
Thus, if $w_\eps(t)=-\mathrm{D} \ene{t}{u_\eps(t)} $,
equation~\eqref{eq:9} is equivalent to  the \emph{energy identity}
\begin{equation}
\label{e:enid-eps}
\begin{aligned}
  \int_{t_0}^{t_1} \Big(\Psi_\eps
  \left({\dot u_\eps}(r)\right)
  +\Psi_\eps^*
  \left(w_\eps(r)\right)+\pt_\eps(r)\Big) \dd r +\ene {t_1}{u_\eps(t_1)}
  &= \ene{t_0}{u_\eps(t_0)},
  \end{aligned}
\end{equation}
for every $ 0 \leq t_0 \leq t_1 \leq T.$
\begin{remark}[The role of $\Psi^*_\eps$]
  \label{rem:ex}
  \upshape
  In the general, \emph{additive-viscosity} case (see~\eqref{eq:5}),
  when $\Psi(v)=\Diss(v)+\Psi_V(v)$ the inf-sup convolution formula
  yields
   \[
    \Psi_\eps^*(\xi)= \inf_{\stackrel{\scriptstyle \xi_1+\xi_2=\xi}{\xi_1,\xi_2 \in \xfins\vphantom{\big(}}}
    \left\{\rmI_{\Kx} (\xi_1) +\frac1{\eps}\Psi_V^*(\xi_2)\right\}=\eps^{-1}
    \min_{z \in \Kx}  \Psi_V^*(\xi -z).
    \]
  In particular, when $\Psi_V(\xi):=\frac 12|v|^2$ for some norm $|\cdot|$ of $\xfin$,
  one finds
   \[
    \Psi_\eps^*(\xi)=\frac 1{2\eps} \min_{z \in \Kx}  |\xi -z|_*^2,
    \]
  where $|\cdot|_*$ is the dual norm of $|\cdot|$. Thus, for all
  $\xi \in \xfin^*$ the functional
  $\Psi_\eps^*(\xi)$ is  the squared distance of $\xi$ from $\Kx$, with
  respect to $|\cdot|_*$. This shows that, in the viscous regularized equation
  \eqref{e:ris_eps},
  the (local) stability condition $w(t)=-\rmD\ene t{u(t)}\in \Kx$ has been replaced by
  the contribution of the penalizing term
  \begin{displaymath}
    \frac 1{2\eps}\int_0^T \min_{z\in \Kx}|-\rmD \ene t{u_\eps(t)}-z|_*^2\,\d t
  \end{displaymath}
  in the energy identity \eqref{e:enid-eps}.
\end{remark}

\subsection{Pointwise limit of viscous approximations and local solutions}
\label{ss:2.5} \label{subsec:pointwise}
Using~\eqref{assene1}, it is not difficult to show that the viscous
solutions $u_\eps$ of \eqref{e:ris_eps} satisfy the \emph{a priori}
bound
\begin{equation}
\label{e:est-1}
 \int_0^T \Big(\Psi_\eps(\dot u_\eps(t))+\Psi_\eps^*(w_\eps(t))\Big)\,\dd t\le C,
 \quad \text{with} \ \  w_\eps(t)=-\rmD\ene t{u_\eps(t)} \ \ \text{for all $t \in [0,T]$.}
\end{equation}
Therefore,  Helly's compactness theorem shows that, up to the
extraction of a suitable subsequence,  the sequence $(u_\eps)$
pointwise converges to a $\BV$ curve $u$.  From the convergence
$w_\eps(t)\to w(t)=-\rmD\ene t{u(t)}$  as $\eps \down 0$ and the
fact that for all $t \in [0,T]$
\begin{equation}
  \label{eq:70}
  \liminf_{\eps\downarrow0}\eps^{-1}\Psi^*(w_\eps(t))\topref{eq:6}\ge \Psi_0^*(w(t))=\rmI_\Kx(w(t))=
  \begin{cases}
    0&\text{if }w(t)\in K^*,\\
    +\infty&\text{otherwise,}
  \end{cases}
\end{equation}
we infer that the limit curve  $u$ satisfies the (local) stability
condition \eqref{eq:65}. On the other hand, passing to the limit in
\eqref{e:enid-eps} one gets the energy inequality
\begin{equation}
  \label{eq:2bis}
  \ene{t_1}{u(t_1)}+\Var{\Psi_0}u{t_0}{t_1}\le \ene{t_0}{u(t_0)}+
  \int_{t_0}^{t_1} \partial_t\ene t{u(t)}\,\d t
  \quad\text{for }0\le t_0<t_1\le T.
  \tag{E$'_{\text{ineq}}$}
\end{equation}
The above discussion motivates the concept of \emph{local solution}
  (see also~\cite[Sec.~5.2]{Mielke-Rossi-Savare08} and the references
  therein).
\begin{definition}[Local solutions]
  \label{def:local}
  A curve $u\in \BV([0,T];\xfin)$ is called a \emph{local solution} of the rate independent
  system {\RIS}
 if it satisfies the \emph{local stability} condition
 \begin{equation}
  \label{eq:65biss}
  \tag{$\mathrm{S}_{\mathrm{loc}}$}
  -\rmD\ene t{u(t)}\in K^*\quad\text{for every } t\in [0,T]\setminus \rmJ_u,
\end{equation}
and
  the \emph{energy dissipation inequality} \eqref{eq:2bis}.
\end{definition}
Local solutions admit the following differential characterization.
\begin{proposition}[Differential characterization of local solutions]
  \label{le:local-diff}
  A curve $u\in \BV([0,T];\xfin)$ is a \emph{local solution} of the rate independent
  system {\RIS}
  if and only if it satisfies the $\BV$ differential inclusion
     \begin{equation}
     \label{eq:56bis}
     \partial\Psi_0\Big(\frac {\d u'_\co}{d \mu}(t)\Big)+\rmD\ene t{u(t)}\ni 0\quad \text{for $\mu$-a.e.\ $t\in [0,T]$},
     \quad \mu:=\Leb 1+|u_\Ca'|,
     \tag{DN$_{0,\BV}$}
   \end{equation}
   and the jump inequalities
  \begin{equation}
  \label{eq:57bis}
  \begin{gathered}
    \ene{t}{u(t)}-\ene t{u(t_-)}\le -\cost\Diss{u(t_-)}{u(t)},\quad
    \ene{t}{u(t_+)}-\ene t{u(t)}\le -\cost\Diss{u(t)}{u(t_+)},\\
    \ene{t}{u(t_+)}-\ene t{u(t_-)}\le -\cost\Diss{u(t_-)}{u(t_+)},
  \end{gathered}
  \tag{J$_{\text{local}}$}
\end{equation}
at each jump time $t\in\rm J_u$.
\end{proposition}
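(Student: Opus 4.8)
The plan is to reduce both implications of the equivalence to a single bookkeeping identity. First I would record a chain rule for the $\mathrm{C}^1$ energy $\cE$ along the (pointwise everywhere defined) $\BV$ curve $u$: with the reference measure $\mu=\Leb1+|u'_\Ca|$ of \eqref{eq:156}, for every $0\le t_0<t_1\le T$
\begin{equation*}
  \ene{t_1}{u(t_1)}-\ene{t_0}{u(t_0)}=\int_{t_0}^{t_1}\partial_t\ene t{u(t)}\,\d t+\int_{(t_0,t_1)}\big\langle\rmD\ene t{u(t)},\tfrac{\d u'_\co}{\d\mu}(t)\big\rangle\,\d\mu(t)+\Sigma(t_0,t_1),
\end{equation*}
where $\Sigma(t_0,t_1)$ gathers the jump increments of $t\mapsto\ene t{u(t)}$ over $[t_0,t_1]$, namely the two endpoint terms plus $\sum_{t\in\rmJ_u\cap(t_0,t_1)}\big(\ene t{u(t)}-\ene t{u(t_-)}+\ene t{u(t_+)}-\ene t{u(t)}\big)$, all read with the conventions \eqref{eq:41}. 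Combining this with the splitting $\Var{\Psi_0}u{t_0}{t_1}=\Mint{t_0}{t_1}{\Psi_0}{u'_\co}+\JVar{\Psi_0}u{t_0}{t_1}$ from \eqref{eq:187}--\eqref{eq:37}, one sees that the slack in \eqref{eq:2bis} equals the $\mu$-integral of the \emph{diffuse defect} $\delta_\co(t):=\Psi_0\big(\tfrac{\d u'_\co}{\d\mu}(t)\big)+\big\langle\rmD\ene t{u(t)},\tfrac{\d u'_\co}{\d\mu}(t)\big\rangle$ plus the sum, over $\rmJ_u\cap[t_0,t_1]$, of \emph{jump defects} of the type $\Psi_0(u(t)-u(t_-))+\ene t{u(t)}-\ene t{u(t_-)}$ and $\Psi_0(u(t_+)-u(t))+\ene t{u(t_+)}-\ene t{u(t)}$.

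For ``local solution $\Rightarrow$ \eqref{eq:56bis}, \eqref{eq:57bis}'' I would first use the additivity of $\Var{\Psi_0}u\cdot\cdot$ to recast \eqref{eq:2bis} as: the function $\rho(t):=\ene t{u(t)}+\Var{\Psi_0}u0t-\int_0^t\partial_t\ene s{u(s)}\,\d s$ is nonincreasing on $[0,T]$. Since $\rho$ is $\BV$ (the integral term is Lipschitz, as $u$ is bounded and $\cE\in\mathrm{C}^1$), $\rho'\le0$ as a measure, so its $\mu$-diffuse density and each jump increment $\rho(t)-\rho(t_-)$, $\rho(t_+)-\rho(t)$ are $\le0$; by the chain rule and the jump structure \eqref{eq:37} these coincide respectively with $\delta_\co$ and with the two jump defects at $t$. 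On the other hand, local stability \eqref{eq:65biss} holds $\mu$-a.e.\ (because $\mu(\rmJ_u)=0$), hence $-\rmD\ene t{u(t)}\in\Kx$, and since $\Psi_0$ is the support function of $\Kx$ we get $\langle\rmD\ene t{u(t)},v\rangle\ge-\Psi_0(v)$ for all $v$, i.e.\ $\delta_\co\ge0$ $\mu$-a.e. Hence $\delta_\co=0$ $\mu$-a.e., which is exactly \eqref{eq:56bis} by the characterization \eqref{eq:18} of $\partial\Psi_0$, while the two jump defects at each $t\in\rmJ_u$ being $\le0$ are precisely the first two inequalities of \eqref{eq:57bis}, the third one following from them by subadditivity of $\Psi_0$.

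For the converse, suppose \eqref{eq:56bis} and \eqref{eq:57bis} hold. From \eqref{eq:56bis} and \eqref{eq:18}, $-\rmD\ene t{u(t)}\in\Kx$ and $\langle-\rmD\ene t{u(t)},\tfrac{\d u'_\co}{\d\mu}\rangle=\Psi_0(\tfrac{\d u'_\co}{\d\mu})$ for $\mu$-a.e.\ $t$, so $\delta_\co=0$ $\mu$-a.e.; moreover $-\rmD\ene t{u(t)}\in\Kx$ holds in particular for $\Leb1$-a.e.\ $t$, and since $u$ is continuous off $\rmJ_u$ and $\cE\in\mathrm{C}^1$ the map $t\mapsto-\rmD\ene t{u(t)}$ is continuous on $[0,T]\setminus\rmJ_u$, so closedness of $\Kx$ and density upgrade this to every $t\notin\rmJ_u$, i.e.\ \eqref{eq:65biss}. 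For \eqref{eq:2bis} I would insert the chain rule into its right-hand side; the inequality then reduces to showing that $\int_{(t_0,t_1)}\delta_\co\,\d\mu$ plus the sum of the jump defects over $\rmJ_u\cap[t_0,t_1]$ is $\le0$, which holds because the integral vanishes and each jump defect is $\le0$ by \eqref{eq:57bis}. This establishes \eqref{eq:2bis} for all $0\le t_0<t_1\le T$, so $u$ is a local solution.

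The step I expect to carry the weight of the argument is the chain-rule identity above for $\cE\in\mathrm{C}^1$ composed with a pointwise-everywhere-defined $\BV$ curve: one must use the decomposition $u'=u'_\Le+u'_\Ca+u'_\rmJ$ of \eqref{eq:87}, handle the Cantor part through the common measure $\mu$, and account for the jump contributions consistently with the conventions \eqref{eq:41}; this is precisely the computation only sketched for Proposition~\ref{le:global-diff}, and it would be cleanest to isolate it as a lemma in Section~\ref{sec:technical}. Once it is in place, both directions are a matter of pairing the diffuse and the jump defects with the sign supplied by either local stability \eqref{eq:65biss} or the jump inequalities \eqref{eq:57bis}; the only other genuine, though minor, point is the upgrade from ``local stability $\mu$-a.e.'' to ``\eqref{eq:65biss} at every $t\notin\rmJ_u$'', which rests on the continuity of $t\mapsto\rmD\ene t{u(t)}$ off the jump set.
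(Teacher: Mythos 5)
Your proposal is correct and follows essentially the same route as the paper: both hinge on the Ambrosio--Dal Maso chain rule for $\cE\in\mathrm C^1$ along the $\BV$ curve, the splitting of $\Var{\Psi_0}u\cdot\cdot$ into diffuse and jump parts, differentiation/localization of the energy inequality combined with the support-function property of $\Kx$ to force the diffuse defect to vanish, and the continuity of $t\mapsto\rmD\ene t{u(t)}$ off $\rmJ_u$ to upgrade $\Leb1$-a.e.\ stability to \eqref{eq:65biss}. Your packaging via the nonincreasing function $\rho$ and the explicit ``defects'' is just a cleaner bookkeeping of the paper's measure inequality \eqref{eq:75} and identity \eqref{eq:89}, and your observation that the third jump inequality follows from the first two by subadditivity of $\Psi_0$ is a valid shortcut.
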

\begin{proof}
  Notice that at every point $t\in (0,T)$ where $\d
  u'_\co(t)/\d\mu=0$, the differential inclusion \eqref{eq:66bis}
  reduces to the local stability condition \eqref{eq:65biss}. In the
  general case, \eqref{eq:56} follows by differentiation of
  \eqref{eq:2bis}. Indeed, the latter procedure provides the following
  inequality between the distributional derivative $\frac\d{\d t}\ene
  t{u(t)}$ of the map $t\mapsto \ene t{u(t)}$ and the $\Psi_0$-total
  variation measure $\Mdiff{\Psi_0}{u_\co'}:=\Psi_0\big(\d u_\co'/\d
  \mu\big)\mu$ for $\mu:=u'_\Ca+\Leb 1$
\begin{equation}
    \label{eq:75}
    \frac\d{\d t}\ene t{u(t)}+\Mdiff{\Psi_0}{u_\co'}-\partial_t \ene
    t{u(t)}\Leb 1
    \le 0\,.
\end{equation}
Applying the chain rule formula for the composition of the $\rmC^1$
functional $\cE$ and the $\BV$ curve $u$ (see~\cite{AmbDMa90AGCR}
and~\cite[Thm.~3.96]{Ambrosio-Fusco-Pallara00}) and taking into
account the fact that $u'_\co$ and $u'_\rmJ$ are mutually singular, we
obtain from~\eqref{eq:75} that
\begin{equation}
\label{eq:75-bis}
    \left\la -\rmD\ene t{u(t)},\frac {\d u'_\co}{\d\mu}\right\ra\mu
    \ge  \Mdiff{\Psi_0}{u'_\co}=\Psi_0\Big(\frac{\d
    u'_\co}{\d\mu}\Big)\mu\,.
    \end{equation}
Combining~\eqref{eq:75-bis} with the local stability
  condition \eqref{eq:65}, in view of the
  characterization~\eqref{eq:18} of $\partial \Diss$ and
  of~\eqref{e:2.3} we finally conclude~\eqref{eq:56}.
  Localizing \eqref{eq:2bis} around a jump point $t$ we get the inequalities \eqref{eq:57bis}.

  Conversely, let us suppose that a $\BV$ curve $u$ satisfies \eqref{eq:56} and \eqref{eq:57bis}.
  The local stability condition is an immediate consequence of \eqref{eq:56}, which yields
  $-\rmD\ene t{u(t)}\in \Kx$ for $\Leb 1$-a.e.~$t\in [0,T]$ and therefore, by continuity,
  at every point of $[0,T]\setminus \rmJ_u$.

  In order to get \eqref{eq:2bis}, we  again apply the chain rule for the composition
$\cE$ and $u$, obtaining
\begin{equation}
    \label{eq:89}
    \begin{aligned}
    \ene {t_1}{u(t_1)}+
    \int_{t_0}^{t_1} \left\la -\rmD\ene t{u(t)},\frac{\d u'_\co}{\d \mu}\right\ra \,\d\mu(t)
   &  -\JVar{}\cE{t_0}{t_1}\\ & =\ene {t_0}{u(t_0)}+\int_{t_0}^{t_1} \partial_t \ene t{u(t)}\,\d t,
    \end{aligned}
  \end{equation}
  where
  \begin{gather}\nonumber
    \JVar{}\cE{t_0}{t_1}=E_+(t_0)+E_-(t_1)+
    \sum_{t\in\rmJ_u\cap (t_0,t_1)} \Big(E_-(t)+E_+(t)\Big),\intertext{and}
    E_-(t):=\ene t{u(t)}-\ene t{u(t_-)},\quad
    E_+(t):=\ene t{u(t_+)}-\ene t{u(t)}. \nonumber
  \end{gather}
  By \eqref{eq:56} we have
  \begin{equation}
    \label{eq:108}
     \int_{t_0}^{t_1} \left\la -\rmD\ene t{u(t)},\frac{\d u'_\co}{\d \mu}\right\ra \,\d\mu(t)=
     \int_{t_0}^{t_1} \Psi_0\Big(\frac{\d u'_\co}{\d \mu}(t)\Big)\,\d\mu(t)=
     \Mint{t_0}{t_1}{\Psi_0}{u_\co'}\,,
  \end{equation}
  whereas \eqref{eq:57bis} yields for every $t\in \rmJ_u$
  \begin{equation}
    \label{eq:109}
    E_-(t)\le-\cost\Diss{u(t_-)}{u(t)},\quad
    E_+(t)\le -\cost\Diss{u(t)}{u(t_+)},
  \end{equation}
  so that $-\JVar{}\cE{t_0}{t_1}\ge\JVar{\Diss}u{t_0}{t_1}$ and therefore \eqref{eq:2bis} follows
  from \eqref{eq:89}.
\end{proof}
\begin{remark}
\label{rmk:lack} \upshape
 Unlike the case of energetic solutions
(cf. Proposition~\ref{le:global-diff}), a precise description of the
behavior of local solutions at jumps in missing here. In fact,
 the jump inequalities \eqref{eq:57bis} are not
sufficient to get an energy balance and do not completely capture
the jump dynamics, see the discussion of \cite[Sec.
5.2]{Mielke-Rossi-Savare08}.
\end{remark}

In order to get more precise insight into the jump properties and to
understand the correct energy balance along them, we have to
introduce a finer description of the dissipation. It is related to
an extra contribution to the jump part of $\Var{\Psi_0}
u\cdot\cdot$, which can be better described by using the
\emph{{\bipotential}} induced by the coupling $\Psi,\Psi^*$. We
describe this notion in the next section.

\section{{\bipotential}s and Finsler dissipation costs}
\label{s:3} \label{subsec:bipotentials} \subsection{
\textbf{Heuristics for the concept of {\bipotential}.}} \label{ss:3.1}
Suppose for the  moment being that, in a given time interval
$[r_0,r_1]$, the energy $\ene t\cdot=\cE(\cdot)$ does not change
w.r.t.\ time. If $\vartheta\in \AC([r_0,r_1];\xfin)$ is a solution
of \eqref{e:ris_eps} connecting $u_0=\vartheta(r_0)$ to
$u_1=\vartheta(r_1)$, then the energy release between the initial
and the final state is, by  the energy identity \eqref{e:enid-eps},
\begin{equation}
  \label{eq:76}
  \begin{aligned}
  \cE(u_0)-\cE(u_1)=&\int_{r_0}^{r_1}\Big(\Psi_\eps(v)  +\Psi_\eps^*(w)\Big)\,\d
  t,\\ &
  \text{with} \ \
  v(t)=\dot\vartheta(t)\quad\text{and}\quad
  w(t)= -\rmD\cE(\vartheta(t)) \quad
  \forae\, t \in (0,T).
  \end{aligned}
\end{equation}
If one looks for a lower bound of the right-hand side  in the above
energy identity \emph{which is independent of $\eps>0$}, it is
natural to recur to the functional $\bipo:\xfin\times\xfin^*\to [0,+\infty)$
defined by
\[
   \bipo(v,w):=\inf_{\eps>0}\left( \Psi_\eps(v)+\Psi_\eps^*(w)\right)=
   \inf_{\eps>0} \left(\eps^{-1}\Psi(\eps v)+\eps^{-1}\Psi^*(w)\right)\quad
   \text{for } v\in \xfin,\ w\in \xfin^*.
\]
We obtain
\begin{equation}
  \label{eq:77}
  \cE(u_0)-\cE(u_1)\ge \int_{r_0}^{r_1}\bipo(v,w)\,\d t \quad
  \text{with} \quad
  v(t)=\dot\vartheta(t)\text{ \ and \ } w(t)= -\rmD\cE(\vartheta(t)).
\end{equation}
Since $\bipo(\cdot,\cdot)$ is positively $1$-homogeneous with
respect to its first variable, the right-hand side expression in
\eqref{eq:77} is in fact independent of (monotone) time rescalings.
On the other hand, the \emph{\bipotential} $\bipo(\cdot,\cdot)$ has
the remarkable properties
\begin{equation}
  \label{eq:78}
  \bipo(v,w)\ge \la w,v\ra,\qquad
  \bipo(v,w)\ge \Psi_0(v)\quad\text{for every } v\in \xfin,\ w\in \xfin^*.
\end{equation}
Therefore, if $\tilde\vartheta\in \AC([r_0,r_1];\xfin)$ is another
arbitrary curve connecting $u_0$ to $u_1$, the chain
rule~\eqref{e:classical-chain_rule} for $\cE$ yields
\[
  \cE(u_0)-\cE(u_1)=\int_{r_0}^{r_1} \la \tilde w(t),\tilde v(t)\ra\,\d t
  \leq \int_{r_0}^{r_1}\left( \Psi_\eps (v(t)) + \Psi_\eps^* (\tilde
  w(t)) \right)\,\d t
\]
(where
 $\tilde v$ denotes the time derivative of $\tilde\vartheta$ and
$\tilde w=-\rmD\cE(\tilde\vartheta)$),  whence
\begin{equation}
  \label{eq:79}
 \cE(u_0)-\cE(u_1) \le \int_{r_0}^{r_1} \bipo(\tilde v(t),\tilde w(t))\,\d
  t\,.
  \end{equation}
 It follows that, in a time
regime in which the energy functional $\cE$ does not change with
respect to time, for every $\eps>0$ any viscous solution of
\eqref{e:ris_eps} (and, therefore, any suitable limit of viscous
solutions) should attain the minimum dissipation, measured in terms
of the {\bipotential} $\bipo$. Moreover, this dissipation always
provides an upper bound for the energy release, reached exactly
along viscous curves and their limits.
\begin{remark}\label{rem:bip}
   \upshape
   In some of the cases discussed in Example \ref{ex:viscous},
    the {\bipotential} $\bipo$ admits a more explicit
   representation.
   \begin{mylist}
   \item[(1)]
 We first consider the \emph{$\Psi_0$-viscosity} case \eqref{eq:92},
   where $\Psi(v):=F(\Diss(v))$,  $F:[0,+\infty)\to [0,+\infty)$ being a real convex superlinear function
   with $F(0)=0, F'(0)=1$. We introduce the $1$-homogeneous support function $\Psi_{0*}$ of the set
    \[
     K:=\big\{v\in \xfin:\Diss(v)\le 1\big\},\quad
     \Psi_{0*}(w):=\sup_{v\in K}\la w,v\ra.
     \]
   It is not difficult to show that $\Psi^*(w)=F^*(\Psi_{0*}(w))$
   and that for all $(v,w) \in \xfin \times \xfin^*$
   \begin{equation}
     \label{eq:30}
     \bipo(v,w)=\Diss(v)\,\max(1,\Psi_{0*}(w))=
     \begin{cases}
       \Diss(v)&\text{if }w\in \Kx,\\
       \Diss(v)\,\Psi_{0*}(w)&\text{if }w\not\in \Kx.
     \end{cases}
   \end{equation}
   \item[(2)]
   In the \emph{additive viscosity} case of \eqref{eq:5} one has  for all $(v,w) \in \xfin \times \xfin^*$
   \begin{equation}
     \label{eq:34}
     \bipo(v,w)=\Diss(v)+\bip_V(v,w),\ \text{where}\
     \bip_V(v,w)=\inf_{\eps>0} \left(\eps^{-1}\Psi_V(\eps v)+\eps^{-1}\inf_{z\in \Kx}\Psi_V^*(w-z)\right).
   \end{equation}
   In particular, when $\Psi_V(v)=F_V(\|v\|)$ for some norm $\|\cdot\|$ of $\xfin$ and a real convex and superlinear
   function $F_V:[0,+\infty)\to[0,+\infty)$ with $F_V(0)=F_V'(0)=0$, we
   have for all $(v,w) \in \xfin \times \xfin^*$
   \begin{equation}
     \label{eq:36}
     \bipo(v,w)=\Diss(v)+\bip_V(v,w),\quad \text{with} \ \
     \bip_V(v,w)=\|v\|\,\min_{z\in \Kx}\|w-z\|_*.
   \end{equation}
   Notice that in \eqref{eq:30} and  \eqref{eq:36}  the form of the {\bipotential} $\bipo$
   \emph{does not depend} on the choice of $F$ and  $F_V$, respectively, but only on the chosen viscosity norm.
   \end{mylist}
 \end{remark}

By the $1$-homogeneity of $\bipo(\cdot,w)$ and these variational
properties, it is then natural to introduce the following Finsler
dissipation.
\begin{definition}[Finsler dissipation]
  \label{def:Finsler_diss}
  For a fixed $t\in [0,T]$,    the Finsler cost 
  induced by $\bipo$ and (the differential of) $\cE$ at the time $t$ is given by
  \begin{equation}
    \label{eq:69}
    \begin{aligned}
      \Cost{\bipcE}t{u_0}{u_1}:=\inf\Big\{&\int_{r_0}^{r_1}
      \bipo(\dot \vartheta(r),-\rmD\ene t{\vartheta(r)})\,\d r:\\&
      \vartheta\in \AC([r_0,r_1];\xfin),\ \vartheta(r_0)=u_0,\
      \vartheta(r_1)=u_1\Big\}
    \end{aligned}
  \end{equation}
  for every $u_0,u_1\in
      \xfin$.
  We also consider the induced ``triple'' cost
  \[
  \TriCost{\bipcE}t{u_-}{u}{u_+}:=\Cost{\bipcE}t{u_-}{u}+\Cost{\bipcE}t{u}{u_+}.
  \]
\end{definition}
\begin{remark}
  \label{rem:inf-attained}
  Since $\bipo(v,w)\ge \Diss(v)$ by \eqref{eq:78}, a simple time rescaling argument shows that the infimum
  in \eqref{eq:69} is always attained by a \emph{Lipschitz curve}
$\vartheta \in \AC([r_0,r_1];\xfin)$
  with constant $\bipo$-speed,  in particular such that
  \[
  \bipo(\dot\vartheta(r),-\rmD\ene t{\vartheta(r)}\equiv 1\qquad
  \forae\,r \in (r_0,r_1)\,.
  \]
\end{remark}
By the heuristical discussion developed
throughout~\eqref{eq:76}--\eqref{eq:79}, the cost $\Delta_{\bipcE}$
is the natural candidate to substituting the potential $\Psi_0$ and
the related cost $\Delta_{\Psi_0}$ of \eqref{eq:64} in the jump
contributions \eqref{eq:37} and in the jump conditions
\eqref{eq:57}. Notice that the second relation of \eqref{eq:78}
implies
\begin{equation}
  \label{eq:82}
  \Cost{\bipcE}t{u_0}{u_1}\ge \cost{\Psi_0}{u_0}{u_1}\quad \text{for every } u_0,u_1\in \xfin.
\end{equation}
The  notion of jump variation arising from such replacements is
precisely stated as follows.
\begin{definition}[The total variation induced by $\Delta_{\bipcE}$]
  \label{def:Finsler_var}
Let $u\in \BV([0,T];\xfin)$ a given curve, let $u'_\co$ be the
diffuse part of its distributional derivative $u'$,
and let $\rmJ_u$ be its pointwise jump set \eqref{eq:88}. For every
subinterval $[a,b]\subset [0,T]$ the Jump variation of $u$ induced
by $(\bipcE)$ on $[a,b]$ is
\begin{equation}
  \label{eq:37bis}
  \begin{aligned}
    \JVar{\bipcE}uab:=
    &
    \Cost{\bipcE}a{u(a)}{u(a_+)}+\Cost{\bipcE}b{u(b_-)}{u(b)}+
    \\+&
    \sum_{t\in \rmJ_u\cap (a,b)}\TriCost{\bipcE}t{u(t_-)}{u(t)}{u(t_+)},
  \end{aligned}
\end{equation}
and the (pseudo-)total variation induced by $(\bipcE)$ is
\begin{equation}
  \label{eq:81}
  \pVar{\bipcE}uab:=\Mint ab{\Psi_0}{u'_\co}+\JVar{\bipcE}uab.
\end{equation}
\end{definition}
\begin{remark}[The (pseudo-)total variation $\pVarname\bipcE$]
  \label{rem:lsc-pseudo}
  Let us mention that $\pVarname{\bipcE}$ enjoys some of the
  properties of the usual total variation functionals,
  but it is not lower semicontinuous w.r.t.\ pointwise convergence. In fact,  it is not difficult to see that
  its lower semicontinuous envelope is simply $\Varname{\Diss}$.
Furthermore, $\pVarname{\bipcE}$
   is not induced
  by any distance on $\xfin$. Indeed,  we have used \textsl{slanted} fonts in
the notation
   $\pVarname{{}}$ to stress this fact.
  In order to recover a more standard total variation in a metric setting, one has to work in the extended space
  $\XX:=[0,T]\times \xfin$ and  add the local stability constraint $-\rmD\cE_t\in\Kx$ on the
  ``continuous'' part of the trajectories. We shall discuss this point of view in
  Section \ref{sec:technical}.
\end{remark}
In view of inequality~\eqref{eq:82} between the Finsler dissipation
$\Delta_{\bipcE}$ and $\Delta_{\Diss}$, the notion of total
variation associated with $\Delta_{\bipcE}$ provides an upper
bound for $\Varname{\Diss}$, namely
\begin{equation}
  \label{eq:83}
  \forall\, u\in \BV([0,T];\xfin),\ [a,b]\subset [0,T]:\qquad
  \pVar{\bipcE}uab\ge \Var{\Psi_0}uab.
\end{equation}
\subsection{{\Bipotential}s}
\label{subsec:bipotentials2} While postponing the definition of
$\BV$ solutions  related  to $\pVarname{\bipcE}$ to the next
section, let us add a few remarks about the {\bipotential} $\bipo$
  \begin{equation}
  \label{eq:68bis}
   \bipo(v,w):=\inf_{\eps>0}\left( \Psi_\eps(v)+\Psi_\eps^*(w)\right)=
   \inf_{\eps>0} \left(\eps^{-1}\Psi(\eps v)+\eps^{-1}\Psi^*(w)\right)\quad
   \text{for } v\in \xfin,\ w\in \xfin^*.
\end{equation}
which partly matches the definition introduced by
\cite{Buliga-deSaxce-Vallee08}.
 We first list a set of \emph{intrinsic} properties of $\bipo$, which we shall prove
  at the end of this section.
  \begin{theorem}[Intrinsic properties of $\bipo$]
  \label{thm:bipotential}
  The continuous functional $\bip:\xfin\times\xfins\to [0,+\infty)$
  defined by \eqref{eq:68bis} satisfies the following properties:
\begin{enumerate}[\upshape ({I}1)]
\item For every $v\in \xfin,w\in \xfin^*$ the maps $\bipo(v,\cdot)$
  and $\bipo(\cdot,w)$ have convex sublevels.
\item $\bip(v,w)\ge \la w,v\ra$ for every $v\in \xfin, w\in \xfin^*$.
\item For every $w\in \xfins$ the map $v\mapsto \bip(v,w)$ is
  $1$-homogeneous and thus convex in $\xfin$, with $\bipo(v,w)>0$ if
  $v\neq 0$.
\item For every $v\in \xfin,w\in \xfin^*$ the map $\lambda\mapsto
  \bipo(v,\lambda w)$ is nondecreasing in $[0,+\infty)$.
\item If for some $v_0\in \xfin$ and $\bar w,\, w \in \xfins$ we have
  $\bipo(v_0,\bar w)<\bipo(v_0, w)$, then the inequality $\bipo(v,\bar
  w)\le \bipo(v,w)$ holds for every $v\in \xfin$, and there exists
  $v_1\in \xfin$ such that $\bipo(v_1,\bar w)<\la w, v_1 \ra$.
\end{enumerate}
\end{theorem}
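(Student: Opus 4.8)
The plan is to verify the five properties (I1)–(I5) by exploiting the definition $\bipo(v,w)=\inf_{\eps>0}(\Psi_\eps(v)+\Psi_\eps^*(w))$ together with the scaling relation $\Psi_\eps(v)=\eps^{-1}\Psi(\eps v)$, $\Psi_\eps^*(w)=\eps^{-1}\Psi^*(w)$, which lets us rewrite
\[
  \bipo(v,w)=\inf_{\eps>0}\eps^{-1}\bigl(\Psi(\eps v)+\Psi^*(w)\bigr).
\]
Continuity of $\bipo$ and the $[0,+\infty)$-range have essentially been established in the heuristic discussion; the bound $\bipo(v,w)\ge\langle w,v\rangle$ is just the Fenchel–Young inequality $\Psi_\eps(v)+\Psi_\eps^*(w)\ge\langle w,v\rangle$ applied for each $\eps$ and passed to the infimum, which gives (I2). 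For (I3), I would first perform the change of variable $\eps\mapsto\eps/\lambda$ in the infimum to see that $\bipo(\lambda v,w)=\lambda\bipo(v,w)$ for $\lambda>0$; positive $1$-homogeneity in $v$ forces convexity in $v$ only once we also know $v\mapsto\bipo(v,w)$ is convex, which follows because $\bipo(\cdot,w)$ is an infimum over $\eps$ of the convex functions $v\mapsto\Psi_\eps(v)+\Psi_\eps^*(w)$ — but an infimum of convex functions need not be convex, so here one uses that a positively $1$-homogeneous function that is the infimum of sublinear functions, or more directly that $\bipo(v,w)\ge\Psi_0(v)$ combined with the reachability of $\Psi_0$-type bounds, yields convexity; strict positivity $\bipo(v,w)>0$ for $v\ne0$ is immediate from $\bipo(v,w)\ge\Psi_0(v)>0$ (property of the gauge $\Psi_0$).

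For (I4), monotonicity of $\lambda\mapsto\bipo(v,\lambda w)$ on $[0,+\infty)$: I would argue that $\lambda\mapsto\Psi_\eps^*(\lambda w)$ is nondecreasing for $\lambda\ge0$ because $\Psi_\eps^*$ is convex with $\Psi_\eps^*(0)=0$ (from $\Psi_\eps(0)=0$), so $\Psi_\eps^*(\lambda w)\le\Psi_\eps^*(\mu w)$ whenever $0\le\lambda\le\mu$; taking the infimum over $\eps$ preserves this. Property (I1) — convex sublevels of both partial maps — for the $v$-variable follows from (I3) (homogeneous plus convex); for the $w$-variable I would show that $\{w:\bipo(v,w)\le c\}$ is convex by exploiting the explicit representation. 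The cleanest route is to compute: for fixed $v\ne0$, minimizing $\eps^{-1}(\Psi(\eps v)+\Psi^*(w))$ over $\eps$ and recognizing the result as a sublinear-in-$w$-type expression; in the model cases of Remark~\ref{rem:bip} one sees $\bipo(v,w)=\Psi_0(v)\max(1,\Psi_{0*}(w))$ or $\Psi_0(v)+\|v\|\operatorname{dist}(w,\Kx)$, both with convex sublevels in $w$, and the general argument should parallel this by showing the $\eps$-minimization produces something of the form $\Psi_0(v)\cdot g(w)$ with $g$ convex, or $\Psi_0(v)+h(w)$ with $h$ convex.

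Property (I5) is the real obstacle and where I would spend most of the effort. The hypothesis $\bipo(v_0,\bar w)<\bipo(v_0,w)$ should be leveraged as follows: the strict inequality at one point $v_0$ together with $1$-homogeneity in $v$ and the structure of the $\eps$-infimum should propagate to $\bipo(\cdot,\bar w)\le\bipo(\cdot,w)$ everywhere — heuristically because the inequality between $\bipo(\cdot,\bar w)$ and $\bipo(\cdot,w)$, as a comparison of two sublinear functions of $v$, is governed by a comparison of their "dual unit balls," and strictness at one nonzero direction means the balls are nested. The existence of $v_1$ with $\bipo(v_1,\bar w)<\langle w,v_1\rangle$ is the subtle claim: it says $\bar w$ fails the Fenchel-type inequality against $w$ in some direction, which should come from the fact that if $\bipo(v_0,\bar w)$ is strictly smaller, then choosing $v_1$ as (a scalar multiple of) an element of $\partial\bipo(\cdot,w)$ at $v_0$ — i.e. a direction realizing $\bipo(v_0,w)=\langle \text{something},v_0\rangle$ — and using (I2) for $w$ but its failure for $\bar w$ yields the strict inequality. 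Concretely, I expect to take $v_1$ in the direction where $\bipo(v_0,w)=\langle w,v_0\rangle$-type equality is approached (the "active" direction for $w$) and show $\bipo(v_1,\bar w)<\bipo(v_1,w)\le\langle w,v_1\rangle$ only after arranging $v_1$ so the last inequality is actually an equality; getting the geometry of this active set correct, using only the abstract properties (I1)–(I4) rather than an explicit formula, is the part I anticipate requiring the most care, and I would fall back on the representation $\bipo(v,w)=\Psi_0(v)\,\phi(w)$ with $\phi(w):=\inf_{\eps>0}\eps^{-1}(\Psi(\eps v_*)+\Psi^*(w))/\Psi_0(v_*)$ suitably interpreted, reducing (I5) to an elementary statement about the convex function $\phi$ and the support function $\Psi_{0*}$.
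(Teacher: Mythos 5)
There is a genuine gap. The paper's proof of (I1)--(I5) rests entirely on one structural fact that your proposal never identifies: the dual representation (P2) of Theorem~\ref{thm:auxiliary}, namely that $\bipo(\cdot,w)$ is the support function of the $\Psi^*$-sublevel set $K_w^*=\{z\in\xfins:\Psi^*(z)\le\Psi^*(w)\}$ (formulas \eqref{eq:188}--\eqref{eq:47}), so that $\bipo(v,w)$ depends on $w$ only through the scalar $\Psi^*(w)$ and the sets $K_w^*$ are totally ordered by inclusion. Once this is available, (I1)--(I5) are immediate: sublevels in $w$ are sublevels of the convex function $\Psi^*$; convexity in $v$ is convexity of a support function; monotonicity in $\lambda$ is nestedness of the sets $K^*_{\lambda w}$; and (I5) reads: strict inequality at one $v_0$ rules out $\Psi^*(w)\le\Psi^*(\bar w)$, hence (real numbers being totally ordered) $\Psi^*(\bar w)<\Psi^*(w)$, giving $K_{\bar w}^*\subset K_w^*$ and $\bipo(\cdot,\bar w)\le\bipo(\cdot,w)$, while $w\notin K_{\bar w}^*$ yields via Hahn--Banach a $v_1$ with $\la w,v_1\ra>\sup_{z\in K_{\bar w}^*}\la z,v_1\ra=\bipo(v_1,\bar w)$. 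Proving (P2) is itself the nontrivial step (the paper selects $z_\pm\in\partial\Psi(\eps_0 v)$ at an optimal $\eps_0\in\fl(v,w)$ and uses that $\Psi^*$ is affine on $\partial\Psi(\eps_0 v)$ to produce $z_\theta$ with $\Psi^*(z_\theta)=\Psi^*(w)$ attaining the supremum), and nothing in your sketch substitutes for it.

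Concretely, three of your steps fail or are unsupported. (a) Convexity of $v\mapsto\bipo(v,w)$: you rightly note that an infimum of convex functions need not be convex, but the remedy you offer (``$1$-homogeneous plus $\bipo\ge\Psi_0$ yields convexity'') is not an argument, since positive $1$-homogeneity does not imply subadditivity. (A direct fix, different from the paper's, would be the perspective-function argument: $(\mu,v)\mapsto\mu\Psi(v/\mu)+\mu\,\Psi^*(w)$ is jointly convex and partial minimization over $\mu>0$ preserves convexity; but you do not say this.) (b) Convexity of the $w$-sublevels: you reduce it to the hope that the $\eps$-minimization ``produces something of the form $\Psi_0(v)\cdot g(w)$ or $\Psi_0(v)+h(w)$''; neither form holds for a general $\Psi$, only in the special cases of Remark~\ref{rem:bip}. (c) (I5): your fallback representation $\bipo(v,w)=\Psi_0(v)\,\phi(w)$ is false in general (compare \eqref{eq:36} with $\|\cdot\|\neq\Psi_0$), and the ``active direction'' construction of $v_1$ is never made precise. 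Your arguments for (I2), (I4) and the $1$-homogeneity part of (I3) are correct, but the heart of the theorem --- (I5) and the two convexity claims --- is missing.
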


\begin{remark}[A dual family of convex sets]
  \upshape
  Property (I5) has a dual geometric counterpart:
  let us first observe that for every $w\in \xfin^*$ the map
  $v\mapsto \bipo(v,w)$ is a gauge function and therefore it is the
  support function of the convex set
\[
K_w^*:=\Big\{z\in \xfins:\la z,v \ra\le \bipo(v,w)\text{ for every }
v\in \xfin\Big\}, \quad\text{i.e.\ } \bipo(v,w)=\sup \Big\{\la
 z,v\ra:z\in K_w^*\Big\}.
 \]
Assertion (I5) then says that for every couple $w,\bar w\in \xfin$
\begin{equation}
  \label{eq:191}
  \text{we always have}\quad
  \bar w\in K_w^*\text{ or }w\in K_{\bar w}^*\quad
  \text{and, moreover,}\quad
  \bar w\in K_w^*\quad\Leftrightarrow\quad
  \bipo(\cdot,\bar w)\le \bipo(\cdot,w).
\end{equation}
Suppose in fact that $w\not\in K_{\bar w}^*$: this means that an
element $ v_0\in \xfin$ exists such that $\la w, v_0\ra>\bipo(
v_0,\bar w)$; by (I2) we get $\bipo(v_0, w)>\bipo( v_0,\bar w)$, and
therefore by (I5) $\bipo(v,w)\ge \bipo(v,\bar
w)\ge \la \bar w, v \ra$ for every $v\in \xfin$, so that $\bar w\in
K_w^*$.  The second statement of \eqref{eq:191} is an immediate
consequence of the second part of (I5).
\end{remark}

Property (I2) suggests that the set where equality holds in plays a
crucial role:
\begin{definition}[Contact set]
  The \emph{contact set} $\Contact\bip\subset \xfin\times \xfin^*$
  is defined as
  \begin{equation}
    \label{eq:53b}
    \Contact\bip:=\Big\{(v,w)\in \xfin\times\xfin^*: \bip(v,w)=\langle w,v\rangle\Big\}.
  \end{equation}
\end{definition}
Here are some other useful consequences of (I1--I5)
\begin{lemma}
  If $\bipo:\xfin\times\xfin^*\to [0,+\infty)$ satisfies {\upshape (I1--I5)}, then
  \begin{enumerate}[\upshape({I}1)]
    \setcounter{enumi}{5}
  \item for every $v\in \xfin,w\in \xfin^*$ we have
    \begin{equation}
      \label{eq:192}
      \bipo(v,0)+I_{K_0^*}(w)\ge \bipo(v,w)\ge \bipo(v,0).
    \end{equation}
\item The contact set can be characterized by
  \begin{equation}
  \label{eq:49}
  (v,w)\in \Contact\bipo\quad\Leftrightarrow\quad
  w\in \partial \bipo(\cdot,w)(v) \quad
  \Leftrightarrow \quad v\in \partial I_{K_w^*}(w).
  \end{equation}
  More generally, if $\bar w \in \partial \bipo(\cdot,w)(v)$ then
  $(v,\bar w)\in \Contact\bipo$, $\bar w\in K_w^*$, and
  $\bipo(v,w)=\bipo(v,\bar w)$.  In particular, if $\bar w\in \partial
  K_w^*$ then $w\in K_{\bar w}^*$.
\end{enumerate}
\end{lemma}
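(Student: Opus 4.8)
\emph{Strategy.} I would read everything through the identity $\bipo(\cdot,w)=\sigma_{K_w^*}$ (the support function of the compact convex set $K_w^*$, which contains $0$ in its interior by (I3)), together with the two assertions of \eqref{eq:191} recorded in the remark after Theorem~\ref{thm:bipotential} — in particular that $\bar w\in K_w^*\Leftrightarrow\bipo(\cdot,\bar w)\le\bipo(\cdot,w)$, so that the family $\{K_{w'}^*\}_{w'}$ is totally ordered by inclusion. With this in hand, (I6), i.e.\ \eqref{eq:192}, is essentially immediate: the lower bound $\bipo(v,w)\ge\bipo(v,0)$ is (I4) applied between $\lambda=0$ and $\lambda=1$; for the upper bound, if $w\notin K_0^*$ there is nothing to prove since $I_{K_0^*}(w)=+\infty$, while if $w\in K_0^*$ then \eqref{eq:191} gives $\bipo(\cdot,w)\le\bipo(\cdot,0)$, i.e.\ $\bipo(v,w)\le\bipo(v,0)+I_{K_0^*}(w)$.

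For the characterization \eqref{eq:49} of the contact set I would invoke plain Fenchel duality. By (I3) the function $f:=\bipo(\cdot,w)$ is a finite, $1$-homogeneous, convex (hence sublinear) function, so $f=\sigma_{K_w^*}$ and $f^*=I_{K_w^*}$; moreover (I2) says precisely $w\in K_w^*$, hence $I_{K_w^*}(w)=0$ and $f(v)+f^*(w)=\bipo(v,w)$. Therefore the Fenchel equality for the conjugate pair $(f,f^*)$ at the point $(v,w)$ gives at once
\[
(v,w)\in\Contact\bipo\iff\bipo(v,w)=\langle w,v\rangle\iff f(v)+f^*(w)=\langle w,v\rangle\iff w\in\partial f(v)\iff v\in\partial f^*(w),
\]
which is exactly \eqref{eq:49}.

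For the ``more general'' statement I would use the standard description of the subdifferential of a support function: $\bar w\in\partial\bipo(\cdot,w)(v)=\partial\sigma_{K_w^*}(v)$ forces $\bar w\in K_w^*$ and $\langle\bar w,v\rangle=\sigma_{K_w^*}(v)=\bipo(v,w)$. The membership $\bar w\in K_w^*$ is then given; by \eqref{eq:191} it yields $\bipo(\cdot,\bar w)\le\bipo(\cdot,w)$, so $\bipo(v,\bar w)\le\langle\bar w,v\rangle$, and (I2) upgrades this to equality $\bipo(v,\bar w)=\langle\bar w,v\rangle=\bipo(v,w)$ — which simultaneously says $(v,\bar w)\in\Contact\bipo$ and $\bipo(v,w)=\bipo(v,\bar w)$. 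For the last assertion, that $\bar w\in\partial K_w^*$ implies $w\in K_{\bar w}^*$, I would start by choosing (supporting hyperplane theorem, valid since $0\in\mathrm{int}\,K_w^*$) a direction $v_*\neq0$ with $\langle\bar w,v_*\rangle=\sigma_{K_w^*}(v_*)$, i.e.\ $\bar w\in\partial\bipo(\cdot,w)(v_*)$; the step just proved gives $\bipo(v_*,w)=\bipo(v_*,\bar w)$, and from $\bar w\in K_w^*$ and \eqref{eq:191} we already have $K_{\bar w}^*\subseteq K_w^*$.

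The step I expect to be the real obstacle is to promote this to $w\in K_{\bar w}^*$, i.e.\ to $K_{\bar w}^*=K_w^*$. Here I would argue that, since $\bar w$ is a boundary point of the body $K_w^*$, for every $\delta>0$ the point $(1+\delta)\bar w$ lies outside $K_w^*$; hence, by \eqref{eq:191} and the total ordering of $\{K_{w'}^*\}$, necessarily $K_w^*\subsetneq K_{(1+\delta)\bar w}^*$, so that $w\in K_w^*\subseteq K_{(1+\delta)\bar w}^*$, i.e.\ $\langle w,v\rangle\le\bipo(v,(1+\delta)\bar w)$ for all $v$. Letting $\delta\downarrow0$ and passing to the limit in the right-hand side yields $\langle w,v\rangle\le\bipo(v,\bar w)$ for all $v$, that is $w\in K_{\bar w}^*$. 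The delicate point is precisely this last limit: the map $\lambda\mapsto\bipo(v,\lambda\bar w)$ is only known to be monotone (by (I4)), so right-continuity at $\lambda=1$ — equivalently $\bipo(v,\bar w)=\inf_{\delta>0}\bipo(v,(1+\delta)\bar w)$ — must be extracted from the continuity of $\bipo$ established in Theorem~\ref{thm:bipotential} (or, in the concrete cases, from the explicit formulas of Remark~\ref{rem:bip}), rather than from (I1)--(I5) alone.
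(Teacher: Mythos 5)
Your proof is correct, and for the three assertions that the paper actually argues --- \eqref{eq:192}, the Fenchel-duality characterization \eqref{eq:49}, and the implication $\bar w\in\partial\bipo(\cdot,w)(v)\Rightarrow(v,\bar w)\in\Contact\bipo$ with $\bipo(v,w)=\bipo(v,\bar w)$ --- it follows essentially the same route: (I4) plus \eqref{eq:191} for \eqref{eq:192}, the gauge/Legendre pair $\bipo(\cdot,w)$, $I_{K_w^*}$ for \eqref{eq:49}, and the chain ``$\bar w\in K_w^*$, hence $\bipo(v,\bar w)\le\bipo(v,w)=\la\bar w,v\ra\le\bipo(v,\bar w)$ by (I2)'' for the contact statement. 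Where you go beyond the paper is the final claim ``$\bar w\in\partial K_w^*\Rightarrow w\in K_{\bar w}^*$'', which the paper's proof does not address at all; your dilation argument ($(1+\delta)\bar w\notin K_w^*$ since $0\in\mathrm{int}\,K_w^*$, hence $w\in K^*_{(1+\delta)\bar w}$ by \eqref{eq:191}, then $\delta\downarrow0$) is a valid way to close this gap, and you are right that the limit step is the genuinely delicate point: it is not a consequence of (I1)--(I5) alone but requires the (right-)continuity of $\lambda\mapsto\bipo(v,\lambda\bar w)$, which in this paper comes from the continuity of $\bipo$ established in the proof of Theorem~\ref{thm:auxiliary}, or more directly from the representation \eqref{eq:188} of $K_w^*$ as the sublevel set $\{\Psi^*\le\Psi^*(w)\}$ (for which $\bar w\in\partial K_w^*$ forces $\Psi^*(\bar w)=\Psi^*(w)$ and hence $K_{\bar w}^*=K_w^*\ni w$). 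So the statement as phrased, attributing everything to (I1)--(I5), is slightly optimistic on this last point, and your proposal correctly identifies and repairs the omission.
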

\begin{proof}
  The chain of inequalities in~\eqref{eq:192} is an immediate
  consequence of (I4) and of \eqref{eq:191}. \eqref{eq:49} is a direct
  consequence of the fact that $v\mapsto \bipo(v,w)$ is a gauge
  function and $I_{K_w^*}$ is its Legendre transform.

  In order to check the last statement,
  given $v\in \xfin,w\in \xfin^*$ let us take $\bar w\in \partial\bipo(\cdot,w)(v)$ so that
  $\bar w\in K_w^*$ and $\bipo(v,w)=\la \bar w ,v\ra$. Combining (I2) with \eqref{eq:191} we get
  $\bipo(v,w)=\bipo(v,\bar w)$, so that $(v,\bar w)\in \Contact\bipo$.
\end{proof}
\begin{remark}
  \label{rem:sublevels}
  Properties (I1,I2,I5) suggest a strong analogy between $\bipo$ and
  the notion of bipotential introduced by
  \cite{Buliga-deSaxce-Vallee08}: according to
  \cite{Buliga-deSaxce-Vallee08}, a bipotential is a functional $\frak
  b:\xfin\times \xfin^*\to ({-}\infty,+\infty]$ which is \emph{convex}
  and lower semicontinuous in each argument, satisfies (I2), and whose
  contact set fulfils a condition similar to \eqref{eq:49}
\[
(v,w)\in \Contact{\frak b}\quad\Leftrightarrow\quad w\in \partial
{\frak b}(\cdot,w)(v)\quad \Leftrightarrow\quad v\in \partial {\frak
  b}(v,\cdot)(w).
\]
  In our situation, \eqref{eq:49} is a direct consequence of
  the homogeneity of $\bipo$, but the convexity condition with respect to $w$ looks too
  restrictive, as shown by
  this simple example.
  Consider the case $\xfin=\xfin^*=\R^2$, with $\Psi(v):=\|v\|_1+\Psi_V(v)$, $\|v\|_1:=|v_1|+|v_2|$,
  and
  \begin{displaymath}
     \Psi_V(v):=\frac 12 v_1^2+\frac 14 v_2^4,\quad v=(v_1,v_2)\in \R^2;\qquad
    \Psi_V^*(w)=\frac 12 w_1^2+\frac 34 w_2^{4/3}\quad
    w=(w_1,w_2)\in \R^2.
  \end{displaymath}
  By \eqref{eq:34} we have
   $ \bipo(v,w)=\|v\|_1{+}\bipo_V(v,w)$ with $ \bipo_V(v,w)=\inf_{\eps>0}\tfrac1\eps\Big(\Psi_V(\eps v){+}
    \Psi^*(w)\Big)$ and find
\[
 \Psi^*(w)=\frac 12(|w_1|-1)_+^2+\frac 34(|w_2|-1)_+^{4/3 }\, .
\]
Considering the special case $v=(v_1,0),\ w=(0,w_2)$, we obtain
   \[
    \bip_V((v_1,0),(0,w_2))=\sqrt{ 3/2}\,|v_1|\Big((|w_2|-1)_+
    \Big)^{2/3}.
    \]
  The map $w_2\mapsto \bipo((v_1,0),(0,w_2))$ is therefore not convex.
\end{remark}

Let us  now  consider some properties of $\bipo$ and its contact set
$\Contact\bipo$ involving explicitly the functional $\Psi$. Since
the {\bipotential} $\bipo$ is  defined through the minimum
procedure~\eqref{eq:68bis}, the contact set is strictly related to
the set of optimal $\eps>0$ attaining the minimum in
\eqref{eq:68bis}.
\begin{definition}[Lagrange multipliers]
  For every $(v,w)\in \xfin\times \xfins$
  we introduce the multivalued function
  $\fl$ (with possibly empty values)
  \begin{equation}
    \label{eq:111}
    \fl(v,w):=\Big\{\eps\ge 0:\bipo(v,w)=\Psi_\eps(v)+\Psi^*_\eps(w)\Big\}\subset [0,+\infty).
  \end{equation}
\end{definition}
Notice that for every $(v,w)\in \xfin\times\xfin^*$ the function
$\eps\mapsto \eps^{-1}\Psi(\eps v)+\eps^{-1}\Psi^*(w)$ is convex on
$(0,+\infty)$. Since $\Psi$ has superlinear growth at infinity, it
goes to $+\infty$ as $\eps\uparrow +\infty$ if $v\neq 0$, so that
\begin{equation}
\label{e:new-entry} \text{the set $\fl(v,w)$ is always a bounded
closed interval if $v\neq 0$.}
\end{equation}
\begin{theorem}[Properties of $\bipo,\Psi$ and $\Contact\bipo$]
  \label{thm:auxiliary}
  \
  \begin{enumerate}[\rm (P1)]
\item The {\bipotential} $\bipo$ satisfies $\bipo(v,0)=\Psi_0(v)$,
    $K_0^*=\Kx$, and in particular
    \begin{gather}
    \label{eq:13b}
    \bipo(v,w)\ge \la w,v\ra,\quad
    \Diss(v)+\rmI_{\Kx}(w)\ge\bipo(v,w)\ge \Diss(v)\ge 0\quad\text{for every }v\in \xfin,\ w\in \xfins,\\
    \label{eq:112}
    \bipo(v,w)=\Diss(v)\quad \Leftrightarrow\quad
    w\in \Kx.
  \end{gather}
\item For every $w \in \xfins$, the convex sets $K_w^*$ are the
  sublevels of $\Psi^*$
  \begin{equation}
      \label{eq:188}
      K_w^*=\Big\{z\in \xfin^*:\Psi^*(z)\le \Psi^*(w)\Big\},
    \end{equation}
  and $\bipo$ admits the dual representation
  \begin{equation}
    \label{eq:47}
    \bipo(v,w)=\sup\Big\{\la z,v\ra\,:\,z \in \xfin^*, \ \Psi^*(z)\le
    \Psi^*(w)\Big\}.
  \end{equation}
    In particular, $\Psi^*(w_1)\le \Psi^*(w_2)$ for
    some $w_1,\, w_2 \in \xfins$ if and only if
    $\bipo(v,w_1)\le\bipo(v,w_2)$ for every $v\in \xfin$.
\item The multivalued function $\fl$ defined in \eqref{eq:111} is upper semicontinuous, i.e.
  \begin{equation}
    \label{eq:44}
    \text{if }(v_n,w_n)\to (v,w)\in \xfin\times \xfins\text{ and }
    \eps_n\in \fl(v_n,w_n)\to \eps,\quad
    \text{then}\quad
    \eps\in \fl(v,w).
  \end{equation}
\item  The contact set $\Contact\bipo$ \eqref{eq:53b} can be characterized by
  \begin{equation}
    \label{eq:32b}
    w\in \partial\Diss(v)\subset \Kx\quad\text{or,}\quad \text{if} \
    w\not\in \Kx,\quad
    \exists\, \eps>0: w\in \partial\Psi(\eps v),
  \end{equation}
  and the last inclusion holds exactly for $\eps\in \fl(v,w)$. 
  Equivalently,
   \[
    (v,w)\in \Contact\bipo\quad\Leftrightarrow\quad
    w\in \partial\Psi_\eps(v)\quad\text{for every }\eps\in \fl(v,w).
    \]
  In particular, in the case of \emph{additive viscosity}, with  $\Psi(v)=
  \Diss(v)+\Psi_V(v)$ and $\Psi_V$ satisfying \eqref{eq:25}, we simply have
  \begin{equation}
    \label{eq:33}
    (v,w)\in \Contact\bipo\quad
     \Longleftrightarrow\quad
     \exists\,\lambda\ge 0:\quad
     w\in \partial\Diss(v)+\partial\Psi_V(\lambda v).
   \end{equation}
\end{enumerate}
\end{theorem}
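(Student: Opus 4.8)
The plan is to prove each of the four assertions (P1)--(P4) in turn, working directly from the definition \eqref{eq:68bis} of $\bipo$ and from the general properties (I1)--(I7) established earlier, and specializing to the concrete structure $\Psi_\eps(v)=\eps^{-1}\Psi(\eps v)$, $\Psi_\eps^*(w)=\eps^{-1}\Psi^*(w)$.

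\textbf{Step (P1).} For $w=0$ we have $\Psi_\eps^*(0)=\eps^{-1}\Psi^*(0)=0$ by \eqref{eq:6}, so $\bipo(v,0)=\inf_{\eps>0}\Psi_\eps(v)=\Psi_0(v)$ by the monotonicity noted in Remark~\ref{rem:prop-psi} together with the limit \eqref{eq:21}. Since $v\mapsto\bipo(v,0)=\Psi_0(v)$ is the support function of $\Kx$ by (K2), and $K_0^*$ is by definition the convex set whose support function is $\bipo(\cdot,0)$, uniqueness of the set with a given support function gives $K_0^*=\Kx$. The first inequality in \eqref{eq:13b} is just (I2); the chain $\Diss(v)+\rmI_{\Kx}(w)\ge\bipo(v,w)\ge\Diss(v)$ follows from \eqref{eq:192} (which reads $\bipo(v,0)+I_{K_0^*}(w)\ge\bipo(v,w)\ge\bipo(v,0)$) once we substitute $\bipo(v,0)=\Diss(v)$ and $K_0^*=\Kx$, using that $\Diss(v)\ge 0$. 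For \eqref{eq:112}: if $w\in\Kx$ then $\Psi_\eps^*(w)=\eps^{-1}\rmI_{\Kx}(w)+\eps^{-1}\big(\Psi^*(w)-\rmI_{\Kx}(w)\big)$—more simply, from the squeeze in \eqref{eq:13b} and $\rmI_{\Kx}(w)=0$ we get $\bipo(v,w)=\Diss(v)$; conversely if $w\notin\Kx$, then $\Psi_{0*}(w)>1$ by (K3), and choosing $v_0$ with $\langle w,v_0\rangle>\Diss(v_0)$ we get $\bipo(v_0,w)\ge\langle w,v_0\rangle>\Diss(v_0)$ by (I2), so equality fails for this $v_0$.

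\textbf{Step (P2).} This is the heart of the matter and the main obstacle. The claim \eqref{eq:188} says the level set $K_w^*=\{z:\langle z,v\rangle\le\bipo(v,w)\ \forall v\}$ equals the sublevel $\{z:\Psi^*(z)\le\Psi^*(w)\}$. For one inclusion: if $\Psi^*(z)\le\Psi^*(w)$, then for every $\eps>0$ and every $v$, $\langle z,v\rangle\le\Psi_\eps(v)+\Psi_\eps^*(z)=\Psi_\eps(v)+\eps^{-1}\Psi^*(z)\le\Psi_\eps(v)+\eps^{-1}\Psi^*(w)$ by the Fenchel--Young inequality; taking the infimum over $\eps$ gives $\langle z,v\rangle\le\bipo(v,w)$, i.e.\ $z\in K_w^*$. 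For the reverse inclusion, I would argue by contradiction: if $\Psi^*(z)>\Psi^*(w)$, I need to produce $v$ with $\langle z,v\rangle>\bipo(v,w)$. Here I would use that $\bipo(v,w)=\inf_\eps\eps^{-1}(\Psi(\eps v)+\Psi^*(w))$ and pick $v$ in the (nonempty, by superlinearity) subdifferential direction realizing $\Psi^*(z)$—concretely, choose $\bar v$ with $\langle z,\bar v\rangle=\Psi^*(z)+\Psi(\bar v)$ and then rescale: for $v=\bar v/\eps_0$ with a well-chosen $\eps_0$, compute $\bipo(v,w)\le\eps_0^{-1}(\Psi(\bar v)+\Psi^*(w))<\eps_0^{-1}(\Psi(\bar v)+\Psi^*(z))=\langle z,v\rangle$ using the optimality and $\Psi^*(w)<\Psi^*(z)$. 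The dual representation \eqref{eq:47} is then immediate since $\bipo(\cdot,w)$ is the support function of $K_w^*$, and the final equivalence $\Psi^*(w_1)\le\Psi^*(w_2)\iff\bipo(\cdot,w_1)\le\bipo(\cdot,w_2)$ follows because sublevels are nested iff the values are ordered, combined with \eqref{eq:191}.

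\textbf{Steps (P3) and (P4).} For (P3), upper semicontinuity of $\fl$: suppose $(v_n,w_n)\to(v,w)$ and $\eps_n\in\fl(v_n,w_n)$, $\eps_n\to\eps$. If $v\neq 0$, then by \eqref{e:new-entry} the $\eps_n$ stay in a bounded interval bounded away from $0$; passing to the limit in $\bipo(v_n,w_n)=\eps_n^{-1}\Psi(\eps_n v_n)+\eps_n^{-1}\Psi^*(w_n)$ using continuity of $\Psi$, $\Psi^*$, and of $\bipo$ (Theorem~\ref{thm:bipotential}) gives $\bipo(v,w)=\eps^{-1}\Psi(\eps v)+\eps^{-1}\Psi^*(w)$, i.e.\ $\eps\in\fl(v,w)$; the degenerate cases $v=0$ or $\eps\in\{0,+\infty\}$ need a separate short check using \eqref{eq:21}. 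For (P4), the characterization \eqref{eq:32b} of the contact set: by (I7), $(v,w)\in\Contact\bipo$ iff $w\in\partial\bipo(\cdot,w)(v)$. If $w\in\Kx$ then $\bipo(\cdot,w)=\Diss$ by \eqref{eq:112}, so the condition is $w\in\partial\Diss(v)$. If $w\notin\Kx$, I expand: $(v,w)\in\Contact\bipo$ means $\bipo(v,w)=\langle w,v\rangle$, and since $\bipo(v,w)=\inf_\eps(\Psi_\eps(v)+\Psi_\eps^*(w))$, for any optimal $\eps\in\fl(v,w)$ the Fenchel--Young inequality $\Psi_\eps(v)+\Psi_\eps^*(w)\ge\langle w,v\rangle$ holds with equality, i.e.\ $w\in\partial\Psi_\eps(v)=\partial\Psi(\eps v)$ (Remark~\ref{rem:prop-psi}); conversely if $w\in\partial\Psi(\eps v)$ for some $\eps$, then $\Psi_\eps(v)+\Psi_\eps^*(w)=\langle w,v\rangle$ so $\bipo(v,w)\le\langle w,v\rangle$, and with (I2) we get equality and $\eps\in\fl(v,w)$. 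The equivalence $(v,w)\in\Contact\bipo\iff w\in\partial\Psi_\eps(v)\ \forall\eps\in\fl(v,w)$ follows by combining these observations with the convexity in $\eps$. Finally \eqref{eq:33} for additive viscosity: from \eqref{eq:5}, $\partial\Psi(\lambda v)=\partial\Diss(v)+\partial\Psi_V(\lambda v)$ (sum rule, valid since $\Diss$ is finite everywhere), so the case $w\notin\Kx$ gives $w\in\partial\Diss(v)+\partial\Psi_V(\lambda v)$ for $\lambda=\eps$; the case $w\in\Kx$, i.e.\ $w\in\partial\Diss(v)$, is recovered by taking $\lambda=0$ since $\partial\Psi_V(0)\ni 0$ by \eqref{eq:25} (which forces $\Psi_V(0)=0$ to be the minimum). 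The main obstacle throughout is the non-obvious reverse inclusion in \eqref{eq:188} of (P2); everything else is a matter of carefully unwinding definitions and invoking Fenchel--Young together with the already-established intrinsic properties.
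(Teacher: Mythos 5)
Your overall architecture matches the paper's (Fenchel--Young for the easy inclusions, the multiplier set $\fl$ for the contact set in (P4)), but your treatment of (P2) is genuinely different and, for the statement as written, cleaner. The paper proves \eqref{eq:47} by showing that the supremum is \emph{attained}: for $\eps_0\in\fl(v,w)$ it exploits the optimality of $\eps_0$ from the left and from the right to produce $z_\pm\in\partial\Psi(\eps_0 v)$ with $\la z_-,v\ra\le\bipo(v,w)\le \la z_+,v\ra$, uses the Fenchel identity to see that $\Psi^*$ is affine on $\partial\Psi(\eps_0 v)$, and interpolates to find $z_\theta$ with $\Psi^*(z_\theta)=\Psi^*(w)$ and $\la z_\theta,v\ra=\bipo(v,w)$. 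You instead prove the set identity \eqref{eq:188} by contraposition: if $\Psi^*(z)>\Psi^*(w)$, pick $\bar v\in\partial\Psi^*(z)$ (nonempty since $\Psi^*$ is finite on the finite-dimensional $\xfins$) and observe that $\la z,\bar v/\eps_0\ra=\eps_0^{-1}\big(\Psi^*(z)+\Psi(\bar v)\big)>\eps_0^{-1}\big(\Psi^*(w)+\Psi(\bar v)\big)\ge\bipo(\bar v/\eps_0,w)$; note that \emph{any} $\eps_0>0$ works here, so the phrase ``well-chosen $\eps_0$'' is superfluous. This is correct and shorter; it only loses the attainment of the sup, which the theorem does not claim. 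To pass from \eqref{eq:188} to \eqref{eq:47} you invoke the support-function representation of $K_w^*$; be aware that the paper derives (I3) and the remark on $K_w^*$ \emph{from} (P2), so you should justify convexity and $1$-homogeneity of $\bipo(\cdot,w)$ independently (e.g.\ $\eps^{-1}\big(\Psi(\eps v)+\Psi^*(w)\big)$ is a perspective of the convex function $v\mapsto\Psi(v)+\Psi^*(w)$, so its infimum over $\eps$ is convex in $v$).

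There are two genuine, though repairable, gaps. First, in (P1) your proof of the implication $\bipo(v,w)=\Diss(v)\Rightarrow w\in\Kx$ only shows that, when $w\notin\Kx$, equality fails for \emph{some} $v_0$; the paper proves the pointwise version (for any fixed $v\neq 0$, a minimizing sequence $\eps_k$ must remain bounded by superlinearity of $\Psi$, which forces $\Psi^*(w)=0$ and hence $w\in\Kx$), and it is this pointwise version that is needed later (e.g.\ via its analogue \eqref{eq:14} in Section 6.3). Second, in (P3) the claim that the $\eps_n$ stay ``bounded away from $0$'' is false in general ($0\in\fl(v,w)$ whenever $w\in\Kx$) and also unnecessary, since convergence $\eps_n\to\eps$ is part of the hypothesis; the real content of \eqref{eq:44} is precisely the case $\eps=0$ that you defer, where one must show $\liminf_n\eps_n^{-1}\Psi^*(w_n)\ge\rmI_{\Kx}(w)$ as in \eqref{eq:70}, and one should not invoke continuity of $\bipo$ at this point, since its lower semicontinuity is established by this very argument.
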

\begin{proof}[Proofs of Theorems \ref{thm:auxiliary} and \ref{thm:bipotential}]
\ \\
\textbf{Ad (P1).} \, Inequalities \eqref{eq:13b} are
  immediate consequences of the definition of $\bipo$.
  The equality $\Diss(v)=\bipo(v,w)$ is equivalent to the existence of a sequence $\eps_k>0$ such that
  (recall that $\eps^{-1}\Psi_\eps(\eps v)\ge \Diss(v)$)
  \begin{displaymath}
    \lim_{k\to\infty} \eps_k^{-1}\Psi(\eps_k v)=\Diss(v),\quad \lim_{k\to\infty}\eps_k^{-1}\Psi^*(w)=0.
  \end{displaymath}
  Since the first inequality prevents $\eps_k$ from diverging to $+\infty$ (being $\Psi$ superlinear),
  from the second limit we get
  $\Psi^*(w)=0$, i.e.
  \begin{displaymath}
    \la w,z\ra\le \Psi(z)\quad \forall\, z\in \xfin.
  \end{displaymath}
  Replacing $z$ with $\eps z$, multiplying the previous inequality by $\eps^{-1}$, and passing to the limit as $\eps\downarrow 0$,
 in view of~\eqref{eq:21} we conclude
  \begin{displaymath}
    \la w,z\ra\le \Diss(z)\quad\forall\, z\in \xfin,\quad\text{so that}\quad
    w\in \Kx.
  \end{displaymath}
  The converse implication in~\eqref{eq:112} is immediate.
  \\
  \textbf{Ad (P2).}  Since the sublevels of $\Psi^*$ are closed and
  convex, a duality argument shows that \eqref{eq:188} is equivalent
  to \eqref{eq:47}. In order to prove the latter formula, let us
  observe that, if $\Psi^*(z)\le \Psi^*(w)$, then $\la z,v\ra \le
  \bipo(v,w)$, because the Fenchel inequality yields
  \begin{displaymath}
    \la z,v\ra=\eps^{-1}\la z, \eps v\ra \le
    \eps^{-1}\Psi(\eps v)+\eps^{-1} \Psi(z)
    = \Psi_\eps(v)+\Psi_\eps^*(z)
    \le
    \Psi_\eps(v)+\Psi_\eps^*(w)\quad\text{for every }\eps>0.
  \end{displaymath}
  We show that there exists $z\in \xfins$ such that $\Psi^*(z)\le
  \Psi^*(w)$ and $\bipo(v,w)=\la z,v\ra.$ Due to~\eqref{eq:112}, if
  $w\in \Kx$, then $\bipo(v,w)=\Diss(v)$ and the thesis follows from
  \eqref{eq:38} Hence, let us suppose that $w\not\in \Kx$ and $v\neq
  0$; then we can choose $\eps_0\in\fl(v,w)$, $ \eps_0>0$, such that
  \begin{equation}
  \label{e:added}
    \bipo(v,w)=\eps_0^{-1}\Psi(\eps_0 v)+\eps_0^{-1}\Psi^*(w)\le
    \eps^{-1}\Psi(\eps v)+\eps^{-1}\Psi^*(w)\quad
    \text{for every }\eps>0.
  \end{equation}
  Choosing $z_\eps\in \partial\Psi(\eps v)$
  we have
  \begin{displaymath}
    \Psi(\eps v)-\Psi(\eps_0 v)\le \la z_\eps, (\eps-\eps_0) v\ra\quad\text{for every }\eps>0
  \end{displaymath}
  so that, in view of inequality~\eqref{e:added},
  \begin{align*}
    \big(\eps^{-1}-\eps_0^{-1}\big)\Big(\Psi(\eps_0 v)+\Psi^*(w)\Big)+\eps^{-1}\la  z_\eps , (\eps-\eps_0) v \ra
    \ge 0\quad\text{for every }\eps>0.
  \end{align*}
  Dividing by $\eps-\eps_0$ and passing to the limit first as $\eps\down\eps_0$
  and then as $\eps\up\eps_0$, we thus find $z_\pm\in \partial\Psi(\eps_0 v)$
  (accumulation points of the sequences $(z_\eps:\eps>\eps_0)$ and
  $(z_\eps:\eps<\eps_0)$, respectively),  such that
  \begin{equation}
    \label{eq:51}
    \la z_- ,v\ra\le \bipo(v,w)=
    \eps_0^{-1}\Big(\Psi(\eps_0 v)+\Psi^*(w)\Big)\le \la z_+, v \ra.
  \end{equation}
  On the other hand,  the Fenchel identity of convex analysis  yields
  \begin{equation}
    \label{eq:52}
    \eps_0^{-1}\Psi^*(z)=\la z,v\ra-\eps_0^{-1}\Psi(\eps_0 v)\quad \text{for every }z\in \partial
    \Psi(\eps_0 v)
  \end{equation}
  so that the map $z\mapsto \Psi^*(z)$ is affine on $\partial\Psi(\eps_0 v)$ and a comparison
  between \eqref{eq:51} and \eqref{eq:52} yields
    \[
    \Psi^*(z_-)\le \Psi^*(w)\le \Psi^*(z_+).
    \]
 Using formula~\eqref{eq:52} we can thus find $\theta\in [0,1]$ and $z_\theta:=(1-\theta)z_-+\theta z_+\in
  \partial\Psi(\eps_0 v)$ such that
   \[
    \Psi^*(z_\theta)=\Psi^*(w),\quad
    \la z_\theta, v \ra= \bipo(v,w)=
    \eps_0^{-1}\Big(\Psi(\eps_0 v)+\Psi^*(w)\Big).
    \]
    The last statement of (P2) follows easily. One implication is
    immediate. On the other hand,  if $\Psi^*(w_1)>\Psi^*(w_2)$, then by the Hahn-Banach
    separation theorem we can find $\bar v\in \xfin$ and $\delta>0$
    such that
    \begin{displaymath}
      \la w_1,  \bar v \ra \ge \delta+\la z, \bar v\ra \quad
     \text{for every $z\in \xfin^*$ such that $\Psi^*(z)\le \Psi^*(w_2)$},
    \end{displaymath}
    and, therefore, by~\eqref{eq:47} we conclude $\bipo(\bar v,w_1)\ge
    \la w_1, \bar v \ra\ge \delta+\bipo(\bar v,w_2)$.

  \noindent
  \textbf{Ad (I1,2,3,4,5)} These properties  directly follow from
  (P2).

  \noindent
  \textbf{Ad (P3) and continuity of $\bipo$.} Notice that $\bipo$ is
  upper semicontinuous, being defined as the infimum of a family of
  continuous functions.  Take now converging sequences $(v_n),\,
  (w_n),\, (\eps_n)$ as in~\eqref{eq:44}: we have that
  \[
    \liminf_{n\to\infty}\left(\eps_{n}^{-1}\Psi(\eps_{n}v_{n})+\eps^{-1}_{n}\Psi^*(w_{n})\right)\ge
    \Psi_\eps(v)+\Psi_\eps^*(w)=
    \begin{cases}
      \eps^{-1}\Psi(\eps v)+\eps^{-1}\Psi^*(w)&\text{if }\eps>0,\\
      \Psi_0(v)+\rmI_\Kx(w)&\text{if }\eps=0.
    \end{cases}
    \]
  Since
  \begin{equation}
  \begin{aligned}
  \label{eq:added2}
    \bipo(v,w)
    \ge \liminf_{n\to\infty}\bipo(v_{n},w_{n}) & \ge
    \liminf_{n\to\infty}\left(\eps_{n}^{-1}\Psi(\eps_{n}v_{n})+\eps^{-1}_{n}\Psi^*(w_{n})\right)
   \\ &  \ge   \Psi_\eps(v)+\Psi_\eps^*(w)\ge
     \bipo(v,w),
  \end{aligned}
  \end{equation}
  we obtain $\eps\in \fl(v,w)$.
  \\
  Inequality~\eqref{eq:added2} shows that $\bipo$ is also lower semicontinuous,
   since,
   if $v\neq0$, any sequence $\eps_n\in\fl(v_n,w_n)$ admits a converging subsequence,
   in view of~\eqref{e:new-entry}.
  \\
\textbf{Ad (P4).} Concerning the characterization \eqref{eq:32b} of $\Contact\bipo$,
   it is easy to check that,
  if $(v,w)$ satisfies \eqref{eq:32b}, then by the  Fenchel identity
  and formula~\eqref{eq:18}
  we have, when $w\in \Kx,$
  \begin{displaymath}
    \bipo(v,w)\ge\la w, v \ra=\Psi_0(v)=\bipo(v,w),
  \end{displaymath}
  and, when $w\not\in\Kx$,
  \begin{displaymath}
    \bipo(v,w)\ge \la w, v \ra=\eps^{-1}\la w, \eps v \ra=\eps^{-1}\Psi(\eps v)+\eps^{-1}\Psi^*(w)\ge \bipo(v,w)
  \end{displaymath}
  so that $(v,w)\in \Contact\bipo$ and $\eps\in \fl(v,w)$.
  Conversely, if $\bipo(v,w)=\la w, v \ra$ and
  $w\in \Kx$, then by \eqref{eq:13b} $\Diss(v)=\la w, v \ra$ and therefore $w\in \partial\Diss(v)$.
  If $w\not\in \Kx$, then, choosing $\eps\in \fl(v,w)$, we have
  \begin{displaymath}
    \Psi(\eps v)+\Psi^*(w)=\eps \bipo(v,w)=\la w, \eps v \ra,\quad\text{so that}\quad
    w\in \partial\Psi(\eps v).
  \end{displaymath}
  In the particular case of~\eqref{eq:5},
  \eqref{eq:33} follows now from  \eqref{eq:32b} by the sum rule of the subdifferentials and the
  $0$-homogeneity of $\partial\Diss$.
\end{proof}
\section{$\BV$ solutions and energy-driven dissipation}
\label{sec:BV-solutions}
 \subsection{$\BV$ solutions}
 \label{ss:4.1}
 We can now give our precise definition of $\BV$ solution of
 the rate-independent system {\VRIS}, driven by the {\bipotential} $\bipo$
 \eqref{eq:68bis} and the energy $\cE$.
 From a formal point of view, the definition simply replaces
 the global stability condition \eqref{eq:1} by the local one \eqref{eq:65}, and
 the $\Psi_0$-total variation in the energy balance \eqref{eq:2} by
 the ``Finsler'' total variation \eqref{eq:81}, induced by $\bipo$ and $\cE$.
\begin{definition}[$\BV$ solutions, variational characterization]
  \label{def:BV-solution}
  A curve $u\in \BV([0,T];\xfin)$ is a $\BV$ solution of the rate independent
  system {\VRIS}
   the \emph{local stability} \eqref{eq:65bis}
  and the
  $(\bipcE)$-\emph{energy balance} hold:
  \begin{equation}
    \label{eq:65bis}
    \tag{$\mathrm{S}_\mathrm{loc}$}
    -\rmD\ene t{u(t)}\in K^*\quad \forae\, t \in [0,T] \setminus J_u
  \end{equation}
  \begin{equation}
    \label{eq:84}
    \pVar{\bipcE}u{0}{t}+\ene{t}{u(t)}=\ene{0}{u(0)}+
    \int_{0}^{t} \partial_t\ene s{u(s)}\,\d s \quad \text{ for all } t\in [0,T].
    \tag{E$_{\bipcE}$}
  \end{equation}
\end{definition}
We shall see in the next Section \ref{subsec:viscous-limit} that any
pointwise limit, as $\eps \down 0$, of the solutions $(u_\eps)$ of
the viscous equation \eqref{e:ris_eps} or, as $\tau,\, \eps \down
0$, of the discrete solutions $(\overline{\Uu}_{\tau,\eps})$ of the
viscous incremental problems \eqref{eq:61},  is a  $\BV$ solutions
induced by the {\bipotential} $\bipo$. Let us first get more insight
into  Definition~\ref{def:BV-solution}. 
\paragraph{\textbf{Properties of $\BV$ solutions.}}
 As in the case of
energetic solutions, it is not difficult to see that the energy
balance \eqref{eq:84} holds on any subinterval $[t_0,t_1]\subset
[0,T]$; moreover, if the local stability condition \eqref{eq:65bis}
holds, to check \eqref{eq:84} it is sufficient to prove the
corresponding inequality.
\begin{proposition}
  \label{prop:useful}
  If $u\in \BV([0,T];\xfin)$ satisfies \eqref{eq:84}, then for every subinterval $[t_0,t_1]$
  there holds
  \begin{equation}
    \label{eq:73}
    \pVar{\bipcE}u{t_0}{t_1}+\ene{t_1}{u(t_1)}=\ene{t_0}{u(t_0)}+
    \int_{t_0}^{t_1} \partial_t\ene s{u(s)}\,\d s.
    \tag{E$'_{\bipcE}$}
  \end{equation}
  Moreover, if $u$ satisfies \eqref{eq:65bis}, then \eqref{eq:84} is equivalent
  to the energy inequality
  \begin{equation}
    \label{eq:85}
    \pVar{\bipcE}u{0}{T}+\ene{T}{u(T)}\le \ene{0}{u(0)}+
    \int_{0}^{T} \partial_t\ene s{u(s)}\,\d s.
    \tag{E$_{\bipcE;\text{ineq}}$}
  \end{equation}
\end{proposition}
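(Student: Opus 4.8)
The plan is to isolate two facts from which both assertions of the proposition follow at once: the \emph{additivity} of the pseudo-total variation $\pVarname{\bipcE}$ over adjacent subintervals, and a \emph{lower energy estimate} valid for every $\BV$ curve obeying the local stability condition~\eqref{eq:65bis}. Granting these, the first statement is a one-line subtraction and the equivalence in the second statement is a standard ``sandwich'' argument.

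\textbf{Step 1 (additivity and proof of~\eqref{eq:73}).} First I would check that, for all $0\le a<b<c\le T$,
\[
  \pVar{\bipcE}{u}{a}{c}=\pVar{\bipcE}{u}{a}{b}+\pVar{\bipcE}{u}{b}{c}.
\]
The diffuse part $\Mint{\cdot}{\cdot}{\Psi_0}{u'_\co}$ splits additively because $u'_\co(\{b\})=0$. For the jump part~\eqref{eq:37bis} the only point is that the two boundary terms created at $b$ recombine, $\Cost{\bipcE}{b}{u(b_-)}{u(b)}+\Cost{\bipcE}{b}{u(b)}{u(b_+)}=\TriCost{\bipcE}{b}{u(b_-)}{u(b)}{u(b_+)}$, which supplies the missing $t=b$ summand when $b\in\rmJ_u$ and vanishes when $b\notin\rmJ_u$ (here one uses that $v\mapsto\bipo(v,w)$ is $1$-homogeneous by~(I3), so constant paths have zero $\bipo$-cost). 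Once this is in hand, writing \eqref{eq:84} at $t_1$ and at $t_0$ and subtracting yields \eqref{eq:73}, exactly as in the derivation of~\eqref{eq:74} from~\eqref{eq:2} in Proposition~\ref{le:global-diff}.

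\textbf{Step 2 (lower energy estimate under~\eqref{eq:65bis}).} The heart of the matter is the inequality
\[
  \pVar{\bipcE}{u}{t_0}{t_1}+\ene{t_1}{u(t_1)}\ \ge\ \ene{t_0}{u(t_0)}+\int_{t_0}^{t_1}\partial_t\ene{s}{u(s)}\,\d s,\qquad [t_0,t_1]\subset[0,T],
\]
valid whenever $u$ satisfies \eqref{eq:65bis}. I would derive it from the $\BV$ chain rule for $\cE\circ u$ already used in the proof of Proposition~\ref{le:local-diff}, i.e.\ the identity~\eqref{eq:89}, which rewrites $\ene{t_0}{u(t_0)}-\ene{t_1}{u(t_1)}+\int_{t_0}^{t_1}\partial_t\cE$ as the diffuse integral $\int_{t_0}^{t_1}\langle-\rmD\ene{t}{u(t)},\tfrac{\d u'_\co}{\d\mu}\rangle\,\d\mu$ minus the jump sum $\JVar{}{\cE}{t_0}{t_1}$. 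For the diffuse integral, \eqref{eq:65bis} gives $-\rmD\ene{t}{u(t)}\in\Kx$ for $\mu$-a.e.\ $t$ --- in fact for every $t\notin\rmJ_u$, since the a.e.\ statement upgrades to every such $t$ by continuity of $u$ off $\rmJ_u$ and closedness of $\Kx$, and since $\mu(\rmJ_u)=0$ --- whence $\langle-\rmD\ene{t}{u(t)},\tfrac{\d u'_\co}{\d\mu}\rangle\le\Psi_0\big(\tfrac{\d u'_\co}{\d\mu}\big)$ by~\eqref{e:2.3}, so this integral is $\le\Mint{t_0}{t_1}{\Psi_0}{u'_\co}$. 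For the jump sum I would use the ``frozen-time'' inequality $\ene{t}{u_0}-\ene{t}{u_1}\le\Cost{\bipcE}{t}{u_0}{u_1}$, which is exactly the computation leading to~\eqref{eq:79}: along any $\vartheta\in\AC$ joining $u_0$ to $u_1$ the $\rmC^1$ chain rule gives $\ene{t}{u_0}-\ene{t}{u_1}=\int\langle-\rmD\ene{t}{\vartheta},\dot\vartheta\rangle\le\int\bipo(\dot\vartheta,-\rmD\ene{t}{\vartheta})$ by~(I2), and one takes the infimum. Applying this term by term to the two endpoint contributions and the at most countably many interior jumps in $\JVar{}{\cE}{t_0}{t_1}$, against the matching pieces of~\eqref{eq:37bis}, gives $-\JVar{}{\cE}{t_0}{t_1}\le\JVar{\bipcE}{u}{t_0}{t_1}$, and the lower estimate follows.

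\textbf{Step 3 (conclusion of the equivalence).} The implication \eqref{eq:84}$\Rightarrow$\eqref{eq:85} is immediate (take $t=T$). Conversely, assume \eqref{eq:65bis} and \eqref{eq:85}, fix $t\in[0,T]$, and apply the lower estimate of Step~2 on $[0,t]$ and on $[t,T]$; adding the two and using additivity $\pVar{\bipcE}{u}{0}{T}=\pVar{\bipcE}{u}{0}{t}+\pVar{\bipcE}{u}{t}{T}$ yields $\pVar{\bipcE}{u}{0}{T}+\ene{T}{u(T)}\ge\ene{0}{u(0)}+\int_0^T\partial_t\cE$. Comparison with \eqref{eq:85} forces every inequality involved to be an equality; in particular the one on $[0,t]$ is, which is precisely \eqref{eq:84} at that $t$. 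I expect the only genuinely delicate points to be the bookkeeping in Steps~1 and~2 --- matching the boundary and interior jump terms of $\JVar{}{\cE}{}{}$ and $\JVar{\bipcE}{u}{}{}$ and invoking the $\BV$ chain rule correctly at jump times --- while the underlying inequalities (\eqref{eq:13b}, and the frozen-time bound behind~\eqref{eq:79}) are already available.
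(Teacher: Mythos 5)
Your proof is correct and follows essentially the same route as the paper: the paper derives \eqref{eq:73} from the additivity property of $\pVarname{\bipcE}$ exactly as in your Step~1, and for the equivalence it simply defers to \cite[Prop.~4]{Mielke-Rossi-Savare08}, whose argument is precisely your Steps~2--3 (the $\BV$ chain rule \eqref{eq:89}, local stability to bound the diffuse part, and the frozen-time bound behind \eqref{eq:79} for the jump part). Your explicit handling of the upgrade of \eqref{eq:65bis} to every $t\notin\rmJ_u$, needed because $\mu$ may charge Lebesgue-null sets through the Cantor part, is a detail the paper leaves implicit.
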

\begin{proof}
  \eqref{eq:73} easily follows from the additivity property
  \begin{equation}
    \label{eq:86}
    \forall\, 0\le t_0<t_1<t_2\le T:\quad
    \pVar{\bipcE}u{t_0}{t_1}+\pVar{\bipcE}u{t_1}{t_2}=\pVar{\bipcE}u{t_0}{t_2}.
  \end{equation}
  In order to prove the second inequality we argue as in \cite[Prop.~4]{Mielke-Rossi-Savare08}, taking
  \eqref{eq:65bis} into account.
\end{proof}
Notice that, by \eqref{eq:83}, any $\BV$ solution is also a local
solution according to Definition \ref{def:local}, i.e.~it  satisfies
the local stability condition and energy inequality \eqref{eq:2bis}.
In fact, one has a more accurate description of the jump conditions,
as the following Theorem shows (cf.~with
Propositions~\ref{le:global-diff} and~\ref{le:local-diff}).
\begin{theorem}[Differential characterization of $\BV$ solutions]
  \label{thm:def-bv-solutions}
  A curve $u\in \BV([0,T];\xfin)$ is a $\BV$ solution of the rate-independent
  system {\VRIS}
  if and only if
  it satisfies the doubly nonlinear differential inclusion in the $\BV$ sense
 \begin{equation}
    \label{eq:66bis}
    \tag{DN$_{0,\BV}$}
    \partial\Psi_0\Big(\frac {\d u'_\co}{d \mu}(t)\Big)+\rmD\ene t{u(t)}\ni 0\quad \text{for $\mu$-a.e.\ $t\in [0,T]$},
     \quad \mu:=\Leb 1+|u_\Ca'|,
  \end{equation}
  and the following jump conditions at each point $t\in\rmJ_u$ of the jump set
  \eqref{eq:88}
  \begin{equation}
    \label{eq:67}
      \begin{gathered}
    \ene{t}{u(t)}-\ene t{u(t_-)}=-\Cost{\bipcE}t{u(t_-)}{u(t)},\\
    \ene{t}{u(t_+)}-\ene t{u(t)}=-\Cost{\bipcE}t{u(t)}{u(t_+)},\\
    \ene{t}{u(t_+)}-\ene t{u(t_-)}=-\Cost{\bipcE}t{u(t_-)}{u(t_+)}.
  \end{gathered}
    \tag{J$_{\BV}$}
  \end{equation}
\end{theorem}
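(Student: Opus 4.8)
The plan is to establish the equivalence between the variational definition of $\BV$ solution (local stability \eqref{eq:65bis} plus energy balance \eqref{eq:84}) and the differential characterization \eqref{eq:66bis}--\eqref{eq:67}, following the template of Proposition~\ref{le:local-diff} but now keeping track of the \emph{finer} dissipation $\pVarname{\bipcE}$ in place of $\Varname{\Psi_0}$. Since by \eqref{eq:83} any $\BV$ solution is already a local solution, the differential inclusion \eqref{eq:66bis} on the diffuse part will come essentially for free; the genuine novelty lies in upgrading the jump \emph{inequalities} \eqref{eq:57bis} to the jump \emph{equalities} \eqref{eq:67}, and conversely in showing these equalities suffice to recover the energy balance.

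\textbf{From $\BV$ solution to the differential characterization.} First I would apply the chain rule for the composition of the $\rmC^1$ energy $\cE$ with the $\BV$ curve $u$ (as in \eqref{eq:89}), splitting the distributional derivative $u'$ into its diffuse part $u'_\co$ and its jump part, and localizing the energy balance \eqref{eq:73} on arbitrary subintervals. On the diffuse part this yields, exactly as in Proposition~\ref{le:local-diff}, the inequality $\langle -\rmD\ene t{u(t)}, \d u'_\co/\d\mu\rangle\,\mu \ge \Psi_0(\d u'_\co/\d\mu)\,\mu$, which combined with the local stability \eqref{eq:65bis} and the characterization \eqref{eq:18} of $\partial\Psi_0$ gives \eqref{eq:66bis}. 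The key point is that the integrand $\Mdiff{\Psi_0}{u'_\co}$ appearing in $\pVarname{\bipcE}$ is the \emph{same} as in $\Varname{\Psi_0}$ (cf.\ \eqref{eq:81}), so $\bipo$ does not alter the diffuse-part analysis — it only enters through the jump terms. For the jumps, I would localize \eqref{eq:73} on a shrinking interval $(t-\delta, t+\delta)$ around a jump point $t\in\rmJ_u$: since the diffuse contribution vanishes as $\delta\down 0$ and the energy is continuous in time, one is left with relations of the form $\ene t{u(t_+)} - \ene t{u(t_-)} = -\TriCost{\bipcE}t{u(t_-)}{u(t)}{u(t_+)}$, together with the one-sided versions. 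The fact that these are \emph{equalities} (not just inequalities as in \eqref{eq:57bis}) is precisely what the full energy \emph{balance} \eqref{eq:84} delivers, as opposed to the mere energy inequality of local solutions — this is the heart of the "$\BV$" improvement. One then checks the single-sided jump conditions \eqref{eq:67} separately by further localizing to the left and to the right of $t$; here the triangle-type inequality $\Cost{\bipcE}t{u_-}{u_+} \le \Cost{\bipcE}t{u_-}{u} + \Cost{\bipcE}t{u}{u_+}$ (immediate from the definition \eqref{eq:69} by concatenation of curves) combined with the chain rule along the segments forces equality in each piece.

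\textbf{From the differential characterization back to the energy balance.} Conversely, assuming \eqref{eq:66bis} and \eqref{eq:67}, the local stability \eqref{eq:65bis} follows from \eqref{eq:66bis} exactly as in Proposition~\ref{le:local-diff}, since $\partial\Psi_0$ takes values in $\Kx$ and $-\rmD\ene t{u(t)}\in\Kx$ for $\Leb^1$-a.e.\ $t$, hence by continuity for every $t\notin\rmJ_u$. For the energy balance, I would invoke the $\BV$ chain rule \eqref{eq:89} again, which splits $\ene{t_1}{u(t_1)} - \ene{t_0}{u(t_0)}$ into an integral of $-\langle\rmD\ene t{u(t)},\d u'_\co/\d\mu\rangle$ over the diffuse part, the power term $\int\partial_t\ene s{u(s)}\,\d s$, and a sum of jump energy drops $E_\pm(t)$. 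By \eqref{eq:66bis} and \eqref{eq:108} the diffuse integral equals $\Mint{t_0}{t_1}{\Psi_0}{u'_\co}$, and by the jump conditions \eqref{eq:67} the jump drops equal exactly the $\Cost{\bipcE}{}{}{}$ terms constituting $\JVar{\bipcE}u{t_0}{t_1}$. Summing, one obtains \eqref{eq:73}, hence \eqref{eq:84}.

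\textbf{Anticipated main obstacle.} The delicate step is the jump analysis — specifically, showing that localizing the global energy balance \eqref{eq:73} around a jump point yields an \emph{exact} identification of the energy drop with the Finsler cost $\Cost{\bipcE}{}{}{}$, rather than merely an inequality. Indeed one readily gets $\ge$ in one direction (from the chain rule along \emph{any} connecting curve, as in \eqref{eq:79}) and $\le$ in the other (from the lower bound $\bipo(v,w)\ge\langle w,v\rangle$, \eqref{eq:78}); the subtlety is that the \emph{definition} of $\pVarname{\bipcE}$ via \eqref{eq:37bis}--\eqref{eq:81} already builds in $\Cost{\bipcE}{}{}{}$ at each jump, so one must argue that the energy balance \eqref{eq:84}, which is an equality, cannot tolerate any slack in the jump terms — this requires the additivity \eqref{eq:86} plus a careful argument that $\pVarname{\bipcE}$ restricted to arbitrarily small neighborhoods of $t$ still accounts for the full jump cost (relying on $\Cost{\bipcE}t{\cdot}{\cdot}$ being continuous in $t$ and on $\JVarname{\bipcE}$ being defined pointwise). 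The decomposition into the three separate conditions in \eqref{eq:67} (left jump, right jump, and total) then follows from the subadditivity of $\Cost{\bipcE}{}{}{}$ together with the fact that equality must propagate to each summand. I expect this to essentially mirror the corresponding argument in \cite{Mielke-Rossi-Savare08}, but streamlined thanks to the more compact formulation of $\pVarname{\bipcE}$ adopted here.
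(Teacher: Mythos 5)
Your proposal is correct and follows essentially the same route as the paper: the differential inclusion on the diffuse part is inherited from the local-solution characterization (Proposition~\ref{le:local-diff}) since any $\BV$ solution is a local solution by \eqref{eq:83}, the jump equalities \eqref{eq:67} are obtained by localizing the energy balance \eqref{eq:73} around each jump time (with the third identity forced by the triangle inequality for $\Cost{\bipcE}{}{}{}$ together with the upper bound \eqref{eq:79}), and the converse reruns the chain-rule argument \eqref{eq:89}--\eqref{eq:108} with the jump inequalities \eqref{eq:109} replaced by the identities coming from \eqref{eq:67}.
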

\begin{proof}
  We have already seen (see Lemma \ref{le:local-diff})
  that local solutions satisfy
  \eqref{eq:66bis}. The jump conditions~\eqref{eq:67} can be obtained by
  localizing \eqref{eq:73} around any jump time $t\in \rmJ_u$.

  Conversely,
to prove~\eqref{eq:85} (as seen in the proof of
Lemma~\ref{le:local-diff}, \eqref{eq:65bis} ensues
from~\eqref{eq:66bis}),
  we argue as in the second part of the proof of Lemma
  \ref{le:local-diff}, still applying \eqref{eq:89} and \eqref{eq:108}, but replacing
   inequalities \eqref{eq:109} with the
following identities,
  \begin{displaymath}
    E_-(t)=-\Cost{\bipcE}t{u(t_-)}{u(t)},\quad
    E_+(t)=-\Cost{\bipcE}t{u(t)}{u(t_+)} \quad \text{for all $t \in \rmJ_u$,}
  \end{displaymath}
which are  due to~\eqref{eq:67}.
  Hence,  $-\JVar{}\cE0T=\JVar{\bipcE}u0T$. Then, \eqref{eq:85}  follows
  from \eqref{eq:89}.
\end{proof}
The next section is devoted to a refined description of the behavior
of a $\BV$ solution along the jumps.
\subsection{Jumps and optimal transitions}
\label{ss:4.2} Let us first introduce the notion of \emph{optimal
transition}.
\begin{definition}
  \label{def:optimal-transition}
  Let $t\in [0,T]$, $u_-,u_+\in \xfin$ with $-\rmD\ene t{u_-},\,  -\rmD\ene t{u_+}
  \in \Kx$,  and
  $-\infty\le r_0<r_1\le +\infty$.
  An absolutely continuous curve
  $\vartheta:[r_0,r_1]\to \xfin$ connecting $u_-=\vartheta(r_0)$ and $u_+=\vartheta(r_1)$
  is an \emph{optimal $(\bipo,\cE_t)$-transition} between $u_-$ and
  $u_+$   if
  \begin{equation}
    \label{eq:66}
    \tag{O.1}
    \dot \vartheta(r)\neq 0\quad \text{for a.a.\ }r\in (r_0,r_1);\quad
    \Psi_{0*}({-}\rmD\ene t{\vartheta(r)})\ge 1\quad \forall\, r\in [r_0,r_1],
  \end{equation}
  \begin{equation}
    \label{eq:91}
    \ene t{u_-}-\ene t{u_+}=\Cost{\bipcE}t{u_-}{u_+}=
    \int_{r_0}^{r_1} \bipo(\dot\vartheta(r),-\rmD\ene t{\vartheta(r)})\,\d r.
    \tag{O.2}
  \end{equation}
  We also say that an optimal transition $\vartheta$ is of
  \begin{align}
    \label{eq:93}
    \text{\emph{sliding} type if}\qquad
    &-\rmD\ene t{\vartheta(r)}\in \Kx\quad\text{for every $r\in [r_0,r_1]$,}
    \tag{O$_{\text{sliding}}$}\\
    \text{\emph{viscous} type if}\qquad
    &-\rmD\ene t{\vartheta(r)}\not \in \Kx\quad\text{for every $r\in (r_0,r_1)$,}
    \tag{O$_{\text{viscous}}$}
    \label{eq:94}\\
    \label{eq:116}
    \text{\emph{energetic} type if}\qquad
    &\ene t{u_+}-\ene t{u_-}=-\Diss(u_+-u_-).
    \tag{O$_{\text{ener}}$}
  \end{align}
  We denote by $\Theta(t;u_-,u_+)$ the (possibly empty) collection of such optimal transitions, with
  normalized domain $[0,1]$ and  \emph{constant Finsler velocity}
\begin{equation}
\label{e:constant-finsler}
 \bipo(\dot\vartheta(r),-\rmD\cE_t(\vartheta(r)))
  \equiv \ene t{u_-}-\ene t{u_+} \qquad \forae\, r \in (0,1)\,.
  \end{equation}
\end{definition}
\begin{remark}
\upshape \label{rem:newrmk}
 Notice that the notion of optimal
transition is invariant by absolutely continuous (monotone) time
rescalings with absolutely continuous inverse; moreover, any optimal
transition $\vartheta$ has finite length, it admits a
reparametrization with constant Finsler velocity
$\bipo(\dot\vartheta(\cdot),-\rmD\cE_t(\vartheta(\cdot)))$, and is a
minimizer of \eqref{eq:69}, so that it is not restrictive to assume
$\vartheta\in \Theta(t,u_-,u_+).$
\end{remark}
\begin{theorem}
  \label{thm:filling-jumps}
  A local solution $u\in \BV([0,T];\xfin)$
  is a $\BV$ solution according to Definition~\ref{def:BV-solution} if and only if
  at every jump time $t\in\rmJ_u$ the initial and final values $u(t_-)$ and $u(t_+)$ can be connected
  by an optimal transition curve
  $\vartheta^t\in \Theta(t;u(t_-),u(t_+))$, and there exists $r\in [0,1]$ such that
  $u(t)=\vartheta^t(r)$.
  Any optimal transition curve $\vartheta$ satisfies the \emph{contact condition}
  \begin{equation}
    \label{eq:106}
    (\dot\vartheta(r),-\rmD\ene t{\vartheta(r)})\in \Contact\bipo\quad
    \text{for a.a.\ }r\in (0,1).
  \end{equation}
\end{theorem}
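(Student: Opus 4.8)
The strategy is to pull everything back to the jump conditions \eqref{eq:67} characterizing $\BV$ solutions in Theorem~\ref{thm:def-bv-solutions}; the only real tools are the chain rule \eqref{e:classical-chain_rule} for $\cE\in\mathrm C^1$ along Lipschitz curves together with the pointwise bounds $\bipo(v,w)\ge\langle w,v\rangle$ and $\bipo(v,w)\ge\Diss(v)$ from \eqref{eq:13b}. I would begin with the last assertion, which drives the rest. If $\vartheta\in\AC([r_0,r_1];\xfin)$ satisfies the energy identity \eqref{eq:91}, then the chain rule gives
\[
  \ene t{\vartheta(r_0)}-\ene t{\vartheta(r_1)}=\int_{r_0}^{r_1}\langle-\rmD\ene t{\vartheta(r)},\dot\vartheta(r)\rangle\,\d r,
\]
while \eqref{eq:91} identifies the same quantity with $\int_{r_0}^{r_1}\bipo(\dot\vartheta(r),-\rmD\ene t{\vartheta(r)})\,\d r$. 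By \eqref{eq:13b} the integrand $\bipo(\dot\vartheta(r),-\rmD\ene t{\vartheta(r)})-\langle-\rmD\ene t{\vartheta(r)},\dot\vartheta(r)\rangle$ is nonnegative with vanishing integral, hence $0$ for a.a.\ $r$, which is precisely \eqref{eq:106} in view of \eqref{eq:53b}. Since this used only \eqref{eq:91}, i.e.\ (O.2), every optimal transition (normalized to $[0,1]$ as in Remark~\ref{rem:newrmk}) satisfies the contact condition. I record the consequence that, wherever $\dot\vartheta(r)\neq0$, the contact identity combined with $\bipo(v,w)\ge\Diss(v)$ and the dual representation \eqref{eq:105} forces $\Psi_{0*}(-\rmD\ene t{\vartheta(r)})\ge\langle-\rmD\ene t{\vartheta(r)},\dot\vartheta(r)\rangle/\Diss(\dot\vartheta(r))\ge 1$.

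For the direction ``$\BV$ solution $\Rightarrow$ optimal transitions'', let $u$ be a local solution that is a $\BV$ solution, so that by Theorem~\ref{thm:def-bv-solutions} the jump conditions \eqref{eq:67} hold, and fix $t\in\rmJ_u$. Using Remark~\ref{rem:inf-attained} I would take Lipschitz minimizers with constant Finsler velocity for $\Cost{\bipcE}t{u(t_-)}{u(t)}$ and $\Cost{\bipcE}t{u(t)}{u(t_+)}$, concatenate them, and reparametrize onto $[0,1]$ with constant Finsler velocity; call the result $\vartheta^t$, so that $\vartheta^t(\bar r)=u(t)$ for a suitable $\bar r\in[0,1]$. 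Its total cost is $\Cost{\bipcE}t{u(t_-)}{u(t)}+\Cost{\bipcE}t{u(t)}{u(t_+)}$, which by \eqref{eq:67} equals $\ene t{u(t_-)}-\ene t{u(t_+)}=\Cost{\bipcE}t{u(t_-)}{u(t_+)}$; hence $\vartheta^t$ is a minimizer in \eqref{eq:69} and \eqref{eq:91} holds. At a genuine jump one has $\ene t{u(t_-)}-\ene t{u(t_+)}>0$ (otherwise $\Cost{\bipcE}t{u(t_-)}{u(t_+)}=0$ would force $u(t_-)=u(t)=u(t_+)$, contradicting $t\in\rmJ_u$), so the constant Finsler velocity is positive and $\dot\vartheta^t\neq0$ a.e., the first half of (O.1). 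The second half, $\Psi_{0*}(-\rmD\ene t{\vartheta^t(r)})\ge1$, follows for a.a.\ $r$ from the consequence recorded above and then for all $r$ by continuity of $r\mapsto-\rmD\ene t{\vartheta^t(r)}$. Since $-\rmD\ene t{u(t_\pm)}\in\Kx$ by local stability and continuity, $\vartheta^t\in\Theta(t;u(t_-),u(t_+))$ and passes through $u(t)$.

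Conversely, suppose $u$ is a local solution for which, at each $t\in\rmJ_u$, there is $\vartheta^t\in\Theta(t;u(t_-),u(t_+))$ with $u(t)=\vartheta^t(\bar r)$ for some $\bar r$. Local solutions already satisfy \eqref{eq:66bis}, so by Theorem~\ref{thm:def-bv-solutions} it suffices to derive \eqref{eq:67} at each $t\in\rmJ_u$; the third identity there is exactly the energy identity \eqref{eq:91} for $\vartheta^t$. Splitting $\vartheta^t$ at $\bar r$ and comparing each half with the infimum in \eqref{eq:69}, the subadditivity $\Cost{\bipcE}t{u(t_-)}{u(t_+)}\le\Cost{\bipcE}t{u(t_-)}{u(t)}+\Cost{\bipcE}t{u(t)}{u(t_+)}$ together with \eqref{eq:91} forces both sub-arcs to be minimizing, with costs $\Cost{\bipcE}t{u(t_-)}{u(t)}$ and $\Cost{\bipcE}t{u(t)}{u(t_+)}$ that sum to $\Cost{\bipcE}t{u(t_-)}{u(t_+)}$. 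Applying the chain rule to each sub-arc yields $\ene t{u(t_-)}-\ene t{u(t)}\le\Cost{\bipcE}t{u(t_-)}{u(t)}$ and $\ene t{u(t)}-\ene t{u(t_+)}\le\Cost{\bipcE}t{u(t)}{u(t_+)}$; since the two left-hand sides add up to $\ene t{u(t_-)}-\ene t{u(t_+)}=\Cost{\bipcE}t{u(t_-)}{u(t_+)}$, which equals the sum of the two right-hand sides, both inequalities are equalities, i.e.\ \eqref{eq:67} holds, and $u$ is a $\BV$ solution.

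The conceptual content is light once Theorem~\ref{thm:def-bv-solutions} is available; the care is concentrated in the second step: verifying that the concatenation of the two minimizers, reparametrized to constant Finsler velocity on $[0,1]$, is genuinely admissible and still passes through $u(t)$ at the right parameter, and — the delicate point — upgrading $\Psi_{0*}(-\rmD\ene t{\vartheta^t})\ge1$ from ``a.e.'' to ``everywhere'', which relies on $\dot\vartheta^t\neq0$ a.e., on Remark~\ref{rem:inf-attained}, and on the continuity of $r\mapsto-\rmD\ene t{\vartheta^t(r)}$. Degenerate configurations ($u(t)$ coinciding with $u(t_-)$ or $u(t_+)$, or $\bar r\in\{0,1\}$) are harmless but must be dispatched separately.
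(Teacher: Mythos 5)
Your proof is correct and follows essentially the same route as the paper's: the contact condition \eqref{eq:106} is obtained exactly as there (chain rule plus the pointwise inequality $\bipo(v,w)\ge\langle w,v\rangle$ applied to an integrand with vanishing integral), and both directions of the equivalence are reduced to the jump conditions \eqref{eq:67} of Theorem~\ref{thm:def-bv-solutions} via concatenation/splitting of minimizers from Remark~\ref{rem:inf-attained}. You merely spell out the details (verification of (O.1), the triangle-inequality argument for the sub-arcs) that the paper dismisses as immediate.
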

\begin{proof}
  Taking into account Theorem~\ref{thm:def-bv-solutions}, the proof of the first part of the statement
is immediate. To prove~\eqref{eq:106},
  let $t$ be a jump point of $u$ and let us first suppose that $u(t_-)=u(t)\neq u(t_+)$.
  By Remark \ref{rem:inf-attained}, we can find a
  Lipschitz curve
  $\vartheta_{01}\in \AC([r_0,r_1];\xfin)$
  with normalized speed $\bipo(\dot\vartheta,-\rmD\ene t{\vartheta})\equiv
  1$,
  connecting $u(t_-)$ to $u(t_+)$, so that the jump condition \eqref{eq:67} yields
  \begin{displaymath}
    \int_{r_0}^{r_1} \la -\rmD\ene t{\vartheta(r)}, \dot\vartheta(r)\ra\,\d r=
    \ene t{u(t_-)}-\ene t{u(t_+)}=\int_{r_0}^{r_1} \bipo(\dot
    \vartheta(r),-\rmD\ene t{\vartheta(r)})\, \d r.
  \end{displaymath}
  This shows that $\vartheta$ is an optimal transition curve and satisfies
   \[
    \int_{r_0}^{r_1} \Big(\bipo(\dot \vartheta,-\rmD\ene t{\vartheta(r)})\, \d r-
    \la -\rmD\ene t{\vartheta(r)}, \dot\vartheta(r)\ra \Big)\,\d r=0.
    \]
  Since the integrand is always nonnegative, it follows that
  \eqref{eq:106} holds.

  In the general case, when $u$ is not left or right continuous at $t$, we join two (suitably rescaled)
  optimal transition curves $\vartheta_{01}\in \Theta(t;u(t_-),u(t))$ and
  $\vartheta_{12}\in \Theta(t;u(t),u(t_+))$.
\end{proof}
The next result provides  a careful
description of $(\bipo,\cE_t)$-optimal transitions.
\begin{theorem}
\label{thm:careful}
  Let
$ t \in [0,T]$, $u_-$, $u_+ \in \xfin$, and $\vartheta :[0,1] \to
\xfin$ be an optimal transition
  curve in $\Theta(t;u_-,u_+)$. Then,
  \begin{enumerate}[\rm (1)]
  \item $\vartheta$ is a constant-speed minimal geodesic for the (possibly asymmetric)
    Finsler cost $\Cost{\bipcE}t{u_-}{u_+}$,
    and for every $0\le \rho_0<\rho_1\le 1$ it satisfies
    \begin{equation}
      \label{eq:97}
      \begin{aligned}
        \ene t{\vartheta(\rho_0)}-\ene t{\vartheta(\rho_1)}={}&
        \Cost{\bipcE}t{\vartheta(\rho_0)}{\vartheta(\rho_1)}\\={}&(\rho_1-\rho_0)
        \Cost{\bipcE}t{u_-}{u_+}=(\rho_1-\rho_0)\Big(\ene t{u_-}-\ene t{u_+}\Big);
      \end{aligned}
    \end{equation}
    In particular,  the map
    $\rho\mapsto \ene t{\vartheta(\rho)}$ is affine.
  \item An optimal transition $\vartheta$ is of \emph{sliding} type \eqref{eq:93} if and only if
    it satisfies
    \begin{equation}
      \label{eq:98}
      \partial\Diss(\dot\vartheta(r))+\rmD\ene t{\vartheta(r)}\ni 0\quad
      \forae\ r \in (0,1),
    \end{equation}
    \begin{equation}
      \label{eq:95}
      \Psi_{0*}({-}\rmD\ene t{\vartheta(r)})=1\quad \text{for every }r\in [0,1].
   \end{equation}
 \item An optimal transition $\vartheta$ is of \emph{viscous}  type \eqref{eq:94}  if and only if there holds
  for every
   selection
   $(0,1)\ni r\mapsto\eps(r)$ in $\fl(\dot\vartheta(r),-\rmD\ene
   t{\vartheta(r)}$
    \begin{equation}
      \label{eq:99}
      \partial\Psi(\eps(r)\dot\vartheta(r))+\rmD\ene t{\vartheta(r)}\ni0\quad
      \forae\, r \in (0,1).
    \end{equation}
    Equivalently, there exists
    an absolutely continuous, surjective time rescaling $\sfr:(\rho_0,\rho_1)\to (0,1)$, with
    $-\infty\le \rho_0 <\rho_1\le \infty$ and
    $\dot\sfr(s)>0$ for
    $\Leb 1$ a.e.~$s\in (\rho_0,\rho_1)$,
    such that the rescaled transition $\theta(s):=\vartheta(\sfr(s))$ satisfies the viscous differential inclusion
    \begin{equation}
      \label{eq:100}
      \partial\Psi(\dot\theta(s))+\rmD \ene t{\theta(s)}\ni0\quad
      \forae\, s \in (\rho_0,\rho_1)\,, \quad \text{with} \ \
      \lim_{s\downarrow \rho_0}\theta(s)=u_-, \ \ \lim_{s\uparrow
      \rho_1}\theta(s)=u_+\,.
    \end{equation}
  \item Any optimal transition $\vartheta$ can be decomposed in a canonical way
    into an (at most) countable collection of
    optimal \emph{sliding and viscous} transitions. In other words, there exists (uniquely determined) disjoint open intervals
    $(S_j)_{j\in \sigma}$ and $(V_k)_{k\in \upsilon}$ of $(0,1)$, with
     $\sigma,\upsilon\subset \N$, such that
    $(0,1)\subset \big(\cup_{j\in\sigma} \overline{S_j})\cup\big(\cup_{k\in \upsilon}V_k\big)$
    and
     \[
      \vartheta\Restr {\overline{S_j}}\quad\text{is of sliding type,}\quad
      \vartheta\Restr {V_k}\quad\text{is of viscous type.}
      \]
  \item An optimal transition $\vartheta$ is of \emph{energetic} type \eqref{eq:116}
    if and only if $\vartheta$ is of sliding type and it is a $\Psi_0$-minimal
    geodesic, i.e.
    \begin{equation}
      \label{eq:96}
      \Diss(\vartheta(r_1)-\vartheta(r_0))=
      (r_1-r_0)\Diss(u_1-u_0)\quad\text{for every }0\le r_0<r_1\le 1.
    \end{equation}
    If $\Diss$  has strictly convex sublevels,  then
    $\vartheta$ is linear and $r\mapsto
    (\vartheta(r),\ene t{\vartheta(r)})$ is a linear segment contained in the graph of $\cE_t$. \\
    If $\Diss$ is G\^{a}teaux-differentiable at $\xfin\setminus \{0\}$
    then
     \[
       -\rmD\ene t{\vartheta(r)}=\rmD\Diss(u_+-u_-)\quad\text{for every }r\in
      [0,1].
      \]
    In particular, the map $r \mapsto -\rmD\ene t{\vartheta(r)}$ is
    constant.
  \end{enumerate}
\end{theorem}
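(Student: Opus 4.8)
The plan is to run everything off the contact condition \eqref{eq:106} of Theorem~\ref{thm:filling-jumps} — along any optimal transition $\vartheta$ one has $(\dot\vartheta(r),-\rmD\cE_t(\vartheta(r)))\in\Contact\bipo$ for a.a.\ $r$ — together with the chain rule for $\cE_t\in\rmC^1$ along the absolutely continuous curve $\vartheta$ and the structural description of $\Contact\bipo$ from Theorem~\ref{thm:auxiliary}. Assume $u_-\neq u_+$ (otherwise $\vartheta$ is constant and everything is trivial) and set $c:=\cE_t(u_-)-\cE_t(u_+)=\Cost{\bipcE}t{u_-}{u_+}>0$, the constant Finsler velocity of $\vartheta$ fixed in $\Theta(t;u_-,u_+)$ (see \eqref{eq:91}). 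For \textbf{(1)} I would insert the contact equality $\bipo(\dot\vartheta(r),-\rmD\cE_t(\vartheta(r)))=\langle -\rmD\cE_t(\vartheta(r)),\dot\vartheta(r)\rangle$ into the chain rule to obtain $\frac{\rmd}{\rmd r}\cE_t(\vartheta(r))=-\bipo(\dot\vartheta(r),-\rmD\cE_t(\vartheta(r)))\equiv-c$, so $\rho\mapsto\cE_t(\vartheta(\rho))$ is affine; for the geodesic identity \eqref{eq:97}, the upper bound comes from using $\vartheta|_{[\rho_0,\rho_1]}$ as a competitor and the lower bound from $\bipo\ge\langle\cdot,\cdot\rangle$ (property (I2)) plus the chain rule, which force equality throughout and make $\vartheta|_{[\rho_0,\rho_1]}$ a constant-speed minimal geodesic.

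Parts \textbf{(2)} and \textbf{(3)} I would treat via the dichotomy encoded in (P4): for a.a.\ $r$, either $-\rmD\cE_t(\vartheta(r))\in\Kx$, in which case \eqref{eq:112} and the characterization \eqref{eq:18} of $\partial\Diss$ give $-\rmD\cE_t(\vartheta(r))\in\partial\Diss(\dot\vartheta(r))$, or $-\rmD\cE_t(\vartheta(r))\notin\Kx$, in which case $-\rmD\cE_t(\vartheta(r))\in\partial\Psi(\eps\dot\vartheta(r))$ exactly for $\eps\in\fl(\dot\vartheta(r),-\rmD\cE_t(\vartheta(r)))$, a bounded closed subinterval of $(0,+\infty)$ — bounded by \eqref{e:new-entry}, and with $0\notin\fl$ since $\Psi^*$ vanishes precisely on $\Kx$. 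For \textbf{(2)}: sliding type means $-\rmD\cE_t(\vartheta(r))\in\Kx$ for \emph{all} $r$, which combined with $\Psi_{0*}(-\rmD\cE_t(\vartheta(r)))\ge1$ from \eqref{eq:66} and $\Kx=\{w:\Psi_{0*}(w)\le1\}$ yields \eqref{eq:95}, and the first alternative then gives \eqref{eq:98}; conversely \eqref{eq:95} by itself forces $-\rmD\cE_t(\vartheta(r))\in\Kx$ for every $r$. For \textbf{(3)}: if $\vartheta$ is viscous, the second alternative directly yields \eqref{eq:99} for every measurable selection; to produce the rescaled form I would fix a measurable selection $r\mapsto\eps(r)\in\fl(\dot\vartheta(r),-\rmD\cE_t(\vartheta(r)))$, note that $\eps(\cdot)^{-1}$ is bounded on compact subsets of $(0,1)$ since $\eps(r)^{-1}\le c/\Psi^*(-\rmD\cE_t(\vartheta(r)))$ with $\Psi^*(-\rmD\cE_t(\vartheta(\cdot)))$ continuous and strictly positive on the viscous set $(0,1)$, and set $\sfr:=\sfs^{-1}$ where $\sfs(r):=\int_{1/2}^r\eps(\rho)^{-1}\,\rmd\rho$; then $\theta:=\vartheta\circ\sfr$ has $\dot\theta(s)=\eps(\sfr(s))\dot\vartheta(\sfr(s))$ and hence solves \eqref{eq:100}, with $\rho_0,\rho_1$ possibly infinite because $\Psi^*(-\rmD\cE_t(\vartheta))\to0$ at the endpoints (where $-\rmD\cE_t(u_\pm)\in\Kx$). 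The converse — a global viscous rescaling forces viscous type — I would get by observing that on any subinterval with $-\rmD\cE_t(\vartheta)\in\Kx$ the viscous inclusion \eqref{eq:100} collapses, via $\Psi^*=0$ on $\Kx$, to $\Psi(\dot\theta)=\Psi_0(\dot\theta)$, which for a genuinely superlinear $\Psi$ makes $\dot\theta\equiv0$ there, contradicting \eqref{eq:66}.

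For \textbf{(4)} I would let $\Sigma:=\{r\in(0,1):-\rmD\cE_t(\vartheta(r))\in\Kx\}$, relatively closed by continuity of $r\mapsto\rmD\cE_t(\vartheta(r))$, take $(V_k)_k$ to be the connected components of the open set $(0,1)\setminus\Sigma$ and $(S_j)_j$ those of $\mathrm{int}\,\Sigma$; by (1) the restriction of $\vartheta$ to any subinterval is again an optimal transition, so $\vartheta|_{V_k}$ is viscous and $\vartheta|_{\overline{S_j}}$ sliding, and a short topological argument (a boundary point of $\Sigma$ either accumulates $\mathrm{int}\,\Sigma$, hence lies in some $\overline{S_j}$, or has a neighbourhood where $\Sigma$ has empty interior, hence lies in the closure of $(0,1)\setminus\Sigma$) delivers the covering, uniqueness being obvious. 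For \textbf{(5)}: energetic type together with (1) gives $\Cost{\bipcE}t{u_-}{u_+}=\Diss(u_+-u_-)$, so in $\int_0^1\bipo(\dot\vartheta,-\rmD\cE_t(\vartheta))\ge\int_0^1\Diss(\dot\vartheta)\ge\Diss(u_+-u_-)$ (using $\bipo\ge\Psi_0$ from \eqref{eq:78} and the triangle inequality) equality holds everywhere, forcing $-\rmD\cE_t(\vartheta(r))\in\Kx$ a.e.\ by \eqref{eq:112} — hence for every $r$ as in (2) — so $\vartheta$ is sliding and a $\Psi_0$-minimal geodesic, which with the constant Finsler speed $c=\Diss(u_+-u_-)$ is precisely \eqref{eq:96}; the converse is immediate. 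If $\Diss$ has strictly convex sublevels, additivity of $\Diss$ along the geodesic forces the increments to be positively proportional, so $\vartheta$ is linear and, by the affinity from (1), $r\mapsto(\vartheta(r),\cE_t(\vartheta(r)))$ is a segment in the graph of $\cE_t$; if $\Diss$ is G\^ateaux-differentiable off the origin, then \eqref{eq:98} reads $-\rmD\cE_t(\vartheta(r))=\rmD\Diss(\dot\vartheta(r))$ a.e., and the equality case of Jensen's inequality for $\Diss\big(\int_0^1\dot\vartheta(r)\,\rmd r\big)=\int_0^1\Diss(\dot\vartheta(r))\,\rmd r$ yields a common subgradient $\rmD\Diss(u_+-u_-)\in\partial\Diss(\dot\vartheta(r))=\{\rmD\Diss(\dot\vartheta(r))\}$ for a.e.\ $r$, whence $-\rmD\cE_t(\vartheta(r))=\rmD\Diss(u_+-u_-)$ a.e.\ and, by continuity, for every $r$.

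I expect the genuinely delicate points to be the reparametrization in part~(3) — the local integrability of $\eps(\cdot)^{-1}$ and the bookkeeping of the possibly infinite endpoints $\rho_0,\rho_1$, plus the fact that the converse implication there is clean only under a non-degeneracy assumption on $\Psi$ — and the topological covering claim in part~(4); the remainder is a fairly mechanical combination of the chain rule with Theorems~\ref{thm:auxiliary} and~\ref{thm:filling-jumps}.
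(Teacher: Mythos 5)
Your proposal is correct and follows essentially the same route as the paper's proof: the contact condition \eqref{eq:106} combined with the chain rule for parts (1)--(3), the characterization \eqref{eq:32b} of $\Contact\bipo$ for the sliding/viscous dichotomy, the reparametrization $\sfs(r)=\int\eps^{-1}(\rho)\,\d\rho$ for \eqref{eq:100}, the decomposition of $(0,1)$ by the open set where $-\rmD\ene t{\vartheta}\notin\Kx$ for (4), and the equality case in $\bipo\ge\Psi_0$ plus the common-subgradient argument for (5). You actually supply a bit more detail than the paper does (the bound $\eps(r)^{-1}\le c/\Psi^*(-\rmD\ene t{\vartheta(r)})$ and the converse implications in (2)--(3), which the paper leaves implicit), and your caveat about the converse in (3) is a fair observation.
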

\begin{remark}
  \label{rem:obvious}
  \upshape
  It follows from the characterization in~\textbf{(2)} of Theorem~\ref{thm:careful}
    (cf. with  \eqref{eq:98}--\eqref{eq:95})
  that sliding optimal transitions are independent of
  the form of the {\bipotential} $\bipo$, and thus on the particular viscosity potential
  $\Psi$.

  Instead, as one may expect,
    $\Psi$
occurs in the doubly nonlinear equation~\eqref{eq:99} (equivalently,
in~\eqref{eq:100}), which in fact  describes the viscous transient
regime. Hence,   different choices of the viscous dissipation $\Psi$
shall give raise to a different behavior in the viscous jumping
regime, see also the example in~\cite[Sec.~2.2]{stefanelli09}. The
latter paper sets forth a different characterization of
rate-independent evolution, still oriented towards local stability,
but derived from a global-in-time variational principle and  not  a
vanishing viscosity approach.
\end{remark}
\begin{proof}
  \textbf{Ad (1).} The geodesic property follows from the minimality of $\vartheta$
   (cf. with~\eqref{eq:91} in Definition~\ref{def:optimal-transition}).
Then, there holds
\begin{equation}
    \label{eq:110}
    \frac\d{\d r}\ene  t{\vartheta(r)}=
    -\bipo(\dot\vartheta(r),-\rmD\ene t
    {\vartheta(r)})\equiv \ene t{u_+}-\ene t{u_-}
    \quad \text{for a.a. }r\in (0,1),
  \end{equation}
  where the first identity ensues from the chain
  rule~\eqref{e:classical-chain_rule} for $\cE $ and the contact
  condition \eqref{eq:106}, and the second one
  from~\eqref{e:constant-finsler}. Clearly, \eqref{eq:110} implies
  \eqref{eq:97}.
\\
  \textbf{Ad (2).} If $\vartheta$ is of sliding type, then the contact
  condition \eqref{eq:106}, with \eqref{eq:32b},
  yields \eqref{eq:98}; \eqref{eq:95} follows since $\dot\vartheta\neq
  0$ a.e.\ in $(0,1)$.
\\
  \textbf{Ad (3).} Equation~\eqref{eq:99} still follows from
  \eqref{eq:32b}. Choosing $r_0\in (0,1)$ and a Borel selection
  $\eps(r)\in \fl(\dot\vartheta(r),-\rmD\ene t{\vartheta(r)})$ (which
  is therefore locally bounded away from $0$), we set
  \begin{equation}
    \label{eq:113}
    \sfs(r):=\int_{r_0}^r \eps^{-1}(\rho)\,\d \rho,\quad \sfr:=\sfs^{-1},
  \end{equation}
  so that $\sfr$ is defined in a suitable interval of $\R$ and satisfies
   \[
    \dot\sfr(s)=\eps(\sfr(s)),\quad
    \dot\theta(s)=\eps(\sfr(s))\vartheta(\sfr(s)).
    \]
  \textbf{Ad (4).} We simply introduce the disjoint open sets
   \[
    V:=\Big\{r\in (0,1):-\rmD\ene t{\vartheta(r)}\not\in \Kx\Big\},\quad
    S:=(0,1)\setminus \overline V
    \]
  and we consider their canonical decomposition in connected components.  \\
  \textbf{Ad (5).}
  If $\vartheta$ is energetic, then
  by~\eqref{eq:116} and~\eqref{eq:97} there holds
  $\Cost{\bipcE}t{u_-}{u_+}=\Diss(u_+-u_-)$. Thus, taking into
  account~\eqref{e:constant-finsler} and \eqref{eq:78} as well, we
  find
  $\bipo(\dot\vartheta,-\rmD\ene t{\vartheta(r)})=\Diss(\dot\vartheta(r))$ for a.a.\
  $r\in (0,1)$. Since its $\Psi_0$-velocity is constant and the total length is
  $\Diss(u_+-u_-)$,
  we deduce that $\vartheta$ is a constant speed minimal geodesic for $\Diss$.
  Conversely, the constraint $-\rmD\ene t{\vartheta(r)}\in \Kx$
 satisfied by sliding transitions
  yields, in view of~\eqref{eq:112}, that
  $\bipo(\dot\vartheta,-\rmD\ene t{\vartheta(r)})=\Diss(\dot\vartheta(r))$ for a.a.\
  $r\in (0,1)$.   Therefore,
  \begin{displaymath}
    \Cost{\bipcE}t{u_-}{u_+}=\int_0^1 \Diss(\dot\vartheta(r))\,\d r=\Diss(u_+-u_-)
\end{displaymath}
by the geodesic property \eqref{eq:96}.

It is well known that, if $\Psi_0$  has strictly convex sublevels,
the  related geodesics are linear segments. In order to prove the
last statement, let us observe that for every $\xi\in
\partial\Psi_0(u_+-u_-)\subset \Kx$ there holds
\begin{displaymath}
    \int_0^1 \la \xi,\dot\vartheta(r)\ra \,\d r=
    \la \xi,u_+-u_-\ra=\Diss(u_+-u_-)=\int_0^1 \Psi_0(\dot\vartheta(r))\,\d
    r\,,
  \end{displaymath}
  where the second equality follows from the
  characterization~\eqref{eq:18} of $\partial \Diss (u_+-u_-)$.
  Hence,
  \begin{displaymath}
    \int_0^1 \Big(\Diss(\dot\vartheta(r))-\la \xi,\dot\vartheta(r)\ra\Big)\,\d r=0.
  \end{displaymath}
  Since the above integrand is nonnegative (being $\xi\in \Kx$),
again by~\eqref{eq:18} we deduce that
    $\xi\in \partial\Diss(\dot\vartheta(r))$
  for a.a.\ $r\in (0,1)$.
  On the other hand, if $\Diss$ is G\^{a}teaux-differentiable outside $0$,
  its subdifferential contains just one point.  Ultimately,  \eqref{eq:98} (recall that
    $\vartheta$ is of sliding type)  shows that
  $-\rmD\ene t{\vartheta(r)}=\xi$ for every $r\in [0,1]$.
\end{proof}

The next result clarifies the relationships between energetic and
$\BV$ solutions.
\begin{corollary}[Energy balance and comparison with energetic solutions]
  \label{cor:comparison}
  \
  \begin{enumerate}[\rm (1)]
  \item
    A $\BV$ solution $u$ of the rate-independent system {\VRIS}
    satisfies the energy balance \eqref{eq:2} if and
    only if every  optimal transition associated with
    its jump set is of energetic type \eqref{eq:116}.
  \item
    A $\BV$ solution $u$ is an energetic solution
    if and only if it satisfies the global stability condition \eqref{eq:1}.
    In that case, all of its optimal transition curves are of
    energetic type.
  \item
    Conversely, an energetic solution $u$ is a $\BV$ solution if and only
    if, for every $t\in \rmJ_u$,
    any jump couple $(u(t_-),u(t_+))$  can be connected by
    a sliding optimal transition.
  \end{enumerate}
\end{corollary}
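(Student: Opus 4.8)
The plan is to prove all three assertions by comparing the $\Psi_0$-energy balance \eqref{eq:2} of an energetic solution with the $(\bipcE)$-energy balance \eqref{eq:84} of a $\BV$ solution. Since the functionals $\pVarname{\bipcE}$ and $\Varname{\Psi_0}$ agree on the diffuse part of $u'$, their difference on any interval $[a,b]$ reduces to the jump excess $\JVar{\bipcE}uab-\JVar{\Psi_0}uab$, which is a sum over the relevant jump points of the nonnegative terms $\Cost{\bipcE}{t}{v_-}{v_+}-\Psi_0(v_+-v_-)$ (by \eqref{eq:82}). I will also use repeatedly the following remark: if $\vartheta\in\Theta(t;v_-,v_+)$ is an optimal transition whose cost $\int_0^1\bipo(\dot\vartheta(r),-\rmD\ene{t}{\vartheta(r)})\,\d r$ equals $\Psi_0(v_+-v_-)$, then $\vartheta$ must be of sliding type, hence of energetic type by Theorem~\ref{thm:careful}(5); indeed, $\bipo(\dot\vartheta(r),-\rmD\ene{t}{\vartheta(r)})\ge\Psi_0(\dot\vartheta(r))$ by \eqref{eq:78} and $\int_0^1\Psi_0(\dot\vartheta)\,\d r\ge\Psi_0(v_+-v_-)$ by the triangle inequality, so equality throughout forces $-\rmD\ene{t}{\vartheta(r)}\in\Kx$ for a.a.\ $r$ (via \eqref{eq:112}) and hence, by continuity and the normalization \eqref{eq:66}, for every $r$.

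For part (1), a $\BV$ solution always satisfies \eqref{eq:84}, so it satisfies \eqref{eq:2} on every subinterval if and only if $\JVar{\bipcE}uab=\JVar{\Psi_0}uab$ there, which by the termwise sign of the excess is equivalent to $\Cost{\bipcE}{t}{u(t_-)}{u(t)}=\Psi_0(u(t)-u(t_-))$ and $\Cost{\bipcE}{t}{u(t)}{u(t_+)}=\Psi_0(u(t_+)-u(t))$ at every $t\in\rmJ_u$; by the jump conditions \eqref{eq:67} this says exactly that the optimal transitions joining the consecutive jump values release energy equal to their $\Psi_0$-distance, i.e.\ are of energetic type \eqref{eq:116} (whence, by the remark above, so is every optimal transition joining those values). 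For part (2), one implication is the definition of energetic solution; conversely, \eqref{eq:83} together with \eqref{eq:84} shows that a $\BV$ solution always satisfies the energy inequality \eqref{eq:2bis}, and the global stability \eqref{eq:1} then yields the reverse inequality through the classical lower energy estimate (test \eqref{eq:1} at a partition with the values $u(s_i)$, telescope, and pass to the limit over refining partitions, using $u(s_+)=u(s)$ off $\rmJ_u$ and \eqref{assene1} for the power term; see e.g.~\cite{Mielke05}), so \eqref{eq:2} holds and $u$ is energetic. Finally, \eqref{eq:1} passes to the one-sided limits, so $\ene{t}{u(t_-)}-\ene{t}{z}\le\Psi_0(z-u(t_-))$ for all $z$; taking $z\in\{u(t),u(t_+)\}$ and comparing with the jump conditions \eqref{eq:67}, which give $\ene{t}{u(t_-)}-\ene{t}{z}=\Cost{\bipcE}{t}{u(t_-)}{z}\ge\Psi_0(z-u(t_-))$, forces equality; by the remark, every associated optimal transition is of energetic type.

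For part (3), an energetic solution is a local solution — it satisfies \eqref{eq:65bis} (global stability implies local stability, see the lines after \eqref{eq:65}) and the energy inequality \eqref{eq:2bis} — hence obeys \eqref{eq:66bis} by Proposition~\ref{le:local-diff}, and by Theorem~\ref{thm:def-bv-solutions} it is a $\BV$ solution if and only if the jump conditions \eqref{eq:67} hold, equivalently (since \eqref{eq:57} rewrites \eqref{eq:67}) if and only if $\Cost{\bipcE}{t}{u(t_-)}{u(t)}=\Psi_0(u(t)-u(t_-))$, $\Cost{\bipcE}{t}{u(t)}{u(t_+)}=\Psi_0(u(t_+)-u(t))$ and $\Cost{\bipcE}{t}{u(t_-)}{u(t_+)}=\Psi_0(u(t_+)-u(t_-))$ at every $t\in\rmJ_u$. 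Now a sliding optimal transition $\vartheta_1$ from $u(t_-)$ to $u(t)$ satisfies $-\rmD\ene{t}{\vartheta_1(r)}\in\partial\Psi_0(\dot\vartheta_1(r))\subset\Kx$ (by Theorem~\ref{thm:careful}(2) and \eqref{e:2.3}), so by \eqref{eq:112}, \eqref{eq:18} and the chain rule $\Cost{\bipcE}{t}{u(t_-)}{u(t)}=\int_0^1\Psi_0(\dot\vartheta_1)\,\d r=\ene{t}{u(t_-)}-\ene{t}{u(t)}=\Psi_0(u(t)-u(t_-))$, the last equality by \eqref{eq:57}; likewise for $(u(t),u(t_+))$. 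Concatenating $\vartheta_1$ and $\vartheta_2$ then bounds $\Cost{\bipcE}{t}{u(t_-)}{u(t_+)}$ above by $\Psi_0(u(t)-u(t_-))+\Psi_0(u(t_+)-u(t))$, which equals $\Psi_0(u(t_+)-u(t_-))$ by \eqref{eq:57}, while \eqref{eq:82} gives the matching lower bound; so all three identities hold and $u$ is a $\BV$ solution. Conversely, if the energetic solution $u$ is a $\BV$ solution, Theorem~\ref{thm:filling-jumps} furnishes an optimal transition joining $u(t_-)$ and $u(t_+)$ through $u(t)$, which is of sliding type exactly as at the end of part (2) (using \eqref{eq:1}); hence so are the transitions joining the consecutive values.

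I expect the main obstacle to be the lower energy estimate invoked in part (2): although classical in the theory of energetic solutions, it requires the careful limiting argument along refining partitions, with the one-sided limits at the countably many jump points and the power term handled via \eqref{assene1}. A secondary, essentially bookkeeping point occurs in part (3): the hypothesis that $(u(t_-),u(t_+))$ ``can be connected by a sliding optimal transition'' must be read so that sliding optimal transitions are available between the consecutive values $u(t_-),u(t),u(t_+)$ — equivalently, that the hypothesized transition passes through $u(t)$ — since this is precisely what is needed to recover all three identities in \eqref{eq:67} from the energetic jump relations \eqref{eq:57}.
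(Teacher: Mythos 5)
Your proof is correct, and its skeleton matches the paper's: reduce everything to the jump identities \eqref{eq:67} versus \eqref{eq:57}, exploit the termwise inequality \eqref{eq:82}, and use the global stability \eqref{eq:1} at the triple $u(t_-),u(t),u(t_+)$ exactly as the paper does in part (2). The one genuine divergence is in part (2): to recover the energy balance \eqref{eq:2} from stability you invoke the classical lower energy estimate (telescoping \eqref{eq:1} over refining partitions), whereas the paper stays self-contained — its displayed chain of inequalities shows that all transitions are of energetic type, and then \eqref{eq:2} follows by feeding this back into part (1). Your route is heavier (it imports the partition-limit argument with its handling of the countable jump set and of the power term) but has the merit of not relying on part (1); the paper's route is shorter once part (1) is in place. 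Two further remarks in your favour: your preliminary observation that an optimal transition whose cost equals $\Psi_0(u_+-u_-)$ is automatically of sliding, hence energetic, type is a clean way to upgrade ``the transition realizing \eqref{eq:67} is energetic'' to ``every optimal transition is energetic,'' a point the paper passes over; and your reading of the hypothesis in part (3) — that the sliding transition must pass through $u(t)$, so that all three identities in \eqref{eq:67} (not just the third) can be recovered — is indeed the intended one, consistent with the ``$u(t)=\vartheta^t(r)$'' requirement in Theorem~\ref{thm:filling-jumps}, where the paper's ``clearly sufficient'' glosses over this.
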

\begin{proof}
  \textbf{Ad (1).}  Let $u$ be a $\BV$ solution such that every
  optimal transition is of energetic type~\eqref{eq:116}. Now, taking
  into account~\eqref{eq:67}, one sees that \eqref{eq:116} is
  equivalent to the jump conditions \eqref{eq:57}. Then, equation
  \eqref{eq:66bis} (which holds by Theorem~\ref{thm:def-bv-solutions})
  and \eqref{eq:57} yield the energy balance \eqref{eq:2} (cf. the
  proofs of Propositions~\ref{le:global-diff}
  and~\ref{le:local-diff}). The converse implication ensues by
  analogous arguments.
  \\
  \textbf{Ad (2).} The necessity is obvious; for the sufficiency we
  observe that, for every jump point $t \in \rmJ_u$, the global
  stability condition \eqref{eq:1} (written first for $u(t_-)$ with
  test functions $v=u(t)$ and $v=u(t_+)$, and then for $u(t)$ with
  $v=u(t_+)$), yields
\begin{align*}
 \Diss(u(t)-u(t_-))&\ge& \ene t{u(t_-)}-\ene t{u(t)}
     &=&\Cost{\bipcE}t{u(t_-)}{u(t)}&\ge&\Diss(u(t)-u(t_-)),\\
 \Diss(u(t_+)-u(t_-))&\ge& \ene t{u(t_-)}-\ene t{u(t_+)}
     &=&\Cost{\bipcE}t{u(t_-)}{u(t_+)}&\ge&\Diss(u(t_+)-u(t_-)),\\
 \Diss(u(t_+)-u(t))&\ge& \ene t{u(t)}-\ene t{u(t_+)}
&=&\Cost{\bipcE}t{u(t)}{u(t_+)}&\ge& \Diss(u(t_+)-u(t)),
\end{align*}
where the intermediate equalities are due to~\eqref{eq:91} and the
subsequent inequalities to~\eqref{eq:82}. The resulting identities
ultimately show that the transition is energetic, by the very
definition \eqref{eq:116}.
\\
\textbf{Ad (3).} The condition is clearly sufficient. It is also
necessary by the previous point, since energetic transitions are in
particular of sliding type.
  \end{proof}
\subsection{Viscous limit}
\label{subsec:viscous-limit}
We conclude this section by our main asymptotic results:
\begin{theorem}[Convergence of viscous approximations to $\BV$ solutions]
  \label{thm:convergence1}
  Consider a sequence \\ $(u_\eps)\subset \AC([0,T];\xfin)$ of  solutions of the viscous
  equation \eqref{e:ris_eps}, with $u_\eps(0)\to u_0$ as
  $\eps\downarrow0$.

  Then, every vanishing sequence $\eps_k\downarrow0$ admits a further subsequence (still denoted by
  $(\eps_k)$), and a limit function $u\in \BV([0,T];\xfin)$ such that
\begin{equation}
\label{e:conve}
\begin{gathered}
   u_{\eps_k}(t)\to u(t) \quad
  \text{for every $t\in [0,T]$ as $k\up+\infty$,}
  \end{gathered}
  \end{equation}
 and $u$ is a $\BV$ solution of \eqref{e:ris_1},
   induced by the {\bipotential} $\bipo$ according to Definition \ref{def:BV-solution}.
\end{theorem}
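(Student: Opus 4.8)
The plan is to combine a Helly-type compactness argument with the viscous energy (in)equalities of Sections~\ref{ss:2.1} and~\ref{ss:3.1}, and then to upgrade the limiting \emph{local} solution to a $\BV$ solution through a careful, localized analysis of its jumps.

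\textbf{Step 1 (a priori bounds and compactness).} Starting from the $\eps$-energy identity~\eqref{e:enid-eps} with $w_\eps=-\rmD\ene\cdot{u_\eps}$, the growth assumption~\eqref{assene1} and a Gronwall argument give a uniform bound $\sup_\eps\sup_{t\in[0,T]}\ene t{u_\eps(t)}<+\infty$ and hence the estimate~\eqref{e:est-1}. Since $\Psi_\eps(v)\ge\Psi_0(v)\ge\eta^{-1}\|v\|_\xfin$ by~\eqref{eq:21-BIS} and~\eqref{e:2.1}, the curves $u_\eps$ have uniformly bounded pointwise variation and (being also uniformly bounded, as $u_\eps(0)\to u_0$) Helly's selection principle yields, along any vanishing sequence $\eps_k\downarrow0$, a further subsequence — not relabelled — and a limit $u\in\BV([0,T];\xfin)$ with $u_{\eps_k}(t)\to u(t)$ for \emph{every} $t\in[0,T]$; in particular $u(0)=u_0$, and by continuity of $\rmD\cE$ also $w_{\eps_k}(t)\to w(t):=-\rmD\ene t{u(t)}$ for every $t$.

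\textbf{Step 2 ($u$ is a local solution).} Recalling $\Psi_{\eps_k}^*(\xi)=\eps_k^{-1}\Psi^*(\xi)$ from~\eqref{eq:6}, Fatou's lemma together with~\eqref{e:est-1} and the pointwise liminf inequality~\eqref{eq:70} gives $\int_0^T\rmI_{\Kx}(w(t))\,\d t<+\infty$, hence $w(t)\in\Kx$ for a.e.~$t$; since $t\mapsto w(t)$ is continuous on $[0,T]\setminus\rmJ_u$ and $\Kx$ is closed, the local stability~\eqref{eq:65biss} follows. Passing to the $\liminf$ in~\eqref{e:enid-eps} (using $\Psi_{\eps_k}(\dot u_{\eps_k})\ge\Psi_0(\dot u_{\eps_k})$, the lower semicontinuity of the pointwise $\Psi_0$-variation under pointwise convergence, and dominated convergence for $\int\partial_t\cE$, justified by~\eqref{assene1} and the uniform energy bound) yields the energy inequality~\eqref{eq:2bis}. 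Thus $u$ is a local solution, and by Proposition~\ref{le:local-diff} it satisfies the $\BV$ differential inclusion~\eqref{eq:56bis} and the jump \emph{inequalities}~\eqref{eq:57bis}.

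\textbf{Step 3 (upgrading the jump conditions).} By Theorem~\ref{thm:def-bv-solutions} it now suffices to strengthen~\eqref{eq:57bis} into the jump \emph{identities}~\eqref{eq:67}. One inequality holds for every $\BV$ curve: applying the chain rule~\eqref{e:classical-chain_rule} along an arbitrary curve joining two states, together with $\langle w,v\rangle\le\Psi_\eps(v)+\Psi_\eps^*(w)$ and the definition~\eqref{eq:68bis} of $\bipo$, and then taking the infimum over connecting curves, gives $\ene t{u_0}-\ene t{u_1}\le\Cost{\bipcE}t{u_0}{u_1}$ (cf.~\eqref{eq:79}), i.e.\ the ``$\ge$'' direction in each line of~\eqref{eq:67}. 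For the reverse inequality I localize around a jump time $t_*\in\rmJ_u$: on $[t_*-h,t_*]$ the curve $u_{\eps_k}$ is a competitor in the infimum defining $\Cost{\bipcE}{t_*}{u_{\eps_k}(t_*-h)}{u_{\eps_k}(t_*)}$, so~\eqref{e:enid-eps} together with $\Psi_{\eps_k}+\Psi_{\eps_k}^*\ge\bipo$ and~\eqref{eq:68bis} gives
\[
  \ene{t_*-h}{u_{\eps_k}(t_*-h)}-\ene{t_*}{u_{\eps_k}(t_*)}+\int_{t_*-h}^{t_*}\partial_t\ene s{u_{\eps_k}(s)}\,\d s
  \ \ge\ \Cost{\bipcE}{t_*}{u_{\eps_k}(t_*-h)}{u_{\eps_k}(t_*)}-R_k(h),
\]
where $R_k(h):=\int_{t_*-h}^{t_*}\big|\bipo(\dot u_{\eps_k}(s),-\rmD\ene s{u_{\eps_k}(s)})-\bipo(\dot u_{\eps_k}(s),-\rmD\ene{t_*}{u_{\eps_k}(s)})\big|\,\d s$ measures the effect of freezing time in the energy differential. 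Using a local modulus-of-continuity estimate for $w\mapsto\bipo(v,w)$ (available from the dual representation in~(P2) of Theorem~\ref{thm:auxiliary}) and the bound $\int_{t_*-h}^{t_*}\|\dot u_{\eps_k}(s)\|_\xfin\,\d s\le\eta\int_{t_*-h}^{t_*}\Psi_{\eps_k}(\dot u_{\eps_k}(s))\,\d s$, which by~\eqref{e:enid-eps} and~\eqref{assene1} is in turn bounded by a quantity converging, as $k\to\infty$ and then $h\downarrow0$, to $\eta\,(\ene{t_*}{u(t_*-)}-\ene{t_*}{u(t_*)})$, one checks $\limsup_{h\downarrow0}\limsup_{k\to\infty}R_k(h)=0$. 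Letting first $k\to\infty$ (using the lower semicontinuity of $\Cost{\bipcE}{t_*}{\cdot}{\cdot}$ in its endpoints) and then $h\downarrow0$ (using $u(t_*-h)\to u(t_*-)$ and continuity of $\cE$) yields $\ene{t_*}{u(t_*-)}-\ene{t_*}{u(t_*)}\ge\Cost{\bipcE}{t_*}{u(t_*-)}{u(t_*)}$, hence equality; the same argument on $[t_*,t_*+h]$ and on $[t_*-h,t_*+h]$ produces the other two lines of~\eqref{eq:67}. Therefore $u$ satisfies~\eqref{eq:56bis} and~\eqref{eq:67}, and Theorem~\ref{thm:def-bv-solutions} then gives that $u$ is a $\BV$ solution of~\eqref{e:ris_1}.

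\textbf{Main obstacle.} The delicate point is Step 3: the crude passage to the limit in Step 2 only retains the contribution $\Var{\Psi_0}u0T$ of the dissipation and loses the viscous part accumulated in the vanishing-width layers around jumps, so it must be recovered ``by hand'' through the local reparametrization above. This hinges on the uniform control of the remainder $R_k(h)$ and on the (lower semi)continuity and geodesic structure of the Finsler transition cost $\Cost{\bipcE}t{\cdot}{\cdot}$ — precisely the kind of facts collected in Section~\ref{sec:technical}. An alternative, equivalent route would be to deduce the statement from the parametrized convergence result of Theorem~\ref{thm:convergence2} together with the equivalence between parametrized and $\BV$ solutions of Theorem~\ref{thm:equivalence}.
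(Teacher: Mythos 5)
Your proof is correct, but it is organized quite differently from the paper's. The paper's argument is shorter because of Proposition~\ref{prop:useful}: once the local stability \eqref{eq:65bis} is known, the full energy balance \eqref{eq:84} is \emph{equivalent} to the single one-sided inequality \eqref{eq:85} on $[0,T]$, and that inequality is obtained in one shot from the $\eps$-energy identity \eqref{e:enid-eps} via Lemma~\ref{le:main-lsc}, i.e.\ $\liminf_k\int_0^T(\Psi_{\eps_k}(\dot u_{\eps_k})+\Psi_{\eps_k}^*(w_{\eps_k}))\,\d t\ge \Var{\cB_0}u0T\ge \pVar{\bipcE}u0T$. You instead stop at the local-solution level, and then upgrade the jump \emph{inequalities} \eqref{eq:57bis} to the identities \eqref{eq:67} by localizing the energy identity on shrinking windows around each jump and using $u_{\eps_k}$ as a competitor for $\Cost{\bipcE}{t_*}{\cdot}{\cdot}$, concluding through the differential characterization of Theorem~\ref{thm:def-bv-solutions}. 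The two routes rest on the same core fact — the viscous trajectories are admissible transitions for the Finsler cost, whose integral is lower semicontinuous along them (this is exactly Lemma~\ref{le:extra-lsc}, proved by reparametrization and Ioffe's theorem, together with the endpoint lower semicontinuity \eqref{eq:135} and the identification \eqref{eq:131}) — but they pay different prices. The paper absorbs the time-dependence of $\rmD\cE$ into the space-time cost $\cB_0$, so no remainder appears and no jump-by-jump analysis is needed; you freeze the time at $t_*$ and must control the remainder $R_k(h)$, which works because $\bipo$ is continuous (hence uniformly continuous on the relevant compact set), $1$-homogeneous in $v$, and $\int\|\dot u_{\eps_k}\|_\xfin$ is uniformly bounded near the jump — all facts available from Theorem~\ref{thm:auxiliary} and \eqref{e:est-1}. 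Your closing remark that the statement could alternatively be routed through Theorems~\ref{thm:convergence2} and~\ref{thm:equivalence} is also accurate. The only points worth tightening are (i) an explicit statement that the endpoint lower semicontinuity of $\Cost{\bipcE}{t_*}{\cdot}{\cdot}$ is the one proved for $\Delta_{\cB_0}$ in Section~\ref{sec:technical}, and (ii) the observation that the third line of \eqref{eq:67} already follows from the first two together with the triangle inequality and \eqref{eq:79}, so the analysis on $[t_*-h,t_*+h]$ is not strictly needed.
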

\begin{proof}
It follows from the discussion developed in
Section~\ref{subsec:pointwise} that for every sequence
$\eps_k\downarrow0 $ there exists a not relabeled subsequence
$(u_{\eps_k})$ such that~\eqref{e:conve} holds, and $u$ complies
with the local stability condition
  \eqref{eq:65bis}.
In view of  Proposition \ref{prop:useful}, it is then sufficient to
check  that \eqref{eq:85} holds. The latter energy inequality is
  a direct consequence of the $\eps$-energy identity
  \eqref{e:enid-eps} and the lower semicontinuity property stated in Lemma \ref{le:main-lsc}
  later on.
\end{proof}

Our next result  concerns the convergence of the discrete solutions
to the \emph{viscous time-incremental} problem~\eqref{eq:61}, as
both the viscosity parameter $\eps$ and the time-step $\tau$ tend to
$0$.
\begin{theorem}[Convergence of discrete solutions of the viscous incremental problems]
  \label{thm:convergence1bis}
  Let $\overline{\Uu}_{\tau,\eps}:[0,T]\to\xfin$ be the left-continuous
  piecewise constant interpolants
  of the discrete solutions of the viscous incremental problem
  \eqref{eq:61},
  with $\Uu_{\tau,\eps}^0\to u_0$ as $\eps,\tau\downarrow0$.

  Then, all vanishing sequences $\tau_k,\eps_k\downarrow0$ satisfying
  \begin{equation}
    \label{eq:103}
    \lim_{k\downarrow0} \frac{\eps_k}{\tau_k}=+\infty
  \end{equation}
  admit  further subsequences (still denoted by
  $(\tau_k) $ and $(\eps_k$)) and a limit function $u\in \BV([0,T];\xfin)$ such that
\[
\overline{\Uu}_{\tau_k,\eps_k}(t)\to u(t) \quad \text{
  for every $t\in [0,T]$ as $k\up+\infty$,}
  \]
   and $u$ is a $\BV$ solution of \eqref{e:ris_1}
  induced by the {\bipotential} $\bipo$ according to Definition \ref{def:BV-solution}.
\end{theorem}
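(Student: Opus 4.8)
The plan is to follow the same two-layer strategy used for Theorem~\ref{thm:convergence1}, but now working at the discrete level: first establish \emph{a priori} estimates on the piecewise-constant interpolants $\overline{\Uu}_{\tau,\eps}$ that are uniform in $(\tau,\eps)$ under the restriction $\eps/\tau\to+\infty$; then extract a pointwise convergent subsequence by Helly's theorem; and finally pass to the limit in a suitable discrete energy (in)equality to recover the local stability \eqref{eq:65bis} and the energy inequality \eqref{eq:85}, which by Proposition~\ref{prop:useful} is enough (together with \eqref{eq:65bis}) to conclude that the limit $u$ is a $\BV$ solution. First I would write down the optimality/Euler--Lagrange conditions for the incremental minimizers $\Uu^n_{\tau,\eps}$ of \eqref{eq:61}: since $\Uu^n_{\tau,\eps}$ minimizes $\Uu\mapsto\tau\Psi_\eps\big((\Uu-\Uu^{n-1}_{\tau,\eps})/\tau\big)+\ene{t_n}\Uu$, one gets $-\rmD\ene{t_n}{\Uu^n_{\tau,\eps}}\in\partial\Psi_\eps\big((\Uu^n_{\tau,\eps}-\Uu^{n-1}_{\tau,\eps})/\tau\big)$. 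Comparing the value at $\Uu^n_{\tau,\eps}$ with that at $\Uu^{n-1}_{\tau,\eps}$ and using $\Psi_\eps\geq 0$, $\Psi_\eps(0)=0$, together with the convexity/superlinearity of $\Psi$, yields the discrete energy estimate
\begin{equation}
  \label{eq:plan-discr-est}
  \ene{t_n}{\Uu^n_{\tau,\eps}}+\sum_{j=1}^n\tau\Big(\Psi_\eps\Big(\tfrac{\Uu^j_{\tau,\eps}-\Uu^{j-1}_{\tau,\eps}}\tau\Big)+\Psi_\eps^*\big({-}\rmD\ene{t_j}{\Uu^j_{\tau,\eps}}\big)\Big)\leq \ene{t_0}{\Uu^0_{\tau,\eps}}+\sum_{j=1}^n\tau\,\partial_t\ene{\xi_j}{\Uu^{j-1}_{\tau,\eps}},
\end{equation}
for suitable intermediate times $\xi_j\in[t_{j-1},t_j]$; then a discrete Gronwall argument based on \eqref{assene1} closes the bound and gives, via \eqref{eq:21-BIS}, a uniform bound on $\Var{\Psi_0}{\overline{\Uu}_{\tau,\eps}}0T$.

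Next I would set up the comparison with the interpolants. Introduce the piecewise-constant ``discrete'' functions $\overline w_{\tau,\eps}$, $\overline\pt_{\tau,\eps}$ collecting $-\rmD\ene{t_n}{\Uu^n_{\tau,\eps}}$ and $-\partial_t\ene{t_n}{\Uu^n_{\tau,\eps}}$, and, crucially, control the error between the Riemann sum in \eqref{eq:plan-discr-est} and the continuous integral $\int_0^t\partial_t\ene s{\overline{\Uu}_{\tau,\eps}(s)}\,\d s$; this is where the condition $\eps/\tau\to+\infty$ enters, since the step-error terms carry factors that are dominated using the superlinearity of $\Psi$ (the Moreau--Yosida-type estimate $\tau\Psi_\eps(v)\geq (\text{something})\,|v|\cdot\tau - o(\tau/\eps)$), so that $\lim_{k}\tau_k/\eps_k=0$ makes the discrepancy vanish. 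By Helly's theorem we then extract $\overline{\Uu}_{\tau_k,\eps_k}(t)\to u(t)$ pointwise, with $u\in\BV([0,T];\xfin)$, and $\overline w_{\tau_k,\eps_k}(t)\to -\rmD\ene t{u(t)}$ by continuity of $\rmD\cE$. Using \eqref{eq:70} (i.e.\ $\liminf_k\Psi_{\eps_k}^*(\overline w_{\tau_k,\eps_k}(t))\geq\rmI_{\Kx}(-\rmD\ene t{u(t)})$ pointwise in $t$) and Fatou, the boundedness of the middle term in \eqref{eq:plan-discr-est} forces $-\rmD\ene t{u(t)}\in\Kx$ for a.e.\ $t$, hence \eqref{eq:65bis} by continuity off $\rmJ_u$.

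Finally, to obtain the energy inequality \eqref{eq:85} I would pass to the limit in the discrete inequality \eqref{eq:plan-discr-est}. The stored energy $\ene T{\overline{\Uu}_{\tau_k,\eps_k}(T)}\to\ene T{u(T)}$, the initial term converges by assumption, and the work term $\sum\tau\partial_t\cE\to\int_0^T\partial_t\ene s{u(s)}\,\d s$ by dominated convergence (using \eqref{assene1} and the uniform energy bound). For the dissipation term, rewrite $\Psi_{\eps}(v)+\Psi_{\eps}^*(w)\geq\bipo(v,w)$ (from the very definition \eqref{eq:68bis} of the \bipotential) and invoke the lower-semicontinuity Lemma~\ref{le:main-lsc}, exactly as in the proof of Theorem~\ref{thm:convergence1}, to get $\liminf_k\sum_j\tau\big(\Psi_{\eps_k}+\Psi_{\eps_k}^*\big)\geq\pVar{\bipcE}u0T$; one must be mildly careful that the discrete sums are genuinely of the integral form to which Lemma~\ref{le:main-lsc} applies, which is handled by rephrasing \eqref{eq:plan-discr-est} in terms of the De~Giorgi-type variational interpolants (the ``piecewise-minimizer'' curves) $\widetilde\Uu_{\tau,\eps}$, for which the discrete energy identity becomes a true continuous-in-time energy identity with $\Psi_{\eps}(\dot{\widetilde\Uu})$ inside. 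This yields \eqref{eq:85}; combined with \eqref{eq:65bis} and Proposition~\ref{prop:useful}, $u$ is a $\BV$ solution. The main obstacle is the second step: showing that the Riemann-sum/integral discrepancy in the power term, and the replacement of the piecewise-constant interpolant by the variational interpolant in the dissipation term, both vanish \emph{precisely under} $\eps_k/\tau_k\to+\infty$ — this is the quantitative heart of the argument and requires the interpolation estimates exploiting the superlinear growth of $\Psi$ stated in \eqref{eq:19}.
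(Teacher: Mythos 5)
Your overall architecture matches the paper's proof: a discrete energy estimate from minimality, Helly compactness, the Euler--Lagrange inclusion plus the Fenchel identity to bring in $\Psi_\eps^*$, and lower semicontinuity via Lemma~\ref{le:main-lsc} combined with Proposition~\ref{prop:useful}. There is, however, a genuine gap at the central step. Writing $\Uu^n$ for $\Uu^n_{\tau,\eps}$, your displayed discrete energy estimate contains the conjugate term $\sum_j\tau\Psi_{\eps}^*\big({-}\rmD\ene{t_j}{\Uu^j}\big)$, and this cannot be obtained by ``comparing the value at $\Uu^n$ with that at $\Uu^{n-1}$'': that comparison only yields \eqref{eq:144}, i.e.\ the bound with the primal term $\frac\tau\eps\Psi\big(\frac\eps\tau(\Uu^n-\Uu^{n-1})\big)$ alone (which is what furnishes the a priori bounds). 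To insert $\Psi_\eps^*$ one must use the Euler--Lagrange inclusion together with the Fenchel identity,
\begin{displaymath}
  \tau\Psi_\eps\Big(\tfrac{\Uu^n-\Uu^{n-1}}\tau\Big)+\tau\Psi_\eps^*\big({-}\rmD\ene{t_n}{\Uu^n}\big)=-\la \rmD\ene{t_n}{\Uu^n},\Uu^n-\Uu^{n-1}\ra,
\end{displaymath}
and the right-hand side equals $\ene{t_n}{\Uu^{n-1}}-\ene{t_n}{\Uu^n}$ only up to the second-order Taylor remainder $R(t_n;\Uu^{n-1},\Uu^n)$, which has no sign since $\cE$ is nonconvex. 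So your inequality, stated without a remainder, is false in general. The correct version (the paper's \eqref{eq:180}) carries the extra term $\sup_n\sigma_B(\|\Uu^n-\Uu^{n-1}\|)\sum_n\|\Uu^n-\Uu^{n-1}\|$, and it is precisely here --- not in the Riemann-sum versus integral discrepancy of the power term, which vanishes for any $\tau\downarrow0$ by uniform continuity --- that hypothesis \eqref{eq:103} is needed: the superlinearity \eqref{eq:19} gives $\sup_n\|\Uu^n-\Uu^{n-1}\|_\xfin\le C\,\omega(\tau/(C\eps))\to0$ exactly when $\tau/\eps\to0$, which kills the remainder and also guarantees that the left-continuous, right-continuous and affine interpolants share the same pointwise limit.

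A second, minor point: the detour through De~Giorgi variational interpolants is unnecessary. Since the piecewise affine interpolant $\Uu_{\tau,\eps}$ has constant derivative $(\Uu^n-\Uu^{n-1})/\tau$ on $(t_{n-1},t_n)$, the discrete dissipation sum is exactly $\int_0^T\Psi_\eps(\dot\Uu_{\tau,\eps})\,\d t$, and Lemma~\ref{le:main-lsc} applies directly to the pair $(\Uu_{\tau,\eps},\overline{\Ww}_{\tau,\eps})$; the required uniform closeness of $\overline{\Ww}_{\tau,\eps}$ to $-\rmD\cE(\cdot,\Uu_{\tau,\eps}(\cdot))$ again follows from the uniform smallness of the increments under \eqref{eq:103}. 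With these two corrections your plan coincides with the paper's argument.
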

\noindent
 The reader may compare this result
to~\cite{DT02MQGB, KnMiZa07?ILMC, KnMiZa08?CPPM,  Roub08?RIPV},
where the same double passage to the limit was performed for
specific applied problems and conditions analogous to~\eqref{eq:103}
were imposed.
\begin{proof}
  The standard energy estimate associated with the variational problem \eqref{eq:61} yields
  \begin{equation}
    \label{eq:144}
    \frac{\tau}\eps\Psi\Big(\frac \eps\tau(U^n_{\tau,\eps}-U^{n-1}_{\tau,\eps})\Big)+
    \ene{t_n}{U^n_{\tau,\eps}}\le \ene{t_n}{U^{n-1}_{\tau,\eps}}=
    \ene{t_{n-1}}{U^{n-1}_{\tau,\eps}}+\int_{t_{n-1}}^{t_n}\partial_t\ene t{U^{n-1}_{\tau,\eps}}\,\d
    t\,.
  \end{equation}
  Thanks to \eqref{assene1}, we easily get from~\eqref{eq:144} the following uniform bounds for every $1\le n\le N$
  (here $C$ is a constant independent of $n,\tau,\eps$)
   \[
    \ene{t_n}{U^n_{\tau,\eps}}\le C,\quad
     \sum_{n=1}^N \frac{\tau}\eps\Psi\Big(\frac \eps\tau(U^n_{\tau,\eps}-U^{n-1}_{\tau,\eps})\Big)\le C,\quad
     \sum_{n=1}^N \Diss(U^n_{\tau,\eps}-U^{n-1}_{\tau,\eps})\le C\,,
     \]
   the latter estimate thanks to~\eqref{eq:20}.

   Denoting by  ${\underline{\Uu}}_{\tau,\eps}$ (resp.~$\Uu_{\tau,\eps}$)
   the
   \emph{right-continuous}
   piecewise constant interpolants
   (resp.~piecewise linear interpolant)
   of the discrete values $(U^n_\taueps)$
   which take the
   value $U^n_{\tau,\eps}$
   at $t=t_n$, we have
   \begin{subequations}
 \begin{gather}
    \label{eq:48}
    \ene t{\overline{\Uu}_{\tau,\eps}(t)}\le C,\quad
    \Var{\Psi_0}{\Uu_{\tau,\eps}}0T\le C\\
    \label{eq:181}
    \|\Uu_{\tau,\eps}-\overline{\Uu}_{\tau,\eps}\|_{L^\infty (0,T;\xfin)},\,
    \|\Uu_{\tau,\eps}-\underline{\Uu}_{\tau,\eps}\|_{L^\infty (0,T;\xfin)}, \le
    \sup_n\|U^n_{\tau,\eps}-U^{n-1}_{\tau,\eps}\|_\xfin\le C\omega(\tau /C\eps),\quad
  \end{gather}
  \end{subequations}
  where
   \[
    \omega(r):=\sup_{x\in \xfin}\big\{\|x\|_\xfin:r\Psi(r^{-1}x)\le 1\big\}
    \]
  satisfies $\lim_{r\down0}\omega(r)=0$ thanks to \eqref{eq:19}.
  By Helly's theorem, these bounds show that (up to the extraction of suitable
  subsequences $(\tau_k)$ and $( \eps_k )$
  satisfying \eqref{eq:103}), the sequences
  $(\Uu_{\tau_k,\eps_k}),$
  $ (\overline{\Uu}_{\tau_k,\eps_k})$ and $ (\underline{\Uu}_{\tau_k,\eps_k})$   pointwise converge to
  the same limit $u$.

  By differentiating the variational characterization of $U^n_{\tau,\eps}$ given by
  \eqref{eq:61} we obtain
   \[
    \partial\Psi_\eps\Big(\frac{U^n_{\tau,\eps}-U^{n-1}_{\tau,\eps}}\tau\Big)+W^n_{\tau,\eps}\ni0,\quad
    W^n_{\tau,\eps}:=-\rmD\ene{t_n}{U^n_{\tau,\eps}},
    \]
  which yields in each interval $(t_{n-1},t_n]$ (here,  $\overline {\Ww}_{\tau,\eps}$
denotes the left-continuous piecewise constant interpolant of the
values
  $(W^n_{\tau,\eps})_{n=1}^N$)
  \begin{align*}
    &\tau \Psi_\eps\big(\dot{\Uu}_{\tau,\eps}\big)+\tau\Psi_\eps^*(\overline{\Ww}_{\tau,\eps})=
    -\left\la \rmD \ene{t_n}{\Uu_{\tau,\eps}(t_n)}, \Uu_{\tau,\eps}(t_n)-\Uu_{\tau,\eps}(t_{n-1}) \right\ra\\&=
    \ene{t_{n-1}}{\Uu_{\tau,\eps}(t_{n-1})}-\ene{t_n}{\Uu_{\tau,\eps}(t_n)}+
    \int_{t_{n-1}}^{t_n} \partial_t \ene t{\underline{\Uu}_{\tau,\eps}(t)}\,\d t
    -R(t_n;\Uu_{\tau,\eps}(t_{n-1}),\Uu_{\tau,\eps}(t_{n}))
  \end{align*}
  where
   \[
    R(t;x,y):=\ene t{y}-\ene tx-
    \la \rmD \ene{t}{y}, y-x\ra.
    \]
  Since $\cE$ is of class $\mathrm{C}^1$, for every convex and bounded set $B\subset \xfin$ there exists a concave modulus of continuity
  $\sigma_B:[0,+\infty)
  \to [0,+\infty)$ such that $\lim_{r\downarrow0}\sigma_B(r)=\sigma_B(0)=0$ and
  \[
    R(t;x,y)\le \sigma_B(\|y-x\|_\xfin)\|y-x\|_{\xfin}\quad
    \text{for every }t\in [0,T],\ x,y\in B.
    \]
  We thus obtain
  \begin{equation}
    \label{eq:180}
    \begin{aligned}
      &\int_0^{T}\Big(\Psi_\eps(\dot{\Uu}_{\tau,\eps}(t))
      +\Psi_\eps^*(\overline{\Ww}_{\tau,\eps}(t))\Big)\,\d t+ \ene{t_N}{\Uu_{\tau,\eps}(t_N)}\le \ene
      0{u_0}+\int_0^{t_N}\partial_t \ene t{\underline{\Uu}_{\tau,\eps}(t)}\,\d
      t\\&+ \sup_{1\le n\le
        N}\sigma_B(\|U^n_{\tau,\eps}-U^{n-1}_{\tau,\eps}\|)\sum_{n=1}^N
      \|U^n_{\tau,\eps}-U^{n-1}_{\tau,\eps}\|,\quad
      \overline{\Ww}_{\tau,\eps}(t))=-\rmD\ene{\bar \sft_\tau(t)}{\overline{\Uu}_{\tau,\eps}(t)}.
\end{aligned}
\end{equation}
We pass to the limit along suitable subsequences $(\tau_k)$ and
$(\eps_k)$ such that ${{\Uu}}_{\tau_k,\eps_k},\,
{\overline{\Uu}}_{\tau_k,\eps_k}\to u$ pointwise; since
$\Uu_{\tau,\eps}$ and $\overline{\Uu}_{\tau,\eps}$ are uniformly
bounded, \eqref{eq:181} and \eqref{eq:103} yield the convergence to
$0$ of the third term on the right-hand side of \eqref{eq:180}, which
thus tends to $\ene0{u_0}+\int_0^T \partial_t \ene t{u(t)}\,\d t$.
Since $\overline{\Ww}_{\tau_k,\eps_k}(t)\to w(t)=-\rmD\ene t{u(t)}$,
applying the lower semicontinuity result of Lemma \ref{le:main-lsc} we
obtain that $u$ satisfies \eqref{eq:85} and the local stability
condition. In view of Proposition~\ref{prop:useful}, this concludes
the proof.
\end{proof}

\section{Parametrized solutions}
\label{sec:parametrized}

In this section, we restart from the
discussions in Sections~\ref{ss:2.1} and \ref{subsec:pointwise}, and
adopt a different point of view, which relies on the
rate-independent structure of the limit problem. The main idea,
which was introduced by \cite{Efendiev-Mielke06}, is to rescale time
in order to gain a uniform Lipschitz bound on the (rescaled) viscous
approximations. Keeping track of the asymptotic behavior of time
rescalings, one can retrieve the $\BV$ limit we analyzed in Section
\ref{sec:BV-solutions}.  In particular, we shall recover that the
limiting jump pathes reflect the viscous approximation.

\subsection{Vanishing viscosity analysis: a rescaling argument.}
\label{subsec:vanishing2}

Let us recall that for every $\eps>0$
$u_\eps$ are the solutions of the viscous differential inclusion
\begin{equation}
  \label{eq:102} \partial\Psi_\eps(\dot u_\eps(t))
  + \mathrm{D} \cE_t (u_\eps(t)) \ni 0 \quad \text{in $\xfins$ \quad for a.a. $t \in (0,T),$}
  \tag{DN$_\eps$}
\end{equation}
which we split into the system
\[
  \begin{aligned}
    \partial\Psi_\eps(\dot u_\eps(t))&\ni w_\eps,\\
    \rmD\ene t{u_\eps(t)}&=-w_\eps,\qquad
    \partial_t\ene t{u_\eps(t)}=-\pt_\eps.
  \end{aligned}
\]
We follow the ideas
of~\cite{Efendiev-Mielke06,Mielke-Rossi-Savare08} to
capture the aforementioned limiting viscous jump pathes,. However, owing to the dissipation bound \eqref{e:est-1},
we use a different time rescaling $\sfs_\eps:[0,T]\to[0,\sfS_\eps]$
\begin{equation}
\label{e:resc1}
 \rescs_\eps (t):= t
 + \int_0^t \Big( \Psi_\eps(\dot{u}_\eps(r))+\Psi_\eps^*(w_\eps(r))\Big)\dd
 r\quad \text{and} \quad
 \rescT_\eps:=\rescs_\eps(T).
\end{equation}
Thus, $\rescs_\eps$ may be interpreted as some sort of ``energy
arclength'' of the curve $u_\eps$. Notice that,
 thanks to~\eqref{e:est-1}, the sequence $(\rescT_\eps)$ is uniformly bounded with respect
 to the parameter~$\eps$.
 Let us
 consider the rescaled functions  $(\resct_\eps,\rescu_\eps):
[0,\rescT_\eps] \to [0,T]\times \xfin$ and
$(\rescpt_\eps,\rescw_\eps): [0,\rescT_\eps] \to \R\times \xfin^*$
defined by
\begin{equation}
\label{e:resc2}
\begin{aligned}
  \resct_\eps (s)&:=\rescs_\eps^{-1}(s)\,, &\rescu_\eps(s)&:= u_\eps
  (\resct_\eps (s)),
  \\
   \rescpt_\eps(s)&:=\pt_\eps(\resct_\eps(s))=-\partial_t\ene{\resct_\eps(s)}{\rescu_\eps(s)}\,,\qquad
   &\rescw_\eps(s)&:=w_\eps(\resct_\eps(s))=-\rmD\ene{\resct_\eps(s)}{\rescu_\eps(s)}.
  \end{aligned}
\end{equation}
We now study the limiting behavior as $\eps \down 0$ of the
\emph{reparametrized trajectories}
\[
\begin{aligned}
   \left\{
    \big(\resct_\eps(s), \rescu_\eps(s)\big)\, : \ s
    \in [0,\rescT_\eps] \right\}&\subset \XX =[0,T] \times \xfin,\\
  \left\{
    \big(\dot\resct_\eps(s), \dot\rescu_\eps(s);\rescpt_\eps(s), \rescw_\eps(s)\big)\, : \ s
    \in [0,\rescT_\eps] \right\}&\subset\BipDom,
\end{aligned}
\]
where we use the notation
\begin{equation}
\label{eq:11-notation} \BipDom:=[0,+\infty) \times \xfin\times
\R\times
    \xfins.
\end{equation}
In order to rewrite the ``rescaled  energy identity'' fulfilled by
the triple $(\resct_\eps,\rescu_\eps,\rescw_\eps)$, we define the
viscous space-time {\bipotential} $\Bip_\eps: (0,+\infty) \times
\xfin \times \R\times \xfins \to [0,+\infty)$ by setting
\begin{equation}
  \label{def-meps} \Bip_\eps (\alpha,\sfv;\sfp,\sfw):= \alpha\Psi_\eps(\sfv/\alpha)+\alpha \Psi_\eps^*(\sfw)+
  \alpha \sfp=
  \frac \alpha\eps\Psi(\frac \eps\alpha\sfv)+
  \frac\alpha\eps\Psi^*(\rescw)+\alpha\sfp
\end{equation}
 Hence, \eqref{e:enid-eps} becomes for all $0 \leq s_1 \leq
s_2 \leq \rescS_\eps$
\begin{equation}
\label{resc-enid-eps}
  \int_{s_1}^{s_2} \Bip_\eps \big( \dot{\resct}_\eps(s),
  \dot{\rescu}_\eps(s);\rescpt_\eps(s),\rescw_\eps(s)\big)
 \dd s
 +\ene {\resct_\eps(s_2)}{\rescu_\eps(s_2)}
 =  \ene {\resct_\eps(s_1)}{\rescu_\eps(s_1)},
\end{equation}
and \eqref{e:resc1} yields
\[
  \Bip_\eps\big(
  \dot{\resct}_\eps(s),\dot{\rescu}_\eps(s);1,\rescw_\eps(s)\big)=1\quad
  \forae\, s \in (0,\rescT_\eps)\,.
\]
\subsection*{A priori estimates and passage to the limit.}
Due to estimate~\eqref{e:est-1}, there exists $\rescS>0$ such that,
along a (not relabeled) subsequence, we have $ \rescs_\eps(T) \to
\rescS$ as $\eps\down 0$. Exploiting again~\eqref{e:est-1}, the
Arzel\`{a}-Ascoli compactness theorem, and the fact that $\xfin$ is
finite-dimensional  (see also the proof
of~\cite[Thm.~3.3]{Mielke-Rossi-Savare08}),  we find two curves
$\resct \in W^{1,\infty} (0,\rescS)$ and $\rescu \in
W^{1,\infty}([0,\rescS];\xfin)$ such that,  along the same
subsequence,
\begin{subequations}
    \label{e:convergences-1}
    \begin{align}
   \resct_\eps \to \resct\ &\text{in ${\rm C}^0 ([0,\rescS])$,}
    \quad&\dot{\resct}_\eps \weaksto \dot{\resct}\ &\text{in $L^\infty
      (0,\rescS)$,}
    \\
    \rescu_\eps \to \rescu \ &\text{in ${\rm C}^0
      ([0,\rescS];\xfin)$,}\quad &\dot{\rescu}_\eps \weaksto
    \dot{\rescu} \ & \text{in $L^\infty (0,\rescS;\xfin)$,}\\
    \rescpt_\eps\to \rescpt \ &\text{in ${\rm C}^0 ([0,\rescS])$,}
    \quad
    &\rescw_\eps\to\rescw \ &\text{in }{\rm C}^0([0,\rescS];\xfins),
    \end{align}
    with
    \begin{equation}
    \ene{\resct_\eps }{\rescu_\eps} \to \ene{\resct }{\rescu},\quad 
    \rescpt(s)=-\partial_t\ene{\resct(s)}{\rescu(s)},\quad 
    \rescw(s)=-\rmD
    \ene{\resct(s)}{\rescu(s)}\label{eq:189}
  \end{equation}
\end{subequations}
for all $s \in [0,\rescS]$. Then,  to pass to the limit in
\eqref{resc-enid-eps} we exploit a lower semicontinuity result (see
Proposition \ref{prop:techn2}), based on the fact that the sequence
of functionals $(\Bip_\eps)$ $\Gamma$-converges to the
\emph{augmented} {\bipotential} $\Bip: [0,+\infty) \times \xfin
\times \R\times \xfins \to [0,+\infty]$ (see Lemma \ref{le:techn1})
defined by
  \begin{equation}
    \label{e:gamma-conv1} \Bip(\alpha, v;p,w):=
    \begin{cases}
      \Diss(v)+\rmI_{\Kx}(w)+\alpha\,p&\text{if }\alpha>0,\\
      \bipo(v,w)&\text{if }\alpha=0.
    \end{cases}
  \end{equation}
 By \eqref{e:convergences-1} and
 Proposition \ref{prop:techn2}, we take the $\liminf$ as $\eps \down 0$
of~\eqref{resc-enid-eps} and conclude that the pair
$(\resct,\rescu)$ fulfils, for all $0 \leq s_1 \leq s_2 \leq \rescS$, the estimate
\begin{equation}
\label{e:enid-lim} \int_{s_1}^{s_2} \Bip \big(
\dot{\resct}(s),\dot{\rescu}(s);\rescpt(s),\rescw(s) \big) \dd s
+\ene {\resct(s_2)}{\rescu(s_2)} \leq  \ene
{\resct(s_1)}{\rescu(s_1)}\,.
\end{equation}

\subsection{{\Bipotential}s and rate-independent evolution}
\label{subsec:Bip_properties}

The augmented space-time {\shortbipotential} $\Bip$ is closely related
to $\bipo$ introduced by \eqref{eq:68bis}. The following result fixes
some properties of $\Bip$. Its proof, which we choose to omit, can be
easily developed starting from Theorems \ref{thm:bipotential} and
\ref{thm:auxiliary} for the {\bipotential} $\bipo$. 

\begin{lemma}[General properties of $\Bip$]
  \label{le:auxiliary2}
  \
  \begin{enumerate}[\rm (1)]
  \item $\Bip$ is lower semicontinuous, $1$-homogeneous and convex in the
    pair $(\alpha,v)$; for every $(\alpha,v) \in [0,+\infty) \times \xfin$ the function $\Bip(\alpha,v;\cdot,\cdot)$ has convex sublevels.
  \item For all $(\alpha,v,p,w) \in \BipDom$ (cf.~\eqref{eq:11-notation})
   it  satisfies
    \begin{gather}
      \label{eq:13}
      \Bip(\alpha,v;p,w)\ge \la w, v \ra+\alpha p,\quad
      \Bip(0,v;p,w)\ge\bipo(v,w)\ge \Diss(v),\\
      \label{eq:14}
      \Bip(0,v;p,w)=\Diss(v)\quad \Leftrightarrow\quad
      w\in \Kx.
    \end{gather}
  \item The \emph{contact set} of $\Bip$
    \begin{equation}
      \label{eq:53}
      \Contact\Bip:=\Big\{(\alpha, v;p,w)\in \BipDom: \Bip(\alpha,v;p,w)=\langle w,v\rangle+\alpha p\Big\}
    \end{equation}
    does not impose any constraint on $p$.
      It can be characterized by
      \begin{equation}
        \label{eq:195}
        (\alpha, v;p,w)\in \Contact\Bip\quad\Leftrightarrow\quad
        w\in \partial\,\Bip(\alpha,\cdot\,;p,w)(v).
      \end{equation}
    We also have
    \begin{align}
      \label{eq:15}
      \text{for $\alpha>0$,}&\quad (\alpha,v;p,w)\in \Contact\Bip\quad
      \text{if and only if}
      \quad w\in \partial\Diss(v),\\
      \label{eq:15bis}
      \text{for $\alpha=0$,}& \quad (\alpha,v;p,w)\in \Contact\Bip\quad
      \text{if and only if}\quad (v,w)\in \Contact\bipo.
    \end{align}
    Equivalently, $ (\alpha,v;p,w)\in \Contact\Bip$ if and only if
    \begin{equation}
      \label{eq:32}
      w\in \partial\Diss(v)\subset \Kx\qquad\text{or}\qquad
      \Big(w\not\in \Kx,\quad \alpha=0,\quad \exists\,\eps\in \fl(v,w): \
      \ w\in \partial\Psi(\eps v)\Big),
    \end{equation}
    where $\fl(v,w)$ is defined in \eqref{eq:111}.
    In particular, in the additive viscosity case~\eqref{eq:5}, we simply have
    \begin{equation}
      \label{eq:33bis}
      (\alpha,v;p,w)\in \Contact\Bip\quad
      \Longleftrightarrow\quad
      \exists\,\lambda\ge 0:\quad
      w\in \partial\Diss(v)+\partial\Psi_V(\lambda v)\quad\text{and}\quad \alpha\lambda=0.
    \end{equation}
  \end{enumerate}
\end{lemma}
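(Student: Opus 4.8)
The plan is to derive Lemma~\ref{le:auxiliary2} entirely from the properties of the {\bipotential} $\bipo$ collected in Theorems~\ref{thm:bipotential} and~\ref{thm:auxiliary}, exploiting throughout a single reformulation of~\eqref{e:gamma-conv1}. By~\eqref{eq:112} one has $\bipo(v,w)=\Diss(v)$ \emph{exactly} when $w\in\Kx$; hence if $w\in\Kx$ then $\Bip(\alpha,v;p,w)=\Diss(v)+\alpha p$ for \emph{every} $\alpha\ge 0$ (no discontinuity at $\alpha=0$), whereas if $w\notin\Kx$ then $\Bip(\alpha,v;p,w)=+\infty$ for all $\alpha>0$ and $\Bip(0,v;p,w)=\bipo(v,w)$, a finite quantity when $v\neq 0$. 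Keeping this dichotomy in mind, all three groups of assertions reduce to bookkeeping; the one point I expect to require genuine care is the matching of the two regimes $\alpha>0$ and $\alpha=0$ in the convexity and lower semicontinuity statements of~(1), and it is handled precisely by this dichotomy together with the closedness of $\Kx$ and the continuity of $\bipo$ established in Theorem~\ref{thm:auxiliary}.

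\textbf{Ad (1).} I would first check $1$-homogeneity of $(\alpha,v)\mapsto\Bip(\alpha,v;p,w)$ by a direct computation in the two cases $\alpha>0$ and $\alpha=0$, using the $1$-homogeneity of $\Diss$ and property~(I3) for $\bipo(\cdot,w)$. For convexity on $[0,+\infty)\times\xfin$: when $w\in\Kx$ it is immediate from the formula $\Diss(v)+\alpha p$, jointly convex across $\alpha=0$ as well; when $w\notin\Kx$ the convexity inequality is trivial unless both chosen points have $\alpha=0$, in which case it is the convexity of $\bipo(\cdot,w)$ from~(I3). Convexity of the sublevels of $\Bip(\alpha,v;\cdot,\cdot)$ follows from the same split: for $\alpha>0$ a sublevel is $\Kx$ times a half-line, for $\alpha=0$ it is $\R$ times a sublevel of $\bipo(v,\cdot)$, convex by~(I1). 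For joint lower semicontinuity along $(\alpha_n,v_n,p_n,w_n)\to(\alpha,v,p,w)$ in $\BipDom$ I would argue: if $\alpha>0$, then eventually $\alpha_n>0$ and one invokes continuity of $\Diss$, convergence of $\alpha_np_n$, and closedness of $\Kx$; if $\alpha=0$ and $w\in\Kx$, one bounds $\Bip(\alpha_n,v_n;p_n,w_n)\ge\Diss(v_n)+\alpha_np_n$ whenever $\alpha_n>0$ (discarding $\rmI_{\Kx}\ge0$) and uses $\Bip=\bipo$ whenever $\alpha_n=0$; if $\alpha=0$ and $w\notin\Kx$, closedness of $\Kx$ forces $w_n\notin\Kx$ for large $n$, so $\Bip(\alpha_n,v_n;p_n,w_n)$ is either $+\infty$ (if $\alpha_n>0$) or equals $\bipo(v_n,w_n)\to\bipo(v,w)$ by continuity of $\bipo$.

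\textbf{Ad (2).} The bound $\Bip(\alpha,v;p,w)\ge\la w,v\ra+\alpha p$ follows, for $\alpha>0$, from $\Diss(v)\ge\la w,v\ra$ when $w\in\Kx$ (recall $\Kx=\partial\Diss(0)$, cf.~\eqref{eq:38}) and trivially when $w\notin\Kx$, and for $\alpha=0$ from~(I2)/\eqref{eq:13b} applied to $\bipo$. The chain $\Bip(0,v;p,w)\ge\bipo(v,w)\ge\Diss(v)$ is an equality in the first place by definition and is~\eqref{eq:13b} in the second; the equivalence $\Bip(0,v;p,w)=\Diss(v)\Leftrightarrow w\in\Kx$ is precisely~\eqref{eq:112}.

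\textbf{Ad (3).} For $\alpha>0$ the term $\alpha p$ cancels on both sides of the defining equation of $\Contact\Bip$, so after discarding $w\notin\Kx$ (where the left-hand side is $+\infty$) the condition becomes $\Diss(v)=\la w,v\ra$ with $w\in\Kx$, i.e.~$w\in\partial\Diss(v)$ by~\eqref{eq:18}: this proves~\eqref{eq:15} and shows the membership is independent of $p$. For $\alpha=0$ the condition reads $\bipo(v,w)=\la w,v\ra$, i.e.~$(v,w)\in\Contact\bipo$, again $p$-free, which is~\eqref{eq:15bis}; combined with~\eqref{eq:49} (and the trivial computation $\partial_v[\Diss(\cdot)+\alpha p]=\partial\Diss$ when $\alpha>0$, $w\in\Kx$) this gives~\eqref{eq:195}, and combined with~\eqref{eq:32b} it gives~\eqref{eq:32}. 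Finally, the additive-viscosity identity~\eqref{eq:33bis} follows from~\eqref{eq:32} and the corresponding characterization~\eqref{eq:33} of $\Contact\bipo$, once one observes that the first condition in~\eqref{eq:25} forces $\partial\Psi_V(0)=\{0\}$, so that the constraint $\alpha\lambda=0$ indeed excludes $\lambda>0$ when $\alpha>0$.
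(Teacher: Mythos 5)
Your proof is correct and takes exactly the route the paper indicates: the authors omit the proof of Lemma~\ref{le:auxiliary2} and state only that it "can be easily developed starting from Theorems~\ref{thm:bipotential} and~\ref{thm:auxiliary}", which is precisely the derivation you carry out, organized around the dichotomy $w\in \Kx$ versus $w\notin \Kx$ and the closedness of $\Kx$ together with the continuity of $\bipo$. The only blemishes are cosmetic (e.g.\ $\bipo(v,w)$ is finite also for $v=0$, and the fact $\Kx=\partial\Diss(0)$ is \eqref{e:2.3} rather than \eqref{eq:38}); the substantive steps, including the observation that $\partial\Psi_V(0)=\{0\}$ is what makes \eqref{eq:33bis} consistent for $\alpha>0$, are all sound.
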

\paragraph{\textbf{Conclusion of the vanishing viscosity analysis.}}
We are now going to show that~\eqref{e:enid-lim} is in fact  an
 equality. This can be easily checked relying on
 the chain rule~\eqref{e:classical-chain_rule}, which
yields $\forae\, s \in (0,\rescS)$
\begin{equation}
\label{e:crucial-1}
\begin{aligned}
 \frac{\rmd}{\rmd s}\ene{\resct(s)}{\rescu(s)}&=-\partial_t
\ene{\resct(s)}{\rescu(s)}\, \dot{\resct}(s) -\langle -\mathrm{D}
\ene{\resct(s)}{\rescu(s)}, \dot{\rescu}(s) \rangle\\ & \topref{e:resc2}=
-\rescpt(s)\dot\resct(s)-\la \rescw(s),\dot\rescu(s)\ra\ge
-\Bip (\dot{\resct}(s), \dot{\rescu}(s),\rescpt(s),\rescw(s))
\,.
\end{aligned}
\end{equation}
Collecting~\eqref{e:crucial-1} and~\eqref{e:enid-lim}, we conclude
that the latter holds with an equality sign and, with an elementary
argument, that such equality also holds in the differential form,
namely $\forae\, s \in (0,\rescS)$
\begin{equation}
\label{def:mprif-1}
\begin{aligned}
  \rescpt(s)&=-\partial_t \ene{\resct(s)}{\rescu(s)},\qquad
  \rescw(s)=-\mathrm{D} \ene{\resct(s)}{\rescu(s)}\\
  \frac{\rmd}{\rmd s}\ene{\resct(s)}{\rescu(s)}&=
  -\rescpt(s)\dot\resct(s)-\la \rescw(s),\dot\rescu(s)\ra=-\Bip (\dot{\resct}(s), \dot{\rescu}(s),\rescpt(s),\rescw(s))
\end{aligned}
\end{equation}
which yields
\begin{equation}
  \label{eq:25bis}
  \Big(\dot\resct(s),\dot\rescu(s);-\partial_t \ene {\resct(s)}{\rescu(s)},-\rmD\ene {\resct(s)}{\rescu(s)}\Big)
  \in \Contact\Bip\quad\text{for a.a.\ }s\in (0,\rescS).
\end{equation}
Finally, we take the $\limsup$ as $\eps \down 0$
of~\eqref{resc-enid-eps}, using~(\ref{e:convergences-1})
and~\eqref{def:mprif-1}, whence
\[
\begin{aligned}
  \limsup_{\eps\down0}\int_0^\rescS \Bip_\eps(\dot{\resct}_\eps(s),\dot{\rescu}_\eps(s),
  \rescpt_\eps(s),\rescw_\eps(s))\,\d s\le
  \int_0^\rescS \Bip(\dot{\resct}(s), \dot{\rescu}(s),\rescpt(s),\rescw(s))\,\d s.
\end{aligned}
\]
In particular, we find that $\forae\, s \in
 (0,\rescS)$
\begin{equation}
  \label{eq:26bis}
  \Bip\big( \dot{\resct}(s),\dot{\rescu}(s);1,\rescw(s)\big)=1.
\end{equation}
\subsection{Parametrized solutions of rate-independent systems.}
\label{subsec:RIF} Motivated by the discussion of the previous
section,
 we now  give  the notion of
parametrized rate-independent evolution,   driven by a general
{\bipotential} $\Bip$, satisfying conditions $(1),(2)$ of Lemma
\ref{le:auxiliary2}.
\begin{definition}[Parametrized solutions of rate-independent systems]
\label{def:2.1} Let $\Bip:\BB\to ({-}\infty,+\infty]$ be
  the {\bipotential} \eqref{e:gamma-conv1}.
We say
that a Lipschitz continuous curve $(\resct,\rescu): [a,b] \to [0,T]
\times \xfin$ is a {\em parametrized rate-independent solution for the system {\PVRIS}}
if $\resct$ is nondecreasing and, setting
$\rescpt(s)=-\partial_t \ene{\resct(s)}{\rescu(s)},\ \rescw(s)=-\rmD
\ene{\resct(s)}{\rescu(s)}$ for all $s \in [a,b]$, we have
\begin{equation}
  \label{eq:16}
  \int_{s_1}^{s_2} \Bip(\dot \resct(s),\dot \rescu(s);\rescpt(s),\rescw(s))\,\d s+\ene{\resct(s_2)}{\rescu(s_2)}\le
  \ene{\resct(s_1)}{\rescu(s_1)}\quad
  \forall\, a\le s_1\le s_2\le b.
\end{equation}
Furthermore,
\begin{enumerate}
\item
 if $\dot\resct(s)+\Diss(\dot\rescu(s))>0$ for a.a.
$s\in (a,b)$ we say that $(\resct,\rescu)$ is \emph{nondegenerate};
\item
 if $\resct(a)=0,\resct(b)=T$ we say that $(\resct,\rescu)$ is
\emph{surjective};
\item
 if $(\resct,\rescu)$ satisfies \eqref{eq:26bis},
we say that it is \emph{normalized}.
\end{enumerate}
\end{definition}
Definition~\ref{def:2.1}  generalizes to the present setting the
notion which we first introduced in~\cite{Mielke-Rossi-Savare08}.
\begin{remark}
\upshape \label{rmk:crucial-feat} The nice feature of the
previous definition is its invariance with respect to
(nondecreasing, Lipschitz) time rescalings. Namely,  if
$(\resct,\rescu):[a,b]\to [0,T]\times \xfin$ is a parametrized solution
and $\rescs:[\alpha,\beta]\to [a,b]$ is a Lipschitz nondecreasing
map, then $(\resct\circ\rescs,\rescu\circ\rescs)$ is a parametrized
solution in $[\alpha,\beta]$. \end{remark}

 The next result provides
equivalent characterizations of parametrized solutions.
\begin{proposition}
  \label{prop:smooth_parametrized} 
  A Lipschitz continuous curve $(\resct,\rescu): [a,b]
  \to [0,T] \times \xfin$,  with $\resct$ nondecreasing, is a
  parametrized solution of {\PVRIS} if
  and only if one of the following (equivalent) conditions (involving as usual
  $\rescpt=-\partial_t\ene{\resct}{\rescu},\rescw=-\rmD\ene\resct\rescu$)
is satisfied:
\begin{enumerate}[\rm (1)]
\item The energy inequality \eqref{eq:16} holds just for $s_1=a$ and $s_2=b$, i.e.
  \begin{equation}
    \label{eq:35}
    \int_{a}^{b} \Bip(\dot \resct(s),\dot \rescu(s);\rescpt(s),\rescw(s))\,\d s+\ene{\resct(b)}{\rescu(b)}\le
    \ene{\resct(a)}{\rescu(a)}.
  \end{equation}
\item The energy inequality \eqref{eq:16}  holds in the differential form
  \begin{equation}
    \label{eq:17}
    \frac\d{\d s}\ene{\resct(s)}{\rescu(s)}+\Bip(\dot\resct(s),\dot\rescu(s);\rescpt(s),\rescw(s))\le 0\quad
    \text{for a.a.\ $s\in (a,b)$}.
  \end{equation}
\item The energy identity holds, in the differential form
  \begin{equation}
    \label{eq:17bis}
    \frac\d{\d s}\ene{\resct(s)}{\rescu(s)}+\Bip(\dot\resct(s),\dot\rescu(s);\rescpt(s),\rescw(s))= 0
    \quad
    \text{for a.a.\ $s\in (a,b)$,}
  \end{equation}
  or in the integrated form
\begin{equation}
    \label{eq:RIF:13-0}
      \int_{s_1}^{s_2} \Bip(\dot \resct(s),\dot \rescu(s);\rescpt(s),\rescw(s))\,\d s+\ene{\resct(s_2)}{\rescu(s_2)}=
      \ene{\resct(s_1)}{\rescu(s_1)}\quad
  \text{for $ a\le s_1\le s_2\le b.$}
  \end{equation}
\item
There holds
\[
 \big(\dot\resct(s),\dot\rescu(s);-\partial_t
\ene{\resct(s)}{\rescu(s)},-\rmD\ene{\resct(s)}{\rescu(s)}\big)
  \in \Contact\Bip
\quad \forae\, s \in (a,b)\,.
\]
\item The pair $(\resct,\rescu)$ satisfy the differential inclusion
\begin{equation}\label{e:newer}
    \partial\,\Bip\big(\dot\resct(s),\cdot\,;-\partial_t
    \ene{\resct(s)}{\rescu(s)},-\rmD\ene{\resct(s)}{\rescu(s)}\big)(\dot\rescu(s))+\rmD\ene{\resct(s)}{\rescu(s)}\ni0
    \quad \text{ a.e.\ in } (a,b)\,.
\end{equation}
In particular, for a.a.\ $s\in (a,b)$  we have the implications
  \begin{equation} \label{eq:31}
     \begin{array}{rcl}
    \dot\resct (s)>0& \Rightarrow &
    -\rmD\ene {\resct(s)}{\rescu(s)}\in \Kx,\\
     -\rmD\ene {\resct(s)}{\rescu(s)}\in \Kx & \Rightarrow &
    -\rmD\ene{\resct(s)}{\rescu(s)}\in \partial\Diss(\dot
    \rescu(s)),
    \end{array}
  \end{equation}
  and for every Borel
  map 
  $\lambda$ defined in the open set $\mathsf J$ by
  \begin{equation}
    \label{eq:124}
    \begin{gathered}
    \mathsf J:=\big\{s\in (a,b):  -\rmD\ene{\resct(s)}{\rescu(s)}\not\in  \Kx\big\},
    \\
    \text{with} \ \
    \lambda(s)\in \fl(\dot\rescu(s),-\rmD\ene{\resct(s)}{\rescu(s)})
    \ \ \forae\, s\in \mathsf J,
    \end{gathered}
     \end{equation}
  we have
  \begin{equation}
    \label{eq:54}
     -\rmD\ene{\resct(s)}{\rescu(s)}\in \partial\Psi(\lambda(s)\dot\rescu(s)),\quad
     \dot\resct(s)=0\quad\text{for a.a.\ }s\in \mathsf J.
  \end{equation}
\end{enumerate}
\end{proposition}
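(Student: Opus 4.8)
The whole statement follows from one elementary observation: the energy inequality \eqref{eq:16} automatically forces the reverse inequality, so it is always an equality whenever it holds. Indeed, since $\cE\in\rmC^1$ and $(\resct,\rescu)$ is Lipschitz, the map $s\mapsto\ene{\resct(s)}{\rescu(s)}$ is Lipschitz and the classical chain rule \eqref{e:classical-chain_rule} gives
\[
  \frac\d{\d s}\ene{\resct(s)}{\rescu(s)}
  = -\la\rescw(s),\dot\rescu(s)\ra-\rescpt(s)\dot\resct(s)
  \qquad\forae\,s\in(a,b).
\]
Since $\resct$ is nondecreasing, $\dot\resct(s)\ge0$ a.e., so $(\dot\resct(s),\dot\rescu(s);\rescpt(s),\rescw(s))\in\BipDom$ a.e., and the lower bound $\Bip(\alpha,v;p,w)\ge\la w,v\ra+\alpha p$ in \eqref{eq:13} yields
\[
  \frac\d{\d s}\ene{\resct(s)}{\rescu(s)}+\Bip\big(\dot\resct(s),\dot\rescu(s);\rescpt(s),\rescw(s)\big)\ge0
  \qquad\forae\,s\in(a,b),
\]
hence, after integration, $\int_{s_1}^{s_2}\Bip\,\d s+\ene{\resct(s_2)}{\rescu(s_2)}\ge\ene{\resct(s_1)}{\rescu(s_1)}$ on every $[s_1,s_2]\subset[a,b]$ — the inequality opposite to \eqref{eq:16}. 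Consequently the definition of parametrized solution (i.e.\ \eqref{eq:16} for all $s_1\le s_2$) holds if and only if its special case \eqref{eq:35} holds, if and only if $\int_a^b\big(\frac\d{\d s}\cE+\Bip\big)\,\d s=0$; and, because the integrand is $\ge0$ a.e., this is equivalent to the pointwise identity \eqref{eq:17bis}. Integrating \eqref{eq:17bis} over subintervals gives \eqref{eq:RIF:13-0}, while \eqref{eq:17bis} trivially implies the differential inequality \eqref{eq:17}, which integrated over $[a,b]$ gives \eqref{eq:35}. This closes the circle among the definition and conditions (1)--(3). (In each direction one gets a posteriori $\int_a^b\Bip\,\d s=\ene{\resct(a)}{\rescu(a)}-\ene{\resct(b)}{\rescu(b)}<+\infty$; measurability of the integrand is ensured by lower semicontinuity of $\Bip$, part (1) of Lemma~\ref{le:auxiliary2}.)

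For the remaining conditions I substitute the chain rule into \eqref{eq:17bis}: the latter is equivalent to $\Bip(\dot\resct(s),\dot\rescu(s);\rescpt(s),\rescw(s))=\la\rescw(s),\dot\rescu(s)\ra+\rescpt(s)\dot\resct(s)$ for a.a.\ $s$, which by the very definition \eqref{eq:53} of the contact set means $(\dot\resct(s),\dot\rescu(s);\rescpt(s),\rescw(s))\in\Contact\Bip$ a.e., i.e.\ condition (4). By the characterization \eqref{eq:195} of $\Contact\Bip$ through the partial subdifferential $\partial\Bip(\alpha,\cdot\,;p,w)$, condition (4) is in turn equivalent to $\rescw(s)\in\partial\Bip(\dot\resct(s),\cdot\,;\rescpt(s),\rescw(s))(\dot\rescu(s))$ a.e.; recalling $\rescw=-\rmD\ene\resct\rescu$, this is precisely the differential inclusion \eqref{e:newer} of (5). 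Hence (3) $\Leftrightarrow$ (4) $\Leftrightarrow$ \eqref{e:newer}, and all the listed conditions are equivalent.

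It remains to read off the pointwise consequences \eqref{eq:31}--\eqref{eq:54} from the membership $(\dot\resct(s),\dot\rescu(s);\rescpt(s),\rescw(s))\in\Contact\Bip$, using the explicit description \eqref{eq:32} (equivalently \eqref{eq:15}--\eqref{eq:15bis}). If $\dot\resct(s)>0$, the second alternative in \eqref{eq:32} (which forces $\alpha=0$) is excluded, so the first one holds and $-\rmD\ene{\resct(s)}{\rescu(s)}\in\partial\Diss(\dot\rescu(s))\subset\Kx$; this gives both implications of \eqref{eq:31} in that case. If $-\rmD\ene{\resct(s)}{\rescu(s)}\in\Kx$, the second alternative is again excluded (it requires $-\rmD\cE\notin\Kx$), so $-\rmD\ene{\resct(s)}{\rescu(s)}\in\partial\Diss(\dot\rescu(s))$, which is the second implication of \eqref{eq:31}. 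Finally, on the open set $\mathsf J=\{s:-\rmD\ene{\resct(s)}{\rescu(s)}\notin\Kx\}$ the first alternative is impossible (since $\partial\Diss\subset\Kx$), so the second one holds: $\dot\resct(s)=0$ and, by the last part of (P4) of Theorem~\ref{thm:auxiliary}, $-\rmD\ene{\resct(s)}{\rescu(s)}\in\partial\Psi(\eps\,\dot\rescu(s))$ for \emph{every} $\eps\in\fl(\dot\rescu(s),-\rmD\ene{\resct(s)}{\rescu(s)})$; moreover $\dot\rescu(s)\neq0$ a.e.\ on $\mathsf J$ (otherwise $\fl(0,-\rmD\cE)=\emptyset$, contradicting the contact condition, or equivalently $-\rmD\cE\in\partial\Psi(0)=\Kx$), so $\fl(\dot\rescu(s),-\rmD\ene{\resct(s)}{\rescu(s)})$ is a nonempty bounded closed interval by \eqref{e:new-entry}. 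In particular \eqref{eq:54} holds for every Borel selection $\lambda$ as in \eqref{eq:124}. The argument presents no genuine obstacle; the only point needing mild care is the measurability/integrability bookkeeping, which is routine given the lower and upper semicontinuity properties recorded in Lemma~\ref{le:auxiliary2} and \eqref{eq:44}.
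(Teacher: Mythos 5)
Your proof is correct and takes exactly the route the paper intends: the chain rule together with the lower bound $\Bip(\alpha,v;p,w)\ge\la w,v\ra+\alpha p$ forces the reverse of \eqref{eq:16}, hence equality, the contact condition, and then the pointwise consequences via the characterizations in Lemma~\ref{le:auxiliary2} and (P4) of Theorem~\ref{thm:auxiliary} --- precisely the argument the paper compresses into its pointer to \eqref{e:crucial-1}, \eqref{def:mprif-1}, \eqref{eq:25bis}. The only (inessential) caveat is your parenthetical claim that $\dot\rescu\neq0$ a.e.\ on $\mathsf J$: it is not needed for \eqref{eq:54} (which is vacuous wherever $\fl=\emptyset$), and its justification inherits the paper's own imprecision of \eqref{eq:32} at $v=0$, where $\Bip(0,0;p,w)=\bipo(0,w)=0=\la w,0\ra$ places $(0,0;p,w)$ in the contact set even for $w\notin\Kx$, so a constant (degenerate) parametrized solution with $-\rmD\cE\notin\Kx$ is not actually excluded.
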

The \emph{proof} follows from the chain rule
\eqref{e:classical-chain_rule} (arguing as for \eqref{e:crucial-1},
\eqref{def:mprif-1}, \eqref{eq:25bis}), and from the
characterization of the contact set $\Contact\Bip$ of Lemma
\ref{le:auxiliary2} (see also
\cite[Prop.~2]{Mielke-Rossi-Savare08}). 
\begin{corollary}[Differential characterization in the additive viscosity case] 
  \label{p:2.1}
 Let $\Bip:\BB\to ({-}\infty,+\infty]$ be a {\bipotential}
satisfying conditions $(1),(2)$ of Lemma \ref{le:auxiliary2}, and
suppose also
  that the contact set of  $\Bip$ satisfies
the characterization~\eqref{eq:33bis} of
  Lemma \ref{le:auxiliary2} in the additive viscosity case~\eqref{eq:5}
  $\Psi=\Diss+\Psi_V$.

  Then,
   a Lipschitz continuous curve $(\resct,\rescu): [a,b]
  \to [0,T] \times \xfin$ is a parametrized solution of {\PVRIS} if
  and only if there exists a Borel function $\lambda : (a,b) \to[0,+\infty)
  $ such that $\forae\, s \in (a,b)$
  \begin{equation}
    \label{eq:55}
    \partial\Diss(\dot \rescu(s))+\partial\Psi_V(\lambda(s)\dot\rescu (s))+\rmD\ene{\resct(s)}{\rescu(s)}\ni0,\quad
    \lambda(s)\dot\resct(s)=0\quad\text{for a.a.\ }s\in (a,b).
  \end{equation}
\end{corollary}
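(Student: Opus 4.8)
The plan is to recognize the claim as a measurable‑selection refinement of the contact‑set characterization of parametrized solutions. By Proposition~\ref{prop:smooth_parametrized}, item~(4), a Lipschitz curve $(\resct,\rescu):[a,b]\to[0,T]\times\xfin$ with $\resct$ nondecreasing is a parametrized solution of {\PVRIS} if and only if, writing $\rescw(s):=-\rmD\ene{\resct(s)}{\rescu(s)}$ and $\rescpt(s):=-\partial_t\ene{\resct(s)}{\rescu(s)}$,
\[
  \big(\dot\resct(s),\dot\rescu(s);\rescpt(s),\rescw(s)\big)\in\Contact\Bip
  \qquad\text{for a.a.\ }s\in(a,b).
\]
Under the standing hypothesis that $\Contact\Bip$ obeys the characterization~\eqref{eq:33bis}, this is equivalent to requiring that for a.a.\ $s$ there be some $\lambda\ge0$ (a priori $s$‑dependent) with $\rescw(s)\in\partial\Diss(\dot\rescu(s))+\partial\Psi_V(\lambda\,\dot\rescu(s))$ and $\dot\resct(s)\,\lambda=0$, which — recalling the definition of $\rescw$ — is precisely the pointwise version of~\eqref{eq:55}. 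Consequently the ``if'' direction is immediate: a Borel $\lambda$ satisfying~\eqref{eq:55} a.e.\ makes the contact inclusion above hold a.e., and Proposition~\ref{prop:smooth_parametrized}~(4) gives that $(\resct,\rescu)$ is a parametrized solution.

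For the ``only if'' direction one must upgrade the a.e.-pointwise existence of a multiplier to a genuine Borel function $s\mapsto\lambda(s)$. I would split $(a,b)$ into the measurable set $\{s:\rescw(s)\in\Kx\}\cup\{s:\dot\rescu(s)=0\}$, on which $\lambda\equiv0$ already works (using the first alternative in~\eqref{eq:32} when $\rescw(s)\in\Kx$ with $\dot\rescu(s)\ne0$, and $0\in\partial\Psi_V(0)$ otherwise), and its complement $\mathsf J':=\{s:\rescw(s)\notin\Kx,\ \dot\rescu(s)\ne0\}$. On $\mathsf J'$ the contact inclusion forces, via~\eqref{eq:32}, that $\dot\resct(s)=0$ and that the admissible multipliers are exactly the elements of $\fl(\dot\rescu(s),\rescw(s))$, which by~\eqref{e:new-entry} is a nonempty bounded closed interval. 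Since $s\mapsto(\dot\rescu(s),\rescw(s))$ is measurable (as $\rescu$ is Lipschitz and $\rescw$ continuous) and the multimap $\fl$ is upper semicontinuous with closed graph by Theorem~\ref{thm:auxiliary}~(P3) (cf.~\eqref{eq:44}), the map $s\mapsto\fl(\dot\rescu(s),\rescw(s))$ is a measurable multimap with compact‑interval values on $\mathsf J'$; hence $\lambda(s):=\min\fl(\dot\rescu(s),\rescw(s))$ is a lower semicontinuous, thus Borel, admissible selection there. Gluing the two pieces yields the Borel $\lambda$ required by~\eqref{eq:55}.

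The only nontrivial ingredient is this last measurability step. The pointwise equivalence is a direct combination of Proposition~\ref{prop:smooth_parametrized} and Lemma~\ref{le:auxiliary2}, but producing a Borel — rather than merely a.e.-pointwise — multiplier $\lambda$ forces one to track how the constraint set depends on $s$: most cleanly through the upper semicontinuity of the Lagrange‑multiplier map $\fl$ established in Theorem~\ref{thm:auxiliary}~(P3), or, alternatively, through the closed‑graph property of $\partial\Diss$ (whose values lie in the compact set $\Kx$) and of $\partial\Psi_V$, followed by a standard measurable selection theorem. Apart from this, only routine verifications remain, such as $0\in\partial\Psi_V(0)$, which follows from $\Psi_V\ge\Psi_V(0)=0$.
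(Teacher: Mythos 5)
Your argument is correct and follows the same route the paper intends: Corollary~\ref{p:2.1} is derived directly from Proposition~\ref{prop:smooth_parametrized} (items (4)--(5)) combined with the contact-set characterization~\eqref{eq:33bis}, exactly as you do. The only difference is that you make explicit the construction of a Borel multiplier $\lambda$ (splitting off the set where $\lambda\equiv 0$ works and selecting $\min\fl$ on the complement via the upper semicontinuity of $\fl$ from Theorem~\ref{thm:auxiliary}(P3)), a measurable-selection step the paper leaves implicit in Proposition~\ref{prop:smooth_parametrized}(5).
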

The vanishing viscosity analysis developed in Sections
\ref{subsec:vanishing2} and \ref{subsec:Bip_properties}
provides the following convergence result.
\begin{theorem}[Convergence to parametrized solutions]
  \label{thm:convergence2}
  Let $(u_{n})$ be viscous solutions of \eqref{eq:102}
   corresponding to a vanishing sequence $(\eps_n)$, let
  $\sft_n:[0,\rescS]\to [0,T]$ be uniformly Lipschitz and surjective
  time rescalings and let $\rescu_n:[0,\rescS]\to\xfin$ be defined as $\rescu_n(s):=
  u_n(\resct_n(s))$ for all $s \in [0,\rescS]$.
  Suppose that
  \[
    \exists\,\alpha>0\, \ \ \forall\, n \in \N\, : \ \
    \sfm_n(s):=\Bip_{\eps_n}(\dot\resct_n(s),\dot\rescu_n(s);1,-\rmD\ene{\resct_n(s)}{\rescu_n(s)})
    \in [\alpha,\alpha^{-1}]
  \]
  for a.a. $s\in (0,\rescS)$.
  If the functions $(\resct_n,\rescu_n,\sfm_n)$ pointwise converge to $(\resct,\rescu,\sfm)$ as $n\to\infty$, then
  $(\resct,\rescu)$ is a (nondegenerate, surjective) parametrized rate-independent solution
  according to Definition \ref{def:2.1}, and
  \[
  \Bipo(\dot\resct(s),\dot\rescu(s);1,-\rmD\ene{\resct(s)}{\rescu(s)})=\sfm(s)
  \qquad \forae\, s \in (0,\rescS).
  \]
\end{theorem}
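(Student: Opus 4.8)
The plan is to transport the vanishing-viscosity analysis of Sections~\ref{subsec:vanishing2}--\ref{subsec:Bip_properties} from the particular ``energy-arclength'' reparametrization \eqref{e:resc1} to the given uniformly Lipschitz rescalings $\resct_n$, and then to read off the identity $\Bip(\dot\resct,\dot\rescu;1,\rescw)=\sfm$ by matching the $\liminf$ and the $\limsup$ of the rescaled $\eps$-energy identities.

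\textbf{A priori bounds and compactness.} First I would use the normalisation $\sfm_n(s)\le\alpha^{-1}$ together with $\Bip_{\eps_n}(\beta,v;1,w)\ge\Diss(v)$ for $\beta>0$ (immediate from \eqref{def-meps} and \eqref{eq:21-BIS}; note that $\sfm_n\ge\alpha>0$ rules out $\dot\resct_n(s)=0$, so $\dot\resct_n>0$ a.e.) to deduce $\Diss(\dot\rescu_n)\le\alpha^{-1}$ and $\dot\resct_n\le\alpha^{-1}$ a.e.; hence $(\resct_n)$ and $(\rescu_n)$ are uniformly Lipschitz. Since $\rescu_n(0)=u_n(0)\to u_0$, the curves $s\mapsto(\resct_n(s),\rescu_n(s))$ stay in a fixed compact set, so that, by $\cE\in{\rm C}^1$, the functions $\rescw_n(s):=-\rmD\ene{\resct_n(s)}{\rescu_n(s)}$ and $\rescpt_n(s):=-\partial_t\ene{\resct_n(s)}{\rescu_n(s)}$ are uniformly bounded. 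Combined with the pointwise convergence of $(\resct_n,\rescu_n,\sfm_n)$, Arzel\`{a}--Ascoli upgrades $\resct_n\to\resct$, $\rescu_n\to\rescu$ to uniform convergence, forces $\dot\resct_n\weaksto\dot\resct$, $\dot\rescu_n\weaksto\dot\rescu$ weakly-$*$ in $L^\infty$, and yields $\rescw_n\to\rescw=-\rmD\ene\resct\rescu$, $\rescpt_n\to\rescpt=-\partial_t\ene\resct\rescu$ uniformly; moreover $\resct$ is Lipschitz and nondecreasing, it is surjective because $\resct_n(0)=0$ and $\resct_n(\rescS)=T$, and $\sfm_n\to\sfm$ boundedly with $\alpha\le\sfm\le\alpha^{-1}$ a.e.

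\textbf{Passage to the limit.} Changing variables $t=\resct_n(s)$ in the $\eps$-energy identity \eqref{e:enid-eps} for $u_n$ and using the $1$-homogeneity of $\Bip_{\eps_n}$ in $(\alpha,v)$, exactly as in the derivation of \eqref{resc-enid-eps}, I obtain for every $n$ and all $0\le s_1\le s_2\le\rescS$
\[
\int_{s_1}^{s_2}\Bip_{\eps_n}\big(\dot\resct_n(s),\dot\rescu_n(s);\rescpt_n(s),\rescw_n(s)\big)\,\d s+\ene{\resct_n(s_2)}{\rescu_n(s_2)}=\ene{\resct_n(s_1)}{\rescu_n(s_1)}.
\]
Since $(\Bip_{\eps_n})$ $\Gamma$-converges to $\Bip$ (Lemma~\ref{le:techn1}), which is convex, lower semicontinuous and $1$-homogeneous in $(\alpha,v)$ with sublevels convex in $(p,w)$ (Lemma~\ref{le:auxiliary2}), while $(\rescpt_n,\rescw_n)$ converge uniformly and $(\dot\resct_n,\dot\rescu_n)$ only weakly-$*$, the lower semicontinuity result of Proposition~\ref{prop:techn2} gives $\int_0^{\rescS}\Bip(\dot\resct,\dot\rescu;\rescpt,\rescw)\,\d s\le\liminf_n\int_0^{\rescS}\Bip_{\eps_n}(\dot\resct_n,\dot\rescu_n;\rescpt_n,\rescw_n)\,\d s$; passing to the $\liminf$ in the identity above on $[0,\rescS]$ and using the continuity of $\cE$ then yields the energy inequality \eqref{eq:16} for $s_1=0$, $s_2=\rescS$, so that $(\resct,\rescu)$ is a parametrized rate-independent solution by Proposition~\ref{prop:smooth_parametrized}(1). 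Moreover the reverse inequality follows at once from the chain rule \eqref{e:classical-chain_rule} together with $\Bip(\alpha,v;p,w)\ge\la w,v\ra+\alpha p$ (see \eqref{eq:13}), so that \eqref{eq:16} holds with equality for all $s_1\le s_2$, i.e.\ the integrated energy identity \eqref{eq:RIF:13-0}.

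\textbf{Equipartition, nondegeneracy, and the main difficulty.} Comparing the $n$-level identity above (now an exact equality on both sides) with \eqref{eq:RIF:13-0} gives $\int_{s_1}^{s_2}\Bip_{\eps_n}(\dot\resct_n,\dot\rescu_n;\rescpt_n,\rescw_n)\,\d s\to\int_{s_1}^{s_2}\Bip(\dot\resct,\dot\rescu;\rescpt,\rescw)\,\d s$ for all $s_1<s_2$. Both $\Bip_{\eps_n}$ and $\Bip$ satisfy the splitting $\Bip_{\eps_n}(\alpha,v;p,w)=\Bip_{\eps_n}(\alpha,v;1,w)+\alpha(p-1)$ and $\Bip(\alpha,v;p,w)=\Bip(\alpha,v;1,w)+\alpha(p-1)$ (immediate from \eqref{def-meps} and \eqref{e:gamma-conv1}); since $\int_{s_1}^{s_2}\dot\resct_n(\rescpt_n-1)\,\d s\to\int_{s_1}^{s_2}\dot\resct(\rescpt-1)\,\d s$ by $\dot\resct_n\weaksto\dot\resct$ and $\rescpt_n\to\rescpt$ uniformly, the previous convergence reduces to $\int_{s_1}^{s_2}\sfm_n\,\d s\to\int_{s_1}^{s_2}\Bip(\dot\resct,\dot\rescu;1,\rescw)\,\d s$, whereas $\int_{s_1}^{s_2}\sfm_n\,\d s\to\int_{s_1}^{s_2}\sfm\,\d s$ by dominated convergence. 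As $s_1<s_2$ are arbitrary, $\Bip(\dot\resct(s),\dot\rescu(s);1,\rescw(s))=\sfm(s)\ge\alpha>0$ for a.a.\ $s$. Finally, on $\{\dot\resct(s)=0\}$ one has $\Bip(0,\dot\rescu(s);1,\rescw(s))=\bipo(\dot\rescu(s),\rescw(s))\ge\Diss(\dot\rescu(s))$ by \eqref{e:gamma-conv1} and \eqref{eq:13b}, so $\Diss(\dot\rescu(s))>0$ there; hence $\dot\resct(s)+\Diss(\dot\rescu(s))>0$ a.e., i.e.\ $(\resct,\rescu)$ is nondegenerate, which completes the proof. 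The hard part is the lower semicontinuity step (Proposition~\ref{prop:techn2}): it requires combining the $\Gamma$-convergence $\Bip_{\eps_n}\to\Bip$ with the asymmetry between the merely weak-$*$ convergence of the velocities $(\dot\resct_n,\dot\rescu_n)$ and the strong convergence of the driving forces $(\rescpt_n,\rescw_n)$, and then checking that the $\limsup$ extracted from the energy identities is not strict --- which is exactly what enforces the limiting equipartition $\Bip(\dot\resct,\dot\rescu;1,\rescw)=\sfm$.
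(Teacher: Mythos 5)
Your argument is correct and follows essentially the same route as the paper: the paper does not write out a separate proof of Theorem~\ref{thm:convergence2}, but derives it from the analysis of Sections~\ref{subsec:vanishing2}--\ref{subsec:Bip_properties} --- a priori bounds and compactness, the $\Gamma$-liminf of Lemma~\ref{le:techn1} via Proposition~\ref{prop:techn2} for the lower energy estimate, the chain rule \eqref{e:classical-chain_rule} for the reverse inequality, and the comparison of $\liminf$ and $\limsup$ of the rescaled energy identities to obtain the normalization $\Bip(\dot\resct,\dot\rescu;1,\rescw)=\sfm$ --- which is exactly your scheme, including the splitting $\Bip_{\eps}(\alpha,v;p,w)=\Bip_{\eps}(\alpha,v;1,w)+\alpha(p-1)$ to isolate $\sfm_n$.

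One small repair is needed in the nondegeneracy step: you invoke $\bipo(\dot\rescu(s),\rescw(s))\ge \Diss(\dot\rescu(s))$, but this inequality points the wrong way --- from $\bipo(\dot\rescu,\rescw)=\sfm\ge\alpha>0$ together with $\bipo\ge\Diss$ one cannot conclude $\Diss(\dot\rescu)>0$. The correct observation is that $\bipo(0,w)=0$ for every $w\in\xfins$ (by the positive $1$-homogeneity of $\bipo(\cdot,w)$, cf.\ (I3) of Theorem~\ref{thm:bipotential}, or directly $\bipo(0,w)=\inf_{\eps>0}\eps^{-1}\Psi^*(w)=0$ since $\Psi^*$ is finite everywhere); hence on the set $\{\dot\resct=0\}$ the bound $\sfm(s)\ge\alpha$ forces $\dot\rescu(s)\neq0$, and then $\Diss(\dot\rescu(s))>0$ by the nondegeneracy of $\Diss$. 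With this one-line fix the proof is complete.
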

 The following remark, to be compared with
Remark~\ref{rem:obvious}, highlights  the different mechanical
regimes encompassed in the notion of parametrized rate-independent
solution.
\begin{remark}[Mechanical interpretation]
\label{rmk:mech} \upshape
 The evolution described by \eqref{e:newer} in Proposition~\ref{prop:smooth_parametrized}
  bears the following mechanical interpretation (cf.
  with~\cite{Efendiev-Mielke06} and~\cite{Mielke-Rossi-Savare08}):
\begin{itemize}
\item the regime $(\dot{\resct}>0,\, \dot{\rescu}=0)$ corresponds to
  {\em sticking},
\item the regime $(\dot{\resct}>0,\dot\rescu\neq 0)$ corresponds to
  {\em rate-independent sliding}.  In both these two regimes
  $-\rmD\ene{\resct}{\rescu}\in \Kx$.
\item when $-\rmD\ene{\resct}\rescu$ cannot obey the constraint $\Kx$,
  then the system switches to a \emph{viscous regime}.  The time is
  frozen (i.e., $\dot{\resct}=0$), and the solution follows a viscous
  path.  In the additive viscosity case~\eqref{eq:5} it is governed by
  the rescaled viscous equation \eqref{eq:55} with $\lambda>0$. These
  viscous motions can be seen as a jump in the (slow) external time
  scale.
  \end{itemize}
\end{remark}
We conclude this section with the main equivalence result between
parametrized and $\BV$ solutions of rate-independent systems
(compare  with the analogous \cite[Prop.~6]{Mielke-Rossi-Savare08}).
We postpone its proof at the end of the next section.
\begin{theorem}[Equivalence between $\BV$ and parametrized solutions]
  \label{thm:equivalence}
  Let $(\resct,\rescu):[0,\rescS]\to [0,T]\times\xfin$ be a (nondegenerate, surjective) parametrized solution
  of the rate independent system {\PVRIS}.
  For every $t\in [0,T]$ set
  \begin{equation}
    \label{eq:127}
    \rescs(t):=\big\{s\in [0,\rescS]: \resct(s)=t\big\}
  \end{equation}
  Then, any curve $u:[0,T]\to\xfin$ such that
  \begin{equation}
    \label{eq:126}
    u(t)\in \big\{\rescu(s):s\in \rescs(t)\big\}
  \end{equation}
  is a $\BV$ solution of the rate-independent system {\VRIS}.\\
  Conversely, if $u:[0,T]\to\xfin$ is a $\BV$ solution, then there exists a parametrized solution
  $(\resct,\rescu)$ such that \eqref{eq:126} holds for a time-rescaling function $\rescs$ defined as
  in~\eqref{eq:127}.
\end{theorem}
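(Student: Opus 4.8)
The plan is to establish the two implications separately, reducing each to the variational characterizations of the two solution concepts: Definition~\ref{def:BV-solution} together with Proposition~\ref{prop:useful} for $\BV$ solutions, and Proposition~\ref{prop:smooth_parametrized} together with the description of the contact set $\Contact\Bip$ in Lemma~\ref{le:auxiliary2} for parametrized solutions. The bridge between the two settings is the change of variables along the continuous, nondecreasing rescaling $\resct$, whose non-injectivity is confined to the at most countable set $\Sigma:=\{t\in[0,T]:\resct^{-1}(t)\text{ is a nondegenerate interval}\}$; on $[0,T]\setminus\Sigma$ the fibre $\rescs(t)=\resct^{-1}(t)$ is a single point, and using continuity of $(\resct,\rescu)$ one sees that $u(t_\pm)=\rescu(s_\pm)$, where $s_\pm$ are the endpoints of $\rescs(t)$, so that $\rmJ_u\subset\Sigma$ and the selection $u$ in~\eqref{eq:126} is a well-defined curve in $\BV([0,T];\xfin)$ (its variation being controlled by that of the Lipschitz curve $\rescu$).

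For the implication ``parametrized $\Rightarrow$ $\BV$'', first note that, $\resct$ being Lipschitz, $\Leb 1(\resct(\{\dot\resct=0\}))\le\int_{\{\dot\resct=0\}}\dot\resct\,\d s=0$; hence for a.a.\ $t$ the unique point $\sigma\in\resct^{-1}(t)$ has $\dot\resct(\sigma)>0$, and the first implication in~\eqref{eq:31} yields $-\rmD\ene{t}{u(t)}\in\Kx$, i.e.\ the local stability~\eqref{eq:65bis} (recall $\rmJ_u$ is null). For the energy balance, fix $t$, pick $s_1,s_2$ with $\resct(s_1)=0$, $\rescu(s_1)=u(0)$, $\resct(s_2)=t$, $\rescu(s_2)=u(t)$, and integrate the energy identity~\eqref{eq:RIF:13-0} on $[s_1,s_2]$. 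Splitting $[s_1,s_2]$ according to the sign of $\dot\resct$ and inserting the explicit form~\eqref{e:gamma-conv1} of $\Bip$, the contribution of $\{\dot\resct>0\}$ reduces (using the local stability just proved, and~\eqref{eq:14} where $\dot\resct=0$) to $\int\Diss(\dot\rescu)\,\d s$ plus $\int\dot\resct\,\rescpt\,\d s$, while the contribution of $\{\dot\resct=0\}$ is $\int\bipo(\dot\rescu,\rescw)\,\d s$; the change of variables $s\mapsto\resct(s)$ (legitimate since the push-forward of $\dot\resct\,\d s$ under $\resct$ is $\Leb 1$ and a.e.\ fibre is a single point) turns $\int\dot\resct\,\rescpt\,\d s$ into $-\int_0^t\partial_t\ene s{u(s)}\,\d s$. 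One is thus left with recognising $\int_{\{\dot\resct>0\}}\Diss(\dot\rescu)\,\d s+\int_{\{\dot\resct=0\}}\bipo(\dot\rescu,\rescw)\,\d s$ as a lower bound for $\pVar{\bipcE}u0t=\Mint 0t{\Psi_0}{u'_\co}+\JVar{\bipcE}u0t$: the $\{\dot\resct>0\}$--part accounts for the diffuse part $u'_\co$ of $u'$, and the integral over each flat interval of $\resct$ (which, together with the value $u(t')$, realises a transition between $u(t'_-)$, $u(t')$, $u(t'_+)$ at the corresponding $t'\in\rmJ_u$) is $\ge$ the corresponding Finsler cost by the very definition of $\Cost{\bipcE}{t'}{\cdot}{\cdot}$. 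This delivers the energy inequality~\eqref{eq:85}, which Proposition~\ref{prop:useful} upgrades to the balance~\eqref{eq:84}; by Definition~\ref{def:BV-solution}, $u$ is a $\BV$ solution.

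For the converse, let $u$ be a $\BV$ solution. By Theorem~\ref{thm:filling-jumps}, at each $t\in\rmJ_u$ the values $u(t_-)$, $u(t)$, $u(t_+)$ lie on an optimal transition $\vartheta^t\in\Theta(t;u(t_-),u(t_+))$ of finite length (Remark~\ref{rem:inf-attained}). One then performs the ``energy--arclength'' reparametrization of~\eqref{e:resc1}: outside jumps let $\resct$ be the generalised inverse of $t\mapsto t+\pVar{\bipcE}u0t$ and $\rescu:=u\circ\resct$, and at each $t\in\rmJ_u$ splice in an interval of length $\TriCost{\bipcE}t{u(t_-)}{u(t)}{u(t_+)}$ on which $\resct\equiv t$ and $\rescu$ runs through $\vartheta^t$, reparametrized so as to pass through $u(t)$. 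Since each $\vartheta^t$ has finite length and $t\mapsto t+\pVar{\bipcE}u0t$ increases at least linearly, the resulting $(\resct,\rescu)$ is Lipschitz, $\resct$ is nondecreasing, and $(\resct,\rescu)$ is nondegenerate and surjective, with~\eqref{eq:126} holding by construction; moreover it satisfies the differential energy identity~\eqref{eq:17bis}, since on the ``continuous'' pieces this is the $\BV$ inclusion~\eqref{eq:66bis} read through the rescaling, while on the spliced pieces it is the contact condition~\eqref{eq:106} for $\vartheta^t$ combined with the chain rule~\eqref{e:classical-chain_rule}. By Proposition~\ref{prop:smooth_parametrized}, $(\resct,\rescu)$ is a parametrized solution.

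The genuinely delicate point, in both directions, is the exact matching between $\pVarname{\bipcE}$ and the integral of $\Bip$ along the rescaled curve: one must check that the Cantor part of $u'$ is faithfully carried by the part of $\{\dot\resct=0\}$ on which $\resct$ is not locally constant, and that the genuine flat intervals of $\resct$ correspond bijectively to $\rmJ_u$ and carry precisely the optimal-transition costs. This bookkeeping, most naturally performed in the extended space $\XX=[0,T]\times\xfin$ with the stability constraint incorporated (cf.\ Remark~\ref{rem:lsc-pseudo}), is supplied by the reparametrization results of Section~\ref{sec:technical}, where the proof of the theorem will be completed.
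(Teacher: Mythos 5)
Your proposal is correct in substance and ultimately leans on the same Section~\ref{sec:technical} machinery as the paper, but the forward implication is organized differently. The paper never compares the integral of $\Bip$ along $(\resct,\rescu)$ with $\pVar{\bipcE}u0T$ term by term: it observes that, for any partition of $[0,T]$, the piece of the parametrized curve joining two consecutive fibres is an admissible competitor in the definition \eqref{eq:128} of the pseudo-Finsler distance, whence $\Var{\cB_0}u0T\le\int_0^\rescS\Bip(\dot\resct,\dot\rescu;0,\rescw)\,\d s$; combining this with \eqref{eq:35} yields \eqref{eq:175}, and Corollary~\ref{cor:equivalent-BV} --- which already encapsulates the comparison $\pVar{\bipcE}u0T\le\Var{\cB_0}u0T$, with equality under local stability --- concludes. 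This detour through the lower semicontinuous total variation $\Var{\cB_0}u0T$ is exactly what dispenses with the matching you flag as the ``genuinely delicate point'': the one-sided bound is immediate from the definitions, and the Cantor-part and jump bookkeeping has been done once and for all in Corollary~\ref{cor:integral} and the comparison theorem of Section~\ref{ss:6.3}. Your converse construction (generalized inverse of the energy-arclength, splicing in the optimal transitions provided by Theorem~\ref{thm:filling-jumps}) is the content of Proposition~\ref{prop:rep}, so there the two arguments coincide.

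Two small corrections. First, in the forward direction you need $\int_{\{\dot\resct>0\}}\Diss(\dot\rescu)\,\d s+\int_{\{\dot\resct=0\}}\bipo(\dot\rescu,\rescw)\,\d s$ to be an \emph{upper} bound for $\pVar{\bipcE}u0t$; your detailed justification argues exactly this, but the phrase ``a lower bound for $\pVar{\bipcE}u0t$'' asserts the reverse inequality, which would give the wrong sign in \eqref{eq:85}. Second, your argument yields $-\rmD\ene t{u(t)}\in\Kx$ only for a.a.\ $t$, whereas \eqref{eq:65bis} requires it at \emph{every} $t\in[0,T]\setminus\rmJ_u$; one should add that $\Kx$ is closed and $t\mapsto-\rmD\ene t{u(t)}$ is continuous outside $\rmJ_u$, as in the proof of Proposition~\ref{le:local-diff}.
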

\section{Auxiliary results}
\label{sec:technical} After proving some lower semicontinuity
results for {\bipotential}s, in Section~\ref{subsec:pseudo} we develop
some auxiliary results concerning the total variation induced by
time-dependent (and possibly asymmetric) Finsler norms.
\subsection{Lower semicontinuity for {\bipotential}s.}
\label{subsec:technical1} Let us start with a lemma which shows that
$\Bip_\eps$, which is defined in \eqref{def-meps}, $\Gamma$-converges to $\Bipo$ as $\eps\down0$ (compare
with \cite[Lemma~3.1]{Mielke-Rossi-Savare08}), where $\Bipo$ is defined in \eqref{e:gamma-conv1}.
\begin{lemma}[$\Gamma$-convergence of $\Bip_\eps$]
  \label{le:techn1}
  \
  \begin{description}
  \item[$\Gamma$-liminf estimate]
 For every choice of sequences $\eps_n\downarrow0$
  and $ (\alpha_n,v_n,p_n,w_n)\to (\alpha,v,p,w)$ in $\BB$, we have
  \begin{equation}
    \label{eq:106bis}
    \liminf_{n\to\infty}\Bip_{\eps_n}(\alpha_n,v_n;p_n,w_n)\ge \Bipo(\alpha,v;p,w).
  \end{equation}
  \item[$\Gamma$-limsup estimate] For every $(\alpha, v;p,w)\in\BB$ there exists $(\alpha_\eps,v_\eps,p_\eps,w_\eps)_{\eps>0}$
    such that
    \begin{equation}
      \label{eq:118}
      \limsup_{\eps\down0}\Bip_{\eps}(\alpha_\eps,v_\eps;p_\eps,w_\eps)
      \leq
       \Bipo(\alpha,v;p,w).
    \end{equation}
  \end{description}
\end{lemma}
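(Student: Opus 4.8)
The plan is to establish the two estimates separately, exploiting the explicit formula
$$\Bip_\eps(\alpha,v;p,w)=\alpha\Psi_\eps(v/\alpha)+\alpha\Psi_\eps^*(w)+\alpha p\qquad(\alpha>0)$$
together with the monotonicity of $\eps\mapsto\eps^{-1}\Psi(\eps v)$ (Remark \ref{rem:prop-psi}) and the already-proved properties of $\bipo$ collected in Theorems \ref{thm:bipotential} and \ref{thm:auxiliary}. The key point is that $\Bip_\eps$ degenerates in two different ways depending on whether the limiting $\alpha$ is positive or zero, so both arguments must track the behavior of the sequence $\alpha_n$.

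\textbf{$\Gamma$-liminf estimate.} Given $\eps_n\downarrow0$ and $(\alpha_n,v_n,p_n,w_n)\to(\alpha,v,p,w)$, I would split into two cases. If $\alpha>0$, then for large $n$ we have $\alpha_n>0$ and, since $\alpha_n\Psi_{\eps_n}(v_n/\alpha_n)\ge\alpha_n\Psi_0(v_n/\alpha_n)=\Psi_0(v_n)$ by \eqref{eq:21-BIS} and $1$-homogeneity of $\Psi_0$, while $\alpha_n\Psi_{\eps_n}^*(w_n)\ge0$, a lower-semicontinuity/liminf argument using continuity of $\Psi_0$ and of the pairing gives $\liminf\ge\Psi_0(v)+\alpha p$. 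It remains to recover the indicator $\rmI_{\Kx}(w)$: if $w\notin\Kx$, pick $z\in\xfin$ with $\langle w,z\rangle>\Psi_0(z)$; then $\alpha_n\Psi^*_{\eps_n}(w_n)=\eps_n^{-1}\alpha_n\Psi^*(w_n)\to+\infty$ because $\Psi^*(w_n)\to\Psi^*(w)>0$ (note $\Psi^*(w)=0\Leftrightarrow w\in\Kx$ by the argument in the proof of (P1)) and $\alpha_n/\eps_n\to+\infty$ — here one uses that $\alpha>0$ while $\eps_n\to0$. If instead $\alpha=0$, then $\Bip_{\eps_n}(\alpha_n,v_n;p_n,w_n)\ge\alpha_n\Psi_{\eps_n}(v_n/\alpha_n)+\alpha_n\Psi^*_{\eps_n}(w_n)+\alpha_n p_n$, and the first two terms together are $\ge\inf_{\eps>0}(\eps^{-1}\Psi(\eps v_n/\alpha_n)\cdot\alpha_n+\alpha_n\eps^{-1}\Psi^*(w_n))$; setting $\tilde\eps=\eps/\alpha_n$ turns this into $\inf_{\tilde\eps>0}(\tilde\eps^{-1}\Psi(\tilde\eps v_n)+\tilde\eps^{-1}\Psi^*(w_n))=\bipo(v_n,w_n)$, and the lower semicontinuity of $\bipo$ (established in the proof of (P3)) together with $\alpha_n p_n\to0$ gives $\liminf\ge\bipo(v,w)=\Bipo(0,v;p,w)$. (The case $\alpha=0$ but some $\alpha_n=0$ is handled directly by the $1$-homogeneity definition, with the convention $\alpha_n\Psi_{\eps_n}(v_n/\alpha_n)=\Psi_0(v_n)+\rmI_{\{0\}}(v_n)$ as a recession function, which is $\ge\bipo(v_n,w_n)$ when $v_n=0$.)

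\textbf{$\Gamma$-limsup estimate.} This is the easy half: for $\alpha>0$ I would take the constant recovery sequence $(\alpha,v,p,w)$ itself, but replace $\eps$ by a sequence $\eps\downarrow0$; then $\alpha\Psi_\eps(v/\alpha)\downarrow\Psi_0(v)$ by monotone convergence, $\alpha\Psi^*_\eps(w)=\eps^{-1}\alpha\Psi^*(w)$ which tends to $0$ if $w\in\Kx$ (since $\Psi^*(w)=0$) and to $+\infty=\rmI_{\Kx}(w)$ otherwise, so $\limsup\Bip_\eps(\alpha,v;p,w)\le\Psi_0(v)+\rmI_{\Kx}(w)+\alpha p=\Bipo(\alpha,v;p,w)$; when $w\in\Kx$ equality holds in the limit. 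For $\alpha=0$, by definition of $\bipo$ as an infimum there is, for each $\delta>0$, some $\eps_\delta>0$ with $\eps_\delta^{-1}\Psi(\eps_\delta v)+\eps_\delta^{-1}\Psi^*(w)\le\bipo(v,w)+\delta$; choosing $\alpha_\eps:=\eps/\eps_\delta$ (so $\alpha_\eps\to0$ as $\eps\to0$) and $v_\eps=v$, $p_\eps=p$, $w_\eps=w$, a direct computation gives $\Bip_\eps(\alpha_\eps,v;p,w)=\eps_\delta^{-1}\Psi(\eps_\delta v)+\eps_\delta^{-1}\Psi^*(w)+\alpha_\eps p\le\bipo(v,w)+\delta+\alpha_\eps p$, and a diagonal argument over $\delta\downarrow0$ produces the desired recovery sequence with $\limsup\le\bipo(v,w)=\Bipo(0,v;p,w)$.

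\textbf{Main obstacle.} The delicate point is the liminf estimate in the regime $\alpha>0$, $w\notin\Kx$: one must argue that $\alpha_n\Psi^*_{\eps_n}(w_n)\to+\infty$, which hinges on the quantitative fact that $\Psi^*$ stays bounded away from $0$ near $w$ — equivalently that $\Psi^*(w)>0$ whenever $w\notin\Kx$. This was essentially shown inside the proof of (P1) (the vanishing of $\Psi^*$ forces $w\in\Kx$), but it needs to be invoked with uniformity over the sequence $w_n\to w$, i.e.\ using continuity of $\Psi^*$ on the finite-dimensional space $\xfin^*$. The bookkeeping across the four sub-cases ($\alpha>0$ vs.\ $\alpha=0$, combined with whether $\alpha_n$ vanishes) is the only other source of friction, and is routine once the recession-function convention for $\alpha\Psi_\eps(v/\alpha)$ at $\alpha=0$ is fixed.
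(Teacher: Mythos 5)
Your proposal is correct and follows essentially the same route as the paper's proof: for the liminf you split according to $\alpha>0$ (using $\Psi_{\eps}\ge\Psi_0$ and the blow-up of $\alpha_n\eps_n^{-1}\Psi^*(w_n)$ when $w\notin \Kx$, since $\Psi^*(w)=0$ iff $w\in\Kx$) versus $\alpha=0$ (using the bound $\Bip_{\eps_n}\ge \bipo(v_n,w_n)+\alpha_n p_n$ after the substitution $\tilde\eps=\eps_n/\alpha_n$, together with the continuity of $\bipo$), and for the limsup you rescale $\alpha_\eps$ so that the effective viscosity parameter $\eps/\alpha_\eps$ tends to an (approximate) minimizer of the infimum defining $\bipo$. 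The only cosmetic difference is that in the recovery sequence for $\alpha=0$ you use $\delta$-approximate minimizers plus a diagonal argument where the paper selects an exact Lagrange multiplier from $\fl(v,w)$; both choices are legitimate, and your explicit bookkeeping of the degenerate subcases ($\alpha_n=0$, or $w\in\Kx$ with $\alpha=0$) is if anything slightly more careful than the paper's.
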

{\renewcommand{\qed}{}
\begin{proof}
  The $\Gamma$-liminf estimate is easy: if $\alpha>0$ then,
   also recalling~\eqref{eq:70}, one verifies  that
  \begin{equation}
    \label{eq:119}
    \begin{aligned}
      \liminf_{n\to\infty}\Bip_{\eps_n}(\alpha_n,v_n;p_n,w_n)&\ge
      \liminf_{n\to\infty}\left(\Diss(v_n)+\alpha_n\eps_n^{-1}\DualDiss(w_n)+\alpha_n
      p_n\right)\\&\ge\Bip_{0}(\alpha,v;p,w),
      \end{aligned}
      \end{equation}
      where we have used the notation
\begin{equation}
\label{e:novel} \Bip_{0}(\alpha,v;p,w):= \Diss(v)+\rmI_\Kx(w)+\alpha
p.
\end{equation}
The first inequality in~\eqref{eq:119} is also due
to~\eqref{eq:21-BIS}.  If $\alpha=0$, we use the obvious lower bound
  \[
    \Bip_{\eps_n}(\alpha_n,v_n;p_n,w_n)\ge \bipo(v_n,w_n)+\alpha_n p_n
  \]
  and the continuity of $\bipo$ (cf. Theorem~\ref{thm:auxiliary}).

  To show the limsup estimate \eqref{eq:118} for $w\in \Kx$, we simply choose $\alpha_\eps:=\alpha+\eps,
  v_\eps:=v, p_\eps:=p,w_\eps:=w$, observing that in this case
  \begin{equation}
    \Bip_\eps(\alpha_\eps,v_\eps;p_\eps,w_\eps)\le \eps(\alpha+\eps)\Psi(v/(\eps(\alpha+\eps))+(\alpha+\eps)p
    \stackrel{\eps\down0}\to \Diss(v)+\alpha
    p=\Bipo(\alpha,v;p,w)\,,\nonumber
  \end{equation}
 the first passage due to~\eqref{eq:21-TER}.
  If $w\not\in \Kx$, we choose a coefficient $\lambda\in \fl(v,w)$ as in
  \eqref{eq:111},
  and we set $\alpha_\eps:=\lambda\eps$, $v_\eps:=v,p_\eps:=p,w_\eps:=w$, obtaining
  \[
    \Bip_\eps(\alpha_\eps,v_\eps;p_\eps,w_\eps)=\bipo(v,w)+\lambda\eps p\stackrel{\eps\down0}\to \bipo(v,w)=
    \Bipo(\alpha,v;p,w).
    \mathqed
  \]
\end{proof}}

An important consequence of the previous Lemma is provided by the
following lower-semicontinuity result for the integral functional
associated with $\Bip_\eps$.

\begin{proposition}[Lower-semicontinuity of the $\eps$-energy]
\label{prop:techn2}
Let us fix an interval $(s_0,s_1)$. For every choice of a vanishing
sequence $\eps_n>0$ and of functions $\alpha_n\in L^\infty(s_0,s_1),\
p_n\in L^1(s_0,s_1),\ v_n\in L^1(0,T;\xfin),\ w_n\in L^1(0,T;\xfins)$
such that
\[
\begin{aligned}
      \alpha_n&\weaksto \alpha\ &&\text{in }L^\infty(s_0,s_1),
      &p_n&\to p\ &&\text{in
        $L^1(0,T)$},\\
      \quad
      \ v_n&\weakto
      v\ &&\text{in }L^1(0,T;\xfin),\ &w_n &\to w\ &&\text{in }L^1(s_0,s_1),
    \end{aligned}
\]
we have the liminf estimates
\begin{align}
    \label{eq:123}
    \liminf_{n\to\infty}\int_{s_0}^{s_1} \Bip_{\eps_n}(\alpha_n(s),v_n(s);p_n(s),w_n(s))\, \d s&\ge
    \int_{s_0}^{s_1}\Bipo(\alpha(s),v(s);p(s),w(s))\,\d s,\\
    \label{eq:136}
    \liminf_{n\to\infty}\int_{s_0}^{s_1}
    \Bip_{0}(\alpha_n(s),v_n(s);p_n(s),w_n(s))\, \d s&\ge
    \int_{s_0}^{s_1}\Bipo(\alpha(s),v(s);p(s),w(s))\,\d s,
\end{align}
where $\Bip_{0}$ is defined in \eqref{e:novel}.
\end{proposition}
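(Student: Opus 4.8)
The plan is to reduce both lower-semicontinuity estimates \eqref{eq:123} and \eqref{eq:136} to the $\Gamma$-liminf inequality \eqref{eq:106bis} of Lemma \ref{le:techn1} by a standard Ioffe-type / Fatou argument for integral functionals of measures. First I would observe that, up to passing to a subsequence realizing the $\liminf$, one may assume the left-hand sides are finite and that the $\Bip_{\eps_n}$-integrals (or $\Bip_0$-integrals) converge. Then I would regard the integrands as (upper semicontinuous in the scalar functions, and convex-in-the-linear-part) and invoke the convexity and $1$-homogeneity of $\Bip_{\eps_n}$ and $\Bip_0$ in the pair $(\alpha,v)$ (which holds because $\Psi_{\eps_n}$ and $\rmI_{\Kx}$ are convex, and the recession/perspective construction in \eqref{def-meps} preserves this), together with the strong convergences $p_n\to p$, $w_n\to w$ in $L^1$ and the weak/weak-$*$ convergences $\alpha_n\weaksto\alpha$, $v_n\weakto v$.

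The core technical step is an application of the lower-semicontinuity theorem for integral functionals of the form $\int f_n(s,\alpha_n(s),v_n(s))\,\d s$ where $f_n(s,\cdot,\cdot)$ depends on the ``parameters'' $(p_n(s),w_n(s))$ that converge strongly. Concretely, I would proceed as follows. By Egorov's and a diagonal argument I may assume $p_n\to p$, $w_n\to w$ pointwise a.e.\ on $(s_0,s_1)$. Fix $\delta>0$ and (after removing a small bad set) assume this convergence is uniform. Then for the fixed limit parameters define $g(s,\alpha,v):=\Bipo(\alpha,v;p(s),w(s))$, which by Lemma \ref{le:auxiliary2}(1) is convex, $1$-homogeneous and lower semicontinuous in $(\alpha,v)$ and (Carathéodory-)measurable in $s$. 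The $\Gamma$-liminf estimate \eqref{eq:106bis}, localized, gives that whenever $(\alpha_n(s),v_n(s))\to(\alpha,v)$ pointwise then $\liminf_n \Bip_{\eps_n}(\alpha_n(s),v_n(s);p_n(s),w_n(s))\ge g(s,\alpha,v)$; this is exactly the hypothesis needed for Ioffe's lower-semicontinuity theorem (see e.g.\ \cite[Thm.~3.96]{Ambrosio-Fusco-Pallara00} or Ioffe's theorem for normal integrands), applied to the weakly convergent pair $(\alpha_n,v_n)\weakto(\alpha,v)$, yielding
\[
  \liminf_{n\to\infty}\int_{s_0}^{s_1}\Bip_{\eps_n}(\alpha_n,v_n;p_n,w_n)\,\d s
  \ge \int_{s_0}^{s_1} g(s,\alpha(s),v(s))\,\d s
  = \int_{s_0}^{s_1}\Bipo(\alpha(s),v(s);p(s),w(s))\,\d s.
\]
Letting $\delta\down 0$ removes the exceptional set. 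The estimate \eqref{eq:136} for $\Bip_0$ is obtained verbatim by replacing $\Bip_{\eps_n}$ with $\Bip_0$ and using the trivial inequality $\liminf_n\Bip_0(\alpha_n(s),v_n(s);p_n(s),w_n(s))\ge\Bipo(\alpha,v;p(s),w(s))$, which again follows from Lemma \ref{le:techn1} (note $\Bip_0\ge\Bip_{\eps}$ up to the term comparison of \eqref{eq:21-TER}, or directly from \eqref{eq:119}).

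The main obstacle I anticipate is handling the coupling between the \emph{weakly} convergent pair $(\alpha_n,v_n)$ and the $\eps_n$-dependence of the integrand: one cannot simply fix the integrand and apply a classical convexity lower-semicontinuity result, because $\Bip_{\eps_n}$ itself varies with $n$. This is precisely why the $\Gamma$-convergence formulation of Lemma \ref{le:techn1} is needed — it provides the ``joint'' liminf inequality $\liminf_n \Bip_{\eps_n}(\alpha_n,v_n;p_n,w_n)\ge \Bipo(\alpha,v;p,w)$ for \emph{all} converging sequences, which is the hypothesis of Ioffe-type theorems allowing a moving sequence of integrands. The secondary technicality is passing from $L^1$-convergence of $(p_n,w_n)$ to pointwise a.e.\ convergence (subsequences) and the measurable-selection/normal-integrand verification for $g$; these are routine given the continuity of $\Bipo$ in all variables (Theorem \ref{thm:auxiliary}(P3) plus Lemma \ref{le:auxiliary2}).
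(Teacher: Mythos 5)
Your proposal is correct and follows essentially the same route as the paper: both reduce the statement to the $\Gamma$-liminf inequality of Lemma \ref{le:techn1} combined with Ioffe's lower-semicontinuity theorem for weak--strong convergence, exploiting convexity and $1$-homogeneity of the integrand in $(\alpha,v)$. The only (cosmetic) difference is packaging: the paper absorbs $\eps$ as an extra strongly-converging argument of a single lower semicontinuous integrand $\tilde\Bip(\eps,\alpha,v,w):=\Bip_\eps(\alpha,v;0,w)$ (after reducing to $p_n\equiv 0$), whereas you invoke a moving-integrand version of Ioffe via Egorov; both are valid.
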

\begin{proof}
  It is sufficient to prove this result in the case $p_n\equiv p=0$.
  Then we notice that, by Lemma \ref{le:techn1}, the integrand
  \begin{displaymath}
    \tilde \Bip(\eps,\alpha,v,w):=\Bip_\eps(\alpha,v;0,w)\quad
    \text{for}\ (\eps,\alpha,v,w) \in [0,+\infty)\times [0,+\infty)\times\xfin\times\xfins
  \end{displaymath}
  is lower semicontinuous and convex in the pair $(\alpha,v)$.
  Then, inequality~\eqref{eq:123} follows from Ioffe's Theorem (see e.g.\
  \cite[Thm. 5.8]{Ambrosio-Fusco-Pallara00}). A similar argument
  yields
  \eqref{eq:136}.
\end{proof}

\subsection{Asymmetric dissipations, pseudo-total variation, and extended space-time curves}
\label{subsec:pseudo}
\paragraph{\textbf{Notation.}}  Hereafter,
   $\XX$ shall stand for the extended
space-time domain $ [0,T]\times \xfin$,  with elements $\xx=(t,u)$
denoted by bold letters. We shall denote by
  $\VV$ the  tangent cone
$[0,+\infty)\times\xfin$ to $\XX$ and by $ \vv=(\alpha,v)$ the
elements in $\VV$.

 We shall  consider  lower
semicontinuous dissipation functionals $\cR:\XX\times \VV\to
[0,+\infty)$ satisfying the following properties:
\begin{subequations}\label{eq:R123}
\begin{gather}
  \label{eq:161}  \forall\, \xx\in \XX: \ \FNorm\xx\cdot{ }\ \text{is convex and positively $1$-homogeneous;}
\\
  \label{eq:162}
  \exists\, C>0 \  \forall\,\xx\in \XX,\ \vv=(\alpha,v)\in \VV\,:
  \quad
  \FNorm\xx\vv{ }\ge C\|v\|_\xfin\,
\\
  \label{eq:163}
   \text{$\cR$ is lower semicontinuous on $\XX\times
   \VV$.}
  \end{gather}
\end{subequations}
In order to keep track of the time-component of $\vv$ we also set,
  for all $\beta \geq 0$,
\[
  \FNorm\xx\vv\beta=\alpha\beta+\FNorm\xx\vv{ }\quad \text{for all} \, \xx\in \XX,\ \vv=(\alpha,v)\in \VV.
\]
Notice that, for  any dissipation $\cR$ complying with  properties
\eqref{eq:R123}, the corresponding
functional $\cR_\beta$ satisfies the subadditivity property for all
$\xx \in \XX$ and $\vv_1,\, \vv_2 \in \VV$
\[
  \FNorm\xx{\vv_1+\vv_2}\beta\le
  \FNorm\xx{\vv_1}\beta+\FNorm\xx{\vv_2}\beta.
\]
\begin{example}[Dissipations induced by $\Psi_0$ and $\Bip$]
  \label{ex:main}
  \
  \begin{enumerate}
  \item
  Our first trivial example of a  dissipation fulfilling
  properties \eqref{eq:R123} is given by
  \begin{equation}
    \label{eq:164}
    \cP(\xx,\vv):=\Psi_0(v)\quad \text{for} \ \xx\in \XX,\ \vv=(\alpha,v)\in \VV.
  \end{equation}
  \item
  Our \textbf{main example} will be provided by the dissipation induced
  by the {\bipotential} $\Bip$,
   namely
\begin{equation}
\label{e:new} \cB(\xx;\vv):= \cP(\alpha,v,0,-\rmD\ene t u) \quad
\text{for} \ \xx=(t,u)\in \XX,\ \vv=(\alpha,v)\in \VV\,.
\end{equation}
 It is not difficult to check that $\cB$ satisfies all of
 assumptions \eqref{eq:R123}. Hence, for all $\beta\geq 0$ we set
\begin{equation}
  \label{eq:152}
  \Norm\xx\vv\beta:=\Bip(\alpha,v;\beta,-\rmD\ene t u)\quad  \text{for} \
  \xx=(t,u)\in\XX,\ \vv=(\alpha,v)\in \VV.
\end{equation}
\end{enumerate}
\end{example}

\begin{definition}[Pseudo-Finsler distance induced by $\cR$]
\label{def:bip-distance}
Given a dissipation function $\cR:\XX\times \VV\to
[0,+\infty)$ complying with \eqref{eq:R123},
for every $\xx_i=(t_i,u_i)\in \XX,\ i=0,1,$ with $0\leq t_0\le t_1 \leq T$, we  set
  \begin{equation}
    \label{eq:128}
    \begin{aligned}
      \FDist{\xx_0}{\xx_1}\beta:=
      \inf\Big\{&\int_{r_0}^{r_1} \FNorm{\sfxx(r)}{\dot\sfxx(r)}\beta\,\d
      r\,:
      \\&
      \sfxx=(\resct,\rescu)\in \mathrm{Lip}(r_0,r_1;\XX),\ \xx(r_i)=\xx_i,\
      i=0,1,
      \quad \dot\resct\ge0\Big\}.
    \end{aligned}
  \end{equation}
  If $t_0>t_1$ we set $  \FDist{\xx_0}{\xx_1}\beta:=+\infty$.
  We also define
  \[
   \TriCost{\cR_0}t{u_0}{u_1}{u_2}:=
    \FDist{(t,u_0)}{(t,u_1)}{0}+    \FDist{(t,u_1)}{(t,u_2)}{0}.
  \]
  (notice that this quantity is independent of $\beta$).
\end{definition}
\begin{remark}
  \label{rem:link}
  The link with the Finsler cost $\Delta_{\bipcE}$~\eqref{eq:69} induced by $(\bipcE)$
  is clear. For $\cR=\cB$ given by \eqref{e:new}, using
  $\Bip(0,v;0,w)=\bipo(v,w)$ we have, for $t_0=t_1=t$,
  \begin{equation}
    \label{eq:131}
    \Dist{(t,u_0)}{(t,u_1)}0=\Cost\bipcE t{u_0}{u_1}\quad
    \text{for every }u_0,u_1\in \xfin.
  \end{equation}
  When $\cR=\cP$ is given by~\eqref{eq:164},   we simply have
  \[
    \cost{\cP_\beta}{(t_0,u_0)}{(t_1,u_1)}=\beta(t_1-t_0)+\Psi_0(u_1-u_0)\quad \text{for }u_0,u_1\in \xfin,\
    0\le t_0<t_1\le T.
  \]
\end{remark}
\paragraph{\textbf{General properties of $ \FDist{\cdot}{\cdot}\beta$.}}
It is not difficult to check that the infimum in \eqref{eq:128} is
attained and, by the usual rescaling argument  (cf.~Remark~\ref{rem:newrmk}),  one can always choose an optimal
Lipschitz curve $ \sfxx=(\resct,\rescu)$ defined in $[0,1]$ such
that
\begin{equation}
  \label{eq:137}
   \FNorm\sfxx{\dot\sfxx}1
  \quad \text{is essentially constant and equal to }
  \FDist{\xx_0}{\xx_1}1=(t_1-t_0)+\FDist{\xx_0}{\xx_1}0.
\end{equation}
Properties~\eqref{eq:162}--\eqref{eq:163} yield, for every
$u_0,u_1\in \xfin$ and $0 \leq t_0\le t_1 \leq T,$
 the estimate
 \begin{equation}
  \label{eq:129}
  \beta(t_1-t_0)+C\|u_1-u_0\|_\xfin\le \FDist{(t_0,u_0)}{(t_1,u_1)}\beta.
\end{equation}
Notice that $\FDist\cdot\cdot\beta$ is not symmetric but still
satisfies the triangle inequality: for $\xx_i=(t_i,u_i)\in
\XX$ with $t_0\le t_1\le t_2$,  there holds
\[
  \FDist{\xx_0}{\xx_2}\beta\le \FDist{\xx_0}{\xx_1}\beta +\FDist{\xx_1}{\xx_2}\beta.
\]
Another useful property,   direct consequence of~\eqref{eq:163},
 is the lower semicontinuity 
with respect to convergence in $\XX$: if
$ \xx_{i,n}= (t_{i,n},u_{i,n})\to \xx_i=(t_i,u_i)$ in $\XX$ as $n\up+\infty$, $i=0,1$, then
\begin{equation}
  \label{eq:135}
  \liminf_{n\up+\infty}\FDist{\xx_{0,n}}{\xx_{1,n}}\beta\ge \FDist{\xx_0}{\xx_1}\beta.
\end{equation}
Indeed, assuming that the $\liminf$ in~\eqref{eq:135} is finite and
that, up to the extraction of a suitable subsequence, that it is a
limit, it is sufficient to choose an optimal sequence
$\sfxx_n=(\resct_n,\rescu_n)$ of Lipschitz curves as in
\eqref{eq:137}, which therefore satisfies a uniform Lipschitz bound
and, up to the extraction of a further subsequence, converges to some
Lipschitz curve $\sfxx=(\resct,\rescu)$. Then, \eqref{eq:135} can be
proved in the same way as \eqref{eq:136}.

In the case of the cost induced by $\cB$ induced by the {\bipotential}
$\Bip$, we have a refined lower-semicontinuity result:
\begin{lemma}
  \label{le:extra-lsc}
  Let $u_n,w_n:[t_0,t_1]\to \xfin$ be Borel maps and $(\eps_n)$ be
  a vanishing sequence.
Suppose that $u_n$ is absolutely continuous for every $n \in \N$
and that the following convergences hold as $n \to \infty$
\begin{gather*}
  u_n(t) \to u(t)\quad\text{and}\quad
  w_n(t) \to w(t) \quad \text{for all $t \in [t_0,t_1]$};\qquad
  \sup_{t  \in [t_0,t_1]} \|w_n(t)+\rmD\ene
t{u_n(t)}\|_{\xfins}\to0.
\end{gather*}
 Then,
  \begin{equation}
    \label{eq:182}
    \liminf_{n\up+\infty}\int_{t_0}^{t_1}\Big(\Psi_{\eps_n}(\dot u_n(t))+\Psi_{\eps_n}^*(w_n(t))\Big)\,\d t\ge
    \cost{\cB_0}{(t_0,u(t_0))}{(t_1,u(t_1))}.
  \end{equation}
\end{lemma}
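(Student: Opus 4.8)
The inequality we want to establish connects the lower limit of the $\eps_n$-viscous action functionals $\int_{t_0}^{t_1}\bigl(\Psi_{\eps_n}(\dot u_n)+\Psi_{\eps_n}^*(w_n)\bigr)\,\d t$ with the Finsler cost $\cost{\cB_0}{(t_0,u(t_0))}{(t_1,u(t_1))}$ built from the augmented bipotential $\Bip$. The natural approach is a time-rescaling argument identical in spirit to the one developed in Sections~\ref{subsec:vanishing2}--\ref{subsec:Bip_properties} for the global vanishing viscosity analysis, but localized on the interval $[t_0,t_1]$. First I would assume without loss of generality that the liminf on the left is finite, pass to a subsequence (not relabeled) realizing it as a limit, and denote by $I_n := \int_{t_0}^{t_1}\bigl(\Psi_{\eps_n}(\dot u_n(t))+\Psi_{\eps_n}^*(w_n(t))\bigr)\,\d t \le C$ the (bounded) actions.

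**The rescaling.** I would introduce, exactly as in~\eqref{e:resc1}, the strictly increasing rescaling functions $\sfs_n:[t_0,t_1]\to[0,\sfS_n]$ by $\sfs_n(t):= (t-t_0) + \int_{t_0}^t\bigl(\Psi_{\eps_n}(\dot u_n(r))+\Psi_{\eps_n}^*(w_n(r))\bigr)\,\d r$, so that $\sfS_n = (t_1-t_0) + I_n$ is uniformly bounded. Setting $\resct_n:=\sfs_n^{-1}$, $\rescu_n:=u_n\circ\resct_n$, $\rescw_n:=w_n\circ\resct_n$, one has the normalization $\dot\resct_n(s) + \Psi_{\eps_n}(\dot\rescu_n(s)/\dot\resct_n(s))\cdot\dot\resct_n(s) + \dot\resct_n(s)\Psi_{\eps_n}^*(\rescw_n(s)) \equiv 1$, i.e.\ $\Bip_{\eps_n}(\dot\resct_n,\dot\rescu_n;0,\rescw_n)\le 1$ a.e., which gives uniform Lipschitz bounds on $(\resct_n,\rescu_n)$. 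After extracting a subsequence with $\sfS_n\to\sfS\in[t_1-t_0,\,t_1-t_0+C]$ and, via Arzel\`a--Ascoli, $\resct_n\to\resct$, $\rescu_n\to\rescu$ uniformly (with $\dot\resct_n\weaksto\dot\resct$, $\dot\rescu_n\weaksto\dot\rescu$ weakly-$*$ in $L^\infty$), I would use the hypothesis $\rescu_n(s)\to u(\resct(s))$, $\rescw_n(s)\to w(\resct(s)) = -\rmD\cE_{\resct(s)}(\rescu(s))$ (this last identity follows because $w_n(t)+\rmD\cE_t(u_n(t))\to 0$ uniformly, and $\rmD\cE$ is continuous). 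Thus $\rescu(s)=u(\resct(s))$, $\resct(0)=t_0$, $\resct(\sfS)=t_1$, and the curve $s\mapsto(\resct(s),\rescu(s))$ joins $(t_0,u(t_0))$ to $(t_1,u(t_1))$ with $\dot\resct\ge 0$, hence it is an admissible competitor in the Finsler infimum~\eqref{eq:128} defining $\cost{\cB_0}{(t_0,u(t_0))}{(t_1,u(t_1))}$.

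**Lower semicontinuity and conclusion.** By the change of variables $t=\resct_n(s)$, we have $I_n = \int_0^{\sfS_n}\Bip_{\eps_n}\bigl(\dot\resct_n(s),\dot\rescu_n(s);0,\rescw_n(s)\bigr)\,\d s$. Extending all functions by constants past $\sfS_n$ if $\sfS_n<\sfS$ (or restricting to $[0,\sfS_\infty]$ with $\sfS_\infty:=\liminf\sfS_n$; a minor bookkeeping point since $\Bip_{\eps}(\,\cdot\,;0,\cdot)\ge 0$), I would apply Proposition~\ref{prop:techn2} (the Ioffe-type lower semicontinuity result for the $\Gamma$-convergent family $\Bip_{\eps_n}\to\Bipo$ from Lemma~\ref{le:techn1}), with $p_n\equiv 0$, to obtain
\[
\liminf_{n\to\infty} I_n \;=\; \liminf_{n\to\infty}\int_0^{\sfS_n}\Bip_{\eps_n}\bigl(\dot\resct_n,\dot\rescu_n;0,\rescw_n\bigr)\,\d s \;\ge\; \int_0^{\sfS}\Bipo\bigl(\dot\resct(s),\dot\rescu(s);0,-\rmD\cE_{\resct(s)}(\rescu(s))\bigr)\,\d s.
\]
By definition~\eqref{eq:152}, the integrand on the right is precisely $\Norm{(\resct(s),\rescu(s))}{(\dot\resct(s),\dot\rescu(s))}{0}=\cB_0\bigl((\resct(s),\rescu(s)),(\dot\resct(s),\dot\rescu(s))\bigr)$, and since $(\resct,\rescu)$ is an admissible path in~\eqref{eq:128}, the last integral is $\ge \cost{\cB_0}{(t_0,u(t_0))}{(t_1,u(t_1))}$. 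Chaining the two inequalities gives~\eqref{eq:182}.

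**Main obstacle.** The delicate point is not the abstract lower semicontinuity — which is handed to us by Proposition~\ref{prop:techn2} — but rather verifying carefully that the rescaled limit $(\resct,\rescu)$ is genuinely a competitor for the localized Finsler cost: one must check that the endpoints are matched (this uses $\sfS_n\to\sfS$ and uniform convergence, not just pointwise), that $\rescw_n(s)$ converges to $-\rmD\cE_{\resct(s)}(\rescu(s))$ rather than merely to $w(\resct(s))$ (here the uniform smallness of $w_n+\rmD\cE_{\cdot}(u_n)$ is exactly what is needed, since $w_n$ itself need not solve the differential inclusion), and handling the slight mismatch between the variable upper limits $\sfS_n$ and the fixed $\sfS$ when passing to the $\liminf$ in the integral — reparametrizing each $(\resct_n,\rescu_n)$ affinely onto the common interval $[0,\sfS]$, which leaves the action unchanged by $1$-homogeneity of $\Bip_{\eps_n}$ in $(\alpha,v)$, cleanly removes this last technicality.
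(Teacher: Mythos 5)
Your proposal is correct and follows essentially the same route as the paper's proof: the same arclength-type rescaling $\sfs_n$, the same change of variables turning the viscous action into $\int_0^{\sfS_n}\Bip_{\eps_n}(\dot\resct_n,\dot\rescu_n;0,\rescw_n)\,\d s$, Arzel\`a--Ascoli for the rescaled curves, and Proposition~\ref{prop:techn2} to pass to the liminf before comparing with the Finsler cost via admissibility of the limit curve. Your extra care about the mismatch between $\sfS_n$ and $\sfS$ (handled by $1$-homogeneous reparametrization) is a legitimate refinement of a point the paper leaves implicit.
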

\begin{proof}
  Up to extracting a further subsequence, it is not restrictive to assume
  that the $\liminf$ in \eqref{eq:182} is in fact a limit.
  We set as in \eqref{e:resc1}, \eqref{e:resc2}
  \begin{gather*}
    \rescs_n(t):=t-t_0+\int_{t_0}^t \Big(\Psi_{\eps_n}(\dot u_n(r))+\Psi_{\eps_n}^*(w_n(r))\Big)\,\d r,\quad
    \rescS_n:=\rescs_n(t_1),\\
    \resct_n(s):=\rescs_n^{-1}(s),\quad
    \rescu_n(s):=u_n(\resct_n(s)),\quad
    \rescw_n(s):=w_n(\resct_n(s))\quad \text{for all }
    s\in [0,\rescS_n]
  \end{gather*}
  so that
  \begin{equation}
    \label{eq:160}
    \int_{t_0}^{t_1}\Big(\Psi_{\eps_n}(\dot u_n(t))+\Psi_{\eps_n}^*(w_n(t))\Big)\,\d t=
    \int_0^{\rescS_n} \Bip_{\eps_n}(\dot\resct_n(s),\dot\rescu_n(s);0,\rescw_n(s))\,\d s.
  \end{equation}
  Since the sequences $(\resct_n)$ and $(\rescu_n)$ are uniformly Lipschitz,
  applying  the Ascoli-Arzel\`a Theorem
  we can extract a (not relabeled)  subsequence
  such that $\rescS_n \to \rescS$, and find functions
  $\resct:[0,\rescS] \to [t_0,t_1]$, $\rescu: [0,\rescS] \to \xfin$,
  and $\rescw: [0,\rescS] \to \xfins$,
  such that
  \begin{displaymath}
    \resct_n\to \resct,\quad
    \rescu_n\to\rescu,\quad
    \rescw_n\to\rescw=-\rmD\ene{\resct}{\rescu}\quad
    \text{uniformly in }  [0,\rescS].
  \end{displaymath}
  By construction, we have
  $\resct(0)=t_0$, $ \rescu(0)=u(t_0)$,
  $\resct(\rescS)=t_1$, and $ \rescu(\rescS)=u(t_1)$.
  Applying Proposition~\ref{prop:techn2}, we have
  \begin{equation}
  \label{liminf}
  \begin{aligned}
    \liminf_{n\up+\infty}\int_0^{\rescS_n} \Bip_{\eps_n}(\dot\resct_n(s),\dot\rescu_n(s);0,\rescw_n(s))\,\d s&\ge
    \int_0^{\rescS}\Bip_{0}(\dot\resct(s),\dot\rescu(s);0,\rescw(s))\,\d s\\&\ge
    \cost{\cB_0}{(\resct(0),\rescu(0))}{(\resct(\rescS),\rescu(\rescS))}.
  \end{aligned}
  \end{equation}
  Combining \eqref{eq:160} and \eqref{liminf}, we conclude
  \eqref{eq:182}.
\end{proof}
\paragraph{\textbf{The total variation associated with
$\Delta_{\cR_\beta}$.}} In the same way as in
Definition~\ref{def:Finsler_var} we introduced the total variation
$\mathop{\text{\sl Var}}_{\bipcE}$ associated with the Finsler cost
$\Delta_{\bipcE}$, it is now natural to define the total variation
associated with $\Delta_{\cR_\beta}$.
\begin{definition}[Total variation for the pseudo-Finsler distance $\Delta_{\cR_\beta}$]
  \label{def:total2}
  For every curve $\sfxx=(\resct,\rescu):$ $[0,\rescS]\to \XX$ such that $\resct$ is \emph{nondecreasing} and
  every interval $[\sfa,\sfb]\subset [0,\rescS]$ we set
  \begin{equation}
    \label{eq:132}
    \begin{aligned}
      \Var{\cR_\beta}{\sfxx}{\sfa}{\sfb}:=
      \sup\Big\{&\sum_{m=1}^M\FDist{\sfxx(s_m)}{\sfxx(s_{m-1})}\beta:\\
      &\sfa=s_0<s_1<\cdots<s_{M-1}<s_M=\sfb\Big\}.
    \end{aligned}
  \end{equation}
  For a \emph{non-parametrized} curve $u:[0,T]\to\xfin$ and $[a,b]\subset [0,T]$, we simply set
\[
  \Var{\cR_\beta}uab:=\Var{\cR_\beta}{\uu}ab, \quad \text{with} \ \ \uu(t):=(t,u(t))\in \XX,  \ \ t\in [0,T].
\]
\end{definition}
In view of~\eqref{eq:162}, it is immediate to
check that a curve $u$ with $\Var{\cR_0}u0T<+\infty$
belongs to $\BV([0,T];\xfin)$.

In contrast to the (pseudo)-total variation defined
in~\eqref{eq:81}, the above notion of total variation is lower
semicontinuous with respect to pointwise convergence (compare with
Remark \ref{rem:lsc-pseudo}).
\begin{proposition}[Lower semicontinuity of $\Var{\cR_\beta}\cdot\sfa\sfb$]
  \label{prop:lsc-Var}
  If $\sfxx_n=(\resct_n,\rescu_n):[0,\rescS]\to\XX$ is a sequence of curves pointwise converging to $\sfxx=(\resct,\rescu)$
  as $n\up\infty$, we have
  \begin{equation}
    \label{eq:133}
    \liminf_{n\up\infty}\Var{\cR_\beta}{\sfxx_n}\sfa\sfb\ge
    \Var{\cR_\beta}{\sfxx}\sfa\sfb.
  \end{equation}
\end{proposition}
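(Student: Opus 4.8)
The plan is to run the standard argument for lower semicontinuity of a variation defined as a supremum over finite partitions: freeze a partition, pass to the limit term by term using the lower semicontinuity of the pseudo-Finsler distance already established in the text, and only at the very end take the supremum over partitions. First I would dispose of the trivial case: if $\liminf_{n\up\infty}\Var{\cR_\beta}{\sfxx_n}{\sfa}{\sfb}=+\infty$ there is nothing to prove, so I may assume this $\liminf$ is finite and, passing to a (not relabeled) subsequence, that it is actually a limit. I would also note that each $\resct_n$ is nondecreasing — this is implicit, since it is exactly what makes $\Var{\cR_\beta}{\sfxx_n}{\cdot}{\cdot}$ well defined in Definition~\ref{def:total2} — so the pointwise limit $\resct$ is nondecreasing too and $\Var{\cR_\beta}{\sfxx}{\sfa}{\sfb}$ is meaningful.

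Next I would fix an arbitrary partition $\sfa=s_0<s_1<\cdots<s_M=\sfb$. By pointwise convergence $\sfxx_n\to\sfxx$, for every $m=1,\dots,M$ we have $\sfxx_n(s_{m-1})\to\sfxx(s_{m-1})$ and $\sfxx_n(s_m)\to\sfxx(s_m)$ in $\XX$. The crucial ingredient is then the lower semicontinuity of $\FDist{\cdot}{\cdot}{\beta}$ with respect to convergence in $\XX$, i.e.\ estimate~\eqref{eq:135} (which the text derives by a compactness argument on the optimal equi-Lipschitz connecting curves furnished by~\eqref{eq:137}): it yields
\[
\liminf_{n\up\infty}\FDist{\sfxx_n(s_{m-1})}{\sfxx_n(s_m)}{\beta}\ \ge\ \FDist{\sfxx(s_{m-1})}{\sfxx(s_m)}{\beta}\qquad \text{for every } m=1,\dots,M.
\]
Since the partition has only finitely many intervals, I can add these inequalities and use that the $\liminf$ of a finite sum dominates the sum of the $\liminf$'s, obtaining
\[
\liminf_{n\up\infty}\sum_{m=1}^M \FDist{\sfxx_n(s_{m-1})}{\sfxx_n(s_m)}{\beta}\ \ge\ \sum_{m=1}^M \FDist{\sfxx(s_{m-1})}{\sfxx(s_m)}{\beta}.
\]

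To conclude I would observe that, for each $n$, since $\resct_n$ is nondecreasing the fixed partition is admissible in the supremum~\eqref{eq:132} defining $\Var{\cR_\beta}{\sfxx_n}{\sfa}{\sfb}$, hence $\sum_{m=1}^M \FDist{\sfxx_n(s_{m-1})}{\sfxx_n(s_m)}{\beta}\le \Var{\cR_\beta}{\sfxx_n}{\sfa}{\sfb}$. Taking $\liminf_{n\up\infty}$ and combining with the previous display gives
\[
\liminf_{n\up\infty}\Var{\cR_\beta}{\sfxx_n}{\sfa}{\sfb}\ \ge\ \sum_{m=1}^M \FDist{\sfxx(s_{m-1})}{\sfxx(s_m)}{\beta},
\]
and taking the supremum over all partitions $\sfa=s_0<\cdots<s_M=\sfb$ on the right-hand side yields exactly~\eqref{eq:133}.

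As for difficulties, there is essentially no genuine obstacle once~\eqref{eq:135} is at hand: the interchange of $\liminf$ and the sum is harmless because the sum is finite, and — in contrast to the continuous lower semicontinuity statements elsewhere in this section — no uniform-in-$n$ Lipschitz bound or compactness of the $\sfxx_n$ is needed, only plain pointwise convergence. The single point deserving a word of care is that the $\FDist$ terms could in principle equal $+\infty$; this does not affect anything, since~\eqref{eq:135} and the subsequent summation are valid in $[0,+\infty]$. The real work, namely the lower semicontinuity~\eqref{eq:135} of the pseudo-Finsler distance (which itself relies on the attainment of the infimum in~\eqref{eq:128} and the equi-Lipschitz reparametrization~\eqref{eq:137}), has already been carried out, so the present proposition is a short corollary of it.
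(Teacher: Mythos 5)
Your argument is correct and coincides with the paper's own proof: fix a partition, apply the lower semicontinuity \eqref{eq:135} of $\FDist{\cdot}{\cdot}\beta$ term by term, sum over the finitely many subintervals, bound by $\Var{\cR_\beta}{\sfxx_n}\sfa\sfb$, and finally take the supremum over partitions. The extra remarks (finite-$\liminf$ reduction, monotonicity of the limit $\resct$) are harmless elaborations of the same standard argument.
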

\begin{proof}
  The argument is standard: for an arbitrary subdivision
  $\sfa=s_0<s_1<\cdots<s_{M-1}<s_M=\sfb$,
  \eqref{eq:135} yields
  \begin{displaymath}
    \sum_{m=1}^M\FDist{\sfxx(s_m)}{\sfxx(s_{m-1})}\beta\le
    \liminf_{n\up+\infty}\sum_{m=1}^M\FDist{\sfxx_n(s_m)}{\sfxx_n(s_{m-1})}\beta
    \le
    \liminf_{n\up\infty}\Var{\cR_\beta}{\sfxx_n}\sfa\sfb.
  \end{displaymath}
  Taking the supremum with respect to all  subdivisions of $[\sfa,\sfb]$ we obtain \eqref{eq:133}.
\end{proof}
\paragraph{\textbf{ Lipschitz curves.}}
The next result shows that, for Lipschitz curves, the total
variation can be calculated by integrating the corresponding
dissipation potential.
\begin{proposition}[The total variation for Lipschitz curves]
  Given $\beta,\, L>0$,
  a bounded curve $\sfxx:=(\resct,\rescu):[0,\rescS]\to\XX$ satisfies the $\Delta_{\cR_\beta}$--Lipschitz condition
  with Lipschitz constant $L$
  \begin{equation}
    \label{eq:145}
    \FDist{\sfxx(s_1)}{\sfxx(s_2)}\beta\le L(s_2-s_1)\quad \text{for every }0\le s_1\le s_2\le \rescS,
  \end{equation}
  if and only if it is Lipschitz continuous (with respect to the usual distance in $\XX$),
  $\resct$ is nondecreasing, and
  \begin{equation}
    \label{eq:146}
    \FNorm{\sfxx(s)}{\dot\sfxx(s)}\beta
    \le L\quad\text{for a.a.\ }s\in (0,\rescS).
  \end{equation}
  In this case, for every $\gamma\ge0$
  \begin{equation}
    \label{eq:147}
    \Var{\cR_\gamma}\sfxx\sfa\sfb=\gamma(\resct(\sfb)-\resct(\sfa))+
    \int_\sfa^\sfb \FNorm{\sfxx(s)}{\dot\sfxx(s)}0\,\d s.
  \end{equation}
\end{proposition}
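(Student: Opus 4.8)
The plan is to prove the two ``upper'' bounds (namely \eqref{eq:145} and ``$\le$'' in \eqref{eq:147}) by using $\sfxx$ itself as a competitor in \eqref{eq:128}, to read off from \eqref{eq:129} that \eqref{eq:145} forces $\sfxx$ to be Lipschitz with $\resct$ nondecreasing, and to obtain the matching ``lower'' bounds from a pointwise metric-derivative estimate proved by a blow-up argument combined with Ioffe-type lower semicontinuity. The blow-up/lower-semicontinuity step is the one genuinely delicate point; everything else is bookkeeping.

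First suppose $\sfxx=(\resct,\rescu)$ is Lipschitz, $\resct$ is nondecreasing, and \eqref{eq:146} holds. For $0\le s_1\le s_2\le\rescS$ the restriction $\sfxx|_{[s_1,s_2]}$ is admissible in \eqref{eq:128}, so $\FDist{\sfxx(s_1)}{\sfxx(s_2)}\beta\le\int_{s_1}^{s_2}\FNorm{\sfxx(r)}{\dot\sfxx(r)}\beta\,\d r\le L(s_2-s_1)$, which is \eqref{eq:145}; summing over a subdivision of $[\sfa,\sfb]$ (with an arbitrary $\gamma\ge0$ in place of $\beta$) and using $\int_\sfa^\sfb\dot\resct=\resct(\sfb)-\resct(\sfa)$ gives $\Var{\cR_\gamma}\sfxx\sfa\sfb\le\gamma(\resct(\sfb)-\resct(\sfa))+\int_\sfa^\sfb\FNorm{\sfxx(r)}{\dot\sfxx(r)}0\,\d r$. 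Conversely, if \eqref{eq:145} holds then $\FDist{\sfxx(s_1)}{\sfxx(s_2)}\beta<+\infty$, so by the convention that this distance equals $+\infty$ when $t_0>t_1$ we must have $\resct(s_1)\le\resct(s_2)$ whenever $s_1\le s_2$, i.e.\ $\resct$ is nondecreasing; feeding this into \eqref{eq:129} yields $\beta(\resct(s_2)-\resct(s_1))+C\|\rescu(s_2)-\rescu(s_1)\|_\xfin\le L(s_2-s_1)$, so $\sfxx$ is Lipschitz in the product metric of $\XX$ and hence differentiable a.e.\ by Rademacher's theorem.

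The heart of the proof is the inequality
\begin{equation*}
  \FNorm{\sfxx(s)}{\dot\sfxx(s)}\beta\ \le\ \liminf_{h\downarrow0}\frac1h\,\FDist{\sfxx(s)}{\sfxx(s+h)}\beta\qquad\text{for a.a.\ }s\in(0,\rescS).
\end{equation*}
To prove it, fix a differentiability point $s$ and a sequence $h_n\downarrow0$ realizing the $\liminf$; by the rescaling argument recalled before \eqref{eq:137} — legitimate since $\beta>0$ makes $\cR_\beta(\xx,\cdot)$ strictly positive on $\VV\setminus\{0\}$ — pick optimal curves $\eta_n\colon[0,1]\to\XX$ joining $\sfxx(s)$ to $\sfxx(s+h_n)$ with constant $\cR_\beta$-speed $\FNorm{\eta_n}{\dot\eta_n}\beta\equiv\ell_n:=\FDist{\sfxx(s)}{\sfxx(s+h_n)}\beta\le Lh_n$. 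Since $\FNorm\xx{(\alpha,v)}\beta\ge\min\{\beta,C\}(\alpha+\|v\|_\xfin)$, each $\eta_n$ is $O(h_n)$-Lipschitz, so the rescaled curves $\xi_n(\sigma):=\sfxx(s)+h_n^{-1}(\eta_n(\sigma)-\sfxx(s))$ are uniformly Lipschitz with $\xi_n(0)=\sfxx(s)$ and $\xi_n(1)\to\sfxx(s)+\dot\sfxx(s)$; along a subsequence $\xi_n\to\xi$ in $\rmC^0$ and $\dot\xi_n\weakto\dot\xi$ in $L^1(0,1)$, with $\dot\xi(\sigma)\in\VV$ a.e.\ (being the limit of velocities with nonnegative time component) and $\xi(1)-\xi(0)=\dot\sfxx(s)$. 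By positive $1$-homogeneity in the velocity, $\ell_n/h_n=\int_0^1\FNorm{\eta_n(\sigma)}{\dot\xi_n(\sigma)}\beta\,\d\sigma$, and since $\eta_n=\sfxx(s)+h_n(\xi_n-\sfxx(s))\to\sfxx(s)$ uniformly while $\FNorm\cdot\cdot\beta$ is lower semicontinuous and convex in its velocity variable (see \eqref{eq:161}, \eqref{eq:163}), Ioffe's theorem (\cite[Thm.~5.8]{Ambrosio-Fusco-Pallara00}, exactly as invoked for Proposition~\ref{prop:techn2}) followed by Jensen's inequality for the convex $1$-homogeneous map $\FNorm{\sfxx(s)}\cdot\beta$ gives $\lim_n\ell_n/h_n\ge\int_0^1\FNorm{\sfxx(s)}{\dot\xi(\sigma)}\beta\,\d\sigma\ge\FNorm{\sfxx(s)}{\dot\sfxx(s)}\beta$, which is the claimed bound. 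This blow-up/lower-semicontinuity argument — made unavoidable by the fact that $\cR$ is only assumed lower semicontinuous (not continuous) in $\xx$ — is where I expect the main difficulty.

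To conclude, combining the last display with \eqref{eq:145} gives $\FNorm{\sfxx(s)}{\dot\sfxx(s)}\beta\le L$ for a.a.\ $s$, i.e.\ \eqref{eq:146}, completing the equivalence. For \eqref{eq:147}, every competitor in \eqref{eq:128} satisfies $\int\dot\resct=t_1-t_0$, whence $\FDist{\xx_0}{\xx_1}\gamma=\gamma(t_1-t_0)+\FDist{\xx_0}{\xx_1}0$; telescoping in \eqref{eq:132} (legitimate because $\resct$ is nondecreasing) then yields $\Var{\cR_\gamma}\sfxx\sfa\sfb=\gamma(\resct(\sfb)-\resct(\sfa))+\Var{\cR_0}\sfxx\sfa\sfb$, so it remains to prove $\Var{\cR_0}\sfxx\sfa\sfb=\int_\sfa^\sfb\FNorm{\sfxx(r)}{\dot\sfxx(r)}0\,\d r$ (the right-hand side being finite since by \eqref{eq:146} the integrand is bounded by $L$). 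Here ``$\le$'' is the second paragraph with $\gamma=0$; for ``$\ge$'' let $V(s):=\Var{\cR_0}\sfxx\sfa s$, which is $L$-Lipschitz and nondecreasing, observe that superadditivity of the total variation gives $V(s+h)-V(s)\ge\FDist{\sfxx(s)}{\sfxx(s+h)}0=\FDist{\sfxx(s)}{\sfxx(s+h)}\beta-\beta(\resct(s+h)-\resct(s))$, and deduce from the core estimate that $V'(s)\ge\FNorm{\sfxx(s)}{\dot\sfxx(s)}\beta-\beta\dot\resct(s)=\FNorm{\sfxx(s)}{\dot\sfxx(s)}0$ for a.a.\ $s$; integrating and using $V(\sfb)-V(\sfa)=\int_\sfa^\sfb V'(s)\,\d s$ finishes the proof.
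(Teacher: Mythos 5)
Your proof is correct, and its skeleton coincides with the paper's: the easy implication by using $\sfxx$ itself as a competitor in \eqref{eq:128}, the Lipschitz property and monotonicity of $\resct$ from the coercivity bound \eqref{eq:129} and the $+\infty$ convention, and then the identification of the metric derivative $\lim_{h\downarrow 0}h^{-1}\FDist{\sfxx(s)}{\sfxx(s+h)}\beta$ with $\FNorm{\sfxx(s)}{\dot\sfxx(s)}\beta$ as the crux. Where you diverge is in the execution of the two nontrivial steps. For the key lower bound on the difference quotient, the paper freezes the base point: by joint lower semicontinuity and $1$-homogeneity it finds, for each $\sigma<1$, a $\delta>0$ with $\FNorm{\sfxx(r)}{\vv}\beta\ge\sigma\FNorm{\sfxx(s)}{\vv}\beta$ for $|r-s|\le\delta$ and all $\vv$, and then compares with \eqref{eq:149} (obtained from the metric-derivative representation of \cite[Prop.~2.2]{Rossi-Mielke-Savare08}) to turn \eqref{eq:149} into an equality; your blow-up of near-optimal connecting curves combined with Ioffe's theorem and Jensen's inequality reaches the same conclusion and is, if anything, more robust, since it does not require the uniform-in-$\vv$ semicontinuity estimate (which implicitly uses compactness of the velocity sphere). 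For the passage from the pointwise identity to \eqref{eq:147}, the paper again leans on the cited metric-derivative representation, whereas you rederive it from scratch via superadditivity and absolute continuity of $s\mapsto\Var{\cR_0}\sfxx{\sfa}{s}$. The net effect is a self-contained proof at the cost of some extra length; the only cosmetic points worth tightening are that exact minimizers in \eqref{eq:128} are not needed (curves within $o(h_n)$ of the infimum suffice, so you need not rely on attainment), and that the uniform Lipschitz bound on the rescaled curves uses $\beta>0$ in an essential way, which you correctly flag.
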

\begin{proof}
  The sufficiency of condition \eqref{eq:146} is clear. Let us now consider  a
  curve $\xx$ satisfying \eqref{eq:145}: by  the coercivity~\eqref{eq:129}, $\xx$ is a
  Lipschitz curve in the usual sense and \cite[Prop. 2.2]{Rossi-Mielke-Savare08} yields
  \[
    \FDist{\sfxx(s_0)}{\sfxx(s_1)}\beta\le \int_{s_0}^{s_1} \sfm(s)\,\d s,\quad\text{where}\quad
    \sfm(s):=\lim_{h\downarrow0}\frac{\FDist{\sfxx(s)}{\sfxx(s+h)}\beta}h
  \]
  is the so-called \emph{metric derivative} of $\xx$ (see~\cite{Ambrosio95,
  Ambrosio-Gigli-Savare08}).
  The minimality of $\sfm$ ensures that
  \begin{equation}
    \label{eq:149}
    \sfm(s)\le  \FNorm{\sfxx(s)}{\dot\sfxx(s)}\beta
    \quad\text{for a.a.\ }s\in [0,\rescS].
  \end{equation}
  On the other hand, since $\cR_\beta$ is lower semicontinuous and $1$-homogeneous in $\vv$,
  for every $0<\sigma<1$ and $s\in [0,\rescS]$
  we find a constant $\delta>0$ such that
  \[
     \FNorm{\sfxx(r)}{\vv}\beta
    \ge
    \sigma\FNorm{\sfxx(s)}{\vv}\beta\quad
    \text{for every $\vv\in \VV$}
    \quad\text{if $|r-s|\le \delta$},
  \]
  so that a comparison with the linear segment joining $\sfxx(s)$ with $\sfxx(s+h)$ yields
  \[
    \begin{aligned}
      \FDist{\sfxx(s)}{\sfxx(s+h)}\beta&
      \le \sigma^{-1}
      \FNorm{\sfxx(s)}{\sfxx(s+h)-\sfxx(s)}\beta
    \end{aligned}
  \]
  Dividing by $h$ and passing to the limit first as $h\downarrow0$ and eventually as $\sigma\uparrow1$, we
  obtain the opposite inequality of
  \eqref{eq:149}.  Combining~\eqref{eq:149} (which holds
  as an equality) with~\eqref{eq:145}, we infer~\eqref{eq:146},
  and~\eqref{eq:147} ensues.
\end{proof}
\begin{proposition}[Reparametrization]
  \label{prop:rep}
  Let $u:[0,T]\to \xfin$ be a curve with finite total variation
  $V:=\Var{\cR_0}u0T<+\infty$, and let us set
  \begin{equation}
    \label{eq:157}
    \rescs(t):=t+\Var{\cR_0}u0t=\Var{\cR_1}u0t\quad \text{for every $t\in [0,T]$}.
  \end{equation}
  Then, there exists a Lipschitz parametrization $\sfxx=(\resct,\rescu):[0,\rescS]\to \XX$,
with
  $\rescS=V+T$,
  such that
  \begin{equation}
    \label{eq:139}
    \FNorm{\sfxx(s)}{\dot\sfxx(s)}1=1\quad \forae\, s \in (0,\rescS),
  \end{equation}
  \begin{equation}
    \label{eq:140}
    \resct(\rescs(t))=t,\quad \rescu(\rescs(t))=u(t)\quad\text{for every }t\in [0,T].
  \end{equation}
  In particular,
  \begin{equation}
    \label{eq:141}
    b-a+\Var {\cR_0}uab=\rescs(b)-\rescs(a)=\int_{\rescs(a)}^{\rescs(b)}
    \FNorm{\sfxx(s)}{\dot\sfxx(s)}1
    \,\d s.
  \end{equation}
\end{proposition}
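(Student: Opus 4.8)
The plan is to realize $\sfxx$ as the \emph{arc-length reparametrization} of the space--time curve $\uu(t):=(t,u(t))$ with respect to the ``$\cR_1$-length'' $\rescs$, the jumps of $u$ being bridged by optimal (geodesic) transition curves travelled at frozen time. I would first record the elementary properties of $\rescs(t):=t+\Var{\cR_0}{u}{0}{t}$. By additivity of the total variation, by the coercivity $\FDist{(t_0,u_0)}{(t_1,u_1)}{0}\ge C\|u_1-u_0\|_\xfin$ from \eqref{eq:129}, and by the identity $\FDist{\xx_0}{\xx_1}{1}=(t_1-t_0)+\FDist{\xx_0}{\xx_1}{0}$ of \eqref{eq:129}--\eqref{eq:137} (whose telescoping over partitions also gives $\rescs=\Var{\cR_1}{u}{0}{\cdot}$, i.e.\ \eqref{eq:157}), the function $\rescs$ is strictly increasing with $\rescs(0)=0$, $\rescs(T)=T+V=\rescS$, and $\rescs(t_2)-\rescs(t_1)\ge (t_2-t_1)+\FDist{(t_1,u(t_1))}{(t_2,u(t_2))}{0}$ for $t_1<t_2$. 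Using finiteness of $V$ (to control accumulation of small jumps) together with the lower semicontinuity \eqref{eq:135} of $\FDist{\cdot}{\cdot}{0}$, one shows that $\rescs$ admits left/right limits with $\rescs(t)-\rescs(t_-)=\FDist{(t,u(t_-))}{(t,u(t))}{0}$ and $\rescs(t_+)-\rescs(t)=\FDist{(t,u(t))}{(t,u(t_+))}{0}$; hence $\rescs$ jumps precisely on $\rmJ_u$, the image $A:=\rescs([0,T])$ differs from $[0,\rescS]$ by the (at most countably many) open ``gaps'' $(\rescs(t_-),\rescs(t))$ and $(\rescs(t),\rescs(t_+))$, $t\in\rmJ_u$, of total length $\le V$, and each gap has its two endpoints in $\overline A$.

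Next I would construct $\sfxx=(\resct,\rescu):[0,\rescS]\to\XX$. On $A$ set $\resct:=\rescs^{-1}$ and $\rescu:=u\circ\rescs^{-1}$; the two coercivity bounds above make $\sfxx$ Lipschitz on $A$, hence it extends uniquely to a continuous (indeed Lipschitz) map on $\overline A$, whose value at a non-attained left, resp.\ right, gap-endpoint $\rescs(t_-)$, resp.\ $\rescs(t_+)$, is forced to be $(t,u(t_-))$, resp.\ $(t,u(t_+))$. On the gap $(\rescs(t_-),\rescs(t))$ I let $\resct\equiv t$ and let $\rescu$ run along a constant-$\cR_1$-speed minimizer of $\FDist{(t,u(t_-))}{(t,u(t))}{0}$; such a curve exists by \eqref{eq:137}, has $\dot\resct\equiv0$, and, after the affine rescaling onto the gap (whose length is exactly that $\FDist$), satisfies $\FNorm{\sfxx(s)}{\dot\sfxx(s)}{1}\equiv1$ there. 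I proceed analogously on the gaps $(\rescs(t),\rescs(t_+))$. The boundary values of these pieces agree with the continuous extension on $\overline A$, so $\sfxx\in\rmC^0([0,\rescS];\XX)$ with $\resct$ nondecreasing, $\resct(0)=0$, $\resct(\rescS)=T$, and \eqref{eq:140} holds by construction.

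The core estimate is
\[
\FDist{\sfxx(s_1)}{\sfxx(s_2)}{1}\le s_2-s_1\qquad(0\le s_1\le s_2\le\rescS).
\]
For $s_i=\rescs(t_i)\in A$ this follows from $\FDist{(t_1,u(t_1))}{(t_2,u(t_2))}{1}=(t_2-t_1)+\FDist{(t_1,u(t_1))}{(t_2,u(t_2))}{0}\le(t_2-t_1)+\Var{\cR_0}{u}{t_1}{t_2}=\rescs(t_2)-\rescs(t_1)$, the middle inequality being the triangle inequality for $\FDist{\cdot}{\cdot}{0}$ tested against partitions of $[t_1,t_2]$; by \eqref{eq:135} it persists for $s_1,s_2\in\overline A$. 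For $s_1,s_2$ inside the same gap it holds because $\sfxx$ has unit $\cR_1$-speed there. Since every point of $[0,\rescS]$ lies either in $\overline A$ or in the interior of a gap with endpoints in $\overline A$, the general case follows by splitting $[s_1,s_2]$ into at most three such sub-intervals and using the triangle inequality --- so no delicate bookkeeping over infinitely many gaps is required. With this estimate, the preceding Proposition on the total variation of Lipschitz curves, applied with $L=\beta=1$, shows that $\sfxx$ is Lipschitz, $\FNorm{\sfxx(s)}{\dot\sfxx(s)}{1}\le1$ for a.a.\ $s$, and, for every $\gamma\ge0$, $\Var{\cR_\gamma}{\sfxx}{0}{\rescS}=\gamma\,T+\int_0^{\rescS}\FNorm{\sfxx(s)}{\dot\sfxx(s)}{0}\,\d s$. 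To upgrade ``$\le1$'' to equality: taking $\gamma=0$ gives $\int_0^{\rescS}\FNorm{\sfxx(s)}{\dot\sfxx(s)}{0}\,\d s=\Var{\cR_0}{\sfxx}{0}{\rescS}\ge\Var{\cR_0}{u}{0}{T}=V$, the inequality because $\sfxx$ runs monotonically through every $(t,u(t))$ so that partitions of $[0,T]$ produce admissible partitions of $[0,\rescS]$; hence $\int_0^{\rescS}\FNorm{\sfxx(s)}{\dot\sfxx(s)}{1}\,\d s=T+\int_0^{\rescS}\FNorm{\sfxx(s)}{\dot\sfxx(s)}{0}\,\d s\ge T+V=\rescS$, which together with $\FNorm{\sfxx(s)}{\dot\sfxx(s)}{1}\le1$ a.e.\ forces $\FNorm{\sfxx(s)}{\dot\sfxx(s)}{1}=1$ a.e., i.e.\ \eqref{eq:139}. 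Finally \eqref{eq:141} is immediate: $\rescs(b)-\rescs(a)=(b-a)+\Var{\cR_0}{u}{a}{b}$ by additivity of $\rescs$, and $\int_{\rescs(a)}^{\rescs(b)}\FNorm{\sfxx(s)}{\dot\sfxx(s)}{1}\,\d s=\rescs(b)-\rescs(a)$ by \eqref{eq:139}.

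The step I expect to be the main obstacle is the precise description of the jump structure of $\rescs$ in the first paragraph --- i.e.\ that $\rescs$ is discontinuous exactly at the points of $\rmJ_u$, with one-sided jumps $\FDist{(t,u(t_-))}{(t,u(t))}{0}$ and $\FDist{(t,u(t))}{(t,u(t_+))}{0}$, so that the ``gap'' picture of $[0,\rescS]$ used throughout is correct. This requires combining additivity of $\Var{\cR_0}{u}{\cdot}{\cdot}$, the triangle inequality, and the lower semicontinuity \eqref{eq:135} of $\FDist$ in order to pass from partition sums to the one-sided limits of $u$, all while invoking finiteness of $V$ to rule out accumulation of jump mass; everything else is then a routine, if somewhat lengthy, bookkeeping.
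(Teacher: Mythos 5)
Your proposal is correct and follows essentially the same route as the paper's (sketched) proof: you invert $\rescs$ on its continuity set, extend to the closure, fill the gaps at the jump times with constant-speed minimizers of $\Delta_{\cR_0}$ having frozen time component, establish the $\Delta_{\cR_1}$-$1$-Lipschitz property, and then invoke the preceding proposition on Lipschitz curves together with the comparison $\Var{\cR_1}{u}{a}{b}\le\Var{\cR_1}{\sfxx}{\rescs(a)}{\rescs(b)}$ to squeeze out the unit-speed identity \eqref{eq:139} and \eqref{eq:141}. Your treatment is somewhat more explicit than the paper's (which defers the jump bookkeeping to a reference), but no new idea is introduced.
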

\begin{proof}
  The proof is classical, at least when the dissipation $\cR$ is continuous and even in its second argument:
  we briefly sketch the main ideas and refer to \cite[Lemma 4.1]{Mielke-Rossi-Savare08}.

  Notice that the jump set $\rmJ_\rescs$ of the curve $\rescs$ given by~\eqref{eq:157}
   coincides with the jump set $\rmJ_u$ of $u$,  and
  $\rescs$ is injective in $\rmC_u:=(0,T)\setminus \rmJ_u$.
  We denote by $\resct$ its inverse, defined on
  $\sfC_u:=\rescs(\rmC_u)$ and extended to $\overline \sfC_u$ by its (Lipschitz) continuity; we also
  set $\rescu(s):=u(t)$ if $s=\rescs(t)\in \sfC_u$.
  Suppose now that $(\sfs_-,\sfs_+)$ is a connected component of $[0,\rescS]\setminus
  \overline\sfC_u$,
  corresponding to some time $\bar t\in [0,T]$ with $\rescs_\pm=\rescs(\bar t_\pm)$ and
  $\bar \rescs=\rescs(\bar t)\in [\sfs_-,\sfs_+]$.
  We have
  \[
  \begin{aligned}
   &  \rescu(\sfs_-)=\lim_{s\up \sfs_-}\rescu(s)=u(\bar t_-),\qquad
  \rescu(\sfs_+)=\lim_{s\down\sfs_+}\rescu(s)=u(\bar t_+),
  \\ &
    \label{eq:158}
    \bar\rescs-\sfs_-=\FDist{(\bar t,u(\bar t_-))}{(\bar t,u(\bar t))}0,\quad
    \sfs_+-\bar \rescs=\FDist{(\bar t,u(\bar t))}{(\bar t,u(\bar
    t_+))}0.
  \end{aligned}
  \]
  By Definition~\ref{def:bip-distance}, we can join $(\bar t,\rescu(\sfs_-))$ to
  $(\bar t,\rescu(\sfs_+))$ by a $\Delta_{\cR_0}$-Lipschitz curve (still denoted by $(\resct,\rescu)$)
  defined in $[\sfs_-,\sfs_+]$ with constant
  first component $\resct(s)=\bar t$,  and satisfying \eqref{eq:137} as well as $\rescu(\bar \rescs)=u(\bar t)$.

  It is then easy to check that the final
   curve $\sfxx=(\resct,\rescu)$ obtained by ``filling'' in this way all
  the (at most countable) holes in $[0,\rescS]\setminus \sfC_u$ satisfies \eqref{eq:140} and the Lipschitz condition
  \eqref{eq:145} with $L\le 1$. Applying \eqref{eq:146} and \eqref{eq:147} we get
  \begin{align*}
    \int_{\sfs(a)}^{\sfs(b)}\FNorm{\sfxx(s)}{\dot\sfxx(s)}1\,\d s \le
     \sfs(b)-\sfs(a)=\Var{\cR_1}u ab & \le
    \Var{\cR_1}\sfxx{\sfs(a)}{\sfs(b)}\\&=\int_{\sfs(a)}^{\sfs(b)}\FNorm{\sfxx(s)}{\dot\sfxx(s)}1\,\d s
  \end{align*}
   where the first inequality follows from the
  $1$-Lipschitz condition, the subsequent identity from the
  definition of $\sfs$, and the last one from~\eqref{eq:141}.
  \end{proof}
The reparametrization of Proposition \ref{prop:rep} is also useful
to express the distributional derivative of $u$. If
 $\Var{\cR_0}u0T<+\infty$,  we can introduce the distributional
derivative $  \MU{\cR_1}u:=\rescs'$ of $\rescs$, which is a finite
positive measure satisfying
 \[
  \MU{\cR_1}u([a,b])=\rescs(b)-\rescs(a),\quad
  \int_0^T \zeta(t)\,\d\MU{\cR_1}u(t)=-\int_0^T \dot\zeta(t)\rescs(t)\,\d t\quad
  \text{for every }\zeta\in \rmC^0_0(0,T).
\]
Notice that a singleton $\{t\}$ has strictly positive measure if and
only if $t\in \rmJ_u$; more precisely
\[
  \begin{aligned}
  &
  \MU{\cR_1}u(\{t\})=\TriCost{\cR_0}t{u(t_-)}{u(t)}{u(t_+)}\quad\text{if }t\in \rmJ_u;
  \\
  &
  \MU{\cR_1}u(\{t\})=0\quad\text{if }t\in \rmC_u=(0,T)
   \setminus \rmJ_u\,,
  \end{aligned}
\]
with obvious modification for $t=0,T$. As a general fact we have the
representation formula  (recall that $\sft$ is the inverse of
$\sfs$)
\begin{equation}
  \label{eq:168}
  \resct_\#\left(\Leb 1\Restr{(0,\rescS)}\right)=\MU{\cR_1}u,\quad
  \text{i.e.~}
  \int_0^T \zeta(t)\,\d\MU{\cR_1}u(t)=\int_0^\rescS\zeta(\resct(s))\,\d
  s,
\end{equation}
for every bounded Borel function $\zeta:[0,T]\to \R$. Since $\resct$ is
 injective in $\sfC_u:=\resct^{-1}(\rmC_u) \subset (0,\rescS)$, a Borel subset $A$ of
$\sfC_u$ is $\Leb 1$-negligible if and only if $\resct(A)$ has
$\MU{\cR_1}u$-measure $0$. Therefore, as the derivatives
$\dot\resct,\dot\rescu$ are Borel functions defined up to a $\Leb
1$-negligible subset of $(0,\rescS)$, the compositions
$\dot\resct\circ \rescs, \dot\rescu\circ\rescs$ are well defined in
$\rmC_u$. The next lemma shows that they play an important role.
\begin{proposition}
  \label{prop:calculus}
  The Lebesgue measure $\Leb 1\Restr{(0,T)}$
  and the vector measure $u'_\co=u'_\Le+u'_\rmC$
   are absolutely
   continuous w.r.t.~$\MU{\cR_1}u$, and we have
  \begin{equation}
    \label{eq:171}
    \frac{\d \Leb 1}{\d\MU{\cR_1}u}=\dot\resct\circ\rescs\quad\text{and}\quad
    \frac{\d u'_\co}{\d\MU{\cR_1}u}=\dot\rescu\circ\rescs\qquad \MU{\cR_1}u\text{-a.e.\ in }\rmC_u.
  \end{equation}
\end{proposition}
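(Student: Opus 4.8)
The plan is to exploit the reparametrization $\sfxx=(\resct,\rescu):[0,\rescS]\to\XX$ constructed in Proposition~\ref{prop:rep}, together with the pushforward formula~\eqref{eq:168}, which already tells us that $\resct_\#(\Leb1\Restr{(0,\rescS)})=\MU{\cR_1}u$. First I would establish the absolute continuity $\Leb1\Restr{(0,T)}\ll\MU{\cR_1}u$ and $u'_\co\ll\MU{\cR_1}u$. For the Lebesgue measure this is immediate from the definition $\rescs(t)=t+\Var{\cR_0}u0t$, which gives $\MU{\cR_1}u\ge\Leb1$ as measures, hence also $|u'_\Le|=|\dot u|\,\Leb1\ll\MU{\cR_1}u$ after recalling that $u\in\BV$ (Definition~\ref{def:total2} and \eqref{eq:162} guarantee $\Var{\cR_0}u0T<+\infty\Rightarrow u\in\BV$). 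For the Cantor part $u'_\Ca$, I would use that $\Var{\cR_0}u0t\ge\Var{\Psi_0}u0t$ by~\eqref{eq:162} (so $\cR_0(\xx,\vv)\ge C\|v\|_\xfin\ge c\,\Psi_0(v)$ for an equivalent-norm constant), hence the monotone function $t\mapsto\Var{\cR_0}u0t$ dominates the variation function of $u$; since $|u'_\co|$ is the diffuse part of the derivative of $\Var{\Psi_0}u0\cdot$, it is absolutely continuous with respect to the diffuse part of $\rescs'=\MU{\cR_1}u$, and $\MU{\cR_1}u$ itself dominates its own diffuse part on $\rmC_u$.

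Next I would compute the two Radon--Nikod\'ym densities. For any bounded Borel $\zeta:[0,T]\to\R$, the pushforward identity~\eqref{eq:168} reads $\int_0^T\zeta\,\d\MU{\cR_1}u=\int_0^\rescS\zeta(\resct(s))\,\d s$. Applying the change of variables $s=\rescs(t)$ on the set $\sfC_u=\rescs(\rmC_u)$ — legitimate since $\resct$ is Lipschitz, hence absolutely continuous, and since $\rescs$ restricted to $\rmC_u$ is injective with $\resct=\rescs^{-1}$ there — gives $\int_{\rmC_u}\zeta(t)\,\d\MU{\cR_1}u(t)=\int_{\sfC_u}\zeta(\resct(s))\,\dot\resct(s)\,\dot\sfs(\resct(s))^{-1}\,\dots$; more cleanly, I would argue directly that for a Borel $A\subset\rmC_u$, $\Leb1(A)=\int_{\resct(A)}\dot\resct(s)\,\d s$ by the Lipschitz change of variable applied to the function $t\mapsto\Leb1([0,t]\cap A)$ composed with $\resct$, and simultaneously $\MU{\cR_1}u(A)=\Leb1(\resct(A)\cap\sfC_u)$ from~\eqref{eq:168}. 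Comparing these two expressions on arbitrary Borel $A$ yields $\d\Leb1/\d\MU{\cR_1}u=\dot\resct\circ\rescs$ on $\rmC_u$. The same scheme applied componentwise to the vector measure $u'_\co$, using $\rescu=u\circ\resct$ on $\sfC_u$ and the $\BV$ chain/composition rule for the Lipschitz reparametrization (so that $u'_\co$ pushed forward under $\rescs$ has density $\dot\rescu$ on $\sfC_u$), gives $\d u'_\co/\d\MU{\cR_1}u=\dot\rescu\circ\rescs$.

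The main obstacle I anticipate is bookkeeping the null sets correctly: the derivatives $\dot\resct,\dot\rescu$ are only defined $\Leb1$-a.e.\ on $(0,\rescS)$, and one must check that the ``holes'' $[0,\rescS]\setminus\sfC_u$ — i.e.\ the intervals inserted over jump times — are $\Leb1$-negligible under $\resct$ in the sense that they do not contribute to $\MU{\cR_1}u$ on $\rmC_u$ (they contribute only atoms at jump points, which lie outside $\rmC_u$). Here the key remark is precisely the one made just before the statement: $\resct$ is injective on $\sfC_u$, so a Borel $A\subset\sfC_u$ is $\Leb1$-null iff $\resct(A)$ is $\MU{\cR_1}u$-null, which makes the compositions $\dot\resct\circ\rescs,\dot\rescu\circ\rescs$ well defined $\MU{\cR_1}u$-a.e.\ on $\rmC_u$ and legitimizes the change-of-variables identities above. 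Once this measure-theoretic matching is in place, \eqref{eq:171} follows, and the absolute continuity statements are a byproduct.
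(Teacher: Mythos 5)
Your plan follows essentially the same route as the paper: absolute continuity from $\MU{\cR_1}u\ge \Leb 1$ and the coercivity \eqref{eq:129}, and the densities via the pushforward identity \eqref{eq:168} together with the injectivity of $\resct$ on $\sfC_u$ (which, as you correctly observe, is what makes $\dot\resct\circ\rescs$ and $\dot\rescu\circ\rescs$ well defined $\MU{\cR_1}u$-a.e.\ on $\rmC_u$). The first identity is fine as you argue it, by the change of variables for the absolutely continuous monotone map $\resct$.

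The only substantive caveat concerns the second identity: the assertion that ``$u'_\co$ pushed forward under $\rescs$ has density $\dot\rescu$ on $\sfC_u$'' is precisely the content of the claim to be proved, and it cannot be dispatched by citing a ready-made $\BV$ chain rule, since $\rescu$ is only Lipschitz (not $\rmC^1$) and $\rescs$ jumps at every $t\in\rmJ_u$. The paper closes exactly this step by an explicit test-function computation: integrating by parts on each connected component $(\sfa_t,\sfb_t)$ of $[0,\rescS]\setminus\sfC_u$ to isolate the jump contribution $J_u(\zeta)=\sum_{t\in\rmJ_u}\zeta(t)(u(t_+)-u(t_-))$, establishing $-\int_{\sfC_u}(\zeta\circ\resct)'\,\rescu\,\d s=\int_{\sfC_u}\zeta(\resct(s))\dot\rescu(s)\,\d s+J_u(\zeta)$, and then comparing with $\int_0^T\zeta\,\d u'$ via the substitution $t=\resct(s)$ and \eqref{eq:168}. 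You should carry out this computation (or an equivalent one) rather than appeal to a composition rule; with that filled in, your argument is complete.
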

\begin{proof}
  The absolute continuity of both measures is easy, since $\Leb 1\le \MU{\cR_1}u$
  by~\eqref{eq:168} and the total variation
  $\|u'\|_\xfin$ is absolutely continuous w.r.t.\ $\MU{\cR_1}u$ thanks to \eqref{eq:129}.
  The first identity of \eqref{eq:171} can be proved as in \cite[Lemma 4.1]{Mielke-Rossi-Savare08}.
  Concerning the second one, let us set for every smooth function $\zeta\in \rmC^\infty_0(0,T)$
  \[
    J_u(\zeta):=\sum_{t\in \rmJ_u} \zeta(t)\big(u(t_+)-u(t_-)\big),
  \]
  and let us observe that we have
  \begin{equation}
    \label{eq:173}
    -\int_{\sfC_u}(\zeta\circ \resct)'(s)\, \rescu(s)\,\d s=
    \int_{\sfC_u}\zeta(\resct(s))\,\dot \rescu(s)\,\d s+J_u(\zeta).
  \end{equation}
  Indeed, denoting by $A_t=(\sfa_t,\sfb_t)=\resct^{-1}(t)$, $t\in \rmJ_u$, the connected components of
  $[0,\rescS]\setminus\sfC_u$, and
   recalling that $\rescu(\sfa_t)=u(t_-),\rescu(\sfb_t)=u(t_+)$,
  we have
  \begin{align*}
    &-\int_{\sfC_u}(\zeta\circ \resct)'(s)\, \rescu(s)\,\d s=
    -\int_0^\rescS(\zeta\circ \resct)'(s)\, \rescu(s)\,\d s+\sum_{t\in \rmJ_u}
    \int_{\sfa_t}^{\sfb_t} (\zeta\circ \resct)'(s)\, \rescu(s)\,\d s
    \\&=
    \int_0^\rescS \zeta(\resct(s))\,\dot \rescu(s)\,\d s
    -\sum_{t\in \rmJ_u} \int_{\sfa_t}^{\sfb_t} (\zeta\circ \resct)(s)\, \dot\rescu(s)\,\d s+J_u(\zeta)
    =\int_{\sfC_u}\zeta(\resct(s))\,\dot \rescu(s)\,\d s+J_u(\zeta).
  \end{align*}
  Therefore, there holds
  \begin{align*}
    \int_0^T \zeta(t)\,\d u'(t)&=
    -\int_0^T \dot\zeta(t)\,u(t)\,\d t=
    -\int_0^\rescS \dot\zeta(\dot\resct(s))\, u(\resct(s))\,\dot\resct(s)\,\d s=
    -\int_{\sfC_u}\dot\zeta(\resct(s))\, u(\resct(s))\,\dot\resct(s)\,\d s
    \\&=
    -\int_{\sfC_u}(\zeta\circ \resct)'(s)\, \rescu(s)\,\d s=
    \int_{\sfC_u} \zeta(\resct(s))\,\dot \rescu(s)\,\d s+J_u(\zeta)\\&=
    \int_{\rmC_u} \zeta(t)\, \dot\rescu(\rescs(t))\,\d\MU{\cR_1}u(t)+J_u(\zeta).
 \end{align*}
where the fifth identity ensues from~\eqref{eq:173} and the last one
from~\eqref{eq:168}.  Since
 \begin{displaymath}
    \int_0^T \zeta(t)\,\d u_\co'(t)= \int_0^T \zeta(t)\,\d u'(t)-J_u(\zeta)
 \end{displaymath}
 we conclude the second of~\eqref{eq:171}.
\end{proof}
\begin{corollary}[Integral expression for ${\rm Var}_\cR$]
  \label{cor:integral}
  Let $u:[0,T]\to \xfin$ fulfil $\Var{\cR_0}u0T<+\infty$, let $\mu$ be a positive finite measure
  such that $\Leb 1 \ll \mu$ and $u_\co'\ll\mu$, and let us set
  \begin{equation}
    \label{eq:174}
    \begin{aligned}
    \JVar{\cR_0}uab:=
    \Cost{\cR_0} a{u(a)}{u(a_+)} & +
    \Cost{\cR_0} b{u(b_-)}{u(b)}\\ & +
   \sum_{t\in \rmJ_u\cap (a,b)}\TriCost{\cR_0}t{u(t_-)}{u(t)}{u(t_+)}.
\end{aligned}
  \end{equation}
  Then,
  \begin{equation}
    \label{eq:155}
    \Var{\cR_0}uab=\int_a^b \FNorm
    {\big(t,u(t)\big)}{\Big(\frac{\d\Leb 1}{\d\mu}(t),\frac{\d u_\co'}{\d \mu}(t)\Big)}{0}\,\d\mu(t)+
    \JVar{\cR_0}uab.
  \end{equation}
\end{corollary}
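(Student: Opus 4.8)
The plan is to derive \eqref{eq:155} from the canonical reparametrization of Proposition~\ref{prop:rep} together with the Radon--Nikod\'ym identities of Proposition~\ref{prop:calculus}, so that the whole statement reduces to two formal facts: the $1$-homogeneity of $\cR$ in its second argument (assumption \eqref{eq:161}), and the reference-measure independence of integrals of the form $\int \FNorm{(t,u(t))}{(\frac{\d\Leb 1}{\d\mu},\frac{\d u'_\co}{\d\mu})}0\,\d\mu$ (proved exactly as the analogous remark after \eqref{eq:187}, by passing through $\mu_1+\mu_2$).

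First I would record the decomposition of the reference measure $\MU{\cR_1}u=\rescs'$. By \eqref{eq:157} and additivity of $\Var{\cR_0}u\cdot\cdot$ we have $\MU{\cR_1}u=\Leb 1+\MU{\cR_0}u$, where $\MU{\cR_0}u$ is the distributional derivative of $t\mapsto \Var{\cR_0}u0t$; in particular $\MU{\cR_0}u([a,b])=\Var{\cR_0}uab$ once the endpoint conventions are fixed. The atoms of $\MU{\cR_1}u$ (hence of $\MU{\cR_0}u$, since $\Leb 1$ is diffuse) lie exactly on $\rmJ_u$, with mass $\TriCost{\cR_0}t{u(t_-)}{u(t)}{u(t_+)}$ at interior jumps and the corresponding one-sided costs at $a$ and $b$, so the atomic part of $\MU{\cR_0}u$ on $[a,b]$ has total mass $\JVar{\cR_0}uab$ in the sense of \eqref{eq:174}. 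Moreover the integrand in \eqref{eq:155} does not charge $\rmJ_u$: at an atom $t\in\rmJ_u$ of $\mu$ both $\frac{\d\Leb 1}{\d\mu}(t)$ and $\frac{\d u'_\co}{\d\mu}(t)$ vanish, so the integrand is $\FNorm{(t,u(t))}{(0,0)}0=0$ by $1$-homogeneity. Thus it only remains to identify the diffuse part $\MU{\cR_0}u\Restr{\rmC_u}$ with the measure $\FNorm{(t,u(t))}{(\frac{\d\Leb 1}{\d\mu},\frac{\d u'_\co}{\d\mu})}0\,\mu$ on $\rmC_u$.

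For the identification, let $\sfxx=(\resct,\rescu):[0,\rescS]\to\XX$ be the parametrization of Proposition~\ref{prop:rep}, so that $\FNorm{\sfxx(s)}{\dot\sfxx(s)}1=\dot\resct(s)+\FNorm{\sfxx(s)}{\dot\sfxx(s)}0=1$ for a.a.\ $s\in(0,\rescS)$ by \eqref{eq:139}. Since $\resct$ is injective on $\sfC_u=\resct^{-1}(\rmC_u)$, pre-images of $\Leb 1$-null sets are $\MU{\cR_1}u$-null on $\rmC_u$ (as noted before Proposition~\ref{prop:calculus}), so \eqref{eq:139} transports, together with $\sfxx(\rescs(t))=(t,u(t))$ and the formulas \eqref{eq:171}, to the identity
\[
1=\frac{\d\Leb 1}{\d\MU{\cR_1}u}(t)+\FNorm{(t,u(t))}{\Big(\tfrac{\d\Leb 1}{\d\MU{\cR_1}u}(t),\tfrac{\d u'_\co}{\d\MU{\cR_1}u}(t)\Big)}0\qquad\text{for }\MU{\cR_1}u\text{-a.a.\ }t\in\rmC_u.
\]
Subtracting the Lebesgue part and using $\MU{\cR_0}u=\MU{\cR_1}u-\Leb 1\ll\MU{\cR_1}u$, this means precisely that $\MU{\cR_0}u\Restr{\rmC_u}=\FNorm{(t,u(t))}{(\frac{\d\Leb 1}{\d\MU{\cR_1}u},\frac{\d u'_\co}{\d\MU{\cR_1}u})}0\,\MU{\cR_1}u$ on $\rmC_u$. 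Now, since $\Leb 1$ and $u'_\co$ are absolutely continuous with respect to $\MU{\cR_1}u$ (Proposition~\ref{prop:calculus}) and $\MU{\cR_1}u\Restr{\rmC_u}$ is a finite measure, the reference-measure independence quoted above lets me replace $\mu$ by $\MU{\cR_1}u$ on $\rmC_u$, whence $\int_{\rmC_u\cap[a,b]}\FNorm{(t,u(t))}{(\frac{\d\Leb 1}{\d\mu},\frac{\d u'_\co}{\d\mu})}0\,\d\mu=\MU{\cR_0}u(\rmC_u\cap[a,b])$. Adding the atomic contribution $\JVar{\cR_0}uab=\MU{\cR_0}u(\rmJ_u\cap[a,b])$ and recalling $\MU{\cR_0}u([a,b])=\Var{\cR_0}uab$ yields \eqref{eq:155}.

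The main obstacle is the measure-theoretic bookkeeping rather than anything conceptual: one must verify that the set where \eqref{eq:139} fails is $\MU{\cR_1}u$-negligible on $\rmC_u$ (via injectivity of $\resct$ on $\sfC_u$ and the pushforward formula \eqref{eq:168}), that the densities $\frac{\d\Leb 1}{\d\MU{\cR_1}u}$ and $\frac{\d u'_\co}{\d\MU{\cR_1}u}$ do not vanish simultaneously $\MU{\cR_1}u$-a.e.\ on $\rmC_u$ (which follows from the displayed identity together with the coercivity \eqref{eq:162}, and which is what legitimizes the change of reference measure), and that the endpoint terms in $\JVar{\cR_0}uab$ of \eqref{eq:174} match the atomic masses of $\MU{\cR_1}u$ at $t=a$ and $t=b$. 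Everything else is a routine consequence of $1$-homogeneity and the reparametrization results already established.
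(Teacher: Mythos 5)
Your argument is correct and follows essentially the same route as the paper's proof: both rest on the arclength reparametrization of Proposition~\ref{prop:rep} with the normalization \eqref{eq:139}, the pushforward formula \eqref{eq:168}, the Radon--Nikod\'ym identities \eqref{eq:171} of Proposition~\ref{prop:calculus}, and the reference-measure independence of the $1$-homogeneous integrand. The only difference is presentational: the paper computes $\rescs(b)-\rescs(a)$ as an integral over the parameter interval, splits it over $\sfC_u$ and its complement, and changes variables, whereas you decompose the measure $\rescs'$ on $[0,T]$ into its diffuse and atomic parts --- the same computation read in the opposite direction.
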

\begin{proof}
  Since the expression on the right-hand side is independent of the measure $\mu$,
  it is not restrictive to choose $\mu=\MU{\cR_1}u$; by \eqref{eq:141} we have
  \begin{align*}
    b-a+\Var{\cR_0}uab&=\int_{(\rescs(a),\rescs(b))\cap\sfC_u}\FNorm{\sfxx(s)}{\dot\sfxx(s)}1\,\d s+
    \Leb 1((\rescs(a),\rescs(b))\setminus \sfC_u)\\&=
    \int_{(a,b)\cap\rmC_u}\FNorm{\sfxx(\rescs(t))}{\dot\sfxx(\rescs(t))}1\,\d\mu+
    \mu([a,b]\cap \rmJ_u)\\&=
    \int_{(a,b)\cap \rmC_u}\FNorm{\big(t,u(t)\big)}{\vphantom{\Big(}(\dot\resct(\rescs(t)),\dot\rescu(\rescs(t)))}1+
    \JVar{\cR_0}uab,
  \end{align*}
  and we conclude by \eqref{eq:171}.
\end{proof}

\subsection{Total variation for $\BV$ solutions}
\label{ss:6.3}

We focus now on the particular case \eqref{eq:152} of
Example \ref{ex:main}, when the dissipation $\cR$ is associated with
the {\bipotential} $\Bip$.

\begin{theorem}[Comparison between $\Var{\cB_0}u\cdot\cdot$ and
  $\pVar\bipcE u\cdot\cdot$]
  For every curve $u\in \BV([0,T];\xfin)$ and every interval $[a,b]\subset [0,T]$ we have
  \begin{equation}
    \label{eq:159}
    \pVar\bipcE u ab\le \Var{\cB_0}uab,
  \end{equation}
  and equality holds in \eqref{eq:159} if and only if $u$ satisfies the local stability condition
  \eqref{eq:65bis} on $(a,b)$.  Furthermore, if
  $\Var{\cB_0}uab<+\infty$, then $u$ satisfies
\eqref{eq:65bis} on $(a,b)$.
\end{theorem}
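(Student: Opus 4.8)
The plan is to reduce the statement to the integral representation of $\Var{\cB_0}u\cdot\cdot$ provided by Corollary~\ref{cor:integral}, and then compare it term by term with the definition~\eqref{eq:81} of $\pVar\bipcE u\cdot\cdot$. First I would record a preliminary: $\pVar\bipcE uab<+\infty$ for every $u\in\BV([0,T];\xfin)$. Indeed $\Mint ab{\Psi_0}{u'_\co}\le\Var{\Psi_0}u0T<+\infty$, and $\JVar\bipcE uab$ is also controlled by $\Var{\Psi_0}u0T$, since each Finsler jump cost $\Cost{\bipcE}t{u(t_-)}{u(t)}$ can be estimated from above along the linear competitor, using that $\bipo(v,w)\le C\,\Psi_0(v)$ uniformly for $w$ ranging in the bounded set of values of $-\rmD\cE_t$ on (the convex hull of) the bounded set $u([0,T])$. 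Next, by Remark~\ref{rem:link} (see~\eqref{eq:131}) the cost induced by $\cB$ agrees with $\Cost{\bipcE}t{\cdot}{\cdot}$, so comparing~\eqref{eq:174} with~\eqref{eq:37bis} gives $\JVar{\cB_0}uab=\JVar\bipcE uab$. Hence, with $\mu:=\Leb 1+|u'_\Ca|$ as in~\eqref{eq:156} and applying Corollary~\ref{cor:integral} (localized, if necessary, to $u\Restr{[a,b]}$), the whole discrepancy is concentrated in the diffuse part: one must compare
\[
  \Var{\cB_0}uab=\int_a^b\FNorm{\big(t,u(t)\big)}{\big(\tfrac{\d\Leb1}{\d\mu}(t),\tfrac{\d u'_\co}{\d\mu}(t)\big)}{0}\,\d\mu(t)+\JVar\bipcE uab
\]
with $\pVar\bipcE uab=\int_a^b\Psi_0\big(\tfrac{\d u'_\co}{\d\mu}(t)\big)\,\d\mu(t)+\JVar\bipcE uab$, the latter by~\eqref{eq:187}.

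The second step is the pointwise comparison of the two integrands. By~\eqref{eq:152}, $\FNorm{(t,u(t))}{(\alpha,v)}{0}=\Bip(\alpha,v;0,-\rmD\ene t{u(t)})$, and the explicit formula~\eqref{e:gamma-conv1} together with~\eqref{eq:13b} and~\eqref{eq:112} gives, for all $\alpha\ge0$, $v\in\xfin$, $w\in\xfins$,
\[
  \Bip(\alpha,v;0,w)=\begin{cases}\Psi_0(v)+\rmI_{\Kx}(w)&\text{if }\alpha>0,\\[0.2em]\bipo(v,w)&\text{if }\alpha=0,\end{cases}\qquad\text{hence}\qquad\Bip(\alpha,v;0,w)\ge\Psi_0(v),
\]
with equality if and only if $w\in\Kx$ (the case $\alpha=0=v$, where equality is automatic, is excluded $\mu$-a.e., since at $\mu$-a.e.\ $t$ with $\tfrac{\d\Leb1}{\d\mu}(t)=0$ one has $\tfrac{\d|u'_\Ca|}{\d\mu}(t)=1$ and so $\tfrac{\d u'_\co}{\d\mu}(t)\neq0$). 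Specializing to $\alpha=\tfrac{\d\Leb1}{\d\mu}(t)$, $v=\tfrac{\d u'_\co}{\d\mu}(t)$, $w=-\rmD\ene t{u(t)}$ at $\mu$-a.e.\ $t\in(a,b)$ and integrating yields~\eqref{eq:159} at once (the case $\Var{\cB_0}uab=+\infty$ being trivial). Since the jump parts coincide and are finite, equality in~\eqref{eq:159} holds if and only if the integrands agree $\mu$-a.e., i.e.\ if and only if $-\rmD\ene t{u(t)}\in\Kx$ for $\mu$-a.e.\ $t\in(a,b)$.

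The third step matches this $\mu$-a.e.\ condition with the local stability condition~\eqref{eq:65bis}, which requires $-\rmD\ene t{u(t)}\in\Kx$ at \emph{every} $t\in[0,T]\setminus\rmJ_u$. As $\rmJ_u$ is at most countable and $\mu=\Leb 1+|u'_\Ca|$ has no atoms, $\mu(\rmJ_u)=0$, so~\eqref{eq:65bis} implies the $\mu$-a.e.\ inclusion on $(a,b)$. Conversely, the $\mu$-a.e.\ inclusion implies the $\Leb 1$-a.e.\ one on $(a,b)$ (because $\Leb 1\ll\mu$), and this upgrades to all of $(a,b)\setminus\rmJ_u$ by continuity: since $\cE\in\rmC^1$ the map $(t,x)\mapsto\rmD\cE_t(x)$ is continuous, and $u$ is genuinely continuous at each $t_0\notin\rmJ_u$ (its one-sided limits there equal $u(t_0)$), so $t\mapsto-\rmD\ene t{u(t)}$ is continuous at $t_0$; taking $\Leb 1$-typical times $t_n\to t_0$ and using that $\Kx$ is closed gives $-\rmD\ene{t_0}{u(t_0)}\in\Kx$. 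This proves the ``equality iff~\eqref{eq:65bis}'' claim. Finally, for the last assertion: if $\Var{\cB_0}uab<+\infty$ then the diffuse integral above is finite, hence its integrand is finite $\mu$-a.e.; on $\{t\in(a,b):\tfrac{\d\Leb1}{\d\mu}(t)>0\}$, which carries all of $\Leb 1$, finiteness forces $\rmI_{\Kx}(-\rmD\ene t{u(t)})<+\infty$, i.e.\ $-\rmD\ene t{u(t)}\in\Kx$ for $\Leb 1$-a.e.\ $t\in(a,b)$, and the same continuity argument delivers~\eqref{eq:65bis} on $(a,b)$.

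The step I expect to require the most care is the finiteness bookkeeping around Corollary~\ref{cor:integral}: it is stated under the global hypothesis $\Var{\cB_0}u0T<+\infty$, so to use it (or Proposition~\ref{prop:rep}) on a single subinterval with only $\Var{\cB_0}uab<+\infty$ available --- or to conclude that this quantity is finite when~\eqref{eq:65bis} holds --- I would localize the reparametrization construction of Section~\ref{subsec:pseudo} to $u\Restr{[a,b]}$ and combine it with the a priori bound $\pVar\bipcE uab<+\infty$ noted in the first step. The continuity upgrade from ``$\Leb 1$-a.e.'' to ``all $t\notin\rmJ_u$'' is routine once $\mu(\rmJ_u)=0$ is in hand; everything else is a direct use of the properties of $\bipo$ and $\Bip$ already established in Theorems~\ref{thm:bipotential} and~\ref{thm:auxiliary} and Lemma~\ref{le:auxiliary2}, together with the integral calculus for $\Var{\cR_\beta}$ of Section~\ref{subsec:pseudo}.
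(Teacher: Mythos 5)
Your argument is essentially the paper's own proof: identify the jump contributions via \eqref{eq:131}, invoke the integral representation \eqref{eq:155} of Corollary~\ref{cor:integral}, compare the diffuse integrands through $\Bip(\alpha,v;0,w)\ge\Psi_0(v)$ with equality iff $w\in\Kx$ (via \eqref{eq:13}--\eqref{eq:14}), and use the $\rmI_{\Kx}$ term for $\alpha>0$ to extract local stability from finiteness of $\Var{\cB_0}uab$. The extra care you take --- the a priori bound $\pVar{\bipcE}uab<+\infty$, the exclusion of the degenerate case $(\alpha,v)=(0,0)$ on a $\mu$-positive set, the localization of Corollary~\ref{cor:integral} to $[a,b]$, and the continuity upgrade off $\rmJ_u$ --- correctly fills in details the paper leaves implicit, but does not change the route.
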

\begin{proof}
  Let us first notice that the jump contributions to the total variations
$\mathop{\text{\sl Var}}_{\bipcE}$ and $\mathop{\rm Var}_{\cB_0}$
   are the same by \eqref{eq:131}.
 Inequality \eqref{eq:159} then follows by applying \eqref{eq:155} and observing
 that  for $\mu$-a.a.\ $t\in [0,T]$
  \begin{equation}
    \label{eq:183}
    \cB_0\Big(\big(t,u(t)\big),\Big(\frac{\d\Leb 1}{\d\mu}(t),\frac{\d u_\co'}{\d \mu}(t)\Big)\Big)=
    \Bip\Big(\frac{\d\Leb 1}{\d\mu}(t),\frac{\d u_\co'}{\d \mu}(t);0,w(t)\Big)
    \ge\Psi_0\Big(\frac{\d u_\co'}{\d \mu}(t)\Big)
  \end{equation}
  (where we  have used the notation $w(t)=-\rmD\ene t{u(t)}$), the latter inequality ensuing
  from~\eqref{eq:13}.
  On the other hand,
in view of~\eqref{eq:14},  \eqref{eq:183} is an identity if and only
if  $w(t) \in K^*$ for $\mu$-a.a $t \in (0,T)$, i.e.
  if the local stability property~\eqref{eq:65bis} holds.

  Finally, since $\Leb 1\ll\mu$, $\frac{\d\Leb
    1}{\d\mu}(t)>0$ for $\Leb 1$-a.a $t \in (0,T)$.
  Therefore, on account
  of~\eqref{e:gamma-conv1} we conclude the last
  part of the statement.
\end{proof}
\begin{corollary}
  \label{cor:equivalent-BV}
  A curve $u:[0,T]\to\xfin$ is a $\BV$ solution if and only if it satisfies one of
   the following (equivalent) two conditions:
   \begin{equation}
    \label{eq:176}
    \Var{\cB_0}u{t_0}{t_1}+\ene {t_1}{u(t_1)}= \ene {t_0}{u(t_0)}+
    \int_{t_0}^{t_1} \partial_t \ene t{u(t)}\,\d t\quad\text{for every }0\le t_0\le t_1\le
    T,
  \end{equation}
  \begin{equation}
    \label{eq:175}
    \Var{\cB_0}u0T+\ene T{u(T)}\le \ene 0{u(0)}+
    \int_0^t \partial_t \ene s{u(s)}\,\d s.
  \end{equation}
\end{corollary}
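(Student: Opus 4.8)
The plan is to deduce everything from the comparison theorem between $\Var{\cB_0}u\cdot\cdot$ and $\pVar\bipcE u\cdot\cdot$ just established, together with Proposition~\ref{prop:useful}. Concretely, I would prove the cycle of implications
\[
  \text{$u$ is a $\BV$ solution}\ \Longrightarrow\ \eqref{eq:176}\ \Longrightarrow\ \eqref{eq:175}\ \Longrightarrow\ \text{$u$ is a $\BV$ solution},
\]
which at once yields that $u$ is a $\BV$ solution if and only if \eqref{eq:176} holds if and only if \eqref{eq:175} holds.

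For the first implication, assume $u$ is a $\BV$ solution. Then the local stability \eqref{eq:65bis} holds on $[0,T]\setminus\rmJ_u$, hence in particular on $(t_0,t_1)\setminus\rmJ_u$ for every subinterval $[t_0,t_1]\subset[0,T]$; by the equality case of the comparison theorem this gives $\Var{\cB_0}u{t_0}{t_1}=\pVar\bipcE u{t_0}{t_1}$. Moreover, by Proposition~\ref{prop:useful} the $(\bipcE)$-energy balance holds on every subinterval, i.e.\ \eqref{eq:73} holds. Inserting the equality of the two variations into \eqref{eq:73} gives precisely \eqref{eq:176}. The implication \eqref{eq:176}$\Rightarrow$\eqref{eq:175} is trivial: take $t_0=0$, $t_1=T$ in \eqref{eq:176} and weaken the equality to an inequality.

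It remains to show \eqref{eq:175}$\Rightarrow$ $u$ is a $\BV$ solution. First I would check that \eqref{eq:175}, together with the a priori bounds \eqref{assene1}, forces $\Var{\cB_0}u0T<+\infty$ — and hence, by the coercivity \eqref{eq:129}, $u\in\BV([0,T];\xfin)$: this is a standard bootstrap, bounding $t\mapsto\ene t{u(t)}$ uniformly via a Gronwall argument and then deducing the integrability of $\partial_t\ene\cdot{u(\cdot)}$, so that the right-hand side of \eqref{eq:175} is finite. Once $\Var{\cB_0}u0T<+\infty$, the ``furthermore'' part of the comparison theorem gives that $u$ satisfies the local stability \eqref{eq:65bis} on $(0,T)$, and therefore, again by the equality case, $\Var{\cB_0}u0T=\pVar\bipcE u0T$. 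Thus \eqref{eq:175} is literally the energy inequality \eqref{eq:85}, and since \eqref{eq:65bis} holds, Proposition~\ref{prop:useful} upgrades \eqref{eq:85} to the $(\bipcE)$-energy balance \eqref{eq:84}. Hence $u$ satisfies \eqref{eq:65bis} and \eqref{eq:84}, i.e.\ it is a $\BV$ solution in the sense of Definition~\ref{def:BV-solution}, which closes the cycle.

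I expect the only delicate point to be the finiteness of $\Var{\cB_0}u0T$ in the last implication: condition \eqref{eq:175} controls the dissipation only on the whole interval $[0,T]$, so to exclude a pathological curve with infinite total variation one genuinely needs the Gronwall estimate on the energy, combined with \eqref{assene1} and the lower bound \eqref{eq:129}. All the remaining steps are a purely formal reduction to the comparison theorem and to Proposition~\ref{prop:useful}.
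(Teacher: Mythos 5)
Your proof is correct and follows exactly the route the paper intends: the corollary is stated there without proof precisely because it is the immediate combination of the comparison theorem between $\Var{\cB_0}u\cdot\cdot$ and $\pVar{\bipcE}u\cdot\cdot$ (including its ``furthermore'' clause) with Proposition~\ref{prop:useful} and Definition~\ref{def:BV-solution}, which is what your cycle of implications does. One small remark on the step you flag as delicate: a Gronwall argument is not actually available there, since \eqref{eq:175} gives no control on subintervals $[0,t]$; but it is also unnecessary, because the condition \eqref{eq:175} implicitly presupposes that $\int_0^T\partial_t\ene s{u(s)}\,\d s$ is a finite integral, and then $\Var{\cB_0}u0T<+\infty$ follows at once from \eqref{eq:175} and the lower bound $\cE\ge -C$ in \eqref{assene1}, after which your appeal to \eqref{eq:129} and to the ``furthermore'' part of the comparison theorem goes through unchanged.
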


\begin{lemma}
  \label{le:main-lsc}
  Suppose that $u_\eps\in \AC([0,T]; \xfin)$, $\eps>0$, is a family pointwise converging to $u$ as $\eps\downarrow0$,
  and $w_\eps:[0,T]\to\xfins$ satisfies $\|w_\eps(t)+\rmD\ene t{u_\eps(t)}\|_{\xfins}\to0$
  uniformly in $[0,T]$.
  Then,
  \begin{equation}
    \label{eq:177}
    \liminf_{\eps\downarrow0}\int_0^T \Big(\Psi_\eps(\dot u_\eps)+\Psi_\eps^*(w_\eps(t))\Big)\,\d t
    \ge
    \Var{\cB_0}u0T\ge\pVar{\bipcE}u0T.
  \end{equation}
\end{lemma}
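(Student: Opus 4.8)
The plan is to dispatch the second inequality $\Var{\cB_0}u0T\ge\pVar{\bipcE}u0T$ at once: it is exactly the comparison inequality~\eqref{eq:159} between the two (pseudo-)total variations, applied on the interval $[a,b]=[0,T]$, and nothing further is needed. Hence the whole content of the lemma reduces to the first inequality in~\eqref{eq:177}, and for this I would localize and appeal to Lemma~\ref{le:extra-lsc}. First I would fix a vanishing sequence $\eps_k\downarrow0$ realizing the $\liminf$ on the left-hand side of~\eqref{eq:177}, and check that the hypotheses of Lemma~\ref{le:extra-lsc} hold on \emph{every} subinterval $[t_0,t_1]\subset[0,T]$: the maps $u_{\eps_k}$ are absolutely continuous by assumption; $u_{\eps_k}(t)\to u(t)$ at each $t$; and the uniform bound $\|w_{\eps_k}(t)+\rmD\ene t{u_{\eps_k}(t)}\|_{\xfins}\to0$, combined with $\cE\in\rmC^1$ and $u_{\eps_k}(t)\to u(t)$, forces $w_{\eps_k}(t)\to w(t):=-\rmD\ene t{u(t)}$ pointwise on $[0,T]$.

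Next, for an arbitrary subdivision $0=t_0<t_1<\dots<t_M=T$, I would use that the integrand $\Psi_{\eps_k}(\dot u_{\eps_k})+\Psi_{\eps_k}^*(w_{\eps_k})$ is nonnegative to split the integral over the subintervals, then apply superadditivity of the $\liminf$ together with Lemma~\ref{le:extra-lsc} on each piece:
\[
\liminf_{k\up\infty}\int_0^T\big(\Psi_{\eps_k}(\dot u_{\eps_k})+\Psi_{\eps_k}^*(w_{\eps_k})\big)\,\d t
\ \ge\ \sum_{m=1}^M\liminf_{k\up\infty}\int_{t_{m-1}}^{t_m}\big(\Psi_{\eps_k}(\dot u_{\eps_k})+\Psi_{\eps_k}^*(w_{\eps_k})\big)\,\d t
\ \ge\ \sum_{m=1}^M\cost{\cB_0}{(t_{m-1},u(t_{m-1}))}{(t_m,u(t_m))}\,.
\]
Taking the supremum over all subdivisions of $[0,T]$ and recalling Definition~\ref{def:total2} of $\Var{\cB_0}u0T$ (with the curve $\uu(t)=(t,u(t))$) yields the first inequality in~\eqref{eq:177}, and hence the lemma. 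An alternative to this partition argument would be to redo, on $[0,T]$, the reparametrization-and-$\Gamma$-liminf passage of Section~\ref{subsec:vanishing2} directly, but invoking the already-proved Lemma~\ref{le:extra-lsc} is cleaner.

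I do not anticipate a genuine obstacle: the essential analytic work — the ``energy arclength'' reparametrization and the lower-semicontinuity passage through $\Bip_\eps$ of Proposition~\ref{prop:techn2} — is already encapsulated in Lemma~\ref{le:extra-lsc}. The only points meriting a line of care are (i) deducing the pointwise convergence $w_{\eps_k}\to-\rmD\ene\cdot{u(\cdot)}$ from the \emph{uniform} smallness of $w_{\eps_k}+\rmD\cE(u_{\eps_k})$ plus $\cE\in\rmC^1$, and (ii) justifying the interchange of $\liminf$ with the finite sum over the subdivision, which is legitimate precisely because the integrands are nonnegative, so no $\infty-\infty$ indeterminacy occurs; after that the passage to the supremum over subdivisions matches the definition of the total variation verbatim.
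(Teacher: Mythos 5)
Your proposal is correct and follows essentially the same route as the paper: the paper likewise partitions $[0,T]$, applies Lemma~\ref{le:extra-lsc} on each subinterval, and takes the supremum over partitions to recover $\Var{\cB_0}u0T$, with the second inequality coming from the comparison~\eqref{eq:159}. Your added care about the pointwise convergence of $w_\eps$ and the superadditivity of the $\liminf$ over the nonnegative pieces just makes explicit what the paper leaves implicit.
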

\begin{proof}
  Choosing a finite partition $0=t_0<t_1<t_2<\cdots<t_N=T$ of the time interval $[0,T]$,
  Lemma \ref{le:extra-lsc} yields
  \begin{displaymath}
     \liminf_{\eps\downarrow0}\int_0^T \Big(\Psi_\eps(\dot u_\eps)+\Psi_\eps^*(w_\eps(t))\Big)\,\d t\ge
     \sum_{j=1}^N\cost{\cB_0}{(t_{j-1},u(t_{j-1}))}{(t_j,u(t_j))}.
  \end{displaymath}
  Taking the supremum of the right-hand side with respect to all  partitions of $[0,T]$, we end up with \eqref{eq:177}.
\end{proof}
We conclude this section with the \textbf{proof of Theorem
\ref{thm:equivalence}.}
\begin{proof}
 Let $(\resct,\rescu)$ be a parametrized
solution as in the statement of the theorem. It is easy to check
directly from definitions \eqref{eq:128} and \eqref{eq:132} that
\[
\begin{aligned}
  \Var{\cB_0}u0T&\le \int_0^\rescS \Bip(\dot\resct(s),\dot\rescu(s);0,-\rmD\ene{\resct(s)}{\rescu(s)})\,\d s
 \\ & \le
  \ene0{\rescu(0)}-\ene{\resct(\rescS)}{\rescu(\rescS)}+
  \int_0^\rescS \partial_t\ene{\resct(s)}{\rescu(s)}\dot\resct(s)\,\d s\\&=
  \ene0{u(0)}-\ene T{u(T)}+
  \int_0^T \partial_t \ene{t}{u(t)}\,\d t\,,
  \end{aligned}
\]
where the second inequality ensues from~\eqref{eq:35}. Thus,
\eqref{eq:175} holds, so that $u$ is  a $\BV$ solution by
Corollary~\ref{cor:equivalent-BV}. The converse implication follows
from Proposition \ref{prop:rep}.  \end{proof}
\bibliographystyle{siam}
\def\cprime{$'$} \def\cprime{$'$} \def\cprime{$'$} \def\cprime{$'$}

\end{document}